\pgfplotsset{compat=1.15}
\theoremstyle{plain}
	\newtheorem{Theo}{Theorem}[section] 
	\newtheorem{Prop}[Theo]{Proposition}        
	\newtheorem{Lem}[Theo]{Lemma}            
	\newtheorem{Cor}[Theo]{Corollary}
	\newtheorem{Conj}[Theo]{Conjecture}
\theoremstyle{definition}
	\newtheorem{Def}[Theo]{Definition}
	\newtheorem{Nota}[Theo]{Notation}
\theoremstyle{remark}
	\newtheorem{Rema}[Theo]{Remark}
\def\NN{{\mathbb N}}    
\def\RR{{\mathbb R}}    
\def\CC{{\mathbb C}}    
\def\HH{{\mathbb H}^2}    
\newcommand{\Hyp}{{\mathbb H}}
\def\SL{{SL_{2}(\mathbb R)}}
\newcommand{\Int}{\mbox{Int}}
\newcommand{\Vol}{\mbox{Vol}}
\def\GIntij{{|\alpha_i \cap \beta_j|}}
\def\Int{{\mathrm{Int}}}
\def\XX{{X_n}}
\def\SS{{\mathcal{S}_n}}
\def\TT{{\mathcal{T}_n}}
\def\KVol{{\mathrm{KVol}}}
\title{Algebraic intersection, lengths and Veech surfaces}
\author{\textsc{Julien Boulanger}}
\begin{document}
\maketitle


\section{Introduction}
In this paper, we continue the study of intersections of closed curves on translation surfaces, initiated in \cite{CKM} and \cite{CKMcras} for a family of arithmetic Veech surfaces and \cite{BLM22} for a family of non-arithmetic Veech surfaces. Namely, we investigate the question of maximizing the algebraic intersection between two curves of given lengths. A suitable way of quantifying this is to consider the following quantity, defined for any closed oriented surface $X$ with a Riemannian metric $g$ (possibly with singularities):
\begin{equation}
\label{eq:KVol}
\mathrm{KVol}(X): = \mathrm{Vol}(X,g)\cdot \sup_{\alpha,\beta} \frac{\mathrm{Int} (\alpha,\beta)}{l_g (\alpha) l_g (\beta)},
\end{equation}
where the supremum ranges over all piecewise smooth closed curves $\alpha$ and $\beta$ in $X$, $\Int$ denotes the algebraic intersection, and $l_g(\cdot)$ denotes the length with respect to the Riemannian metric (it is readily seen that multiplying by the volume Vol$(X,g)$ makes the quantity invariant by rescaling the metric $g$). As shown by Massart-Müetzel~\cite{MM}, this function is well defined and finite.

Though KVol is a close cousin of the systolic volume $\mathrm{SysVol}(X) = \sup_{\alpha} \frac{\mathrm{Vol}(X)}{l_g(\alpha)^2}$, it is difficult to compute on a given surface outside the case of a flat torus (where KVol $= 1$). In this context, Cheboui, Kessi and Massart~\cite{CKM,CKMcras} initiated the study of KVol on translation surfaces, which are instances of flat surfaces with conical singularities. More precisely,~\cite{CKM} provides estimates of KVol on the Teichm\"uller curves associated with a family of arithmetic translation surfaces $(X,\omega)$. In Boulanger--Lanneau--Massart~\cite{BLM22}, we give a closed formula for KVol on the Teichm\"uller disk of the double regular $n$-gon translation surface for $n \geq 5$ odd. In this paper, we continue the study of KVol in the Teichm\"uller disk of translation surfaces coming from regular polygons. Namely, we deal with the case of the regular $n$-gon for $n \geq 8$ even. Although similar to the case of the double regular $n$-gon for odd $n$, the case of the regular $n$-gon for even $n$ requires a more careful study.\newline

Given an even integer $n \geq 8$, we denote by $\XX$ the translation surface made from a regular $n$-gon by identifying its parallel opposite sides by translations. The resulting surface has a unique conical singularity if $n \equiv 0 \mod 4$ and two distinct conical singularities if $n \equiv 2 \mod 4$. This surface can also be obtained by the unfolding construction of Katok--Zemlyakov \cite{KZ} from a triangle of angles $(\frac{\pi}{2}, \frac{\pi}{n}, \frac{(n/2-1)\pi}{n})$ and is one of the original Veech surfaces (see \cite{Veech}). As it will be recalled in Section \ref{sec:background}, the Veech group of $\XX$ has a \emph{staircase model} $\mathcal{S}$ in its Teichm\"uller disk whose Veech group $\Gamma_n$ is an index two subgroup of the Hecke group of order $n$. In particular, the Teichm\"uller curve associated to $\XX$ can be identified with $\HH / \Gamma_n$ and has a fundamental domain $\TT$ depicted in Figure \ref{geodesics_kPhi}, where $\Phi := \Phi_{n} = 2 \cos( \frac{\pi}{n})$. 

In this paper, we study KVol in the Teichm\"uller disk of $\XX$, and we give a formula to compute KVol on any surface of $\TT$ for $n \equiv 0 \mod 4$. If $n \equiv 2 \mod 4$, the fact that $\XX$ has two singularities makes it more difficult to compute KVol. In this latter case, our methods provide upper bounds on KVol in the Teichm\"uller disk of $\XX$.

\paragraph{The case $n \equiv 0 \mod 4$.}
Our main result holds in the case where $n \equiv 0 \mod 4$, so that the regular $n$-gon as a single singularity, and can be stated as:

\begin{Theo}\label{theo:main}
Let $n \geq 8$ such that $n \equiv 0 \mod 4$. Given $d,d' \in \RR \cup \{ \infty \} \simeq \partial \HH$, let $\gamma_{d,d'}$ denote the geodesic in the hyperbolic plane $\HH$ having $d$ and $d'$ as endpoints, and define:
$$
\mathcal{G}_{max} = \bigcup_{k \in \NN^* \cup \{\infty\}} \gamma_{\infty, \pm \frac{1}{k\Phi}}
$$
(with the convention $\frac{1}{\infty} = 0$). \newline

Let $X = M \cdot \mathcal{S}_n$ be a surface in the Teichm\"uller disk of $\XX$, obtained from the staircase model $\mathcal{S}$ by applying a matrix $M = \begin{pmatrix} a & b \\ c & d \end{pmatrix} \in SL_2(\RR)$. Then, we have:
\begin{equation}\label{Formule_1}
\KVol(X) = K_0 \cdot \frac{1}{\cosh(\mathrm{dist}_{\HH}(\frac{di+b}{ci+a}, \Gamma_n \cdot \mathcal{G}_{max}))}
\end{equation}
Where $K_0 > 0$ is an explicit constrant which only depends on $n$ and $\mathrm{dist}_{\HH}$ denotes the hyperbolic distance. \newline

In particular, KVol is bounded on the Teichm\"uller disk of the regular $n$-gon, and
\begin{enumerate}[label=(\roman*)]
\item the maximum of KVol is achieved for surfaces represented by images of elements of $\mathcal{G}_{max}$ under the group $\Gamma_n$,
\item the minimum of KVol is achieved, uniquely, at $\XX$.
\end{enumerate}
\end{Theo}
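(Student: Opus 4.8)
The plan is to reduce the supremum in \eqref{eq:KVol} to a supremum over a countable family of pairs of closed geodesics, and then to translate the resulting optimization into hyperbolic geometry on $\HH$. Following the approach of \cite{MM} and \cite{BLM22}, I would first argue that on the Veech surface $\mathcal{S}_n$ the supremum defining $\KVol$ is realized (or approached) by pairs $(\alpha,\beta)$ whose members are core curves of cylinders in periodic directions: replacing a curve by a homologically shortest representative does not decrease the ratio, and on a Veech surface the shortest curves in a primitive homology class are carried by cylinder decompositions. The point is that the algebraic intersection $\Int(\alpha,\beta)$ is a topological invariant, hence unchanged under the $SL_2(\RR)$-action, whereas a curve with holonomy vector $v$ on $\mathcal{S}_n$ has length $l_g(\alpha)=\lVert Mv\rVert$ on $X=M\cdot\mathcal{S}_n$. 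Since $\Vol(X)=\Vol(\mathcal{S}_n)=:A_n$ is constant on the Teichm\"uller disk, this reduces everything to understanding, for each admissible pair, the function $M\mapsto \lVert Mv\rVert^{-1}\lVert Mv'\rVert^{-1}$, where $v,v'$ are the holonomy vectors of $\alpha,\beta$.

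Second, I would set up the hyperbolic dictionary. Writing $w=\frac{di+b}{ci+a}$ and letting $\xi_v=-p/q\in\partial\HH$ be the boundary point attached to the direction of $v=(p,q)$ (so that $\xi_{(1,0)}=\infty$), a direct computation using $\det M=1$ gives
\[
\lVert Mv\rVert^2=\frac{q^2\,|w-\xi_v|^2}{\mathrm{Im}(w)},
\]
with the obvious modification $\lVert M(1,0)\rVert^2=1/\mathrm{Im}(w)$ in the horizontal case. Combined with the classical identity $\cosh\big(\mathrm{dist}_{\HH}(w,\gamma_{\xi,\xi'})\big)=\frac{|w-\xi|\,|w-\xi'|}{|\xi-\xi'|\,\mathrm{Im}(w)}$, this shows that for a \emph{fixed} pair $(\alpha,\beta)$ the corresponding term in $\KVol$ equals
\[
A_n\,\frac{\Int(\alpha,\beta)}{l_g(\alpha)l_g(\beta)}=C(\alpha,\beta)\cdot\frac{1}{\cosh\big(\mathrm{dist}_{\HH}(w,\gamma_{\xi_v,\xi_{v'}})\big)},
\]
where the constant $C(\alpha,\beta)$ depends only on the intersection number and the holonomy vectors on $\mathcal{S}_n$, \emph{not} on $M$. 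Thus each pair contributes a term governed by the hyperbolic distance to the geodesic joining its two directions.

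Third --- and this is the crux --- I would identify the maximizing pairs. The claim is that the supremum is realized by pairs consisting of a horizontal core curve (direction $\infty$) and a curve in direction $\pm\frac{1}{k\Phi}$, for $k\in\NN^*\cup\{\infty\}$, together with their $\Gamma_n$-orbits; these produce precisely the geodesics composing $\Gamma_n\cdot\mathcal{G}_{max}$. The main obstacle is the combinatorial analysis of the cylinder decompositions of $\mathcal{S}_n$ in these directions, which is needed to (a) compute $\Int(\alpha,\beta_k)$ and the holonomy vectors, (b) show that the constant $C(\alpha,\beta_k)$ takes the \emph{same} value $K_0/A_n$ for every $k$ (so that a single constant appears in \eqref{Formule_1}), and (c) show that no other pair yields a larger ratio at any point of $\HH$. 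This is exactly where the hypothesis $n\equiv 0\bmod 4$, ensuring a single conical singularity, should be essential: it is what forces the uniform constant and makes the maximizers combine cleanly, whereas two singularities ($n\equiv 2\bmod 4$) break this and only allow upper bounds. Granting (a)--(c), and using that $\KVol$ is $\Gamma_n$-invariant (since $\gamma\cdot\mathcal{S}_n\cong\mathcal{S}_n$ for $\gamma\in\Gamma_n$ forces $M\gamma\cdot\mathcal{S}_n\cong M\cdot\mathcal{S}_n$, and $M\mapsto M\gamma$ induces the standard Möbius action on $w$), the distance to the union equals the infimum of the distances to its components, so
\[
\KVol(X)=\sup_{(\alpha,\beta)}C(\alpha,\beta)\,\frac{1}{\cosh\big(\mathrm{dist}_{\HH}(w,\gamma_{\xi_v,\xi_{v'}})\big)}=\frac{K_0}{\cosh\big(\mathrm{dist}_{\HH}(w,\Gamma_n\cdot\mathcal{G}_{max})\big)}.
\]
By $\Gamma_n$-invariance it suffices to check (a)--(c) for the finitely many geodesics of $\mathcal{G}_{max}$ meeting a fundamental domain $\TT$.

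Finally, the three consequences follow from \eqref{Formule_1}. As $\cosh\ge 1$, one has $\KVol(X)\le K_0$, whence boundedness, with equality exactly when $\mathrm{dist}_{\HH}(w,\Gamma_n\cdot\mathcal{G}_{max})=0$, i.e. when $w$ lies on a $\Gamma_n$-image of $\mathcal{G}_{max}$; this is (i). For (ii), the minimum of $\KVol$ corresponds to the point of $\HH/\Gamma_n$ that is \emph{farthest} from the family $\Gamma_n\cdot\mathcal{G}_{max}$; I would locate this deepest point explicitly from the shape of $\TT$ (Figure \ref{geodesics_kPhi}) and the nested configuration of the geodesics $\gamma_{\infty,\pm 1/(k\Phi)}$, verify that it is the point representing the regular $n$-gon $\XX$, and obtain uniqueness from the convexity of $w\mapsto\mathrm{dist}_{\HH}(w,\gamma)$ along geodesics together with the symmetry of the fundamental domain.
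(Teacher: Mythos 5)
Your first two steps coincide with the paper's framework: the reduction to pairs of periodic directions is Proposition \ref{prop:sup_directions}, and your hyperbolic dictionary is precisely the identity $\sin\theta(X,d,d')=1/\cosh(\mathrm{dist}_{\HH}(X,\gamma_{d,d'}))$ of Proposition \ref{prop:directions} and Notation \ref{nota:sinus}; the $\Gamma_n$-invariance argument and the derivation of boundedness, (i) and (ii) from Formula \eqref{Formule_1} are likewise as in the paper.

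The genuine gap is your item (c) --- ``show that no other pair yields a larger ratio at any point of $\HH$'' --- which you grant rather than prove, and which is where essentially all of the paper's work lies. Two points your outline does not address. First, comparing the constants $C(\alpha,\beta)$ (the paper's $K(d,d')$, Proposition \ref{prop:etude_K}) is not sufficient: a pair $(d,d')$ with a strictly smaller constant can still dominate at a surface $X$ lying on its own geodesic $\gamma_{d,d'}$, where $\sin\theta(X,d,d')=1$ while $\sin\theta(X,\infty,\tfrac{1}{k\Phi})<1$ for every $k$; the needed pointwise inequality \eqref{eq:trefle} is a genuine geometric statement, not a consequence of the constant comparison. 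The paper resolves it with a two-region argument: on $\mathcal{R}_+$ (near $\Gamma_n\cdot\mathcal{G}_{max}$) the constant gap $\Phi^2-2$ between the maximal value $K(\infty,\tfrac{1}{k\Phi})$ and all other $K(d,d')$, combined with the bound $\sin\theta(X,\infty,\tfrac{1}{k\Phi})\ge 1/(\Phi^2-2)$ valid there, suffices (Lemma \ref{lem:R_1_case}); on $\TT\setminus\mathcal{R}_+$, i.e.\ near $\XX$, one needs an anchor, namely the exact value of $\KVol$ at the regular $n$-gon itself (Theorem \ref{theo:KVol_4m}, proved by the elementary subdivision into sandwiched and non-sandwiched segments in Section \ref{sec:4m}, and restated as Corollary \ref{cor:K_regular}), which is then propagated over the whole region by the sine-ratio monotonicity of Propositions \ref{prop:extension_case_1} and \ref{prop:extension_Prop_BLM} together with the $V(d,d')$ reduction. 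Your proposal never mentions computing $\KVol(\XX)$ first --- which the paper explicitly flags as logically prior to Theorem \ref{theo:main} --- and offers no substitute mechanism for the pointwise domination near $\XX$; as written, the plan reduces the theorem to its hardest part instead of proving it.
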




\begin{figure}[h]
\center
\definecolor{qqqqff}{rgb}{0,0,1}
\definecolor{qqwuqq}{rgb}{0,0.39215686274509803,0}
\definecolor{ccqqqq}{rgb}{0.8,0,0}
\begin{tikzpicture}[line cap=round,line join=round,>=triangle 45,x=1cm,y=1cm, scale = 3.5]
\clip(-1.3,-0.3) rectangle (1.3,1.9);
\fill [gray!10](-0.923879532511,0) rectangle (0.923879532511,1.95);
\filldraw[fill=white] (1.0823922002,0) -- (1.0823922002,0) arc (0:180:0.54119610014) (0,0) -- cycle;
\filldraw[fill=white] (0,0) -- (0,0) arc (0:180:0.5411961001) (1.0823922002,0)  -- cycle;
\draw [shift={(0,0)},line width=1pt, dash pattern=on 3pt off 3pt]  plot[domain=0:3.141592653589793,variable=\t]({1*1*cos(\t r)+0*1*sin(\t r)},{0*1*cos(\t r)+1*1*sin(\t r)});
\draw [line width=1pt] (0.9238795325112867,0) -- (0.9238795325112867,2.34877816657346);
\draw [line width=1pt] (-0.9238795325112867,0) -- (-0.9238795325112867,2.34877816657346);
\draw [shift={(0.541196100146197,0)},line width=1pt]  plot[domain=0:3.141592653589793,variable=\t]({1*0.541196100146197*cos(\t r)+0*0.541196100146197*sin(\t r)},{0*0.541196100146197*cos(\t r)+1*0.541196100146197*sin(\t r)});
\draw [shift={(-0.541196100146197,0)},line width=1pt]  plot[domain=0:3.141592653589793,variable=\t]({1*0.541196100146197*cos(\t r)+0*0.541196100146197*sin(\t r)},{0*0.541196100146197*cos(\t r)+1*0.541196100146197*sin(\t r)});
\draw [line width=1pt,color=ccqqqq] (-0.541196100146197,0.541196100146197) -- (-0.541196100146197,2.34877816657346);
\draw [line width=1pt,color=ccqqqq] (0.541196100146197,0.541196100146197) -- (0.541196100146197,2.34877816657346);
\draw [line width=1pt,domain=-2.2811437913952344:2.305963049326692] plot(\x,{(-0-0*\x)/2});
\draw (-0.07,0) node[anchor=north west] {$0$};
\draw (0.84,0.01) node[anchor=north west] {$\frac{\Phi}{2}$};
\draw (1,0.01) node[anchor=north west] {$\frac{2}{\Phi}$};
\draw (0.93,-0.03) node[anchor=north west] {$1$};
\draw (0.45,0.01) node[anchor=north west] {$\frac{1}{\Phi}$};
\draw [line width=1pt,color=qqwuqq] (0.2705980500730985,0.4686895711556737) -- (0.2705980500730985,2.34877816657346);
\draw [line width=1pt,color=qqwuqq] (-0.27059805007309845,0.4686895711556738) -- (-0.27059805007309845,2.34877816657346);
\draw [shift={(0,0)},line width=1pt,color=qqwuqq]  plot[domain=1.0471975511965976:2.0943951023931957,variable=\t]({1*0.541196100146197*cos(\t r)+0*0.541196100146197*sin(\t r)},{0*0.541196100146197*cos(\t r)+1*0.541196100146197*sin(\t r)});
\draw (0.18,0.01) node[anchor=north west] {$\frac{1}{2\Phi}$};
\draw (0.03,0.01) node[anchor=north west] {$\frac{1}{3\Phi}$};
\draw [shift={(0.18039870004873235,0)},line width=1pt,color=qqqqff]  plot[domain=1.5707963267948966:2.3055723661905114,variable=\t]({1*0.40338375636156015*cos(\t r)+0*0.40338375636156015*sin(\t r)},{0*0.40338375636156015*cos(\t r)+1*0.40338375636156015*sin(\t r)});
\draw [shift={(-0.18039870004873235,0)},line width=1pt,color=qqqqff]  plot[domain=0.8360202873992818:1.5707963267948966,variable=\t]({1*0.40484317796423347*cos(\t r)+0*0.40484317796423347*sin(\t r)},{0*0.40484317796423347*cos(\t r)+1*0.40484317796423347*sin(\t r)});
\draw [line width=1pt,color=qqqqff] (0.18039870004873235,0.40338375636156015) -- (0.18039870004873235,2.34877816657346);
\draw [line width=1pt,color=qqqqff] (-0.1803987000487323,0.40338375636156015) -- (-0.1803987000487323,2.34877816657346);
\draw (0.924,0.382) node[anchor=south west] {$\XX$};
\draw [fill=black] (0.924,0.382) circle (1pt);
\end{tikzpicture}
\includegraphics[width=5cm]{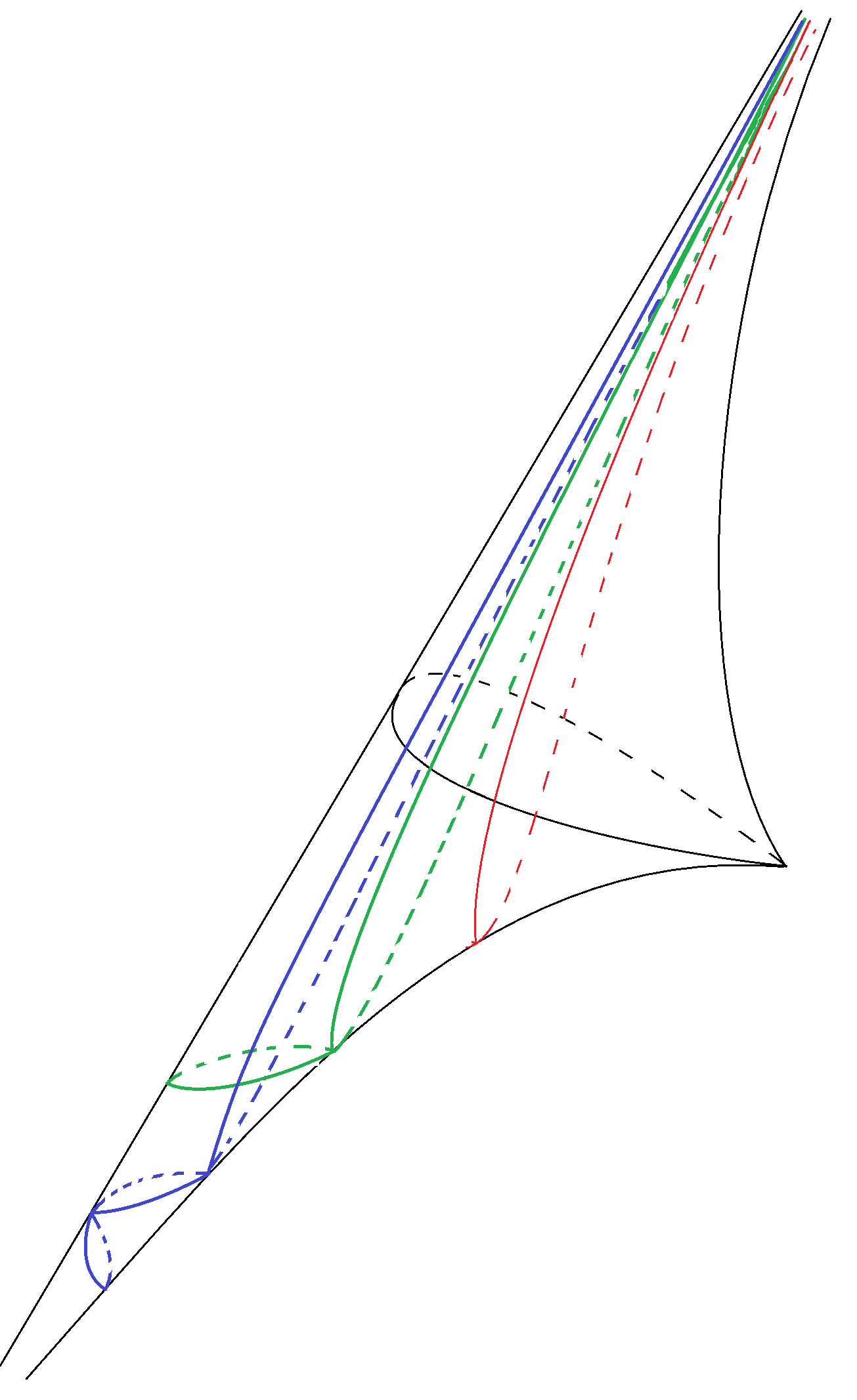}
\caption{The geodesics $\gamma_{\infty, \frac{1}{k\Phi}}$ for $k=1,2,3$ and their images by the Veech group intersecting the fundamental domain $\TT_n$. On the right, the same geodesics on the surface $\HH / \Gamma_n$.}
\label{geodesics_kPhi}
\end{figure}

Specifically when $X = \XX$ is the regular $n$-gon, the result of Theorem \ref{theo:main} can be stated as follows\footnote{Although stated in this order, we first prove Theorem \ref{theo:KVol_4m} and then use it to prove Theorem \ref{theo:main}}:

\begin{Theo}\label{theo:KVol_4m}
Let $n \geq 8$, $n \equiv 0 \mod 4$. Let $l_0$ be the length of the side of the $n$-gon\footnote{Which is also the systolic length of the resulting surface $\XX$.}. For any pair of saddle connections $\alpha, \beta$, we have 
\begin{equation}\label{K4m}
\frac{\mathrm{Int}(\alpha,\beta)}{l(\alpha)l(\beta)} \leq \frac{1}{l_0^2}.
\end{equation}
Moreover, equality is achieved if and only if $\alpha$ and $\beta$ are distinct sides of the $n$-gon.
\end{Theo}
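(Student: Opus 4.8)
The starting point is that, since $n \equiv 0 \bmod 4$, the surface $\XX$ has a single conical singularity, so every saddle connection is a loop based at that singularity and determines a well-defined class in $H_1(\XX,\ZZ)$. Consequently $\Int(\alpha,\beta)$ is the algebraic (symplectic) intersection pairing of these two homology classes, and in particular $|\Int(\alpha,\beta)| \le i(\alpha,\beta)$, the geometric intersection number counting transverse crossings of the two geodesics. I would therefore aim for a bound of the form $i(\alpha,\beta) \le l(\alpha)\,l(\beta)/l_0^2$, since this implies \eqref{K4m}. It is worth stressing that this clean homological reformulation is exactly what the single-singularity hypothesis buys us, and is the feature that breaks down when $n \equiv 2 \bmod 4$ (where saddle connections joining the two distinct singularities only live in relative homology).

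Next I would exploit the Veech dichotomy: because $\XX$ is a Veech surface, the direction of $\alpha$ is completely periodic, so $\XX$ decomposes into finitely many cylinders $C_1,\dots,C_r$ in the direction of $\alpha$, with $\alpha$ lying in the union $\Sigma$ of the boundary saddle connections. If $\beta$ is parallel to this direction then $\Int(\alpha,\beta)=0$ and we are done; otherwise $\beta$ is a straight geodesic transverse to it, so each component of $\beta\cap C_i$ is a segment crossing $C_i$ from one boundary to the other, hence of length at least the height $h_i$. The crossings of $\beta$ with $\alpha$ occur where these segments meet the arc $\alpha\subset\partial C_i$. To make this quantitative I would represent the Poincar\'e dual of $[\alpha]$ by a closed $1$-form supported in the cylinders adjacent to $\alpha$ and built from the normalized height forms $\tfrac{1}{h_i}\,dy$; integrating along $\beta$, together with the crossing bound $\#(\beta\cap C_i)\le l(\beta\cap C_i)/h_i$, should yield an estimate essentially of the shape
\[
|\Int(\alpha,\beta)| \;\le\; l(\alpha)\sum_{C_i\ni\alpha}\frac{l(\beta\cap C_i)}{\Vol(C_i)} \;\le\; \frac{l(\alpha)\,l(\beta)}{\min_i \Vol(C_i)},
\]
the geometric content being that each crossing of $\alpha$ must be \emph{paid for} by a full traversal of an adjacent cylinder.

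It then remains to identify the constant, i.e.\ to show that the geometry forces the effective constant to be exactly $1/l_0^2$, with the extremal configuration being the thin cylinders bounded by the sides of the $n$-gon (heuristically, that the minimal cylinder area over all periodic directions is $l_0^2$). Here I would use the dihedral symmetry group $D_n$, which acts by isometries and preserves the ratio in \eqref{K4m}, to reduce the direction of $\alpha$ to a fundamental arc of angle $\pi/n$, and then feed in the explicit description of the cylinder decompositions of $\XX$ recalled in Section \ref{sec:background} (equivalently, the action of the Veech group $\Gamma_n$ on the standard staircase decomposition) to control the widths and heights $w_i,h_i$ in every direction. Combined with the systolic lower bounds $l(\alpha)\ge l_0$ and $l(\beta)\ge l_0$, this should pin the constant to $1/l_0^2$. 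For the equality case I would then trace back through the chain of inequalities: equality forces $\alpha$ to bound a minimal (area $l_0^2$) cylinder, $l(\alpha)=l(\beta)=l_0$, and $\beta$ to cross that cylinder exactly once, a rigidity which I expect to characterize precisely the configuration where $\alpha$ and $\beta$ are two distinct sides of the $n$-gon.

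The main obstacle is this third step. The cylinder estimate above is soft and uniform, but extracting the \emph{sharp} constant $1/l_0^2$ rather than a mere $O(1/l_0^2)$ requires controlling the moduli of the cylinder decompositions in infinitely many periodic directions, since the ratio is invariant only under the isometric symmetries $D_n$ and not under the full (length-distorting) Veech group; the dihedral reduction shrinks but does not eliminate the family of directions to be checked. This is exactly where the specific arithmetic of $\Gamma_n$ and the explicit flat geometry of the regular $n$-gon must enter, and where the \emph{more careful study} alluded to in the introduction is concentrated.
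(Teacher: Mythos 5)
The reduction in your first paragraph is fine and matches the paper's setup: for $n\equiv 0 \bmod 4$ every saddle connection is a closed loop and $\Int$ is the homological pairing. The fatal problem is your displayed inequality, and it is not a repairable technicality: it is false, and the counterexample is precisely the extremal configuration of the theorem. Take the octagon with $l_0=1$, let $\alpha$ be the horizontal side and $\beta$ the vertical side; these are distinct sides, so $\Int(\alpha,\beta)=1$ (one crossing, at the singularity). The cylinder decomposition in the direction of $\alpha$ consists of two cylinders, of heights $\frac{\sqrt{2}}{2}$ and $1$ and circumferences $2+\sqrt{2}$ and $1+\sqrt{2}$, each of area $1+\sqrt{2}$, so your second inequality would give $1\le \frac{1}{1+\sqrt{2}}$. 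Your first inequality fails even more starkly: $\alpha$ is adjacent (on both of its sides) only to the cylinder of height $\frac{\sqrt{2}}{2}$, while the interior of $\beta$ lies entirely inside the other cylinder, so $\sum_{C_i\ni\alpha} l(\beta\cap C_i)/\Vol(C_i)=0$ although $\Int(\alpha,\beta)=1$. The underlying mistakes are these: the form $\frac{1}{h_i}\,dy$ supported on $C_i$ is Poincar\'e dual to the \emph{core curve} of $C_i$, not to $\alpha$; writing $[\alpha]$ in terms of core classes (for the octagon, $[\alpha]$ is the \emph{difference} of the two core classes) introduces integer coefficients and cylinder widths, not the factor $l(\alpha)$; and crossings occurring at the singularity are invisible to interior crossing counts unless handled separately.

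Even the correct soft version of your idea --- each interior crossing of $\alpha$ forces $\beta$ to traverse the cylinder $C_{\mathrm{adj}}$ adjacent to $\alpha$, whence $|\alpha\cap\beta|\le \frac{l(\beta)\sin\theta}{h_{\mathrm{adj}}}+1$ --- cannot produce the sharp constant, because for $\alpha$ a side of the octagon one has $h_{\mathrm{adj}}=\frac{\sqrt{2}}{2}\,l_0<l_0$, and this vertical period is actually realized: a trajectory can cross the bottom side, exit the strip above it through a slanted side of the octagon, re-enter (by the side identification) the strip below the top side, and cross $\alpha$ again after vertical displacement exactly $\frac{\sqrt{2}}{2}l_0$. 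So the best constant this estimate can yield is $\frac{\sqrt{2}}{l_0^2}$, not $\frac{1}{l_0^2}$. The theorem survives because such trajectories are also forced to make horizontal displacement at least $\frac{\sqrt{2}}{2}l_0$, so their genuine length between consecutive crossings still exceeds $l_0$: the sharp constant lives in an angle-versus-length trade-off that vertical-length (cylinder-height) accounting discards by design. Capturing that trade-off is the actual content of the paper's proof: it subdivides $\alpha$ and $\beta$ at their crossings with the non-sandwiched sides determined by the sector of their direction, proves every resulting segment has length at least $l_0$ (Lemma \ref{Lemme_longueurs}), shows that pairs of segments cross at most once outside two explicit configurations (Lemma \ref{lem:cases_2_intersections}), and absorbs the singular crossing by a case analysis, reaching $\frac{|\alpha\cap\beta|+1}{l(\alpha)l(\beta)}\le\frac{1}{l_0^2}$ together with the equality case. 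So the hard part is not, as your last paragraph suggests, bookkeeping cylinder moduli over infinitely many periodic directions after a dihedral reduction: your estimate is already non-sharp in a single direction, for the extremal curves themselves, and no direction-by-direction control of the widths and heights can close that gap.
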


Using that the volume of a regular $n$-gon of unit side is $\frac{n}{4 \tan \frac{\pi}{n}}$, we get:
\begin{Cor}
For any $n \geq 8$ such that $n \equiv 0 \mod 4$, we have:
\[
\KVol(\XX) = \frac{n}{4 \tan(\frac{\pi}{n})}.
\]
\end{Cor}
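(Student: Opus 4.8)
The plan is to feed Theorem~\ref{theo:KVol_4m} into the definition~\eqref{eq:KVol} of $\KVol$, the only genuine work being to pass from the supremum over \emph{all} piecewise smooth closed curves that appears in~\eqref{eq:KVol} to the supremum over pairs of saddle connections that Theorem~\ref{theo:KVol_4m} controls. First I would record that $\KVol$ is invariant under rescaling the metric, so it suffices to compute it on the copy of $\XX$ whose sides have unit length; with this normalization $l_0 = 1$ and, by the quoted area formula, $\Vol(\XX) = \frac{n}{4\tan(\pi/n)}$.

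The crux is the reduction to saddle connections, and this is precisely where the hypothesis $n \equiv 0 \bmod 4$ enters: since $\XX$ then has a \emph{single} conical singularity, every saddle connection is a closed loop based at that singularity, hence an admissible competitor in~\eqref{eq:KVol} with a well-defined homology class. Given an arbitrary pair of closed curves $\alpha,\beta$, I would replace each by a geodesic representative of its homology class; this leaves $\Int(\alpha,\beta)$ unchanged (it depends only on homology) and does not increase the lengths. On a translation surface such geodesic representatives are concatenations of saddle connections, say $\alpha = \bigcup_i s_i$ and $\beta = \bigcup_j t_j$ with $l(\alpha) = \sum_i l(s_i)$ and $l(\beta) = \sum_j l(t_j)$. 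Bilinearity of the algebraic intersection then gives
\[
\frac{\Int(\alpha,\beta)}{l(\alpha)\,l(\beta)}
\;\le\; \frac{\sum_{i,j} \lvert\Int(s_i,t_j)\rvert}{\sum_{i,j} l(s_i)\,l(t_j)}
\;\le\; \max_{i,j} \frac{\lvert\Int(s_i,t_j)\rvert}{l(s_i)\,l(t_j)},
\]
the last step being the elementary mediant bound (a weighted average of ratios never exceeds the largest ratio). Thus the supremum in~\eqref{eq:KVol} is attained on pairs of saddle connections.

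Applying Theorem~\ref{theo:KVol_4m} to each pair $(s_i,t_j)$ bounds every ratio on the right by $1/l_0^2 = 1$, whence $\sup_{\alpha,\beta}\frac{\Int(\alpha,\beta)}{l(\alpha)\,l(\beta)} \le 1$; conversely, two distinct sides of the $n$-gon are themselves closed saddle connections realizing equality in~\eqref{K4m}, so the supremum equals exactly $1$. Multiplying by $\Vol(\XX) = \frac{n}{4\tan(\pi/n)}$ yields $\KVol(\XX) = \frac{n}{4\tan(\pi/n)}$. I expect the main obstacle to be the reduction step, namely justifying cleanly that the length-minimizing representative of a homology class is a union of saddle connections with additive lengths, and verifying that the single-singularity hypothesis genuinely makes each saddle connection a legitimate closed curve in~\eqref{eq:KVol}; the scale-invariance and the closing arithmetic are routine.
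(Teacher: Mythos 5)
Your proof is correct, and its outer skeleton matches the paper's: normalize to unit side length so $l_0=1$, show the supremum in \eqref{eq:KVol} equals $1/l_0^2$ using Theorem \ref{theo:KVol_4m}, and multiply by $\Vol(\XX)=\frac{n}{4\tan(\pi/n)}$. Where you genuinely diverge is in the reduction from arbitrary closed curves to saddle connections. The paper (Conclusion of Section \ref{sec:4m}) counts \emph{geometric} intersections: it writes $\Int(\eta,\xi)\le\sum_{i,j}|\eta_i\cap\xi_j|+s$, with $s$ the number of singular intersection points, and this forces it to prove and use the strengthened per-pair inequality \eqref{eq:sc_case}, namely $\frac{|\alpha\cap\beta|+1}{l(\alpha)l(\beta)}\le\frac{1}{l_0^2}$, so that the slack $kl$ gained from the $+1$'s absorbs $s$. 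You instead exploit that, for $n\equiv 0\bmod 4$, every saddle connection is itself a cycle, so the \emph{algebraic} intersection form is bilinear on homology, $\Int(\alpha,\beta)=\sum_{i,j}\Int(s_i,t_j)$, with singular intersections automatically accounted for inside each term; then the bare statement of Theorem \ref{theo:KVol_4m} (applied also after reversing one orientation, to bound $|\Int(s_i,t_j)|$) together with the mediant inequality finishes. Your route is cleaner as a derivation of the Corollary from Theorem \ref{theo:KVol_4m} used as a black box; the price is that it is specific to the one-singularity case, whereas the paper's geometric bookkeeping is exactly what survives for $n\equiv 2\bmod 4$ (Theorem \ref{theo:4m+2}), where saddle connections are not closed and term-by-term bilinearity is unavailable. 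The loose end you flag yourself, that a length-minimizing representative may be a regular closed geodesic (a cylinder core) rather than a concatenation of saddle connections, is real but harmless: such a geodesic is homologous to a boundary component of its maximal cylinder, which is a union of saddle connections of the same total length; the paper glosses over the same point, so this is not a gap specific to your argument.
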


Notice that although $\XX$ has minimal KVol in its Teichm\"uller disk, it is not a local minimum for KVol in its stratum $\mathcal{H} (2g-2)$. Indeed, the regular $n$-gon is the polygon with $n$ sides of the same length that have maximal volume. In particular, any other such polygon close to the regular $n$-gon will have a smaller volume, and KVol will still be realized by pairs of sides of the corresponding $n$-gon, hence will be smaller.

\paragraph{The case $n \equiv 2 \mod 4$.}
If $n \equiv 2 \mod 4$, the resulting translation surface $\XX$ has two conical singularities, so that saddle connections may not be closed curves anymore, and simple closed geodesics could be homologous to the union of several non-closed saddle connections in different directions. For this reason, we do not have a closed formula for KVol in the Teichm\"uller disk of the regular $n$-gon. However, we show:

\begin{Theo}\label{theo:main2}
For $n \geq 10$ with $n \equiv 2 \mod 4$, KVol is bounded on the Teichm\"uller disk of the regular $n$-gon.
\end{Theo}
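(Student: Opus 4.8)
The plan is to exploit the $\Gamma_n$–invariance of $\KVol$ to pass to the Teichm\"uller curve $\HH/\Gamma_n$, which has finite hyperbolic area and only finitely many cusps, and then to reduce the boundedness statement to a local estimate at each cusp. Since any element of the Veech group induces an affine equivalence preserving $\KVol$, the function descends to $\HH/\Gamma_n$, and it is enough to bound it on the fundamental domain $\TT$. First I would invoke the continuity of $\KVol$ on the Teichm\"uller disk (as in the case $n\equiv 0\bmod 4$) to bound $\KVol$ on the compact part of $\TT$; the whole difficulty then concentrates at the finitely many cusps of $\TT$, which correspond to the completely periodic directions of $\XX$.

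At a given cusp I would fix the associated periodic direction and the resulting cylinder decomposition $C_1,\dots,C_k$, with circumferences $c_i$, heights $h_i$, and areas $a_i=c_ih_i$ summing to $\Vol(X)$. Approaching the cusp is modelled by the Teichm\"uller flow that shrinks the core curves $\sigma_i$ (so $\sigma_i$ becomes the systole) and stretches the transverse direction. The key observation is that $\Int$ is a topological invariant: for any closed curve $\beta$ one has $|\Int(\sigma_i,\beta)|\le m_i$, where $m_i$ counts the transverse crossings of $C_i$ by $\beta$, while each such crossing forces $l(\beta)\ge m_i h_i$. Since $l(\sigma_i)=c_i$, this yields
\begin{equation*}
\Vol(X)\cdot\frac{\Int(\sigma_i,\beta)}{l(\sigma_i)\,l(\beta)}\;\le\;\frac{\Vol(X)}{a_i},
\end{equation*}
and the right-hand side stays bounded along the cusp because the combinatorial type of the decomposition, hence the ratios $a_i/\Vol(X)$, is preserved.

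It remains to guarantee that near the cusp $\KVol$ is genuinely governed by pairs involving such a short core curve. Only the curves in the periodic direction become short under the flow, every other closed geodesic being stretched, so I would argue, as in the one-singularity case, that the supremum defining $\KVol$ is realised by a pair of short geodesics, which near the cusp must lie in (or close to) the periodic direction. The main obstacle, and the reason $n\equiv 2\bmod 4$ is harder than $n\equiv 0\bmod 4$, is precisely that $\XX$ now carries two distinct singularities $P$ and $Q$: a saddle connection from $P$ to $Q$ is not closed, and the relevant short closed geodesics may be concatenations such as $P\to Q\to P$, or core curves bounded by $P$–$Q$ saddle connections, possibly built from saddle connections in several directions. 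Controlling the algebraic intersection of these composite curves, and showing that a short such curve still spends a definite proportion of its length crossing the thin cylinders rather than running along their waists, is the delicate point.

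The strategy to overcome this is to observe that a composite closed geodesic still meets each cylinder $C_i$ in a well-defined number of transverse crossings, so the crossing-versus-length estimate of the previous paragraph applies verbatim: its intersection with $\sigma_i$ is again bounded by the crossing count, and each crossing still costs length at least $h_i$; the fact that the curve must cross some $C_i$ at all follows from its homology class pairing nontrivially with one of the $\sigma_i$. Combining the bound $\Vol(X)/a_i$ at each of the finitely many cusps with the compactness bound on the remainder of $\TT$ then gives a uniform bound for $\KVol$ on the whole Teichm\"uller disk. Because only boundedness is sought, these crude estimates suffice; the two-singularity phenomenon is exactly what prevents them from being sharp enough to produce the closed formula available when $n\equiv 0\bmod 4$.
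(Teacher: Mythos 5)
Your cusp estimate only ever bounds $\Int(\sigma_i,\beta)$ where one of the two curves is a core curve $\sigma_i$, and this is exactly where the argument breaks down. The dangerous pairs near a cusp are two closed curves $\eta,\xi$ which are \emph{both} unions of saddle connections parallel to the cusp direction; for $n\equiv 2\bmod 4$ such curves exist and are not core curves (e.g.\ concatenations of two horizontal $P\to Q$ saddle connections in the staircase model). Such a curve crosses no cylinder $C_i$ transversally, so your crossing count $m_i$ is zero, and, contrary to the claim in your last paragraph, its homology class pairs trivially with every $\sigma_i$, so it need not ``cross some $C_i$ at all''. Meanwhile the Teichm\"uller flow into the cusp shrinks both $l(\eta)$ and $l(\xi)$ to zero while $\Int(\eta,\xi)$ is unchanged, and because there are two cone points the algebraic intersection of two such parallel curves can a priori be nonzero, coming entirely from crossings at the singularities. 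One intersecting pair of this kind forces $\KVol\to\infty$ along the cusp, so your reduction ``the supremum is governed by pairs involving a short core curve'' is unjustified. A litmus test: your argument uses nothing specific to the regular $n$-gon, so it would prove boundedness for \emph{every} Veech surface with several singularities, which is false --- the paper points to square-tiled surfaces with multiple singularities whose Teichm\"uller disks have unbounded KVol.

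What is missing is precisely the content of the paper's proof. The paper first establishes the criterion of Theorem~\ref{theo:boundedness_criterion_0}: $\KVol$ is bounded on the Teichm\"uller disk of a Veech surface if and only if there is \emph{no} intersecting pair of closed curves each of which is a union of parallel saddle connections; the ``bounded'' direction of that criterion is essentially your crossing-versus-length estimate, made uniform over the disk by the no-small-triangle constant of Vorobets and Smillie--Weiss. Then comes the step your proposal skips entirely: a surface-specific verification, in the staircase model, that such parallel pairs never intersect on $\XX$. Namely, the vertical (closed) saddle connections satisfy $\Int(\beta_i,\beta_j)=0$, and closed curves formed by two horizontal $P\to Q$ saddle connections also have pairwise zero algebraic intersection; both facts are proved by writing these curves, in homology, as signed sums of core curves of cylinders, which are pairwise disjoint. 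Without this homological computation the boundedness conclusion cannot be reached, and with it the criterion immediately yields Theorem~\ref{theo:main2}.
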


An explicit bound is given in Corollary \ref{cor:boundedness}. This result contrasts with examples of squared tiled translation surfaces with multiple singularities having unbounded KVol on their Teichm\"uller disk. It is the first example of a translation surface with more than one singularity where we can show boundedness on the Teichm\"uller disk. In fact, we give an explicit boundedness criterion in the Teichm\"uller disk of a Veech surface:

\begin{Theo}\label{theo:boundedness_criterion_0}
$\KVol$ is bounded on the Teichm\"uller disk of a Veech surface $X$ if and only if there are no intersecting closed curves $\eta$ and $\xi$ on $X$ such that $\eta = \eta_1 \cup \cdots \cup \eta_k$ and $\xi = \xi_1 \cup \cdots \cup \xi_l$ are unions of parallel saddle connections (that is all saddle connections $\eta_1, \dots , \eta_k,\xi_1, \dots, \xi_l$ have the same direction).
\end{Theo}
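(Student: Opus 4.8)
The plan is to prove the boundedness criterion of Theorem \ref{theo:boundedness_criterion_0} by establishing the contrapositive in each direction, relating the existence of the forbidden configuration to the behaviour of $\KVol$ under the Teichm\"uller geodesic flow (equivalently, under the action of the diagonal subgroup $g_t = \mathrm{diag}(e^t, e^{-t})$). The underlying mechanism is the same as in the even-$n$ analysis: if $\eta = \eta_1 \cup \cdots \cup \eta_k$ and $\xi = \xi_1 \cup \cdots \cup \xi_l$ are homologically nontrivial closed curves built from saddle connections all lying in a common direction $\theta$, then after rotating so that $\theta$ is horizontal, flowing by $g_t$ contracts every horizontal length by $e^{-t}$ while the volume stays fixed at $\Vol(X)$. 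The algebraic intersection number $\Int(\eta, \xi)$ is a topological invariant and does not change along the flow. Hence
\begin{equation}\label{eq:blowup}
\KVol(g_t \cdot X) \;\geq\; \Vol(X) \cdot \frac{\Int(\eta,\xi)}{l_{g_t}(\eta)\, l_{g_t}(\xi)} \;=\; e^{2t}\,\Vol(X)\cdot\frac{\Int(\eta,\xi)}{l_0(\eta)\, l_0(\xi)} \xrightarrow[t\to\infty]{} \infty,
\end{equation}
so $\KVol$ is unbounded on the Teichm\"uller disk. This gives the ``only if'' direction: the presence of intersecting parallel-saddle-connection curves forces unboundedness.

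For the converse (``if'') direction, I would assume that no such pair of intersecting curves exists and show boundedness. The key point is that for a Veech surface the horocycle/geodesic flow has a well-understood orbit structure: the Teichm\"uller disk is $\HH/\GA$ for the Veech group $\GA$, and every surface in the disk is $g_t r_\theta$-equivalent to a point in a fixed fundamental domain together with a direction. Since $\KVol$ is $SL_2(\RR)$-invariantly defined up to the volume normalization, the only way it can blow up is along a sequence $X_j = M_j \cdot X$ escaping to a cusp of $\HH/\GA$. The standard picture of Veech groups is that cusps correspond exactly to the finitely many cylinder directions (the periodic directions), and approaching a cusp is realized by $g_{t_j}$ applied after rotating a fixed periodic direction to the horizontal. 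In such a degeneration the supremum in \eqref{eq:KVol} is asymptotically governed by curves that become short, and these are precisely unions of saddle connections in the cusp direction. I would show that if $\KVol(X_j)\to\infty$, then one can extract (using the compactness of the unit cotangent directions and the fact that only finitely many homology classes can realize bounded-length short curves) two sequences of curves whose lengths shrink and whose intersection stays bounded below, and that in the limit these converge to closed curves $\eta, \xi$ each a union of parallel saddle connections in a single periodic direction, with $\Int(\eta,\xi) > 0$ --- contradicting the hypothesis.

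The main obstacle will be the extraction and limiting argument in the converse direction: making precise that a blow-up of $\KVol$ can only come from \emph{saddle-connection} curves in a \emph{single} direction, and that the intersecting pair survives in the limit as genuine closed curves rather than degenerating. Concretely, one must rule out the possibility that $\KVol$ stays bounded even though $\eta$ and $\xi$ are only ``almost'' parallel, or that the minimizing curves wander across several cusps. I expect to handle this using the Veech dichotomy (every direction is either completely periodic or uniquely ergodic) to reduce to periodic directions, together with the observation that along $g_t$ in a fixed periodic direction the contribution $\frac{\Int}{l\cdot l}$ is maximized by genuinely parallel configurations — any transverse component of a short curve contributes a length that grows like $e^{t}$ and thus suppresses the ratio. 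A secondary technical point is uniformity: one must bound $\KVol$ uniformly away from the cusps, which follows from the continuity of $\KVol$ (via the Massart--M\"utzel finiteness result \cite{MM}) on the compact part of $\HH/\GA$, so that any unboundedness is necessarily a cusp phenomenon and hence, by the above, tied to a parallel intersecting configuration.
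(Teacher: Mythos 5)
Your ``only if'' direction is correct and coincides with the paper's: flow by the Teichm\"uller geodesic flow contracting the common direction of $\eta$ and $\xi$; lengths go to zero, $\Int(\eta,\xi)$ is unchanged, so $\KVol$ blows up. The problem is the converse. What you give there is not a proof but a plan whose central step is deferred: the ``extraction and limiting argument'' that you yourself flag as the main obstacle is, in substance, the whole theorem. You reduce boundedness to a cusp analysis (which already presupposes continuity of $\KVol$ on $\HH/\Gamma_n$ -- the cited result of Massart--M\"utzel gives finiteness at each surface, not continuity, so even the claim that blow-up is ``necessarily a cusp phenomenon'' is unjustified as written), and then you assert that a blow-up sequence yields, in the limit, a pair of intersecting closed curves that are unions of parallel saddle connections. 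But the supremum in \eqref{eq:KVol} ranges over \emph{all} pairs of closed curves; near a cusp the nearly-extremal pairs may involve curves wrapping many times around thin cylinders, or one short and one long curve, and their pullbacks to the base surface need not converge to anything. The heuristic ``any transverse component contributes a length that grows like $e^{t}$ and thus suppresses the ratio'' is exactly the quantitative statement that has to be proved, uniformly in the pair of curves and in the point of the disk, and your sketch contains no mechanism for it.

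The paper's proof supplies precisely this missing uniform mechanism, and avoids all limiting arguments. The key input is the no-small-triangle condition for Veech surfaces (\cite{Vorobets}, see also \cite{SW}): there is a constant $A>0$, uniform over the whole Teichm\"uller disk, such that $|\alpha \wedge \beta| \geq A$ for every pair of non-parallel saddle connections. Combining this with a cylinder decomposition in the direction of $\alpha$ (each non-singular intersection of $\beta$ with $\alpha$ costs transverse length at least the minimal cylinder height $h$, and $l(\alpha)h \geq A$), one gets the pointwise bound
\begin{equation*}
\frac{|\alpha \cap \beta| + 1}{l(\alpha)\,l(\beta)} \;\leq\; \frac{1}{l(\alpha)\,h} \;\leq\; \frac{1}{A}
\end{equation*}
for all non-parallel saddle connections. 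One then decomposes arbitrary simple closed curves into saddle connections, uses the hypothesis to dispose of the all-parallel case (such curves do not intersect), and observes that the singular intersections, at most $\min(k,l)$ of them, are absorbed by the at least $\min(k,l)$ non-parallel pairs, yielding $\Int(\eta,\xi) \leq \frac{1}{A}\, l(\eta)\, l(\xi)$ and hence $\KVol \leq 1/A$ on the entire disk. If you want to complete your own route, you would at minimum need to prove continuity of $\KVol$ on the disk and then establish a uniform estimate near each cusp that plays the role of the bound above -- at which point you will have essentially reproduced the Vorobets/Smillie--Weiss argument.
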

This criterion generalises Proposition 3.2 of \cite{BLM22} to the case of translation surfaces with several singularities.\newline

Finally, concerning the regular $n$-gon itself, the proof of Theorem \ref{theo:KVol_4m} extends and gives:
\begin{Theo}\label{theo:4m+2}
Let $n \geq 10$ with $n \equiv 2 \mod 4$. Let $l_0$ be the length of the side of the $n$-gon. For any pair of closed curves $\alpha, \beta$ on $\XX$, we have 
\begin{equation*}\label{K4m+2}
\frac{Int(\alpha,\beta)}{l(\alpha)l(\beta)} < \frac{1}{l_0^2}.
\end{equation*}
\end{Theo}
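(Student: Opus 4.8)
The plan is to adapt the proof of Theorem~\ref{theo:KVol_4m} to the setting $n \equiv 2 \bmod 4$, keeping track of the complication that $\XX$ now has two singularities. The overall strategy is to reduce the problem to saddle connections and then to a finite combinatorial inequality. First I would recall that, by the homological/geometric reduction used in the proof of Theorem~\ref{theo:KVol_4m}, the quantity $\frac{\mathrm{Int}(\alpha,\beta)}{l(\alpha)l(\beta)}$ is maximized by pairs of saddle connections (or, in the two-singularity case, by closed curves built from saddle connections): on a Veech surface every direction is either completely periodic or the geodesics are dense, and a standard argument lets one replace $\alpha$ and $\beta$ by saddle connections in their own directions without decreasing the ratio while controlling the length. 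The point is that algebraic intersection only depends on the homology classes, whereas the length is minimized within the homology class by geodesic representatives, so it suffices to bound the ratio over (unions of) saddle connections.

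Next I would set up the explicit geometry of the regular $n$-gon for $n \equiv 2 \bmod 4$. The key estimate is that any saddle connection on $\XX$ has length at least $l_0$, the side length, with equality exactly for the sides themselves; this is the systolic statement in the footnote to Theorem~\ref{theo:KVol_4m} and should carry over verbatim since the systole is still realized by a side. Thus for the right-hand side it is enough to show $\mathrm{Int}(\alpha,\beta) < \frac{l(\alpha)l(\beta)}{l_0^2}$, i.e. to bound the algebraic intersection number from above by a product of combinatorial length factors. Following the method of Theorem~\ref{theo:KVol_4m}, I would decompose $\alpha$ and $\beta$ along the polygon's diagonals and count signed intersections, showing that each saddle connection of length $\ell$ contributes at most $\ell / l_0$ (up to the appropriate normalization) to the intersection pairing, so that $\mathrm{Int}(\alpha,\beta) \leq l(\alpha)l(\beta)/l_0^2$ with the correct constant.

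The decisive difference from the $n \equiv 0 \bmod 4$ case — and the place where I expect the strict inequality, rather than equality, to come from — is that the two sides of the $n$-gon now join the \emph{two distinct} singularities rather than forming closed loops at a single singularity. Consequently a pair of sides can no longer be realized as a pair of closed curves meeting with intersection number attaining the bound: to close up a curve one must concatenate a side with at least one further saddle connection, strictly increasing the length while the intersection number fails to grow proportionally. Concretely, I would argue that for any genuinely closed $\alpha$, the homological contribution of each constituent saddle connection is diluted by the need to return to the starting singularity, so the extremal configuration of Theorem~\ref{theo:KVol_4m} is never achieved and the inequality is strict. Making this dilution quantitative is the main obstacle: one must rule out, by a careful case analysis over the cylinder/saddle-connection combinatorics of the $n$-gon, the possibility that some clever closed curve recovers the ratio $1/l_0^2$ in the limit. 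I would handle this by showing that the set of directions realizing near-optimal intersection is discrete and that in each such direction the shortest closed curves carrying the relevant homology strictly exceed $l_0$ in the relevant normalized sense, which yields the strict inequality uniformly.
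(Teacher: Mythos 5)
Your overall skeleton does match the paper's: reduce closed curves to homologous unions of saddle connections, prove a subdivision-based bound at the level of saddle connections, and extract strictness from the fact that for $n \equiv 2 \bmod 4$ the sides of the $n$-gon join the two \emph{distinct} singularities and hence are not closed curves. Your third paragraph locates the geometric source of strictness correctly. But there is a genuine gap at the decisive step, in two places. First, the saddle-connection inequality you propose to prove, $\Int(\alpha,\beta) \leq l(\alpha)l(\beta)/l_0^2$, is too weak to be summed. What the paper actually proves (Equation \eqref{eq:sc_case}) is the finer statement that for distinct saddle connections
\[
|\alpha \cap \beta| + 1 \;\leq\; \frac{l(\alpha)l(\beta)}{l_0^2},
\]
where $|\alpha\cap\beta|$ counts only \emph{non-singular} intersections, with equality if and only if $\alpha$ and $\beta$ are both sides of the $n$-gon. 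The ``$+1$'' per pair is essential: for closed curves $\eta = \eta_1 \cup \cdots \cup \eta_k$ and $\xi = \xi_1 \cup \cdots \cup \xi_l$ it yields a total slack of $kl$, namely
\[
\Int(\eta,\xi) \;\leq\; \Bigl(\sum_{i,j} |\eta_i \cap \xi_j|\Bigr) + s \;\leq\; \frac{l(\eta)l(\xi)}{l_0^2} + s - kl,
\]
where $s$ is the number of singular intersections, against which the (at most two) singular intersections can be absorbed. With your version of the bound one only gets $\Int(\eta,\xi) \leq l(\eta)l(\xi)/l_0^2 + s$, which does not prove the theorem.

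Second, your proposed mechanism for strictness --- showing that ``the set of directions realizing near-optimal intersection is discrete'' together with a per-direction systolic estimate, claimed to give the inequality ``uniformly'' --- is not a workable argument as stated: it is undeveloped, a uniform bound is strictly stronger than the pointwise statement of Theorem \ref{theo:4m+2} (the paper explicitly leaves open whether the supremum is attained, and only conjectures that it equals $\frac{1}{2l_0^2}$), and no discreteness is needed. The paper's strictness argument is elementary bookkeeping on the displayed inequality above, using the equality characterization: equality forces every $\eta_i$ and $\xi_j$ to be a side of the $n$-gon. If $s \leq 1$, then not all pieces can be sides (a closed curve made of sides must pass through \emph{both} singularities, which would force $s=2$), so at least one pair satisfies the strict per-pair inequality while $s - kl \leq 0$, giving $\Int(\eta,\xi) < l(\eta)l(\xi)/l_0^2$. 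If $s = 2$, then both curves pass through both singularities, hence $k, l \geq 2$, so $kl \geq 4$ and $s - kl \leq -2 < 0$, again giving strict inequality. This counting of singular intersections against the number $kl$ of pairs of constituent saddle connections is the idea missing from your proposal.
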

Notice that in this case, we show that the inequality is strict. In fact, it is unclear whether the best possible ratio is achieved and, if so, by which closed curves. Our guess would be the following:

\begin{Conj}
Let $n \geq 10$ with $n \equiv 2 \mod 4$. For any pair of closed curves $\alpha, \beta$ on $\XX$, we have
\begin{equation*}\label{K4m+2}
\frac{Int(\alpha,\beta)}{l(\alpha)l(\beta)} \leq \frac{1}{2 l_0^2},
\end{equation*}
where $l_0$ is the length of the side of the $n$-gon. Moreover, equality is achieved if and only if $\alpha$ and $\beta$ are twice intersecting pairs of sides of the $n$-gon, as in Figure \ref{fig:example_10_gon} for the decagon.
In particular:
\[
\KVol(\XX) = \frac{n}{8 \tan(\frac{\pi}{n})}
\]
\end{Conj}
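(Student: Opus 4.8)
The plan is to reduce the statement to a bound on pairs of saddle connections and then run the argument of Theorem~\ref{theo:KVol_4m} almost verbatim, the only genuinely new input being the strictness. The starting point is that any two saddle connections $\sigma,\tau$ are straight segments in fixed directions, so at every transverse crossing the local sign equals $\mathrm{sign}\!\left(\det(\mathrm{hol}(\sigma),\mathrm{hol}(\tau))\right)$ and is the same at all crossings; hence $|\mathrm{Int}(\sigma,\tau)|$ is exactly the number of crossings of the two segments. This observation is insensitive to the number of cone points, so the counting argument behind Theorem~\ref{theo:KVol_4m} carries over to the case $n\equiv 2\bmod 4$ and yields, for all saddle connections $\sigma,\tau$ on $\XX$,
\begin{equation*}
|\mathrm{Int}(\sigma,\tau)|\ \le\ \frac{l(\sigma)\,l(\tau)}{l_0^{2}},
\end{equation*}
with equality if and only if $\sigma$ and $\tau$ are distinct sides of the $n$-gon (in which case $\mathrm{Int}(\sigma,\tau)=\pm 1$).

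Next I would pass to arbitrary closed curves. Taking geodesic representatives, I write $\alpha=\alpha_1\cup\cdots\cup\alpha_k$ and $\beta=\beta_1\cup\cdots\cup\beta_l$ as concatenations of saddle connections, so that $l(\alpha)=\sum_i l(\alpha_i)$, $l(\beta)=\sum_j l(\beta_j)$, and, by bilinearity of the algebraic intersection, $\mathrm{Int}(\alpha,\beta)=\sum_{i,j}\mathrm{Int}(\alpha_i,\beta_j)$. Combining the triangle inequality with the bound above gives
\begin{equation*}
|\mathrm{Int}(\alpha,\beta)|\ \le\ \sum_{i,j}|\mathrm{Int}(\alpha_i,\beta_j)|\ \le\ \frac{1}{l_0^{2}}\sum_{i,j} l(\alpha_i)\,l(\beta_j)\ =\ \frac{l(\alpha)\,l(\beta)}{l_0^{2}},
\end{equation*}
which already proves the non-strict inequality; it then remains only to rule out equality.

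The strictness is where I expect the real difficulty. Equality above forces equality in both inequalities at once. Equality in the second one requires, by the characterisation recalled in the first paragraph, that every $\alpha_i$ and every $\beta_j$ be a side of the $n$-gon, with no pair $(\alpha_i,\beta_j)$ of vanishing intersection, so that $|\mathrm{Int}(\alpha_i,\beta_j)|=1$ for all $i,j$; equality in the triangle inequality then requires all the signs $\mathrm{sign}\!\left(\det(\mathrm{hol}(\alpha_i),\mathrm{hol}(\beta_j))\right)$ to coincide. Since for $n\equiv 2\bmod 4$ a side joins the two \emph{distinct} singularities, a closed curve built out of sides must use $k\ge 2$ of them, alternating between the singularities, and likewise $l\ge 2$; so $\alpha$ and $\beta$ genuinely involve sides pointing in several directions. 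The equality case thus amounts to the assertion that $|\mathrm{Int}(\alpha,\beta)|=kl$, i.e. that every side of $\alpha$ crosses every side of $\beta$ once and that all $kl$ crossings share a common sign.

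The main obstacle is to show that this saturating configuration cannot occur for two honest closed curves, equivalently that $|\mathrm{Int}(\alpha,\beta)|<kl$. Purely in terms of holonomy directions the common sign is \emph{not} excluded — any directions lying in suitable opposite sectors would align all the determinants — so the argument must genuinely use the discrete structure of $\XX$: the distribution of the $n/2$ sides between the two singularities and the orientations they are forced to inherit when concatenated into a closed loop that alternates between the two cone points. Concretely, I would study the $k\times l$ matrix of crossing signs $\mathrm{sign}\!\left(\det(\mathrm{hol}(\alpha_i),\mathrm{hol}(\beta_j))\right)$ and show that the closure constraint forces at least one sign change, so that the matrix is never constant. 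This is exactly the step that has no counterpart when $n\equiv 0\bmod 4$, where a single side already closes up and equality is attained; it is precisely the presence of two singularities that both makes the case harder and is responsible for the strict inequality. Pushed further to identify the best attainable aligned sub-pattern, the same analysis should single out the twice-intersecting pairs of sides and yield the sharp constant $\tfrac{1}{2l_0^{2}}$ predicted by the Conjecture.
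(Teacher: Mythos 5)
There is a genuine gap, and in fact two distinct ones. First, note that the statement you are proving is stated as a \emph{Conjecture} in the paper: the paper itself only establishes the weaker strict bound $\frac{\mathrm{Int}(\alpha,\beta)}{l(\alpha)l(\beta)} < \frac{1}{l_0^2}$ (Theorem \ref{theo:4m+2}) and explicitly leaves the constant $\frac{1}{2l_0^2}$ open. Your proposal does not close this gap either: everything you actually argue (decomposition into saddle connections, the per-pair bound, the triangle inequality) only yields the non-strict $\frac{1}{l_0^2}$ bound, i.e.\ a sketch of Theorem \ref{theo:4m+2}, and the passage to $\frac{1}{2l_0^2}$ is deferred entirely to the final sentences (``I would study\dots'', ``the same analysis should single out\dots''), which contain no argument. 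Quantitatively, your sign-matrix idea, even if carried out, would only show that at least one of the $kl$ signs flips, improving the intersection bound from $kl$ to $kl-1$; the conjecture requires a bound of roughly $kl/2$ (in the extremal configuration of Figure \ref{fig:example_10_gon} one has $k=l=2$ and $\mathrm{Int}=2$), which is a factor-of-two improvement that no single sign change can produce.

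Second, and more structurally, your framework misses where the intersections of the extremal configurations actually live. For $n \equiv 2 \bmod 4$ each side of the $n$-gon joins the two \emph{distinct} singularities, so: (i) an individual saddle connection $\alpha_i$ is not a closed curve, hence $\mathrm{Int}(\alpha_i,\beta_j)$ is not defined homologically and the ``bilinearity'' identity $\mathrm{Int}(\alpha,\beta)=\sum_{i,j}\mathrm{Int}(\alpha_i,\beta_j)$ is not available as written --- this is why the paper instead bounds $\mathrm{Int}(\eta,\xi) \leq \bigl(\sum_{i,j}|\eta_i \cap \xi_j|\bigr) + s$, where $|\cdot \cap \cdot|$ counts only non-singular crossings and $s \leq 2$ is a correction for singular intersections; and (ii) two distinct sides of the $n$-gon never cross in their interiors at all --- they meet only at the cone points --- so in the conjectured equality case \emph{both} intersections are singular. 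This is exactly the situation your first paragraph excludes: the claim that ``the local sign equals $\mathrm{sign}\!\left(\det(\mathrm{hol}(\sigma),\mathrm{hol}(\tau))\right)$ and $|\mathrm{Int}|$ is the number of crossings'' is valid only for transverse interior crossings of straight segments; at a cone point of angle $2\pi k$ the sign of the intersection depends on which prongs the four arcs occupy in the cyclic order around the singularity, not on holonomy alone. Consequently your equality analysis, which looks for a constant matrix of determinant signs among transverse crossings, can never detect (let alone characterize) the configurations of Figure \ref{fig:example_10_gon}, and the approach as formulated cannot yield either the constant $\frac{1}{2l_0^2}$ or the equality statement.
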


\begin{figure}
\center
\definecolor{ccqqqq}{rgb}{0.8,0,0}
\definecolor{qqwuqq}{rgb}{0,0.39215686274509803,0}
\definecolor{uuuuuu}{rgb}{0.26666666666666666,0.26666666666666666,0.26666666666666666}
\begin{tikzpicture}[line cap=round,line join=round,>=triangle 45,x=1cm,y=1cm,scale=1.2]
\clip(-2.06666666666667,-0.1244444444444426) rectangle (3.737777777777777,3.248888888888885);
\draw [line width=1pt,color=qqwuqq] (0,0)-- (1,0);
\draw [line width=1pt,color=ccqqqq] (1,0)-- (1.809016994374947,0.5877852522924729);
\draw [line width=1pt] (1.809016994374947,0.5877852522924729)-- (2.118033988749895,1.5388417685876261);
\draw [line width=1pt] (2.118033988749895,1.5388417685876261)-- (1.8090169943749475,2.4898982848827793);
\draw [line width=1pt] (1.8090169943749475,2.4898982848827793)-- (1,3.0776835371752527);
\draw [line width=1pt] (1,3.0776835371752527)-- (0,3.0776835371752527);
\draw [line width=1pt] (0,3.0776835371752527)-- (-0.809016994374947,2.4898982848827798);
\draw [line width=1pt,color=qqwuqq] (-0.809016994374947,2.4898982848827798)-- (-1.1180339887498945,1.5388417685876268);
\draw [line width=1pt] (-1.1180339887498945,1.5388417685876268)-- (-0.8090169943749475,0.5877852522924734);
\draw [line width=1pt,color=ccqqqq] (-0.8090169943749475,0.5877852522924734)-- (0,0);
\draw [color=qqwuqq](0.25,0.4) node[anchor=north west] {$\alpha$};
\draw [color=ccqqqq](-0.9,0.3) node[anchor=north west] {$\beta$};
\draw [color=ccqqqq](1.4,0.3) node[anchor=north west] {$\beta$};
\draw [color=qqwuqq](-1.4,2.25) node[anchor=north west] {$\alpha$};
\begin{scriptsize}
\draw [fill=uuuuuu] (0,0) circle (2.5pt);
\draw [color=black] (1,0)-- ++(-2.5pt,-2.5pt) -- ++(5pt,5pt) ++(-5pt,0) -- ++(5pt,-5pt);
\draw [fill=uuuuuu] (1.809016994374947,0.5877852522924729) circle (2.5pt);
\draw [color=black] (2.118033988749895,1.5388417685876261)-- ++(-2.5pt,-2.5pt) -- ++(5pt,5pt) ++(-5pt,0) -- ++(5pt,-5pt);
\draw [fill=uuuuuu] (1.8090169943749475,2.4898982848827793) circle (2.5pt);
\draw [color=black] (1,3.0776835371752527)-- ++(-2.5pt,-2.5pt) -- ++(5pt,5pt) ++(-5pt,0) -- ++(5pt,-5pt);
\draw [fill=uuuuuu] (0,3.0776835371752527) circle (2.5pt);
\draw [color=black] (-0.809016994374947,2.4898982848827798)-- ++(-2.5pt,-2.5pt) -- ++(5pt,5pt) ++(-5pt,0) -- ++(5pt,-5pt);
\draw [fill=uuuuuu] (-1.1180339887498945,1.5388417685876268) circle (2.5pt);
\draw [color=black] (-0.8090169943749475,0.5877852522924734)-- ++(-2.5pt,-2.5pt) -- ++(5pt,5pt) ++(-5pt,0) -- ++(5pt,-5pt);
\end{scriptsize}
\end{tikzpicture}
\caption{In this example, the curves $\alpha$ and $\beta$ (oriented such that the resulting curve has a well defined orientation) intersect twice.}
\label{fig:example_10_gon}
\end{figure}
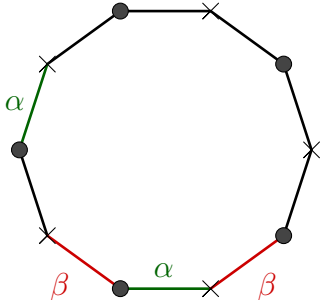

\paragraph{Strategy of proof.}
Although stated in the reversed order, the proof of Theorem \ref{theo:main} rely on Theorem \ref{theo:KVol_4m}. The proof of both Theorem \ref{theo:KVol_4m} and Theorem \ref{theo:4m+2} uses a subdivision method developped in \cite{BLM22} in the case of the double $n$-gon for odd $n$, which we extend to the case of the regular $n$-gons for even $n$: given two saddle connections, we can estimate simulteneously their lengths and their intersection by subdivising each saddle connection into smaller well-chosen segments. \newline

Then, we prove Theorem \ref{theo:main} by studying how the ratio $\frac{\Int(\alpha,\beta)}{l(\alpha) l(\beta)}$ varies as the surface $X$ and the saddle connections $\alpha$ and $\beta$  moves under the action of a matrix of $SL_2(\RR)$. The proof of both Theorem \ref{theo:main} and Theorem \ref{theo:main2} uses the fact that there are only two cylinder decompositions up to the action of $SL_2(\RR)$ on each surface of the Teichm\"uller disk of the regular $n$-gon. 

Further, although Theorem \ref{theo:main} is similar to Theorem 1.1 of \cite{BLM22} stated for the double $n$-gon when $n$ is odd, the main difference is that the maximum of KVol on the Teichm\"uller disk of the regular $n$-gon is achieved along an infinite set of geodesics instead of a single geodesic for the Teichm\"uller disk of the double regular $n$-gon. This has to do with the fact that the staircase model associated with the double regular $n$-gon has a pair of intersecting systoles, giving a big KVol, while there is only one systole for the staircase model associated with the regular $n$-gon. In particular, the supremum in the definition of KVol on the staircase model associated with the regular $n$-gon ($n \geq 8$, $n \equiv 0 \mod 4$) is achieved when $\alpha$ and $\beta$ are respectively the systole and the second shortest closed curve (which is perpendicular to the systole, see Figure \ref{staircase_model_0}), but it is also realised as the limit when $k$ goes to infinity of the ratio $\frac{\Int(\alpha,\beta_k)}{l(\alpha)l(\beta_k)}$, where $\alpha$ is the systole and $\beta_k$ is a saddle connection winding $k$ times around the smallest vertical cylinder of $\SS$ (and hence intersecting $k+1$ times $\alpha$, counting one singular intersection).

\paragraph{Organization of the paper.} We recall in Section \ref{sec:background} useful results about translation surfaces and their Veech groups, and we describe the staircase model associated with the regular $n$-gon. In Section \ref{sec:4m}, we compute explicitly KVol on the regular $n$-gon using elementary geometry, showing both Theorem \ref{theo:KVol_4m} and Theorem \ref{theo:4m+2}. Next, we provide in Section \ref{sec:directions} 
key estimates that allows to understand the function KVol on the Teichm\"uller disk. Using the knowledge of KVol on the regular $n$-gon ($n \equiv 0 \mod 4$), we prove Theorem \ref{theo:main} in Section \ref{sec:extension_teichmuller}. Finally, we show the boundedness criterion for KVol on Teichm\"uller disks (Theorem \ref{theo:boundedness_criterion_0}) in Section \ref{sec:4m+2} and we apply this criterion to the regular $n$-gon ($n \equiv 2 \mod 4$) in order to deduce Theorem \ref{theo:main2}.

\paragraph{Ackowledgements.}
I would like to thank Erwan Lanneau and Daniel Massart for their constant support and for enlightening discussions, as well as helpful comments on preliminary versions of this document.

\section{Background}\label{sec:background}
\subsection{Translation surfaces and their Veech groups}
We start with a quick review of useful notions. We encourage the reader to check out the surveys \cite{survey_Zorich}, \cite{survey_wright} and \cite{survey_massart} for a general introduction to translation surfaces.

A \emph{translation surface} $(X,\omega)$ is a real compact genus $g$ surface $X$ with an atlas $\omega$ such that all transition functions are translations except on a finite set of singularities $\Sigma$, along with a distinguished direction. In fact, it can also be seen as a surface obtained from a finite collection of polygons embedded in $\CC$ by gluing pairs of parallel opposite sides by translation. The resulting surface has a flat metric and a finite number of conical singularities. With this description, the moduli space of translation surfaces can be thought of as the space of all translation surfaces up to cut and paste.

The action of $GL_2^+(\RR)$ on polygons induces an action on the moduli space of translation surfaces. The orbit of a given translation surface is called its \emph{Teichm\"uller disk} and its stabilizer is called the \emph{Veech group} and is often denoted $SL(X)$. W.A.~Veech showed in \cite{Veech} that the latter are discrete subgroups of $\SL$. In particular, the Teichm\"uller disk of a translation surface can be identified with $\HH / SL(X)$.

\subsection{The regular $n$-gon and its staircase model}
Given $n \geq 4$ even, one can contruct a translation surface by identifying parallel opposite sides of a regular $n$-gon. If $n \equiv 0 \mod 4$ (and $n \geq 8$), the resulting surface has a single singularity, and in particular, every edge corresponds to a closed curve on the surface. However, if $n \equiv 2$ mod $4$ (and $n \geq 10$), the resulting surface has two singularities, so that sides are no longer closed curves.

As described in \cite{Monteil} and depicted in Figures \ref{cut_and_paste} and \ref{staircase_model_0}, the regular $n$-gon $\XX$ has a staircase shaped model $\SS$ in its Teichm\"uller disk.

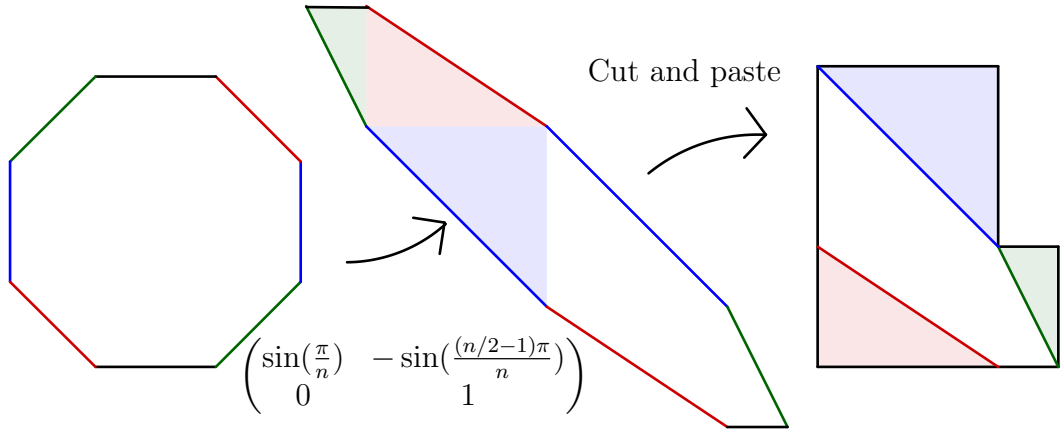
\begin{figure}[h]
\center
\definecolor{ccqqqq}{rgb}{0.8,0,0}
\definecolor{qqqqff}{rgb}{0,0,1}
\definecolor{qqwuqq}{rgb}{0,0.39215686274509803,0}
\begin{tikzpicture}[line cap=round,line join=round,>=triangle 45,x=1cm,y=1cm,scale=0.8]
\clip(-9.2,-1.1) rectangle (8.6,6.2);
\fill[line width=1pt,color=qqqqff,fill=qqqqff,fill opacity=0.1] (4.5,5) -- (7.5,5) -- (7.5,2) -- cycle;
\fill[line width=1pt,color=qqwuqq,fill=qqwuqq,fill opacity=0.1] (7.5,2) -- (8.5,2) -- (8.5,0) -- cycle;
\fill[line width=1pt,color=ccqqqq,fill=ccqqqq,fill opacity=0.1] (4.5,2) -- (4.5,0) -- (7.5,0) -- cycle;
\fill[line width=1pt,color=ccqqqq,fill=ccqqqq,fill opacity=0.1] (-3,4) -- (-3,6) -- (0,4) -- cycle;
\fill[line width=1pt,color=qqwuqq,fill=qqwuqq,fill opacity=0.1] (-4,6) -- (-3,6) -- (-3,4) -- cycle;
\fill[line width=1pt,color=qqqqff,fill=qqqqff,fill opacity=0.1] (-3,4) -- (0,4) -- (0,1) -- cycle;
\draw [line width=1pt] (4.5,0)-- (8.5,0);
\draw [line width=1pt] (8.5,0)-- (8.5,2);
\draw [line width=1pt] (8.5,2)-- (7.5,2);
\draw [line width=1pt] (7.5,2)-- (7.5,5);
\draw [line width=1pt] (7.5,5)-- (4.5,5);
\draw [line width=1pt] (4.5,5)-- (4.5,0);
\draw [line width=1pt,color=qqwuqq] (7.5,2)-- (8.5,0);
\draw [line width=1pt,color=qqqqff] (7.5,2)-- (4.5,5);
\draw [line width=1pt,color=ccqqqq] (7.5,0)-- (4.5,2);
\draw [line width=1pt,color=ccqqqq] (-3,6)-- (0,4);
\draw [line width=1pt,color=qqwuqq] (-3,4)-- (-4,6);
\draw [line width=1pt,color=qqqqff] (0,1)-- (-3,4);
\draw [line width=1pt,color=qqqqff] (0,4)-- (3,1);
\draw [line width=1pt,color=ccqqqq] (0,1)-- (3,-1);
\draw [line width=1pt,color=qqwuqq] (3,1)-- (4,-1);
\draw [line width=1pt] (3,-1)-- (4,-1);
\draw [line width=1pt] (-3.992,5.984)-- (-2.963076923076923,5.975384615384615);
\draw [line width=1pt] (-7.5,0)-- (-5.5,0);
\draw [line width=1pt,color=qqwuqq] (-5.5,0)-- (-4.09,1.414213562373095);
\draw [line width=1pt,color=qqqqff] (-4.09,1.414213562373095)-- (-4.09,3.4142135623730945);
\draw [line width=1pt,color=ccqqqq] (-4.09,3.4142135623730945)-- (-5.5,4.82842712474619);
\draw [line width=1pt] (-5.5,4.82842712474619)-- (-7.5,4.82842712474619);
\draw [line width=1pt,color=qqwuqq] (-7.5,4.82842712474619)-- (-8.92,3.4142135623730954);
\draw [line width=1pt,color=qqqqff] (-8.92,3.4142135623730954)-- (-8.92,1.4142135623730956);
\draw [line width=1pt,color=ccqqqq] (-8.92,1.4142135623730956)-- (-7.5,0);
\draw [shift={(-3.34,4.22)},line width=1pt]  plot[domain=4.720453321691457:5.445810878213226,variable=\t]({1*2.480080643850115*cos(\t r)+0*2.480080643850115*sin(\t r)},{0*2.480080643850115*cos(\t r)+1*2.480080643850115*sin(\t r)});
\draw [shift={(3.5,1)},line width=1pt]  plot[domain=1.521884320645702:2.2550911184560465,variable=\t]({1*2.863424523188973*cos(\t r)+0*2.863424523188973*sin(\t r)},{0*2.863424523188973*cos(\t r)+1*2.863424523188973*sin(\t r)});
\draw [line width=1pt] (3.64,3.86)-- (3.28,3.48);
\draw [line width=1pt] (3.64,3.86)-- (3.32,4.3);
\draw [line width=1pt] (-1.7,2.4)-- (-2.22,2.48);
\draw [line width=1pt] (-1.7,2.4)-- (-1.78,1.88);
\draw (-5.3,0.7) node[anchor=north west] {$\begin{pmatrix} \sin(\frac{\pi}{n}) & -\sin(\frac{(n/2-1)\pi}{n}) \\ 0 & 1 \end{pmatrix}$};
\draw (0.5,5.3) node[anchor=north west]  {Cut and paste};
\end{tikzpicture}
\caption{From the regular octagon to its staircase model.}
\label{cut_and_paste}
\end{figure}

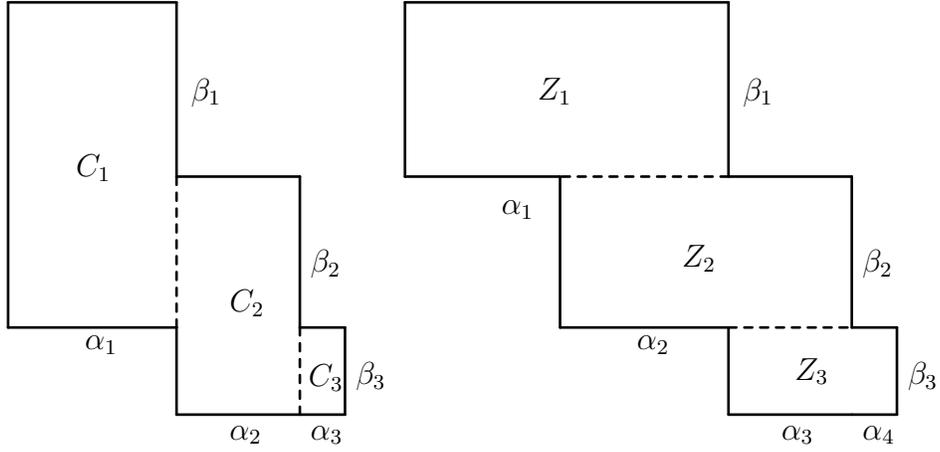
\begin{figure}[h]
\center
\begin{tikzpicture}[line cap=round,line join=round,>=triangle 45,x=1cm,y=1cm, scale = 0.6]
\clip(-7,-1) rectangle (2,10);
\draw [line width=1pt] (0,0)-- (1,0);
\draw [line width=1pt] (1,0)-- (1,1.9318516525781366);
\draw [line width=1pt] (1,1.9318516525781366)-- (0,1.9318516525781366);
\draw [line width=1pt] (0,1.9318516525781366)-- (0,5.277916867529369);
\draw [line width=1pt] (0,5.277916867529369)-- (-2.7320508075688776,5.277916867529369);
\draw [line width=1pt] (-2.7320508075688776,0)-- (0,0);
\draw [line width=1pt,dash pattern=on 3pt off 3pt] (0,0)-- (0,1.9318516525781366);
\draw (-1.8,0) node[anchor=north west] {$\alpha_2$};
\draw (-5,2) node[anchor=north west] {$\alpha_1$};
\draw (0,3.9) node[anchor=north west] {$\beta_2$};
\draw (-2.65,7.7) node[anchor=north west] {$\beta_1$};
\draw [line width=1pt] (-2.7320508075688776,1.9318516525781368)-- (-6.464101615137755,1.9318516525781366);
\draw [line width=1pt] (-6.464101615137755,1.9318516525781366)-- (-6.464101615137755,9.141620172685643);
\draw [line width=1pt] (-6.464101615137755,9.141620172685643)-- (-2.7320508075688767,9.141620172685643);
\draw [line width=1pt] (-2.7320508075688767,9.141620172685643)-- (-2.7320508075688776,5.277916867529369);
\draw [line width=1pt,dash pattern=on 3pt off 3pt] (-2.7320508075688776,1.9318516525781368)-- (-2.7320508075688776,5.277916867529369);
\draw [line width=1pt] (-2.7320508075688776,1.9318516525781368)-- (-2.7320508075688776,0);
\draw (0,0) node[anchor=north west] {$\alpha_3$};
\draw (1,1.4) node[anchor=north west] {$\beta_3$};
\draw (-0.05,1.35) node[anchor=north west] {$C_3$};
\draw (-1.8,3) node[anchor=north west] {$C_2$};
\draw (-5.2,6) node[anchor=north west] {$C_1$};
\end{tikzpicture}
\begin{tikzpicture}[line cap=round,line join=round,>=triangle 45,x=1cm,y=1cm, scale = 0.6]
\clip(-10,-1) rectangle (2,10);
\draw [line width=1pt] (0,0)-- (1,0);
\draw [line width=1pt] (1,0)-- (1,1.9318516525781366);
\draw [line width=1pt] (1,1.9318516525781366)-- (0,1.9318516525781366);
\draw [line width=1pt] (0,1.9318516525781366)-- (0,5.277916867529369);
\draw [line width=1pt] (0,5.277916867529369)-- (-2.7320508075688776,5.277916867529369);
\draw [line width=1pt] (-2.7320508075688776,0)-- (0,0);
\draw [line width=1pt,dash pattern=on 3pt off 3pt] (-2.7320508075688776,5.277916867529369)-- (-6.464101615137755,5.277916867529369);
\draw (-1.8,0) node[anchor=north west] {$\alpha_3$};
\draw (-5,2) node[anchor=north west] {$\alpha_2$};
\draw (0,3.9) node[anchor=north west] {$\beta_2$};
\draw (-2.65,7.7) node[anchor=north west] {$\beta_1$};
\draw [line width=1pt] (-2.7320508075688776,1.9318516525781368)-- (-6.464101615137755,1.9318516525781366);
\draw [line width=1pt] (-6.464101615137755,1.9318516525781366)-- (-6.464101615137755,5.277916867529369);
\draw [line width=1pt] (-9.9,5.277916867529369)-- (-6.464101615137755,5.277916867529369);
\draw [line width=1pt] (-9.9,5.277916867529369)-- (-9.9,9.141620172685643);
\draw [line width=1pt] (-9.9,9.141620172685643)-- (-2.7320508075688767,9.141620172685643);
\draw [line width=1pt] (-2.7320508075688767,9.141620172685643)-- (-2.7320508075688776,5.277916867529369);
\draw [line width=1pt,dash pattern=on 3pt off 3pt] (-2.7320508075688776,1.9318516525781368)-- (0,1.9318516525781368);
\draw [line width=1pt] (-2.7320508075688776,1.9318516525781368)-- (-2.7320508075688776,0);
\draw (0,0) node[anchor=north west] {$\alpha_4$};
\draw (1,1.4) node[anchor=north west] {$\beta_3$};
\draw (-1.5,1.5) node[anchor=north west] {$Z_3$};
\draw (-4,4) node[anchor=north west] {$Z_2$};
\draw (-7.2,7.7) node[anchor=north west] {$Z_1$};
\draw (-8,5) node[anchor=north west] {$\alpha_1$};
\end{tikzpicture}
\caption{The staircase model associated with the regular $n$-gon for $n=12$ on the left and $n=14$ on the right.}
\label{staircase_model_0}
\end{figure}

With the notations of Figure \ref{staircase_model_0}, the lengths of the sides are given by:

$$
l(\alpha_i) = \sin(\frac{(n/2-2i+1)\pi}{n}) \text{ and } l(\beta_j) = \sin(\frac{(n/2-2j+2)\pi}{n})
$$
Where $i,j \in \llbracket 1,\frac{n}{4} \rrbracket$ for $n \equiv 0 \mod 4$, and $i \in \llbracket 1,\frac{n-2}{4}+1 \rrbracket$ while $j \in \llbracket 1, \frac{n-2}{4} \rrbracket$ for $n \equiv 2 \mod 4$. In particular, the modulus of each vertical cylinder $C_i$ is $\Phi = 2\cos(\frac{\pi}{n})$ as well as the modulus of each horizontal cylinder $Z_j$, except the horizontal cylinder $Z_1$ if $n \equiv 0 \mod 4$ (resp. the vertical cylinder $C_1$ if $n \equiv 2 \mod 4$) having a modulus of $\Phi / 2$. \newline

In both cases, the Veech group $\Gamma_n$ associated with the staircase model contains the horizontal twist $T_H = \begin{pmatrix} 1 & \Phi \\ 0 & 1 \end{pmatrix}$ and the vertical twist $T_V = \begin{pmatrix} 1 & 0 \\ \Phi & 1 \end{pmatrix}$. In fact, it is shown in \cite{Veech} that $\Gamma_n$ is generated by $T_H$ and $T_V$. In particular, the Teichm\"uller disk of the regular $n$-gon, identified with $\HH / \Gamma_n$ has a fundamental domain $\TT$ as depicted in Figure \ref{Domaine_fondamental}.

It will sometimes be convenient to include orientation-reversing elements in the definition of the Veech group. The matrix $R = \begin{pmatrix} 1 & 0 \\ 0 & -1 \end{pmatrix}$ gives an affine orientation-reversing diffeomorphism of the staircase model, and in fact $T_H, T_V$ and $R$ generate the "non-oriented" Veech group, which we will denote by $\Gamma_n^{\pm}$.

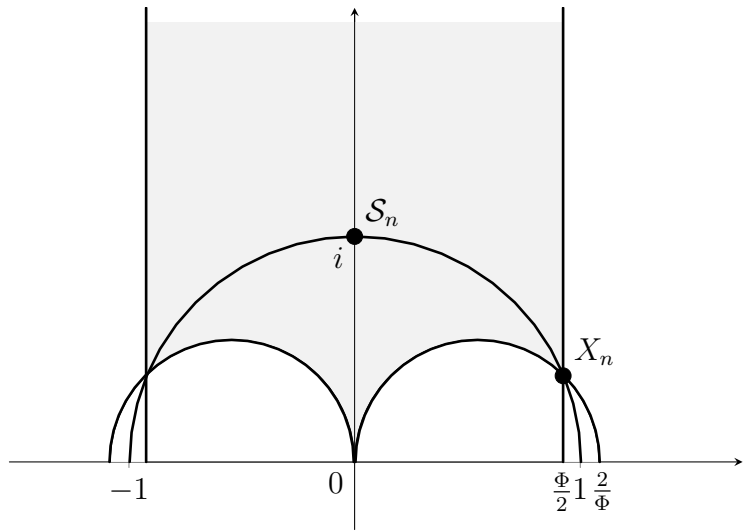
\begin{figure}[h]
\center
\begin{tikzpicture}[line cap=round,line join=round,>=triangle 45,x=3cm,y=3cm]
\begin{axis}[
x=3cm,y=3cm,
axis lines=middle,
xmin=-1.5299745127701394,
xmax=1.719501040204969,
ymin=-0.3,
ymax=2.013798818627857,
xtick={-1,0,1},
ytick={0},]
\clip(-1.5299745127701394,-0.3) rectangle (1.719501040204969,2.013798818627857);
\fill [gray!10](-0.923879532511,0) rectangle (0.923879532511,1.95);
\filldraw[fill=white] (1.0823922002,0) -- (1.0823922002,0) arc (0:180:0.54119610014) (0,0) -- cycle;
\filldraw[fill=white] (0,0) -- (0,0) arc (0:180:0.5411961001) (1.0823922002,0)  -- cycle;
\draw [line width=1pt] (-0.9238795325112867,0) -- (-0.9238795325112867,2.013798818627857);
\draw [line width=1pt] (0.9238795325112867,0) -- (0.9238795325112867,2.013798818627857);
\draw [line width=0.3pt] (-0.9238,0) -- (0.9238795325112867,0);
\draw [line width=0.3pt] (0,0) -- (0,2);
\draw [shift={(-2.075,-0.3)},line width=1pt]  plot[domain=0:3.141592653589793,variable=\t]({1*0.541196100146197*cos(\t r)+0*0.541196100146197*sin(\t r)},{0*0.541196100146197*cos(\t r)+1*0.541196100146197*sin(\t r)});
\draw [shift={(-0.985,-0.3)},line width=1pt]  plot[domain=0:3.141592653589793,variable=\t]({1*0.541196100146197*cos(\t r)+0*0.541196100146197*sin(\t r)},{0*0.541196100146197*cos(\t r)+1*0.541196100146197*sin(\t r)});
\draw [shift={(-1.527,-0.3)},line width=1pt]  plot[domain=0:3.141592653589793,variable=\t]({1*1*cos(\t r)+0*1*sin(\t r)},{0*1*cos(\t r)+1*1*sin(\t r)});
\draw (0.82,0) node[anchor=north west] {$\frac{\Phi}{2}$};
\draw (1,0) node[anchor=north west] {$\frac{2}{\Phi}$};
\draw (0,0) node[anchor=north east] {$0$};
\draw (0,1) node[anchor=north east] {$i$};
\draw (0,1) node[anchor=south west] {$\SS$};
\draw (0.924,0.382) node[anchor=south west] {$\XX$};
\draw [fill=black] (0,1) circle (3pt);
\draw [fill=black] (0.924,0.382) circle (3pt);
\end{axis}
\end{tikzpicture}
\caption{The fundamental domain of the Teichm\"uller disk of $\XX$.}
\label{Domaine_fondamental}
\end{figure}

\section{KVol on the regular $n$-gon, $n \equiv 0 \mod 4$.}\label{sec:4m}
In this section, we give an elementary proof of Theorem \ref{theo:KVol_4m} and Theorem \ref{theo:4m+2}. The main idea of the proof is to subdivide saddle connections $\alpha$ and $\beta$ into smaller (non-closed) segments where we can control both the lengths and the intersections.

Given saddle connections $\alpha$ and $\beta$, we start by defining a notion of sector for the direction of $\alpha$ (resp. $\beta$) which tells how to subdivide the saddle connection $\alpha$ (resp. $\beta$) into segments (\S\S 3.1 \& 3.2). Then, we study these segments separately and show that they are all longer than the side of the regular $n$-gon (\S 3.2). Finally, we study the possible intersection for each pair of segments (\S 3.3). Setting aside two particular cases that are studied separately, each pair of segments $(\alpha_i,\beta_j)$ intersect at most once, giving the desired ratio. To conclude the proof, it remains to take into account the possible singular intersections. This can be acomplished by paying closer attention to the intersections and the lengths.

\subsection{Sectors and separatrix diagrams}
The directions of the diagonals of the regular $n$-gon subdivide the set of possible directions into $n$ sectors of angle $\frac{\pi}{n}$, as in Figure \ref{secteurs_octogone} for the octagon.

In each sector there is an associated \emph{transition diagram} which encodes the possible sequence of intersections of sides for a line whose direction lies in this sector. For example, a geodesic with direction in the sector $\Sigma_0$ of Figure \ref{secteurs_octogone} has to intersect $e_2$ before and after each intersection with $e_1$ while it can intersect either $e_2$ or $e_3$ before and after each intersection with $e_0$. In this example, the sector $\Sigma_0$ gives the following transition diagram:
\begin{equation*}
 e_1 \leftrightarrows e_2 \leftrightarrows e_0\leftrightarrows e_3 \circlearrowleft
\end{equation*}

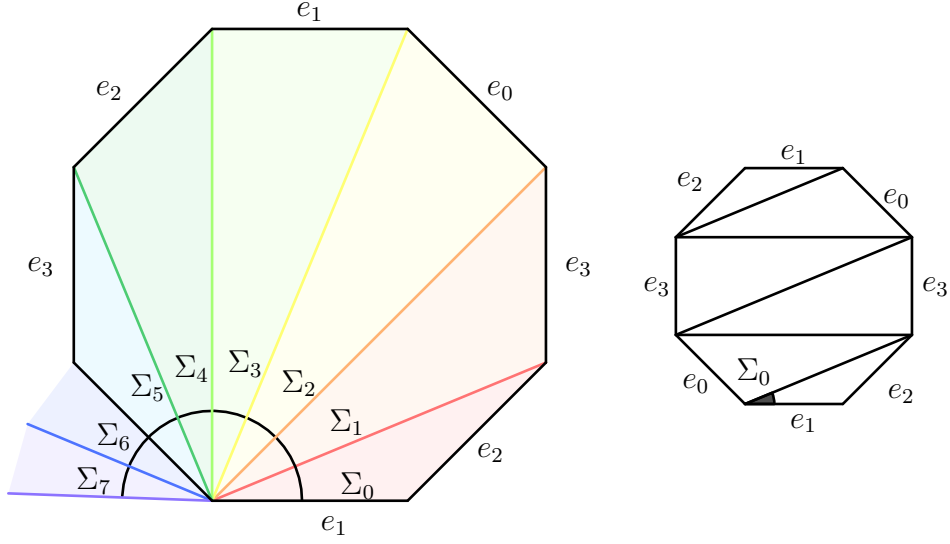
\begin{figure}[h]
\center
\definecolor{ffacyf}{rgb}{0.55,0.45,1} 
\definecolor{zdaqzb}{rgb}{0.3,0.45,1} 
\definecolor{yfeayc}{rgb}{0.3,0.8,1} 
\definecolor{zvbcff}{rgb}{0.3,0.8,0.45} 
\definecolor{drauff}{rgb}{0.65,1,0.45} 
\definecolor{ffzqfw}{rgb}{1,1,0.45} 
\definecolor{ffdcbu}{rgb}{1,0.7,0.45} 
\definecolor{euzsva}{rgb}{1,0.45,0.45} 
\definecolor{ccqqqq}{rgb}{0,0,0}
\definecolor{qqqqff}{rgb}{0,0,0}
\definecolor{ffvvqq}{rgb}{0,0,0}
\definecolor{qqwuqq}{rgb}{0,0,0}
\begin{tikzpicture}[line cap=round,line join=round,>=triangle 45,x=1.3cm,y=1.3cm]
\clip(-3.5,-0.5) rectangle (7,5.2);
\fill[line width=1pt,color=euzsva,fill=euzsva,fill opacity=0.1] (-1,0) -- (1,0) -- (2.4142135623730954,1.4142135623730951) -- cycle;
\fill[line width=1pt,color=ffdcbu,fill=ffdcbu,fill opacity=0.1] (-1,0) -- (2.4142135623730954,1.4142135623730951) -- (2.4142135623730954,3.4142135623730936) -- cycle;
\fill[line width=1pt,color=ffzqfw,fill=ffzqfw,fill opacity=0.1] (-1,0) -- (2.4142135623730954,3.4142135623730936) -- (1,4.828427124746191) -- cycle;
\fill[line width=1pt,color=drauff,fill=drauff,fill opacity=0.1] (-1,0) -- (1,4.828427124746191) -- (-1,4.828427124746191) -- cycle;
\fill[line width=1pt,color=zvbcff,fill=zvbcff,fill opacity=0.1] (-1,0) -- (-1,4.828427124746191) -- (-2.4142135623730954,3.4142135623730945) -- cycle;
\fill[line width=1pt,color=yfeayc,fill=yfeayc,fill opacity=0.1] (-1,0) -- (-2.4142135623730954,3.4142135623730945) -- (-2.414213562373096,1.4142135623730963) -- cycle;
\fill[line width=1pt,color=zdaqzb,fill=zdaqzb,fill opacity=0.1] (-1,0) -- (-2.414213562373096,1.4142135623730963) -- (-2.8864976604412904,0.7850695654254252) -- cycle;
\fill[line width=1pt,color=ffacyf,fill=ffacyf,fill opacity=0.1] (-2.8864976604412904,0.7850695654254252) -- (-1,0) -- (-3.083673377740735,0.07523698314742658) -- cycle;
\draw [line width=1pt,color=euzsva] (2.4142135623730954,1.4142135623730951)-- (-1,0);
\draw [shift={(-1,0)},line width=1pt]  plot[domain=0:3.088835092859723,variable=\t]({1*0.92*cos(\t r)+0*0.92*sin(\t r)},{0*0.92*cos(\t r)+1*0.92*sin(\t r)});
\draw (2.5,2.6) node[anchor=north west] {$e_3$};
\draw (1.6,0.7) node[anchor=north west] {$e_2$};
\draw (0,-0.05) node[anchor=north west] {$e_1$};
\draw (1.7,4.4) node[anchor=north west] {$e_0$};
\draw (0,4.8) node[above] {$e_1$};
\draw (-2.3,4.4) node[anchor=north west] {$e_2$};
\draw (-3,2.6) node[anchor=north west] {$e_3$};
\draw (0.2,0.4) node[anchor=north west] {$\Sigma_0$};
\draw [line width=1pt,color=ffdcbu] (2.4142135623730954,3.4142135623730936)-- (-1,0);
\draw [line width=1pt,color=ffzqfw] (1,4.828427124746191)-- (-1,0);
\draw [line width=1pt,color=drauff] (-1,4.828427124746191)-- (-1,0);
\draw [line width=1pt,color=zvbcff] (-1,0)-- (-2.4142135623730954,3.4142135623730945);
\draw [line width=1pt,color=zdaqzb] (-2.8864976604412904,0.7850695654254252)-- (-1,0);
\draw [line width=1pt,color=ffacyf] (-1,0)-- (-3.083673377740735,0.07523698314742658);
\draw (0.09,1.05) node[anchor=north west] {$\Sigma_1$};
\draw (-0.4,1.45) node[anchor=north west] {$\Sigma_2$};
\draw (-0.95,1.65) node[anchor=north west] {$\Sigma_3$};
\draw (-1.5,1.6) node[anchor=north west] {$\Sigma_4$};
\draw (-1.95,1.4) node[anchor=north west] {$\Sigma_5$};
\draw (-2.2949705085429573,0.9) node[anchor=north west] {$\Sigma_6$};
\draw (-2.5,0.45) node[anchor=north west] {$\Sigma_7$};
\draw [line width=1pt,color=qqwuqq] (-1,0)-- (1,0);
\draw [line width=1pt,color=ffvvqq] (1,0)-- (2.414213562373095,1.414213562373095);
\draw [line width=1pt,color=qqqqff] (2.414213562373095,1.414213562373095)-- (2.414213562373095,3.4142135623730945);
\draw [line width=1pt,color=ccqqqq] (2.414213562373095,3.4142135623730945)-- (1,4.82842712474619);
\draw [line width=1pt,color=qqwuqq] (1,4.82842712474619)-- (-1,4.82842712474619);
\draw [line width=1pt,color=ffvvqq] (-1,4.82842712474619)-- (-2.414213562373095,3.4142135623730954);
\draw [line width=1pt,color=qqqqff] (-2.414213562373095,3.4142135623730954)-- (-2.4142135623730954,1.4142135623730956);
\draw [line width=1pt,color=ccqqqq] (-2.4142135623730954,1.4142135623730956)-- (-1,0);
\draw [shift={(4.45,0.99)},line width=1pt,fill=black,fill opacity=0.7] (0,0) -- (0:0.3) arc (0:22.5:0.3) -- cycle;
\draw [line width=1pt] (4.45,0.99)-- (5.45,0.99);
\draw [line width=1pt] (5.45,0.99)-- (6.157106781186547,1.6971067811865472);
\draw [line width=1pt] (6.157106781186547,1.6971067811865472)-- (6.157106781186547,2.697106781186547);
\draw [line width=1pt] (6.157106781186547,2.697106781186547)-- (5.45,3.4042135623730942);
\draw [line width=1pt] (5.45,3.4042135623730942)-- (4.45,3.4042135623730947);
\draw [line width=1pt] (4.45,3.4042135623730947)-- (3.742893218813453,2.6971067811865472);
\draw [line width=1pt] (3.742893218813453,2.6971067811865472)-- (3.7428932188134527,1.6971067811865477);
\draw [line width=1pt] (3.7428932188134527,1.6971067811865477)-- (4.45,0.99);
\draw [line width=1pt] (4.45,0.99)-- (6.157106781186547,1.6971067811865472);
\draw [line width=1pt] (3.7428932188134527,1.6971067811865477)-- (6.157106781186547,2.697106781186547);
\draw [line width=1pt] (3.7428932188134527,1.6971067811865477)-- (6.157106781186547,1.6971067811865472);
\draw [line width=1pt] (3.742893218813453,2.6971067811865472)-- (6.157106781186547,2.697106781186547);
\draw [line width=1pt] (3.742893218813453,2.6971067811865472)-- (5.45,3.4042135623730942);
\draw (4.25,1.57) node[anchor=north west] {$\Sigma_0$};
\draw (5.8,1.3) node[anchor=north west] {$e_2$};
\draw (4.7,3.75) node[anchor=north west] {$e_1$};
\draw (5.75,3.3) node[anchor=north west] {$e_0$};
\draw (3.3,2.4) node[anchor=north west] {$e_3$};
\draw (3.64,3.47) node[anchor=north west] {$e_2$};
\draw (6.15,2.4) node[anchor=north west] {$e_3$};
\draw (3.7,1.4) node[anchor=north west] {$e_0$};
\draw (4.8,1) node[anchor=north west] {$e_1$};
\end{tikzpicture}
\caption{The directions of the diagonals of the octagon divide the set of directions into $8$ sectors. On the right, diagonals corresponding to the sector $\Sigma_0$.}
\label{secteurs_octogone}
\end{figure}

In this configuration we say that $e_1$ is \emph{sandwiched}. The side $e_1$ can only be preceded and followed by the (adjacent) side $e_2$. More generally, for a given sector $\Sigma$ on the regular $n$-gon we have a separatrix diagram of the form 
\begin{equation*}
 e_{\sigma(0)} \leftrightarrows e_{\sigma(1)} \leftrightarrows \cdots \leftrightarrows  e_{\sigma(n/2-2)} \leftrightarrows e_{\sigma(n/2-1)} \circlearrowleft
\end{equation*}
where $\sigma = \sigma_\Sigma \in \mathfrak{S}_{n/2}$ is a given permutation. We say that the side $e_{\sigma(0)}$ is sandwiched by the side $e_{\sigma(1)}$ in sector $\Sigma$. Note that it is sufficient to know $\sigma(0)$ and $\sigma(1)$ to tell the sector, as the sector is defined by the directions of the side $e_{\sigma(0)}$ and the diagonal $e_{\sigma(0)} + e_{\sigma(1)}$. See \cite{SU} for further details on transition diagrams of regular $n$-gons.

\subsection{Construction of the subdivision.}
Let $\alpha$ be an oriented saddle connections on the regular $n$-gon. Assume $\alpha$ is not a diagonal of the $n$-gon, so that it has a well defined sector $\Sigma_{\alpha}$ which corresponds to a transition diagram given by the permutation $\sigma_{\alpha}$. We cut $\alpha$  each time it intersects a non-sandwiched side of the $n$-gon. This gives a decomposition into non-closed segments $\alpha = \alpha_1 \cup \cdots \cup \alpha_k$ where each segment is either (see Figure~\ref{types_segments}):
\begin{itemize}
\item A non-sandwiched segment which goes from a side of the $n$-gon to another non-adjacent side of the $n$-gon.
\item A sandwiched segment, with extremities on the side $e' = e_{\sigma_{\alpha}(1)}$, intersecting the sandwiched side $e = e_{\sigma_{\alpha}(0)}$ on its interior. Such segments are made of one piece going from $e'$ to $e$ and another piece going from $e$ to $e'$. We say that such a sandwiched segment has type $e' \to e \to e'$.
\item An initial or terminal segment $\alpha_1$ or $\alpha_k$. Such segments will be considered as non-sandwiched segments.
\end{itemize}

If $\alpha$ is a diagonal or a side of the $n$-gon, we set $k=1$ and $\alpha = \alpha_1$.

\begin{figure}[h]
\center
\definecolor{qqwuqq}{rgb}{0,0.39215686274509803,0}
\definecolor{ccqqqq}{rgb}{0.8,0,0}
\begin{tikzpicture}[line cap=round,line join=round,>=triangle 45,x=1cm,y=1cm]
\clip(-3,-0.5) rectangle (3,5.3);
\draw [line width=1pt] (-1,0)-- (1,0);
\draw [line width=1pt] (1,0)-- (2.414213562373095,1.414213562373095);
\draw [line width=1pt] (2.414213562373095,1.414213562373095)-- (2.414213562373095,3.4142135623730945);
\draw [line width=1pt] (2.414213562373095,3.4142135623730945)-- (1,4.82842712474619);
\draw [line width=1pt] (1,4.82842712474619)-- (-1,4.82842712474619);
\draw [line width=1pt] (-1,4.82842712474619)-- (-2.414213562373095,3.4142135623730954);
\draw [line width=1pt] (-2.414213562373095,3.4142135623730954)-- (-2.4142135623730954,1.4142135623730956);
\draw [line width=1pt] (-2.4142135623730954,1.4142135623730956)-- (-1,0);
\draw [line width=1pt,dash pattern= on 3pt off 3pt, color=ccqqqq] (-1.43,0.43)-- (2.414213562373095,2.58);
\draw [line width=2pt,color=qqwuqq] (0,0)-- (1.34,0.34);
\draw [line width=2pt,color=qqwuqq] (0,4.828427124746191)-- (-1.3242135623730953,4.504213562373094);
\draw (-0.25,0) node[anchor=north west] {$e_1$};
\draw (1.7,0.7) node[anchor=north west] {$e_2$};
\draw (-0.4,5.3) node[anchor=north west] {$e_1$};
\draw (-2.4,4.5) node[anchor=north west] {$e_2$};
\draw (1.7,4.5) node[anchor=north west] {$e_0$};
\draw (2.4,2.5) node[anchor= north west] {$e_3$};
\draw (-2.4,0.7) node[anchor=north west] {$e_0$};
\draw (-3,2.5) node[anchor= north west] {$e_3$};
\end{tikzpicture}
\caption{Example of a non-sandwiched segment (in dashed red) and a sandwiched segment (in bold green). The sandwiched segment is of type $e_2 \to e_1 \to e_2$.}
\label{types_segments}
\end{figure}
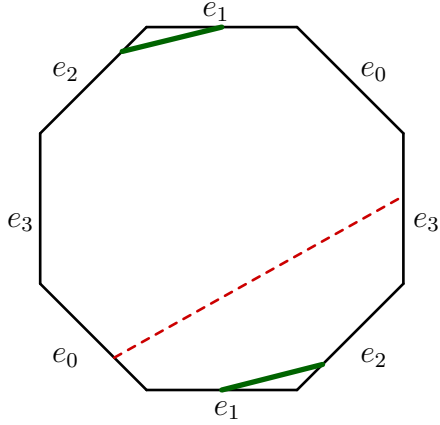

By construction, we have:
\begin{Lem}\label{Lemme_longueurs}
For every $i$, we have $l(\alpha_i) \geq l_0$ and equality holds if and only if $k=1$ and $\alpha$ is a side of the regular $n$-gon.
\end{Lem}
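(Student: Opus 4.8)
The plan is to reduce the whole statement to a single elementary fact about the regular $n$-gon and then apply it to each of the three types of segment. The fact I would isolate is the following \emph{divergence sub-lemma}: for any two sides $e_a, e_b$ of the $n$-gon having no common vertex, every segment joining a point of $e_a$ to a point of $e_b$ has length at least $l_0$, and its length equals $l_0$ only when $e_a$ and $e_b$ are separated by exactly one side and the segment is precisely that intermediate side. To prove it, let $U_a \in e_a$ and $U_b \in e_b$ be the two closest endpoints. When $e_a$ and $e_b$ are separated by a single side, $U_a$ and $U_b$ are the endpoints of that side, so $|U_a U_b| = l_0$. The interior angle of the regular $n$-gon is $\pi - \frac{2\pi}{n}$, which exceeds $\frac{\pi}{2}$ exactly because $n \geq 8 > 4$; hence at $U_a$ the side $e_a$ leaves the segment $U_a U_b$ at an obtuse angle, and likewise at $U_b$. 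Two sides emanating at obtuse angles from the endpoints of $U_a U_b$ diverge, so moving along either side only increases the distance to the other, and $\mathrm{dist}(e_a, e_b) = |U_a U_b| = l_0$, attained only at the pair $(U_a, U_b)$. For sides separated by more than one side the distance is strictly larger than $l_0$ (it is bounded below either by a nearest-vertex chord $2R\sin(\frac{m\pi}{n})$ with $m \geq 2$, or by the width $2R\cos(\frac{\pi}{n})$ of the polygon, both $> l_0 = 2R\sin(\frac{\pi}{n})$ for $n > 4$, where $R$ is the circumradius).

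With the sub-lemma in hand, a non-sandwiched segment $\alpha_i$ joins two non-adjacent sides by definition, so $l(\alpha_i) \geq \mathrm{dist}(e_a, e_b) \geq l_0$ at once; the initial and terminal segments are handled the same way, since they are treated as non-sandwiched. The equality case of the sub-lemma shows that $l(\alpha_i) = l_0$ forces the segment to coincide with a side of the $n$-gon, which can only occur when $\alpha$ itself is that side and $k = 1$. Any genuine interior chord is therefore strictly longer than $l_0$.

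For a sandwiched segment of type $e' \to e \to e'$ I would \emph{develop} the trajectory across the sandwiched side $e$: applying the translation that identifies $e$ with its opposite side straightens the two pieces into a single straight segment of the same total length. Its two endpoints now lie on two copies of the sandwiching side $e'$, and one checks that these copies have no common vertex and that their nearest endpoints are again two vertices at distance $l_0$, in exactly the diverging configuration of the sub-lemma (this is Figure \ref{types_segments} read in the developed picture). The sub-lemma then applies verbatim and yields $l(\alpha_i) \geq l_0$. Here equality would require both endpoints to sit at those two vertices, i.e. the sandwiched segment to degenerate into the corner at the shared vertex of $e$ and $e'$; no honest interior-crossing segment does this, so in fact $l(\alpha_i) > l_0$ strictly.

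Combining the three cases gives $l(\alpha_i) \geq l_0$ for every $i$, with equality only in the degenerate situation where $k = 1$ and $\alpha$ is a single side. The main obstacle is the sandwiched case: one must set up the development across $e$ correctly and verify that the two resulting copies of $e'$ genuinely sit in the diverging configuration with nearest points at distance $l_0$, rather than overlapping or approaching each other near the cone point. Once the divergence sub-lemma is established uniformly, the non-sandwiched and boundary segments follow immediately and the equality bookkeeping is routine.
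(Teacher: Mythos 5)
Your proof is correct and follows essentially the same route as the paper: the paper's (much terser) proof rests on exactly the same geometric input, namely the obtuse interior angles of the regular $n$-gon, asserting the bound for segments between non-adjacent sides and treating the sandwiched case by the same obtuse-angle divergence that your sub-lemma and unfolding across $e$ make explicit. The additional care you take with the equality case and with sides separated by more than one side is detail the paper leaves implicit, not a different method.
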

\begin{proof}
A non-sandwiched segment has length at least $l_0$ since each segment going from a side to another non-adjacent side has length at least $l_0$ in the regular $n$-gon. \newline
Now, take a sandwiched segment and assume up to a rotation that it is of type $e_2 \to e_1 \to e_2$, as in Figure \ref{types_segments}. Since angles between the sides $e_1$ and $e_2$ are obtuse, the sandwiched segment has a length bigger than the length of $e_1$, that is $l_0$.
\end{proof}

\subsection{Study of the intersections}
In this section, we investigate the possible intersections between two distinct saddle connections $\alpha$ and $\beta$, and show:

\begin{equation}\label{eq:intersection_general}
\frac{|\alpha \cap \beta|+1}{l(\alpha)l(\beta)} \leq \frac{1}{l_0^2}
\end{equation}
where $|\alpha \cap \beta|$ is the cardinal of the set of intersection points without counting the singularities. This set is finite as $\alpha$ and $\beta$ are distinct saddle connections.

As before, we decompose both $\alpha = \alpha_1 \cup \cdots \cup \alpha_k$ and $\beta = \beta_1 \cup \cdots \cup \beta_l$ into sandwiched and non-sandwiched segments. We start with the case where either $\alpha$ or $\beta$ are sides of the $n$-gon.

\paragraph{If $\alpha$ (resp. $\beta$) is a side of the $n$-gon.}
In this case, notice that:
\begin{enumerate}
\item Between each non-singular intersection with $\alpha$, there is at least one sandwiched segment or one non-sandwiched segment of $\beta$, giving a length greater than $l_0$.
\item Further, after the last non singular intersection with $\alpha$, there is a least one non-sandwiched segment, giving a length greater than $l_0$. 
\item If there are no non-singular intersection with $\alpha$, then $\beta$ has length at least $l_0$ anyway (with equality if and only if $\beta$ is another side of the regular $n$-gon).
\end{enumerate}
In particular, $ l(\beta) \geq (|\alpha \cap \beta|+1) l_0$ and hence:
\begin{equation*}
\frac{|\alpha \cap \beta|+1}{l(\alpha)l(\beta)} \leq \frac{1}{l_0^2}
\end{equation*}
with equality if and only if both $\alpha$ and $\beta$ are sides of the regular $n$-gon.


\paragraph{The other cases.} In the rest of this section, we assume that $\alpha$ and $\beta$ are not both sides of the regular $n$-gon. As such, there is at least one segment $\alpha_i$ of $\alpha$ for which the inequaltity $l(\alpha_i) > l_0$ is strict, and hence $l(\alpha)l(\beta) > kl \cdot l_0^2$. Further, up to a small deformation of $\alpha$, we can assume there are no intersection on the sides of the $n$-gon (it is possible to do it while keeping each $\alpha_i$ straight and in the same sector). In this setting, all non-singular intersections between $\alpha$ and $\beta$ correspond to an intersection of two segments $\alpha_i$ and $\beta_j$ and we have:
\begin{equation*}
|\alpha \cap \beta| \leq \sum_{i,j} |\alpha_i \cap \beta_j|
\end{equation*}
where $\GIntij$ is the number of intersection points between the (non-closed) curves $\alpha_i$ and $\beta_j$. The latter quantity is finite as long as $\alpha$ and $\beta$ are assumed to be distinct. Notice that if $n \equiv 0 \mod 4$, there is a single singularity so that $\alpha$ and $\beta$ are automatically closed curves and we can define their algebraic intersection, and hence:
\begin{equation}\label{eq:intersections}
\Int(\alpha,\beta) \leq (\sum_{i,j} |\alpha_i \cap \beta_j|) +1
\end{equation}
where the added $+1$ corresponds to the possible singular intersection.\newline

Now, remark that if both $\alpha_i$ and $\beta_j$ are not sandwiched, then $\GIntij \leq 1$. More generally, we have:

\begin{Lem}\label{lem:cases_2_intersections}
For any $i,j$, $\GIntij \leq 2$. Further, the case $\GIntij = 2$ is possible only if:
\begin{enumerate}[label = (\roman*)]
\item Either $\alpha_i$ is a sandwiched segment of type $e' \to e \to e'$ and $\beta_j$ is a sandwiched segment of type $e \to e' \to e$.
\item Or, up to permutation of $\alpha$ and $\beta$, $\alpha_i$ is a sandwiched segment of type $e' \to e \to e'$, and $\beta_j$ is a long segment, that is the endpoints of $\beta_j$ lie in $e_{\sigma_{\beta} (2m-1)}$ or $e_{\sigma_{\beta}(2m-2)}$ or at the two vertices contained in both $e_{\sigma_{\beta} (n/2-1)}$ and $e_{\sigma_{\beta}(n/2-2)}$. Further, $\{e,e'\} = \{e_{\sigma_{\beta}(n/2-1)}, e_{\sigma_{\beta}(n/2-2)}\}$.
\end{enumerate}
These two configurations are depicted in Figure~\ref{fig:case_two_intersections}.
\end{Lem}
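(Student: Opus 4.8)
The plan is to reduce the entire statement to an elementary count of crossings between straight chords of the polygon. Recall from the construction of the subdivision that, drawn inside the $n$-gon, a non-sandwiched segment is a single chord, whereas a sandwiched segment $\alpha_i$ of type $e'\to e\to e'$ is a union of two parallel chords $a_1$ (running from $e'$ to $e$) and $a_2$ (running from $e$ to $e'$). Because $e$ is glued to its opposite side, $a_1$ and $a_2$ do not sit close together: as in Figure~\ref{types_segments}, $a_1$ cuts off the corner at the vertex shared by the adjacent sides $e,e'$, while $a_2$ cuts off the \emph{centrally symmetric} corner near the opposite vertex. Since two straight chords meet in at most one point, one gets $\GIntij\le 1$ immediately when $\alpha_i,\beta_j$ are both non-sandwiched, and $\GIntij\le 2$ when exactly one is sandwiched. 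The only bound that is not automatic is that two sandwiched segments meet at most twice rather than the naive four, and this is where the geometry enters.

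The key input I would isolate is the following sub-claim: a single chord $c$ of the $n$-gon meets \emph{both} chords $a_1,a_2$ of a sandwiched segment $\alpha_i$ of type $e'\to e\to e'$ only if $c$ is a \emph{long} chord, i.e. its endpoints lie on (or at the common vertices of) the two deepest sides $e_{\sigma(n/2-1)},\,e_{\sigma(n/2-2)}$ of its own transition diagram, and moreover these two sides coincide with $\{e,e'\}$. The reason is convexity together with the central symmetry noted above: since $a_1$ and $a_2$ occupy opposite corners of the polygon, any chord meeting both must span from one corner to the opposite one, hence be essentially a diameter; among the chords of a fixed direction only the longest ones do this, and in a fixed sector the longest chords are precisely those joining the two deepest sides of the transition diagram. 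A direct computation with the angles of the regular $n$-gon then forces the spanned sides to be exactly $\{e,e'\}$.

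Granting the sub-claim, both cases of the lemma follow. If $\beta_j$ is non-sandwiched, it is the single chord $c=\beta_j$, and $\GIntij=2$ forces $c$ to meet both $a_1$ and $a_2$; by the sub-claim this is exactly the long-segment configuration of case (ii) (up to exchanging the roles of $\alpha$ and $\beta$). If $\beta_j$ is sandwiched, write $\beta_j=b_1\cup b_2$; here every chord $a_1,a_2,b_1,b_2$ runs between \emph{adjacent} sides (recall the sandwiched side is adjacent to its sandwiching side, cf.\ Lemma~\ref{Lemme_longueurs}), hence each is short and, by the sub-claim, cannot meet both chords of the other segment. Thus the crossing pattern is a matching on $\{a_1,a_2\}\times\{b_1,b_2\}$ of size at most $2$, which already gives $\GIntij\le 2$; equality requires a perfect matching, say $a_1\cap b_1$ and $a_2\cap b_2$, and a short-chord analysis of where these two crossings can sit pins down the relative sectors of $\alpha$ and $\beta$, forcing $\beta_j$ to have the swapped type $e\to e'\to e$ of case (i).

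The hard part will be the sub-claim, and specifically translating the transition-diagram combinatorics into a clean geometric inequality: one must verify, using only the angles of the regular $n$-gon, that \emph{``$c$ reaches from the corner of $a_1$ to the corner of $a_2$''} is equivalent to \emph{``the endpoints of $c$ lie on the two deepest sides of its transition diagram and those sides are $\{e,e'\}$.''} I would carry this out by fixing the sector of $c$, listing the finitely many sides on which its endpoints can lie in the order prescribed by $\sigma$, and checking directly that only the extremal pair yields a chord long enough to cross both $a_1$ and $a_2$. The degenerate situation where an endpoint of $c$ sits at a vertex common to $e_{\sigma(n/2-1)}$ and $e_{\sigma(n/2-2)}$ has to be tracked separately, and it is exactly this that produces the vertex clause appearing in case (ii).
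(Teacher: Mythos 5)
Your strategy is correct, but it is organized quite differently from the paper's proof, and the comparison is instructive. The paper splits into two cases: when both $\alpha_i$ and $\beta_j$ are sandwiched it observes that the segments must share a side of the $n$-gon, enumerates the six possible types of $\beta_j$ relative to $\alpha_i$ (importing the analysis of \cite[\S 6.3]{BLM22}), and checks configuration by configuration (Figure~\ref{fig:intersections_sandwichees}) that only the swapped type admits two crossings; the mixed case is then handled by a direct geometric observation. You instead promote what is essentially the paper's mixed-case observation to a central sub-claim (a chord meets both pieces $a_1,a_2$ of a sandwiched segment only if its endpoints lie on the two deepest sides of its own transition diagram and these coincide with $\{e,e'\}$), and you deduce the two-sandwiched bound from it by a degree argument: each chord of a sandwiched segment joins the two \emph{shallowest} sides $e_{\sigma(0)},e_{\sigma(1)}$ of its diagram, which for $n \geq 8$ are disjoint from the two deepest ones, so no chord of either segment can meet both chords of the other, and the crossing pattern is a matching of size at most two. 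This is a genuinely tidier route to the inequality $\GIntij \leq 2$: it avoids the enumeration entirely and isolates the single geometric fact on which the whole lemma rests. Your convexity argument for the sub-claim can even be streamlined: a chord meeting both $a_1$ and $a_2$ must have one endpoint in the corner region cut off by $a_1$ and the other in the centrally symmetric region cut off by $a_2$, so its endpoints automatically lie on copies of $e \cup e'$; spanning the polygon between opposite copies then forces $\{e,e'\}$ to be the deepest pair of the chord's sector, and tracking endpoints at the two vertices common to $e$ and $e'$ produces the vertex clause.

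The place where your proposal falls genuinely short is the equality analysis in the two-sandwiched case. The matching argument only bounds the count; it does not by itself force the swapped type, because a perfect matching is a priori possible in non-swapped configurations as well: for instance if $\beta_j$ has type $e_0 \to e \to e_0$ (the paper's case $(1)$, with $e_0 \neq e'$), the pairs $a_1,b_1$ and $a_2,b_2$ are each incident to a common copy of $e$ and can individually cross. Ruling out all such configurations is exactly the content of the six pictures in Figure~\ref{fig:intersections_sandwichees}, i.e.\ the bulk of the paper's proof, and your sentence ``a short-chord analysis \dots\ forces the swapped type'' conceals that casework rather than replacing it. It can be discharged, for example as follows when $\beta_j$ has the \emph{same} sandwiched side $e$ as $\alpha_i$: both second pieces $a_2, b_2$ are obtained from the lines through $a_1,b_1$ by the same translation $\tau$ identifying the two copies of $e$, so the only candidate second crossing is $x+\tau$ (where $x$ is the first crossing), and this point lies on the far side of the top copy of $e$, hence on neither $a_2$ nor $b_2$; the types with a different sandwiched side need their own short checks. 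Together with an actual proof of the sub-claim for general $n$ (which you rightly flag as the hard part, and which the paper itself only asserts with a figure), this would complete your argument.
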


\begin{figure}[h]
\center
\definecolor{ccqqqq}{rgb}{0.8,0,0}
\definecolor{qqwuqq}{rgb}{0,0.39215686274509803,0}
\begin{tikzpicture}[line cap=round,line join=round,>=triangle 45,x=2cm,y=2cm]
\clip(-0.8,-0.2) rectangle (2,2.7);
\draw [line width=1pt] (0,0)-- (1,0);
\draw [line width=1pt] (1,0)-- (1.7071067811865475,0.7071067811865475);
\draw [line width=1pt] (1.7071067811865475,0.7071067811865475)-- (1.7071067811865475,1.7071067811865472);
\draw [line width=1pt] (1.7071067811865475,1.7071067811865472)-- (1,2.414213562373095);
\draw [line width=1pt] (1,2.414213562373095)-- (0,2.414213562373095);
\draw [line width=1pt] (0,2.414213562373095)-- (-0.7071067811865475,1.7071067811865477);
\draw [line width=1pt] (-0.7071067811865475,1.7071067811865477)-- (-0.7071067811865477,0.7071067811865478);
\draw [line width=1pt] (-0.7071067811865477,0.7071067811865478)-- (0,0);
\draw [line width=1pt,color=qqwuqq] (0.6266666666666643,0)-- (1.155555555555555,0.15555555555555506);
\draw [line width=1pt,color=qqwuqq] (-0.2611111111111077,2.153102451261988)-- (0.6266666666666645,2.414213562373095);
\draw [line width=1pt,color=ccqqqq] (0.8088888888888882,0)-- (1.3,0.3);
\draw [line width=1pt,color=ccqqqq] (0.4,2.4142)-- (-0.4,2.0142);
\begin{scriptsize}
\draw [color=qqwuqq] (0.6266666666666643,0)-- ++(-2.5pt,-2.5pt) -- ++(5pt,5pt) ++(-5pt,0) -- ++(5pt,-5pt);
\draw [fill=qqwuqq,shift={(1.155555555555555,0.15555555555555506)}] (0,0) ++(0 pt,3.75pt) -- ++(3.2475952641916446pt,-5.625pt)--++(-6.495190528383289pt,0 pt) -- ++(3.2475952641916446pt,5.625pt);
\draw [color=qqwuqq] (0.6266666666666645,2.414213562373095)-- ++(-2.5pt,-2.5pt) -- ++(5pt,5pt) ++(-5pt,0) -- ++(5pt,-5pt);
\draw [fill=qqwuqq] (-0.2611111111111077,2.153102451261988) circle (2.5pt);
\draw [fill=qqwuqq] (1.4459956700754397,0.44599567007544016) circle (2.5pt);
\draw [fill=ccqqqq] (-0.4,2.0142) circle (2.5pt);
\draw [fill=ccqqqq] (1.3,0.3) circle (2.5pt);
\draw [color=ccqqqq] (0.4,2.4142)-- ++(-2.5pt,-2.5pt) -- ++(5pt,5pt) ++(-5pt,0) -- ++(5pt,-5pt);
\draw [fill=ccqqqq,shift={(0.8088888888888882,0)}] (0,0) ++(0 pt,3.75pt) -- ++(3.2475952641916446pt,-5.625pt)--++(-6.495190528383289pt,0 pt) -- ++(3.2475952641916446pt,5.625pt);
\end{scriptsize}
\draw [color=qqwuqq](0.1,2.3) node[anchor=north west] {$\alpha_i$};
\draw [color=qqwuqq](0.6,0.4) node[anchor=north west] {$\alpha_i$};
\draw [color=ccqqqq](1,0.6) node[anchor=north west] {$\beta_j$};
\draw [color=ccqqqq](-0.2,2.2) node[anchor=north west] {$\beta_j$};
\draw (-0.6,2.3) node[anchor=north west] {$e'$};
\draw (0.35,2.65) node[anchor=north west] {$e$};
\draw (0.35,0) node[anchor=north west] {$e$};
\draw (1.35,0.5) node[anchor=north west] {$e'$};
\end{tikzpicture}
\begin{tikzpicture}[line cap=round,line join=round,>=triangle 45,x=2cm,y=2cm]
\clip(-1,-0.2) rectangle (1.8,2.7);
\draw [line width=1pt] (0,0)-- (1,0);
\draw [line width=1pt] (1,0)-- (1.7071067811865475,0.7071067811865475);
\draw [line width=1pt] (1.7071067811865475,0.7071067811865475)-- (1.7071067811865475,1.7071067811865472);
\draw [line width=1pt] (1.7071067811865475,1.7071067811865472)-- (1,2.414213562373095);
\draw [line width=1pt] (1,2.414213562373095)-- (0,2.414213562373095);
\draw [line width=1pt] (0,2.414213562373095)-- (-0.7071067811865475,1.7071067811865477);
\draw [line width=1pt] (-0.7071067811865475,1.7071067811865477)-- (-0.7071067811865477,0.7071067811865478);
\draw [line width=1pt] (-0.7071067811865477,0.7071067811865478)-- (0,0);
\draw [line width=1pt,color=qqwuqq] (0.6266666666666643,0)-- (1.155555555555555,0.15555555555555506);
\draw [line width=1pt,color=qqwuqq] (-0.2611111111111077,2.153102451261988)-- (0.6266666666666645,2.414213562373095);
\draw [line width=1pt,color=ccqqqq] (0.8088888888888882,0)-- (0.235555555555555,2.414213562373095);
\begin{scriptsize}
\draw [color=qqwuqq] (0.6266666666666643,0)-- ++(-2.5pt,-2.5pt) -- ++(5pt,5pt) ++(-5pt,0) -- ++(5pt,-5pt);
\draw [fill=qqwuqq,shift={(1.155555555555555,0.15555555555555506)}] (0,0) ++(0 pt,3.75pt) -- ++(3.2475952641916446pt,-5.625pt)--++(-6.495190528383289pt,0 pt) -- ++(3.2475952641916446pt,5.625pt);
\draw [color=qqwuqq] (0.6266666666666645,2.414213562373095)-- ++(-2.5pt,-2.5pt) -- ++(5pt,5pt) ++(-5pt,0) -- ++(5pt,-5pt);
\draw [fill=qqwuqq] (-0.2611111111111077,2.153102451261988) circle (2.5pt);
\draw [fill=qqwuqq] (1.4459956700754397,0.44599567007544016) circle (2.5pt);
\end{scriptsize}
\draw [color=qqwuqq](-0.1,2.2) node[anchor=north west] {$\alpha_i$};
\draw [color=qqwuqq](0.8,0.35) node[anchor=north west] {$\alpha_i$};
\draw [color=ccqqqq](0.6,1.4) node[anchor=north west] {$\beta_j$};
\draw (-0.6,2.3) node[anchor=north west] {$e'$};
\draw (0.35,2.65) node[anchor=north west] {$e$};
\draw (0.35,0) node[anchor=north west] {$e$};
\draw (1.35,0.5) node[anchor=north west] {$e'$};
\end{tikzpicture}
\caption{The two cases where $\GIntij = 2$.}
\label{fig:case_two_intersections}
\end{figure}

\begin{Rema}\label{rk:not_simultaneous_cases}
Notice that the two cases cannot happen simultaneously since in case $(i)$ we have $\{e,e'\} = \{e_{\sigma_{\beta}(1)}, e_{\sigma_{\beta}(0)}\}$ while in case $(ii)$ we have $\{e,e'\} = \{e_{\sigma_{\beta}(n/2-1)}, e_{\sigma_{\beta}(n/2-2)}\}$.
\end{Rema}

\begin{proof}[Proof of Lemma \ref{lem:cases_2_intersections}]
\textbf{$1^{st}$ case:} Assume that both $\alpha_i$ and $\beta_j$ are sandwiched segments. Then, the study of the intersections is exactly the same as the study of the intersections of pairs of sandwiched segments in the double $n$-gon for odd $n$, see \cite[\S 6.3]{BLM22}. Up to a rotation or a symmetry, we can assume $\alpha_i$ is sandwiched of type $e_2 \to e_1 \to e_2$.\newline

In particular, $\GIntij = 0$ unless $\beta_j$ has one of the following type:
$$
\begin{array}{llllllll}
(1)& e_0 \to e_1 \to e_0 &&&&& (2)& e_1 \to e_0 \to e_1 \\
(3)& e_1 \to e_2 \to e_1 &&&&& (4)& e_2 \to e_1 \to e_2 \\
(5)& e_2 \to e_3 \to e_2 &&&&& (6)& e_3 \to e_2 \to e_3
\end{array}
$$
(This is because $\alpha_i$ and $\beta_j$ have to share at least a common side of the $n$-gon to intersect).\newline

Similarly to the case of the double $n$-gon for odd $n$, we can show (see Figure~\ref{fig:intersections_sandwichees}) that in all cases but $(3)$ we have $\GIntij \leq 1$. In particular, the case $(3)$ is the only case where we possibly have $\GIntij = 2$ (as in the left of Figure \ref{fig:case_two_intersections}). \newline
\textbf{$2^{nd}$ case:} Up to permutation of $\alpha$ and $\beta$, $\alpha_i$ is sandwiched (say of type $e' \to e \to e'$) while $\beta_j$ is not.
In this case, we see easily that $\beta_j$ has to be a long segment whose extremities lie on the sides $e$ or $e'$ in order to get two intersections, as in the right of Figure \ref{fig:case_two_intersections}.
\end{proof}

\begin{figure}
\center
\definecolor{wwwwww}{rgb}{0.4,0.4,0.4}
\definecolor{ccqqqq}{rgb}{0.8,0,0}
\definecolor{qqwuqq}{rgb}{0,0.39215686274509803,0}
\begin{tikzpicture}[line cap=round,line join=round,>=triangle 45,x=1cm,y=1cm, scale = 1.7]
\clip(-1.7,-2) rectangle (2.5,2);
\fill[line width=1pt,color=wwwwww,fill=wwwwww,fill opacity=0.1] (-0.8090169943749475,0.5877852522924734) -- (0,0) -- (1,0) -- (1.809016994374947,0.5877852522924729) -- (2.118033988749895,1.5388417685876261) -- cycle;
\fill[line width=1pt,color=wwwwww,fill=wwwwww,fill opacity=0.1] (-1.1180339887498947,-1.5388417685876261) -- (-0.809016994374947,-0.5877852522924729) -- (0,0) -- (1,0) -- (1.8090169943749475,-0.5877852522924734) -- cycle;
\draw [line width=1pt] (-0.8090169943749475,0.5877852522924734)-- (0,0);
\draw [line width=1pt] (0,0)-- (1,0);
\draw [line width=1pt] (1,0)-- (1.809016994374947,0.5877852522924729);
\draw [line width=1pt] (1.809016994374947,0.5877852522924729)-- (2.118033988749895,1.5388417685876261);
\draw [line width=1pt] (0,0)-- (-0.809016994374947,-0.5877852522924729);
\draw [line width=1pt] (-0.809016994374947,-0.5877852522924729)-- (-1.1180339887498947,-1.5388417685876261);
\draw [line width=1pt] (1,0)-- (1.8090169943749475,-0.5877852522924734);
\draw [line width=1pt,color=ccqqqq] (-0.17235204515320687,-0.12522109059250505)-- (1.267800736216506,0.19456862389243712);
\draw (0.3,1.4) node[anchor=north west] {$\cdots$};
\draw (0.3,-0.9) node[anchor=north west] {$\cdots$};
\draw (1.3,0.3) node[anchor=north west] {$e_2$};
\draw (0.3,0) node[anchor=north west] {$e_1$};
\draw (-0.9,0.45) node[anchor=north west] {$e_0$};
\draw (1.45,0) node[anchor=north west] {$e_0$};
\draw (-1.4,-0.85) node[anchor=north west] {$e_3$};
\draw (-0.85,0.05) node[anchor=north west] {$e_2$};
\draw (1.95,1.2) node[anchor=north west] {$e_3$};
\draw [color=ccqqqq](0.6,0.5511111111111096) node[anchor=north west] {$\alpha_i$};
\draw (0.5, -1.5) node[below] {$(1)$ $e_0 \to e_1 \to e_0$};
\draw [line width=1pt,color=qqwuqq] (-0.49,0.36) -- (1.41,-0.3);
\draw [color=qqwuqq](-0.1,0.7) node[anchor=north west] {$\beta_j$};
\begin{scriptsize}
\draw [fill=qqwuqq] (-0.49,0.36) circle (2pt);
\draw [color=qqwuqq] (1.41,-0.3)-- ++(-2.5pt,-2.5pt) -- ++(5pt,5pt) ++(-5pt,0) -- ++(5pt,-5pt);
\draw [fill=black] (0,0) circle (1pt);
\draw [fill=black] (1,0) circle (1pt);
\draw [fill=black] (1.809016994374947,0.5877852522924729) circle (1pt);
\draw [fill=black] (2.118033988749895,1.5388417685876261) circle (1pt);
\draw [fill=black] (-0.8090169943749475,0.5877852522924734) circle (1pt);
\draw [fill=black] (-0.809016994374947,-0.5877852522924729) circle (1pt);
\draw [fill=black] (-1.1180339887498947,-1.5388417685876261) circle (1pt);
\draw [fill=black] (1.8090169943749475,-0.5877852522924734) circle (1pt);
\draw [fill=ccqqqq] (-0.17235204515320687,-0.12522109059250505) circle (2pt);
\draw [color=ccqqqq] (1.267800736216506,0.19456862389243712)-- ++(-2.5pt,-2.5pt) -- ++(5pt,5pt) ++(-5pt,0) -- ++(5pt,-5pt);
\draw [color=ccqqqq] (-0.541216258158441,-0.3932166284000358)-- ++(-2.5pt,-2.5pt) -- ++(5pt,5pt) ++(-5pt,0) -- ++(5pt,-5pt);
\draw [fill=ccqqqq] (1.6366649492217402,0.46256416169996784) circle (2pt);
\end{scriptsize}
\end{tikzpicture}
\begin{tikzpicture}[line cap=round,line join=round,>=triangle 45,x=1cm,y=1cm, scale = 1.7]
\clip(-1.7,-2) rectangle (2.5,2);
\fill[line width=1pt,color=wwwwww,fill=wwwwww,fill opacity=0.1] (-0.8090169943749475,0.5877852522924734) -- (0,0) -- (1,0) -- (1.809016994374947,0.5877852522924729) -- (2.118033988749895,1.5388417685876261) -- cycle;
\fill[line width=1pt,color=wwwwww,fill=wwwwww,fill opacity=0.1] (-1.1180339887498947,-1.5388417685876261) -- (-0.809016994374947,-0.5877852522924729) -- (0,0) -- (1,0) -- (1.8090169943749475,-0.5877852522924734) -- cycle;
\draw [line width=1pt] (-0.8090169943749475,0.5877852522924734)-- (0,0);
\draw [line width=1pt] (0,0)-- (1,0);
\draw [line width=1pt] (1,0)-- (1.809016994374947,0.5877852522924729);
\draw [line width=1pt] (1.809016994374947,0.5877852522924729)-- (2.118033988749895,1.5388417685876261);
\draw [line width=1pt] (0,0)-- (-0.809016994374947,-0.5877852522924729);
\draw [line width=1pt] (-0.809016994374947,-0.5877852522924729)-- (-1.1180339887498947,-1.5388417685876261);
\draw [line width=1pt] (1,0)-- (1.8090169943749475,-0.5877852522924734);
\draw [line width=1pt,color=ccqqqq] (-0.17235204515320687,-0.12522109059250505)-- (1.267800736216506,0.19456862389243712);
\draw (0.3,1.4) node[anchor=north west] {$\cdots$};
\draw (0.3,-0.9) node[anchor=north west] {$\cdots$};
\draw (1.3,0.3) node[anchor=north west] {$e_2$};
\draw (0.3,0) node[anchor=north west] {$e_1$};
\draw (-0.9,0.45) node[anchor=north west] {$e_0$};
\draw (1.45,0) node[anchor=north west] {$e_0$};
\draw (-1.4,-0.85) node[anchor=north west] {$e_3$};
\draw (-0.85,0.05) node[anchor=north west] {$e_2$};
\draw (1.95,1.2) node[anchor=north west] {$e_3$};
\draw [color=ccqqqq](0.6,0.5511111111111096) node[anchor=north west] {$\alpha_i$};
\draw (0.5, -1.5) node[below] {$(2)$ $e_1 \to e_0 \to e_1$};
\draw [line width=1pt,color=qqwuqq] (0.76,0) -- (1.26,-0.19);
\draw [line width=1pt,color=qqwuqq] (-0.55,0.4) -- (0.53,0);
\draw [color=qqwuqq](-0.1,0.7) node[anchor=north west] {$\beta_j$};
\begin{scriptsize}
\draw [fill=qqwuqq] (1.26,-0.19) circle (2pt);
\draw [fill=qqwuqq] (-0.55,0.4) circle (2pt);
\draw [color=qqwuqq] (0.53,0)-- ++(-2.5pt,-2.5pt) -- ++(5pt,5pt) ++(-5pt,0) -- ++(5pt,-5pt);
\draw [fill=qqwuqq,shift={(0.76,0)}] (0,0) ++(0 pt,1.825pt) -- ++(1.62pt,-2.81pt)--++(-3.25pt,0 pt) -- ++(1.62pt,2.81pt);
\draw [fill=black] (0,0) circle (1pt);
\draw [fill=black] (1,0) circle (1pt);
\draw [fill=black] (1.809016994374947,0.5877852522924729) circle (1pt);
\draw [fill=black] (2.118033988749895,1.5388417685876261) circle (1pt);
\draw [fill=black] (-0.8090169943749475,0.5877852522924734) circle (1pt);
\draw [fill=black] (-0.809016994374947,-0.5877852522924729) circle (1pt);
\draw [fill=black] (-1.1180339887498947,-1.5388417685876261) circle (1pt);
\draw [fill=black] (1.8090169943749475,-0.5877852522924734) circle (1pt);
\draw [fill=ccqqqq] (-0.17235204515320687,-0.12522109059250505) circle (2pt);
\draw [color=ccqqqq] (1.267800736216506,0.19456862389243712)-- ++(-2.5pt,-2.5pt) -- ++(5pt,5pt) ++(-5pt,0) -- ++(5pt,-5pt);
\draw [color=ccqqqq] (-0.541216258158441,-0.3932166284000358)-- ++(-2.5pt,-2.5pt) -- ++(5pt,5pt) ++(-5pt,0) -- ++(5pt,-5pt);
\draw [fill=ccqqqq] (1.6366649492217402,0.46256416169996784) circle (2pt);
\end{scriptsize}
\end{tikzpicture}
\begin{tikzpicture}[line cap=round,line join=round,>=triangle 45,x=1cm,y=1cm, scale = 1.7]
\clip(-1.7,-2) rectangle (2.5,2);
\fill[line width=1pt,color=wwwwww,fill=wwwwww,fill opacity=0.1] (-0.8090169943749475,0.5877852522924734) -- (0,0) -- (1,0) -- (1.809016994374947,0.5877852522924729) -- (2.118033988749895,1.5388417685876261) -- cycle;
\fill[line width=1pt,color=wwwwww,fill=wwwwww,fill opacity=0.1] (-1.1180339887498947,-1.5388417685876261) -- (-0.809016994374947,-0.5877852522924729) -- (0,0) -- (1,0) -- (1.8090169943749475,-0.5877852522924734) -- cycle;
\draw [line width=1pt] (-0.8090169943749475,0.5877852522924734)-- (0,0);
\draw [line width=1pt] (0,0)-- (1,0);
\draw [line width=1pt] (1,0)-- (1.809016994374947,0.5877852522924729);
\draw [line width=1pt] (1.809016994374947,0.5877852522924729)-- (2.118033988749895,1.5388417685876261);
\draw [line width=1pt] (0,0)-- (-0.809016994374947,-0.5877852522924729);
\draw [line width=1pt] (-0.809016994374947,-0.5877852522924729)-- (-1.1180339887498947,-1.5388417685876261);
\draw [line width=1pt] (1,0)-- (1.8090169943749475,-0.5877852522924734);
\draw [line width=1pt,color=ccqqqq] (-0.17235204515320687,-0.12522109059250505)-- (1.267800736216506,0.19456862389243712);
\draw (0.3,1.4) node[anchor=north west] {$\cdots$};
\draw (0.3,-0.9) node[anchor=north west] {$\cdots$};
\draw (1.3,0.3) node[anchor=north west] {$e_2$};
\draw (0.3,0) node[anchor=north west] {$e_1$};
\draw (-0.9,0.45) node[anchor=north west] {$e_0$};
\draw (1.45,0) node[anchor=north west] {$e_0$};
\draw (-1.4,-0.85) node[anchor=north west] {$e_3$};
\draw (-0.85,0.05) node[anchor=north west] {$e_2$};
\draw (1.95,1.2) node[anchor=north west] {$e_3$};
\draw [color=ccqqqq](0.6,0.5511111111111096) node[anchor=north west] {$\alpha_i$};
\draw (0.5, -1.5) node[below] {$(3)$ $e_1 \to e_2 \to e_1$};
\draw [line width=1pt,color=qqwuqq] (0.17,0) -- (-0.35,-0.25);
\draw [line width=1pt,color=qqwuqq] (1.46,0.33) -- (0.78,0);
\draw [color=qqwuqq](-0.1,-0.1) node[anchor=north west] {$\beta_j$};
\begin{scriptsize}
\draw [fill=qqwuqq] (-0.35,-0.25) circle (2pt);
\draw [fill=qqwuqq] (1.46,0.33) circle (2pt);
\draw [color=qqwuqq] (0.17,0)-- ++(-2.5pt,-2.5pt) -- ++(5pt,5pt) ++(-5pt,0) -- ++(5pt,-5pt);
\draw [fill=qqwuqq,shift={(0.78,0)}] (0,0) ++(0 pt,1.825pt) -- ++(1.62pt,-2.81pt)--++(-3.25pt,0 pt) -- ++(1.62pt,2.81pt);
\draw [fill=black] (0,0) circle (1pt);
\draw [fill=black] (1,0) circle (1pt);
\draw [fill=black] (1.809016994374947,0.5877852522924729) circle (1pt);
\draw [fill=black] (2.118033988749895,1.5388417685876261) circle (1pt);
\draw [fill=black] (-0.8090169943749475,0.5877852522924734) circle (1pt);
\draw [fill=black] (-0.809016994374947,-0.5877852522924729) circle (1pt);
\draw [fill=black] (-1.1180339887498947,-1.5388417685876261) circle (1pt);
\draw [fill=black] (1.8090169943749475,-0.5877852522924734) circle (1pt);
\draw [fill=ccqqqq] (-0.17235204515320687,-0.12522109059250505) circle (2pt);
\draw [color=ccqqqq] (1.267800736216506,0.19456862389243712)-- ++(-2.5pt,-2.5pt) -- ++(5pt,5pt) ++(-5pt,0) -- ++(5pt,-5pt);
\draw [color=ccqqqq] (-0.541216258158441,-0.3932166284000358)-- ++(-2.5pt,-2.5pt) -- ++(5pt,5pt) ++(-5pt,0) -- ++(5pt,-5pt);
\draw [fill=ccqqqq] (1.6366649492217402,0.46256416169996784) circle (2pt);
\end{scriptsize}
\end{tikzpicture}
\begin{tikzpicture}[line cap=round,line join=round,>=triangle 45,x=1cm,y=1cm, scale = 1.7]
\clip(-1.7,-2) rectangle (2.5,2);
\fill[line width=1pt,color=wwwwww,fill=wwwwww,fill opacity=0.1] (-0.8090169943749475,0.5877852522924734) -- (0,0) -- (1,0) -- (1.809016994374947,0.5877852522924729) -- (2.118033988749895,1.5388417685876261) -- cycle;
\fill[line width=1pt,color=wwwwww,fill=wwwwww,fill opacity=0.1] (-1.1180339887498947,-1.5388417685876261) -- (-0.809016994374947,-0.5877852522924729) -- (0,0) -- (1,0) -- (1.8090169943749475,-0.5877852522924734) -- cycle;
\draw [line width=1pt] (-0.8090169943749475,0.5877852522924734)-- (0,0);
\draw [line width=1pt] (0,0)-- (1,0);
\draw [line width=1pt] (1,0)-- (1.809016994374947,0.5877852522924729);
\draw [line width=1pt] (1.809016994374947,0.5877852522924729)-- (2.118033988749895,1.5388417685876261);
\draw [line width=1pt] (0,0)-- (-0.809016994374947,-0.5877852522924729);
\draw [line width=1pt] (-0.809016994374947,-0.5877852522924729)-- (-1.1180339887498947,-1.5388417685876261);
\draw [line width=1pt] (1,0)-- (1.8090169943749475,-0.5877852522924734);
\draw [line width=1pt,color=ccqqqq] (-0.17235204515320687,-0.12522109059250505)-- (1.267800736216506,0.19456862389243712);
\draw (0.3,1.4) node[anchor=north west] {$\cdots$};
\draw (0.3,-0.9) node[anchor=north west] {$\cdots$};
\draw (1.3,0.3) node[anchor=north west] {$e_2$};
\draw (0.3,0) node[anchor=north west] {$e_1$};
\draw (-0.9,0.45) node[anchor=north west] {$e_0$};
\draw (1.45,0) node[anchor=north west] {$e_0$};
\draw (-1.4,-0.85) node[anchor=north west] {$e_3$};
\draw (-0.85,0.05) node[anchor=north west] {$e_2$};
\draw (1.95,1.2) node[anchor=north west] {$e_3$};
\draw [color=ccqqqq](0.6,0.5511111111111096) node[anchor=north west] {$\alpha_i$};
\draw (0.5, -1.5) node[below] {$(4)$ $e_2 \to e_1 \to e_2$};
\draw [line width=1pt,color=qqwuqq] (-0.37,-0.27) -- (1.38,0.28);
\draw [color=qqwuqq](-0.1,-0.1) node[anchor=north west] {$\beta_j$};
\begin{scriptsize}
\draw [fill=qqwuqq] (-0.37,-0.27) circle (2pt);
\draw [color=qqwuqq] (1.38,0.28)-- ++(-2.5pt,-2.5pt) -- ++(5pt,5pt) ++(-5pt,0) -- ++(5pt,-5pt);
\draw [fill=black] (0,0) circle (1pt);
\draw [fill=black] (1,0) circle (1pt);
\draw [fill=black] (1.809016994374947,0.5877852522924729) circle (1pt);
\draw [fill=black] (2.118033988749895,1.5388417685876261) circle (1pt);
\draw [fill=black] (-0.8090169943749475,0.5877852522924734) circle (1pt);
\draw [fill=black] (-0.809016994374947,-0.5877852522924729) circle (1pt);
\draw [fill=black] (-1.1180339887498947,-1.5388417685876261) circle (1pt);
\draw [fill=black] (1.8090169943749475,-0.5877852522924734) circle (1pt);
\draw [fill=ccqqqq] (-0.17235204515320687,-0.12522109059250505) circle (2pt);
\draw [color=ccqqqq] (1.267800736216506,0.19456862389243712)-- ++(-2.5pt,-2.5pt) -- ++(5pt,5pt) ++(-5pt,0) -- ++(5pt,-5pt);
\draw [color=ccqqqq] (-0.541216258158441,-0.3932166284000358)-- ++(-2.5pt,-2.5pt) -- ++(5pt,5pt) ++(-5pt,0) -- ++(5pt,-5pt);
\draw [fill=ccqqqq] (1.6366649492217402,0.46256416169996784) circle (2pt);
\end{scriptsize}
\end{tikzpicture}
\begin{tikzpicture}[line cap=round,line join=round,>=triangle 45,x=1cm,y=1cm, scale = 1.7]
\clip(-1.7,-2) rectangle (2.5,2);
\fill[line width=1pt,color=wwwwww,fill=wwwwww,fill opacity=0.1] (-0.8090169943749475,0.5877852522924734) -- (0,0) -- (1,0) -- (1.809016994374947,0.5877852522924729) -- (2.118033988749895,1.5388417685876261) -- cycle;
\fill[line width=1pt,color=wwwwww,fill=wwwwww,fill opacity=0.1] (-1.1180339887498947,-1.5388417685876261) -- (-0.809016994374947,-0.5877852522924729) -- (0,0) -- (1,0) -- (1.8090169943749475,-0.5877852522924734) -- cycle;
\draw [line width=1pt] (-0.8090169943749475,0.5877852522924734)-- (0,0);
\draw [line width=1pt] (0,0)-- (1,0);
\draw [line width=1pt] (1,0)-- (1.809016994374947,0.5877852522924729);
\draw [line width=1pt] (1.809016994374947,0.5877852522924729)-- (2.118033988749895,1.5388417685876261);
\draw [line width=1pt] (0,0)-- (-0.809016994374947,-0.5877852522924729);
\draw [line width=1pt] (-0.809016994374947,-0.5877852522924729)-- (-1.1180339887498947,-1.5388417685876261);
\draw [line width=1pt] (1,0)-- (1.8090169943749475,-0.5877852522924734);
\draw [line width=1pt,color=ccqqqq] (-0.17235204515320687,-0.12522109059250505)-- (1.267800736216506,0.19456862389243712);
\draw (0.3,1.4) node[anchor=north west] {$\cdots$};
\draw (0.3,-0.9) node[anchor=north west] {$\cdots$};
\draw (1.3,0.3) node[anchor=north west] {$e_2$};
\draw (0.3,0) node[anchor=north west] {$e_1$};
\draw (-0.9,0.45) node[anchor=north west] {$e_0$};
\draw (1.45,0) node[anchor=north west] {$e_0$};
\draw (-1.4,-0.85) node[anchor=north west] {$e_3$};
\draw (-0.85,0.05) node[anchor=north west] {$e_2$};
\draw (2.05,1.2) node[anchor=north west] {$e_3$};
\draw [color=ccqqqq](0.6,0.5511111111111096) node[anchor=north west] {$\alpha_i$};
\draw (0.5, -1.5) node[below] {$(5)$ $e_2 \to e_3 \to e_2$};
\draw [line width=1pt,color=qqwuqq] (1.49,0.36) -- (1.96,1.05);
\draw [line width=1pt,color=qqwuqq] (-0.97,-1.08) -- (-0.47,-0.34);
\draw [color=qqwuqq](1.3,1) node[anchor=north west] {$\beta_j$};
\begin{scriptsize}
\draw [fill=qqwuqq] (1.96,1.05) circle (2pt);
\draw [fill=qqwuqq] (-0.97,-1.08) circle (2pt);
\draw [color=qqwuqq] (1.49,0.36)-- ++(-2.5pt,-2.5pt) -- ++(5pt,5pt) ++(-5pt,0) -- ++(5pt,-5pt);
\draw [fill=qqwuqq,shift={(-0.47,-0.34)}] (0,0) ++(0 pt,1.825pt) -- ++(1.62pt,-2.81pt)--++(-3.25pt,0 pt) -- ++(1.62pt,2.81pt);
\draw [fill=black] (0,0) circle (1pt);
\draw [fill=black] (1,0) circle (1pt);
\draw [fill=black] (1.809016994374947,0.5877852522924729) circle (1pt);
\draw [fill=black] (2.118033988749895,1.5388417685876261) circle (1pt);
\draw [fill=black] (-0.8090169943749475,0.5877852522924734) circle (1pt);
\draw [fill=black] (-0.809016994374947,-0.5877852522924729) circle (1pt);
\draw [fill=black] (-1.1180339887498947,-1.5388417685876261) circle (1pt);
\draw [fill=black] (1.8090169943749475,-0.5877852522924734) circle (1pt);
\draw [fill=ccqqqq] (-0.17235204515320687,-0.12522109059250505) circle (2pt);
\draw [color=ccqqqq] (1.267800736216506,0.19456862389243712)-- ++(-2.5pt,-2.5pt) -- ++(5pt,5pt) ++(-5pt,0) -- ++(5pt,-5pt);
\draw [color=ccqqqq] (-0.541216258158441,-0.3932166284000358)-- ++(-2.5pt,-2.5pt) -- ++(5pt,5pt) ++(-5pt,0) -- ++(5pt,-5pt);
\draw [fill=ccqqqq] (1.6366649492217402,0.46256416169996784) circle (2pt);
\end{scriptsize}
\end{tikzpicture}
\begin{tikzpicture}[line cap=round,line join=round,>=triangle 45,x=1cm,y=1cm, scale = 1.7]
\clip(-1.7,-2) rectangle (2.5,2);
\fill[line width=1pt,color=wwwwww,fill=wwwwww,fill opacity=0.1] (-0.8090169943749475,0.5877852522924734) -- (0,0) -- (1,0) -- (1.809016994374947,0.5877852522924729) -- (2.118033988749895,1.5388417685876261) -- cycle;
\fill[line width=1pt,color=wwwwww,fill=wwwwww,fill opacity=0.1] (-1.1180339887498947,-1.5388417685876261) -- (-0.809016994374947,-0.5877852522924729) -- (0,0) -- (1,0) -- (1.8090169943749475,-0.5877852522924734) -- cycle;
\draw [line width=1pt] (-0.8090169943749475,0.5877852522924734)-- (0,0);
\draw [line width=1pt] (0,0)-- (1,0);
\draw [line width=1pt] (1,0)-- (1.809016994374947,0.5877852522924729);
\draw [line width=1pt] (1.809016994374947,0.5877852522924729)-- (2.118033988749895,1.5388417685876261);
\draw [line width=1pt] (0,0)-- (-0.809016994374947,-0.5877852522924729);
\draw [line width=1pt] (-0.809016994374947,-0.5877852522924729)-- (-1.1180339887498947,-1.5388417685876261);
\draw [line width=1pt] (1,0)-- (1.8090169943749475,-0.5877852522924734);
\draw [line width=1pt,color=ccqqqq] (-0.17235204515320687,-0.12522109059250505)-- (1.267800736216506,0.19456862389243712);
\draw (0.3,1.4) node[anchor=north west] {$\cdots$};
\draw (0.3,-0.9) node[anchor=north west] {$\cdots$};
\draw (1.3,0.3) node[anchor=north west] {$e_2$};
\draw (0.3,0) node[anchor=north west] {$e_1$};
\draw (-0.9,0.45) node[anchor=north west] {$e_0$};
\draw (1.45,0) node[anchor=north west] {$e_0$};
\draw (-1.4,-0.85) node[anchor=north west] {$e_3$};
\draw (-0.85,0.05) node[anchor=north west] {$e_2$};
\draw (2.05,1.2) node[anchor=north west] {$e_3$};
\draw [color=ccqqqq](0.6,0.5511111111111096) node[anchor=north west] {$\alpha_i$};
\draw (0.5, -1.5) node[below] {$(6)$ $e_3 \to e_2 \to e_3$};
\draw [line width=1pt,color=qqwuqq] (1.99,1.14) -- (1.39,0.29);
\draw [line width=1pt,color=qqwuqq] (-0.98,-1.11) -- (-0.41,-0.3);
\draw [color=qqwuqq](1.3,1) node[anchor=north west] {$\beta_j$};
\begin{scriptsize}
\draw [fill=qqwuqq] (1.39,0.29) circle (2pt);
\draw [fill=qqwuqq] (-0.41,-0.3) circle (2pt);
\draw [color=qqwuqq] (-0.98,-1.11)-- ++(-2.5pt,-2.5pt) -- ++(5pt,5pt) ++(-5pt,0) -- ++(5pt,-5pt);
\draw [fill=qqwuqq,shift={(1.99,1.14)}] (0,0) ++(0 pt,1.825pt) -- ++(1.62pt,-2.81pt)--++(-3.25pt,0 pt) -- ++(1.62pt,2.81pt);
\draw [fill=black] (0,0) circle (1pt);
\draw [fill=black] (1,0) circle (1pt);
\draw [fill=black] (1.809016994374947,0.5877852522924729) circle (1pt);
\draw [fill=black] (2.118033988749895,1.5388417685876261) circle (1pt);
\draw [fill=black] (-0.8090169943749475,0.5877852522924734) circle (1pt);
\draw [fill=black] (-0.809016994374947,-0.5877852522924729) circle (1pt);
\draw [fill=black] (-1.1180339887498947,-1.5388417685876261) circle (1pt);
\draw [fill=black] (1.8090169943749475,-0.5877852522924734) circle (1pt);
\draw [fill=ccqqqq] (-0.17235204515320687,-0.12522109059250505) circle (2pt);
\draw [color=ccqqqq] (1.267800736216506,0.19456862389243712)-- ++(-2.5pt,-2.5pt) -- ++(5pt,5pt) ++(-5pt,0) -- ++(5pt,-5pt);
\draw [color=ccqqqq] (-0.541216258158441,-0.3932166284000358)-- ++(-2.5pt,-2.5pt) -- ++(5pt,5pt) ++(-5pt,0) -- ++(5pt,-5pt);
\draw [fill=ccqqqq] (1.6366649492217402,0.46256416169996784) circle (2pt);
\end{scriptsize}
\end{tikzpicture}
\caption{The six cases in Lemma \ref{lem:cases_2_intersections}, $(i)$.}
\label{fig:intersections_sandwichees}
\end{figure}
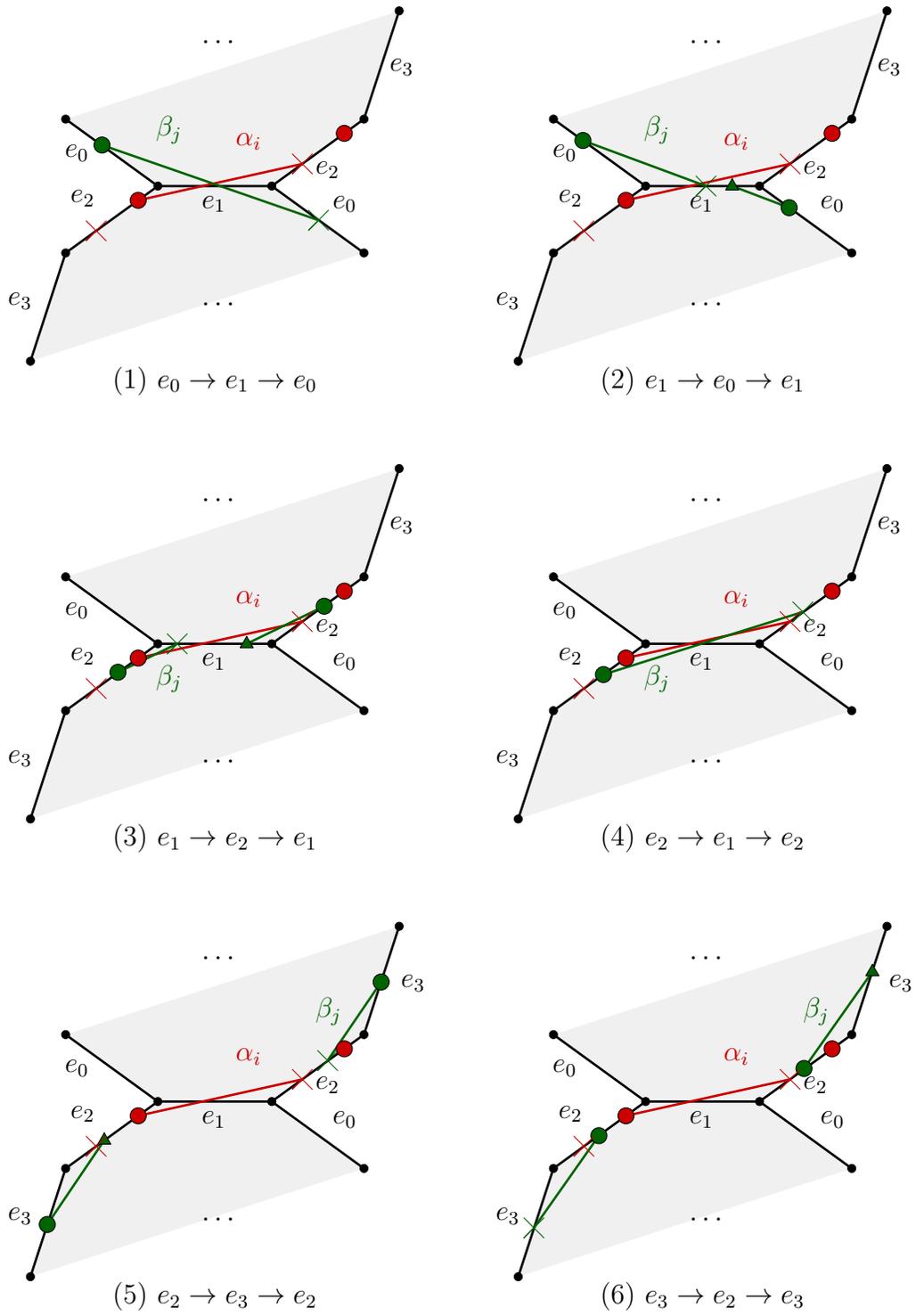

In light of Lemma \ref{lem:cases_2_intersections} and Remark \ref{rk:not_simultaneous_cases}, we distinguish three mutually exclusive cases:
\begin{enumerate}
\item[(0)] There is no configuration of type $(i)$ or $(ii)$.
\item[(i)] There exists $i,j$ such that $\alpha_i$ and $\beta_j$ are in a configuration of type $(i)$.
\item[(ii)] There exists $i,j$ such that $\alpha_i$ and $\beta_j$ are in a configuration of type $(ii)$.
\end{enumerate}

\paragraph{\underline{Case $(0)$: There is no configuration of type $(i)$ or $(ii)$.}}
In this case, we deduce from Equation \eqref{eq:intersections} and Lemma \ref{lem:cases_2_intersections} that:
\begin{equation}\label{eq:intersection2}
|\alpha \cap \beta| \leq kl.
\end{equation}
In order to get Equation \eqref{eq:intersection_general}, we either improve the inequality on the intersections (Equation \eqref{eq:intersection2}) or the inequality on the lengths $l(\alpha)l(\beta) > kl \cdot l_0^2$. For this purpose, we distinguish two cases:
\begin{enumerate}
\item If $\alpha$ is not contained in the cylinder defined by $e_{\sigma_{\alpha}(0)}, e_{\sigma_{\alpha}(1)}$ as in Figure \ref{case_1_octogone} (in the example of the octagon), then we can find two consecutive segments $\alpha_i$ and $\alpha_{i+1}$ which are not contained in the cylinder. As explained in Figure \ref{case_1_octogone}, the length $l(\alpha_i) + l(\alpha_{i+1})$ should be at least $\Phi^2 l_0$. Since $\Phi^2 > 3$, we have $l(\alpha_i) + l(\alpha_{i+1}) > 3 l_0$, so that $l(\alpha) > (k+1)l_0$, and $$l(\alpha)l(\beta) > (kl+1)l_0^2.$$

Using Equation \eqref{eq:intersection2} we have directly that Equation \eqref{eq:intersection_general} holds (the inequality being strict in this case). By symmetry, the same argument holds if $\beta$ is not contained in the cylinder defined by $e_{\sigma_{\beta}(0)}, e_{\sigma_{\beta}(1)}$.

\begin{figure}[h]
\center
\definecolor{ccqqqq}{rgb}{0.8,0,0}
\definecolor{wwwwww}{rgb}{0.4,0.4,0.4}
\begin{tikzpicture}[line cap=round,line join=round,>=triangle 45,x=1cm,y=1cm]
\clip(-2.5,-1) rectangle (5.9,5.5);
\fill[line width=1pt,color=wwwwww,fill=wwwwww,fill opacity=0.2] (-2.4142135623730954,1.4142135623730956) -- (2.414213562373095,1.414213562373095) -- (1,0) -- (-1,0) -- cycle;
\fill[line width=1pt,fill=black,fill opacity=0.05] (-1,4.82842712474619) -- (1,4.82842712474619) -- (2.414213562373095,3.4142135623730945) -- (-2.414213562373095,3.4142135623730954) -- cycle;
\fill[line width=1pt,color=wwwwww,fill=wwwwww,fill opacity=0.4] (2.414213562373095,1.414213562373095) -- (1,0) -- (-1,0) -- cycle;
\fill[line width=1pt,color=wwwwww,fill=wwwwww,fill opacity=0.2] (1,0) -- (2.414213562373095,1.414213562373095) -- (4.414213562373095,1.414213562373095) -- (5.82842712474619,0) -- cycle;
\fill[line width=1pt,color=wwwwww,fill=wwwwww,fill opacity=0.4] (1,0) -- (2.414213562373095,1.414213562373095) -- (4.414213562373095,1.414213562373095)-- cycle;
\draw [line width=1pt] (-1,0)-- (1,0);
\draw [line width=1pt] (1,0)-- (2.414213562373095,1.414213562373095);
\draw [line width=1pt] (2.414213562373095,1.414213562373095)-- (2.414213562373095,3.4142135623730945);
\draw [line width=1pt] (2.414213562373095,3.4142135623730945)-- (1,4.82842712474619);
\draw [line width=1pt] (1,4.82842712474619)-- (-1,4.82842712474619);
\draw [line width=1pt] (-1,4.82842712474619)-- (-2.414213562373095,3.4142135623730954);
\draw [line width=1pt] (-2.414213562373095,3.4142135623730954)-- (-2.4142135623730954,1.4142135623730956);
\draw [line width=1pt] (-2.4142135623730954,1.4142135623730956)-- (-1,0);
\draw [line width=1pt,color=wwwwww] (2.414213562373095,1.414213562373095)-- (4.414213562373095,1.414213562373095);
\draw [line width=1pt,color=wwwwww] (4.414213562373095,1.414213562373095)-- (5.82842712474619,0);
\draw [line width=1pt,dash pattern=on 3pt off 3pt] (-1,0)-- (2.414213562373095,1.414213562373095);
\draw [line width=1pt,dash pattern=on 3pt off 3pt] (1,0)-- (4.414213562373095,1.414213562373095);
\draw [line width=1pt,color=ccqqqq] (1.55,0.55)-- (5.044213562373095,0.7842135623730951);
\draw [line width=1pt,color=ccqqqq] (-1.7842135623730955,0.7842135623730958)-- (2.0405859638932142,1.0405859638932142);
\draw [line width=1pt] (-0.96,-0.2)-- (5.86,-0.2);
\draw [line width=1pt] (-0.96,-0.2)-- (-0.86,-0.1);
\draw [line width=1pt] (-0.96,-0.2)-- (-0.86,-0.3);
\draw [line width=1pt] (5.76,-0.1)-- (5.86,-0.2);
\draw [line width=1pt] (5.76,-0.3)-- (5.86,-0.2);
\draw (1.62,-0.26) node[anchor=north west] {$\Phi^2 l_0 > 3l_0$};
\draw (-0.05,0.45) node[anchor=north west] {$e_0$};
\draw (1.7,1) node[anchor=north west] {$e_1$};
\draw [color=ccqqqq](-0.5,1.4) node[anchor=north west] {$\alpha_{i+1}$};
\draw [color=ccqqqq](3,0.7) node[anchor=north west] {$\alpha_i$};
\begin{scriptsize}
\draw [fill=ccqqqq] (1.55,0.55) circle (2.5pt);
\draw [color=ccqqqq] (5.044213562373095,0.7842135623730951)-- ++(-2.5pt,-2.5pt) -- ++(5pt,5pt) ++(-5pt,0) -- ++(5pt,-5pt);
\draw [color=ccqqqq] (-1.7842135623730955,0.7842135623730958)-- ++(-2pt,-2pt) -- ++(4pt,4pt) ++(-4pt,0) -- ++(4pt,-4pt);
\draw [fill=ccqqqq] (2.0405859638932142,1.0405859638932142) ++(-2.5pt,0 pt) -- ++(2.5pt,2.5pt)--++(2.5pt,-2.5pt)--++(-2.5pt,-2.5pt)--++(-2.5pt,2.5pt);
\end{scriptsize}
\end{tikzpicture}
\caption{If $\alpha$ is not contained in the cylinder defined by $e_0$ and $e_1$, then there are two consecutive segments $\alpha_i$ and $\alpha_{i+1}$ with $l(\alpha_i)+l(\alpha_{i+1}) > 3 l_0$.}
\label{case_1_octogone}
\end{figure}
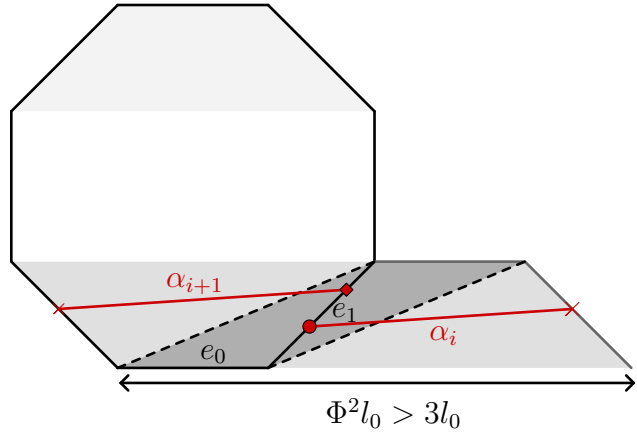

\item Otherwise, we can assume that $\alpha$ is contained in the cylinder defined by $e_{\sigma_ {\alpha}(0)}$ and $e_{\sigma_{\alpha}(1)}$ while $\beta$ is contained in the cylinder defined by $e_{\sigma_ {\beta}(0)}$ and $e_{\sigma_{\beta}(1)}$. In this case, unless $\beta$ is a diagonal, we see that $\alpha_1$ cannot intersect both $\beta_1$ (which lies in a region $S_0$ as in Figure \ref{fig:case_2_octogone}) and $\beta_l$ (which lie in a symmetric region $S_1$), so that $$|\alpha \cap \beta| \leq kl-1.$$
In particular $|\alpha \cap \beta|+1 \leq kl$ and using that $l(\alpha)l(\beta) > kl \cdot l_0^2$, we deduce that Equation~\eqref{eq:intersection_general} holds (and the inequality is strict in this case). 

\item Finally, if both $\alpha$ and $\beta$ are diagonals, we have $l(\alpha) ,l(\beta) \geq \Phi l_0$ and hence $l(\alpha)l(\beta) \geq \Phi^2 l_0^2 > 2 l_0^2$, so that Equation \eqref{eq:intersection_general} holds as well and the inequality is strict.

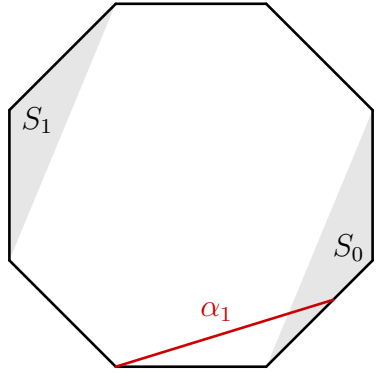
\begin{figure}[h]
\center
\definecolor{ccqqqq}{rgb}{0.8,0,0}
\begin{tikzpicture}[line cap=round,line join=round,>=triangle 45,x=1cm,y=1cm]
\clip(-3,-0.5) rectangle (3,6.3);
\fill[line width=1pt,fill=black,fill opacity=0.1] (2.414213562373095,3.4142135623730945) -- (1,0) -- (2.414213562373095,1.414213562373095) -- cycle;
\fill[line width=1pt,fill=black,fill opacity=0.1] (-2.4142135623730954,1.4142135623730956) -- (-1,4.82842712474619) -- (-2.414213562373095,3.4142135623730954) -- cycle;
\draw [line width=1pt] (-1,0)-- (1,0);
\draw [line width=1pt] (1,0)-- (2.414213562373095,1.414213562373095);
\draw [line width=1pt] (2.414213562373095,1.414213562373095)-- (2.414213562373095,3.4142135623730945);
\draw [line width=1pt] (2.414213562373095,3.4142135623730945)-- (1,4.82842712474619);
\draw [line width=1pt] (1,4.82842712474619)-- (-1,4.82842712474619);
\draw [line width=1pt] (-1,4.82842712474619)-- (-2.414213562373095,3.4142135623730954);
\draw [line width=1pt] (-2.414213562373095,3.4142135623730954)-- (-2.4142135623730954,1.4142135623730956);
\draw [line width=1pt] (-2.4142135623730954,1.4142135623730956)-- (-1,0);
\draw [line width=1pt,color=ccqqqq] (-1,0)-- (1.89,0.89);
\draw [color=ccqqqq](-0.02,1) node[anchor=north west] {$\alpha_1$};
\draw (1.74,1.9) node[anchor=north west] {$S_0$};
\draw (-2.4,3.6) node[anchor=north west] {$S_1$};
\end{tikzpicture}
\caption{In case 2, $\alpha_1$ cannot intersect both $\beta_1$ (which lies in a region $S_0$) and $\beta_l$ (which lies in the symetric region $S_1$). In this example $\beta$ has direction in $\Sigma_2$.}
\label{fig:case_2_octogone}
\end{figure}
\end{enumerate}

It remains to investigate cases $(i)$ and $(ii)$.

\paragraph{\underline{Case $(i)$: $\exists i,j$, $\alpha_i$ and $\beta_j$ are in a configuration of type $(i)$,}} that is $\alpha_i$ is a sandwiched segment of type $e' \to e \to e'$ and $\beta_j$ is a sandwiched segment of type $e \to e' \to e$. This case corresponds to case $(3)$ of \cite[\S 6.3]{BLM22}. Similarly, we consider the maximal sequence of sandwiched segments $\alpha_{i_0} \cup \cdots \cup \alpha_{i_0+p}$ (resp. $\beta_{j_0} \cup \cdots \cup \beta_{j_0+q}$) containing $\alpha_i$ (resp. $\beta_j$), which is maximal in the sense that both $\alpha_{i_0-1}$ and $\alpha_{i_0+p+1}$ (resp. $\beta_{j_0-1}$ and $\beta_{j_0+q+1}$) are non-sandwiched, and we can show:

\begin{Lem}
$\alpha_{i_0} \cup \cdots \cup \alpha_{i_0+p} \cup \alpha_{i_0+p+1}$ and $\beta_{j_0} \cup \cdots \cup \beta_{j_0+q} \cup \beta_{j_0+q+1}$ intersect at most $(p+3)(q+2)$ times while there are $(p+3)(q+3)$ pairs of segments.
\end{Lem}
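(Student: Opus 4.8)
The plan is to localise the entire intersection count to the two maximal sandwiched blocks and to show that inside such a block the ``doubling'' of intersections produced by Lemma~\ref{lem:cases_2_intersections}$(i)$ is more than offset by the many pairs of segments that do not meet at all. Note first that, since the number of pairs is $(p+3)(q+3)$, the relevant block on the $\alpha$ side is the maximal sandwiched run $\alpha_{i_0}\cup\cdots\cup\alpha_{i_0+p}$ \emph{together with its two flanking non-sandwiched segments} $\alpha_{i_0-1}$ and $\alpha_{i_0+p+1}$ (and symmetrically for $\beta$), giving $p+3$ and $q+3$ segments. All the sandwiched segments cross only the two sides $e$ and $e'$, which are transversals of a common cylinder. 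I would develop this cylinder in the plane, so that $\alpha_{i_0-1}\cup\cdots\cup\alpha_{i_0+p+1}$ becomes a single straight segment meeting the lines carrying $e$ and $e'$ in equally spaced points, and likewise for $\beta$; the equal spacing comes from the fact that consecutive crossings of a transversal by a geodesic differ by a constant shift, and it is what makes the count tractable. I record the $p+2$ feet of the $\alpha$-caps on $e'$ and their $p+1$ apices on $e$, together with the $q+2$ feet of the $\beta$-cups on $e$ and their $q+1$ apices on $e'$.

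The second step is to read off from Figures~\ref{fig:intersections_sandwichees} and~\ref{fig:case_two_intersections} the precise rule governing $\GIntij$ for a complementary cap/cup pair: two such hairpins meet twice exactly when they are \emph{linked}, that is when the apex of $\beta_{j_0+s}$ lies between the two feet of $\alpha_{i_0+r}$ on $e'$ and simultaneously the apex of $\alpha_{i_0+r}$ lies between the two feet of $\beta_{j_0+s}$ on $e$; they meet once when exactly one of these holds, and not at all otherwise. The crucial observation is then that the feet of the $\alpha$-caps partition an arc of $e'$ into disjoint intervals, so each $\beta$-apex can lie in at most one of them. Summing the rule over all $(r,s)$ bounds the total number of sandwiched--sandwiched intersections by the number of $\beta$-apices falling in the $\alpha$-arc plus the number of $\alpha$-apices falling in the $\beta$-arc, hence by $(q+1)+(p+1)$. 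This replaces the naive estimate $2(p+1)(q+1)$ by a quantity that is only \emph{linear} in $p$ and $q$.

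Finally I would assemble the two blocks. Every remaining pair of segments — those involving one of the four flanking non-sandwiched segments $\alpha_{i_0-1},\alpha_{i_0+p+1},\beta_{j_0-1},\beta_{j_0+q+1}$ — contributes at most $1$ by the elementary case of Lemma~\ref{lem:cases_2_intersections}, and I would show that the extremal hairpins together with these boundary segments cannot realise all of these intersections simultaneously, using the same separating-region argument as in Case~$(0)$ (the analogue of the regions $S_0,S_1$ of Figure~\ref{fig:case_2_octogone}, where $\alpha_1$ meets at most one of the two ends of $\beta$). Combining the linear sandwiched contribution with the boundary contributions, and discarding the forced non-intersections at the two ends, yields the announced bound $(p+3)(q+2)$, to be compared with the $(p+3)(q+3)$ available pairs.

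I expect the last step to be the main obstacle. The soft estimates above already give a bound of the correct order of magnitude, but extracting the exact constant $(p+3)(q+2)$ — equivalently a deficit of at least $p+3$ from the number of pairs — requires a careful analysis of the two ends of each block: one must invoke the transition-diagram structure to control precisely how a maximal sandwiched block is entered and exited, and verify that the extremal caps and cups lose exactly the intersections needed to reach the stated bound. This end-effect bookkeeping is where I would follow, and adapt to the even $n$-gon, the corresponding argument of \cite[\S 6.3, case (3)]{BLM22}.
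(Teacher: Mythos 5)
The paper never actually proves this lemma: it states it with ``we can show'', an illustrative figure, and a pointer to case (3) of \cite[\S 6.3]{BLM22}, so your proposal has to be measured against that intended argument rather than against a written proof. Your core mechanism is correct and is genuinely the right idea: you correctly note that $\alpha_{i_0-1}$ must be adjoined for the count of $(p+3)(q+3)$ pairs to be coherent, and developing the cylinder bounded by the two diagonals in the direction $e+e'$, using that successive crossings of a rung by a fixed geodesic form an arithmetic progression, and summing your linking rule over pairs does bound the sandwiched-versus-sandwiched intersections by $(p+1)+(q+1)$ --- linear rather than quadratic, exactly as needed.

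The genuine gap is in your assembly step, and it is not the bounded ``end-effect bookkeeping'' you describe. You bound every pair involving a flanking segment by $1$; there are $(p+3)(q+3)-(p+1)(q+1)=2p+2q+8$ such pairs, so your total is $3p+3q+10$, while $(p+3)(q+2)-(3p+3q+10)=pq-p-4$. This is negative for every $p$ when $q\in\{0,1\}$, and for $p=0$ and every $q$; worse, for $q=0$ the shortfall is $p+4$, which grows linearly in $p$. So no argument that merely ``discards the forced non-intersections at the two ends'' can close the gap: when $q=0$ you must show that the \emph{two} flanking segments of $\beta$ meet the $p+1$ caps of $\alpha$ only about $p$ times in total, not $2(p+1)$ times, and this requires the same winding/length control as the sandwiched part --- a flanking segment crosses no rung, hence advances less than one period of the developed cylinder, so the number of caps it can meet is proportional to the fraction of the cylinder's width it traverses --- rather than a local separating-region argument as in Case $(0)$. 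The cleaner route is to drop the pair-by-pair decomposition altogether: inside the cylinder each whole block is a \emph{single} straight arc in the universal cover (a strip), crossing it monotonically while advancing fewer than $p+3$ (resp.\ $q+3$) periods, and two such transverse arcs meet at most the sum of their advances plus a bounded error; but even there the constants must be tracked exactly, since for $p=q=0$ the lemma's bound $6$ coincides with the sharp linear bound and leaves no slack. That exact bookkeeping is precisely the content of \cite[\S 6.3, case (3)]{BLM22}, which both you and the paper ultimately invoke rather than carry out.
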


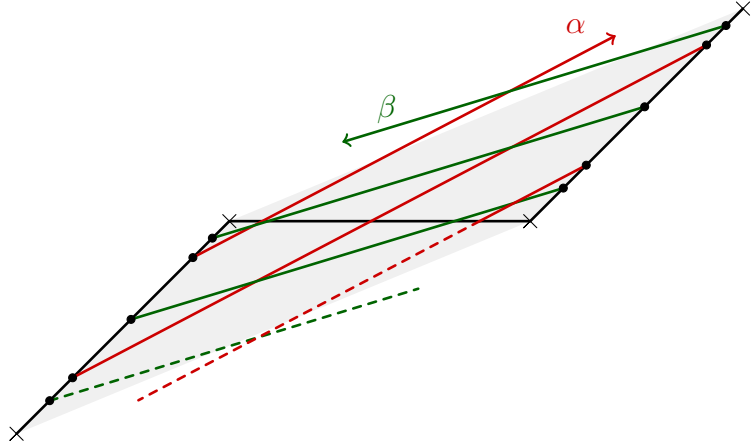
\begin{figure}[h]
\center
\definecolor{qqwuqq}{rgb}{0,0.39215686274509803,0}
\definecolor{ccqqqq}{rgb}{0.8,0,0}
\definecolor{wwwwww}{rgb}{0.4,0.4,0.4}
\begin{tikzpicture}[line cap=round,line join=round,>=triangle 45,x=2cm,y=2cm]
\clip(-1.6,-1.6) rectangle (3.6,1.6);
\fill[line width=1pt,color=wwwwww,fill=wwwwww,fill opacity=0.1] (-1.4142135623730951,-1.4142135623730951) -- (2,0) -- (3.414213562373095,1.4142135623730951) -- (0,0) -- cycle;
\draw [line width=1pt] (-1.4142135623730951,-1.4142135623730951)-- (0,0);
\draw [line width=1pt] (0,0)-- (2,0);
\draw [line width=1pt] (2,0)-- (3.414213562373095,1.4142135623730951);
\draw [line width=1pt,color=ccqqqq] (-1.0418512544474752,-1.0418512544474752)-- (3.1719500830398006,1.1719500830398009);
\draw [line width=1pt,color=ccqqqq, -to] (-0.24226347933329428,-0.24226347933329428)-- (2.5689549765445614,1.2346640450436432);
\draw [line width=1pt,color=qqwuqq,to-] (0.7505168852057164,0.5259057698140497)-- (3.301349526645943,1.3013495266459434);
\draw [line width=1pt,color=qqwuqq] (-0.11286403572715198,-0.11286403572715176)-- (2.7606840230160055,0.760684023016005);
\draw [line width=1pt,color=qqwuqq] (-0.6535295393570895,-0.6535295393570901)-- (2.2200185193860666,0.22001851938606617);
\draw [line width=1pt,dash pattern=on 3pt off 3pt,color=qqwuqq] (-1.1941950429870283,-1.194195042987029)-- (1.2557240594359484,-0.44942863148130907);
\draw [color=ccqqqq](2.1645860786740556,1.417733499785561) node[anchor=north west] {$\alpha$};
\draw [color=qqwuqq](0.9112065662816632,0.9115610043963248) node[anchor=north west] {$\beta$};
\draw [line width=1pt,dash pattern=on 3pt off 3pt,color=ccqqqq] (1.663599166176075,0)-- (-0.6044902242670345,-1.1915842546810187);
\draw [line width=1pt,color=ccqqqq] (1.663599166176075,0)-- (2.37236230792562,0.3723623079256199);
\begin{scriptsize}
\draw [color=black] (0,0)-- ++(-2.5pt,-2.5pt) -- ++(5pt,5pt) ++(-5pt,0) -- ++(5pt,-5pt);
\draw [color=black] (2,0)-- ++(-2.5pt,-2.5pt) -- ++(5pt,5pt) ++(-5pt,0) -- ++(5pt,-5pt);
\draw [color=black] (3.414213562373095,1.4142135623730951)-- ++(-2.5pt,-2.5pt) -- ++(5pt,5pt) ++(-5pt,0) -- ++(5pt,-5pt);
\draw [color=black] (-1.4142135623730951,-1.4142135623730951)-- ++(-2.5pt,-2.5pt) -- ++(5pt,5pt) ++(-5pt,0) -- ++(5pt,-5pt);
\draw [fill=black] (2.37236230792562,0.3723623079256199) circle (1.5pt);
\draw [fill=black] (-1.0418512544474752,-1.0418512544474752) circle (1.5pt);
\draw [fill=black] (3.1719500830398006,1.1719500830398009) circle (1.5pt);
\draw [fill=black] (-0.24226347933329428,-0.24226347933329428) circle (1.5pt);
\draw [fill=black] (3.301349526645943,1.3013495266459434) circle (1.5pt);
\draw [fill=black] (-0.11286403572715198,-0.11286403572715176) circle (1.5pt);
\draw [fill=black] (2.7606840230160055,0.760684023016005) circle (1.5pt);
\draw [fill=black] (-0.6535295393570895,-0.6535295393570901) circle (1.5pt);
\draw [fill=black] (2.2200185193860666,0.22001851938606617) circle (1.5pt);
\draw [fill=black] (-1.1941950429870283,-1.194195042987029) circle (1.5pt);
\end{scriptsize}
\end{tikzpicture}
\caption{Exemple of maximal sequences of sandwiched segments. In this example, $p=1$ and $q=1$. There are $5$ intersections.}
\end{figure}

In particular, summing the intersections of pairs of maximal sequences of sandwiched segments and intersections of pairs of segments which are not already counted in such maximal sequences (and as such intersect at most once), we get:
\[ \sum_{i,j} |\alpha_i \cap \beta_j| < kl \]
so that $|\alpha \cap \beta| + 1 \leq kl$ and $\cfrac{|\alpha \cap \beta| +1}{l(\alpha)l(\beta)} < \cfrac{1}{l_0^2}$, as required.

\paragraph{\underline{Case $(ii)$: $\exists i,j$, $\alpha_i$ and $\beta_j$ are in a configuration of type $(ii)$.}}
In this case, we count the intersection of $\alpha$ with the maximal sequence of non-adjacent segments $\beta_j$ contained in the big cylinder spanned by $e_{\sigma_{\beta}(n/2-1)}$ and $e_{\sigma_{\beta}(n/2-2)}$. Examples are depicted in Figure \ref{fig:non_escape_case} and Figure \ref{fig:case_escape_cylinder} in the case of the octagon.

We distinguish two cases: one where we can get a better estimation of the length of $\beta$ while in the other we can get a better lower bound on the number of intersections between $\alpha$ and $\beta$.
\begin{enumerate}
\item If $\beta$ contains only non-sandwiched segments contained in the big cylinder, then in particular $\beta$ start and end at the vertices which are the common endpoints to the sides of label $e_{\sigma_{\beta}(n/2-1)}$ and $e_{\sigma_{\beta}(n/2-2)}$, see Figure \ref{fig:non_escape_case}). In this case, $\beta = \beta_1 \cup \cdots \cup \beta_l$ could have $l+1$ intersections with $\alpha_i$ (instead of $l$). However, we can compensate with a better estimation of the lengths. A convenient way to estimate the length is to unfold the big cylinder as in Figure \ref{fig:non_escape_case} and get:
\begin{itemize}
\item If $l$ is odd, then $l(\beta) \geq (\frac{l+1}{2} \Phi^2 -1)l_0$.
\item If $l$ is even, then $l(\beta) \geq \frac{l}{2}\Phi^2 l_0$.
\end{itemize}
Hence, in both cases, we have $\cfrac{|\alpha_i \cap \beta|}{l(\alpha_i)l(\beta)} < \cfrac{1}{l_0^2}$. Further, the same holds if $\alpha_i$ is a non-sandwiched segment: in this case $\alpha_i$ intersect $\beta$ at most $l$ times, so that $\cfrac{|\alpha_i \cap \beta| + 1}{l(\alpha_i)l(\beta)} < \cfrac{1}{l_0^2}$. Since $\alpha_1$ is non-sandwiched, this allows to count the singular intersection, so that:
\[
\frac{(\sum_{i} |\alpha_i \cap \beta|) + 1}{(\sum_i l(\alpha_i))l(\beta)} < \frac{1}{l_0^2}.
\]
As required.

\item Else, the maximal sequence $B = \beta_{j_0} \cup \cdots \cup \beta_{j_0+p-1}$ of non-sandwiched segments in the big cylinder containing $\beta_j$ is not contained big cylinder, as in Figure \ref{fig:case_escape_cylinder}. In this case, given that $p$ is the number of non-sandwiched segments in the maximal sequence, there are at most
\begin{itemize}
\item[$\bullet$] $p$ intersections, if $\beta$ either start or end at the central singularity. That is we can assume up to changing the orientation of $\beta$ that the maximal sequence of non-sandwiched segments containing $\beta_j$ is $\beta_1 \cup \cdots \cup \beta_p$, and hence we have 
$$\cfrac{|\alpha_i \cap B|}{l(\alpha_i)l(B)} < \cfrac{p}{p l_0^2} = \cfrac{1}{l_0^2}.$$
Further, using that in this case, $l(\beta_1) \geq (\Phi^2-1)l_0 > 2l_0$, we deduce that in fact $l(B) = l(\beta_1) + \cdots + l(\beta_p)> 2l_0+(p-1)l_0 = (p+1)l_0$,
so that $$\cfrac{|\alpha_i \cap B| + 1}{l(\alpha_1)l(B)} < \cfrac{p+1}{(p+1)l_0^2} = \frac{1}{l_0^2}.$$ 
Summing all intersections of this kind with the other intersections, we finally get $\cfrac{\Int(\alpha,\beta)}{l(\alpha)l(\beta)} < \cfrac{1}{l_0^2}$ as required.
\item[$\bullet$] $(p-1)$ intersections, if $\beta$ does not start and does not end at the central singularity, so that
$$\cfrac{|\alpha_i \cap B| +1}{l(\alpha_i)l(B)} < \cfrac{(p-1) + 1}{pl_0^2} = \frac{1}{l_0^2}.$$
Similarly, we conclude that $\cfrac{\Int(\alpha,\beta)}{l(\alpha)l(\beta)} < \cfrac{1}{l_0^2}$ as required.
\end{itemize}

\end{enumerate}

\begin{figure}[h]
\center
\definecolor{ccqqqq}{rgb}{0.8,0,0}
\definecolor{qqwuqq}{rgb}{0,0.39215686274509803,0}
\definecolor{wwwwww}{rgb}{0.4,0.4,0.4}
\definecolor{uuuuuu}{rgb}{0.26666666666666666,0.26666666666666666,0.26666666666666666}
\begin{tikzpicture}[line cap=round,line join=round,>=triangle 45,x=2cm,y=2cm]
\clip(-0.8,-0.8) rectangle (1.8,2.7);
\fill[line width=1pt,color=wwwwww,fill=wwwwww,fill opacity=0.1] (-0.7071067811865475,1.7071067811865477) -- (0,0) -- (1,0) -- (1.7071067811865475,0.7071067811865475) -- (1,2.414213562373095) -- (0,2.414213562373095) -- cycle;
\draw [line width=1pt] (0,0)-- (1,0);
\draw [line width=1pt] (1,0)-- (1.7071067811865475,0.7071067811865475);
\draw [line width=1pt] (1.7071067811865475,0.7071067811865475)-- (1.7071067811865475,1.7071067811865472);
\draw [line width=1pt] (1.7071067811865475,1.7071067811865472)-- (1,2.414213562373095);
\draw [line width=1pt] (1,2.414213562373095)-- (0,2.414213562373095);
\draw [line width=1pt] (0,2.414213562373095)-- (-0.7071067811865475,1.7071067811865477);
\draw [line width=1pt] (-0.7071067811865475,1.7071067811865477)-- (-0.7071067811865477,0.7071067811865478);
\draw [line width=1pt] (-0.7071067811865477,0.7071067811865478)-- (0,0);
\draw [line width=1pt,color=qqwuqq] (0.6545454545454531,0)-- (1.2212121212121203,0.22121212121212033);
\draw [line width=1pt,color=qqwuqq] (0.6545454545454533,2.414213562373095)-- (-0.4191387559808574,1.9950748063922372);
\draw [color=qqwuqq](-0.35,2.05) node[anchor=north west] {$\alpha_i$};
\draw [color=qqwuqq](0.55,0.3) node[anchor=north west] {$\alpha_i$};
\draw [color=ccqqqq](0.648484848484847,1.4666666666666643) node[anchor=north west] {$\beta$};
\draw [line width=1pt,color=ccqqqq] (1,0)-- (-0.19195526603503485,2.22225829633806);
\draw [line width=1pt,color=ccqqqq] (-0.5196434023679111,1.8945701600051836)-- (0.4965493970075056,0);
\draw [line width=1pt,color=ccqqqq] (1.5151515151515125,0.5151515151515123)-- (0.49654939700750583,2.414213562373095);
\draw [line width=1pt,color=ccqqqq] (1.1874633788186364,0.18746337881863595)-- (0,2.414213562373095);
\begin{scriptsize}
\draw [fill=black] (1,0) circle (2.5pt);
\draw [fill=uuuuuu] (0,2.414213562373095) circle (2.5pt);
\draw [color=qqwuqq] (0.6545454545454531,0)-- ++(-2.5pt,-2.5pt) -- ++(5pt,5pt) ++(-5pt,0) -- ++(5pt,-5pt);
\draw [fill=qqwuqq,shift={(1.2212121212121203,0.22121212121212033)}] (0,0) ++(0 pt,3.75pt) -- ++(3.2475952641916446pt,-5.625pt)--++(-6.495190528383289pt,0 pt) -- ++(3.2475952641916446pt,5.625pt);
\draw [color=qqwuqq] (0.6545454545454533,2.414213562373095)-- ++(-2.5pt,-2.5pt) -- ++(5pt,5pt) ++(-5pt,0) -- ++(5pt,-5pt);
\draw [fill=qqwuqq] (-0.4191387559808574,1.9950748063922372) circle (2.5pt);
\draw [fill=qqwuqq] (1.28796802520569,0.28796802520568954) circle (2.5pt);
\end{scriptsize}
\end{tikzpicture}
\definecolor{ccqqqq}{rgb}{0.8,0,0}
\definecolor{zzttqq}{rgb}{0.6,0.2,0}
\begin{tikzpicture}[line cap=round,line join=round,>=triangle 45,x=0.8cm,y=0.8cm]
\clip(-4,-0.3) rectangle (5,9.5);
\fill[line width=1pt,color=wwwwww,fill=wwwwww,fill opacity=0.10000000149011612] (0,0) -- (1,0) -- (1.7071067811865475,0.7071067811865475) -- (1.7071067811865475,1.7071067811865472) -- (1,2.414213562373095) -- (0,2.414213562373095) -- (-0.7071067811865475,1.7071067811865477) -- (-0.7071067811865477,0.7071067811865478) -- cycle;
\draw [line width=1pt] (0,0)-- (1,0);
\draw [line width=1pt] (1,0)-- (1.7071067811865475,0.7071067811865475);
\draw [line width=1pt] (1.7071067811865475,0.7071067811865475)-- (1.7071067811865475,1.7071067811865472);
\draw [line width=1pt] (1.7071067811865475,1.7071067811865472)-- (1,2.414213562373095);
\draw [line width=1pt] (1,2.414213562373095)-- (0,2.414213562373095);
\draw [line width=1pt] (0,2.414213562373095)-- (-0.7071067811865475,1.7071067811865477);
\draw [line width=1pt] (-0.7071067811865475,1.7071067811865477)-- (-0.7071067811865477,0.7071067811865478);
\draw [line width=1pt] (-0.7071067811865477,0.7071067811865478)-- (0,0);
\draw [line width=1pt, dash pattern=on 3pt off 3pt, domain=-6.9999999999999964:0] plot(\x,{(-0--1.7071067811865477*\x)/-0.7071067811865475});
\draw [line width=1pt,dash pattern=on 3pt off 3pt, domain=-6.9999999999999964:1.7071067811865475] plot(\x,{(-3.4142135623730945--1.7071067811865475*\x)/-0.7071067811865475});
\draw [line width=1pt] (-3.414213562373095,8.242640687119286)-- (-2.4142135623730945,8.242640687119286);
\draw [line width=1pt] (-2.4142135623730945,8.242640687119286)-- (-1.707106781186547,8.949747468305834);
\draw [line width=1pt] (-2.414213562373095,5.828427124746191)-- (-1.707106781186547,6.535533905932738);
\draw [line width=1pt] (-1.707106781186547,6.535533905932738)-- (-0.7071067811865472,6.535533905932738);
\draw [line width=1pt] (-1.7071067811865472,4.121320343559643)-- (-0.7071067811865472,4.121320343559643);
\draw [line width=1pt] (-0.7071067811865472,4.121320343559643)-- (0,4.82842712474619);
\draw [line width=1pt,color=ccqqqq] (-3.414213562373095,8.242640687119286)-- (1,0);
\draw [to-to,line width=0.5pt] (0.5,5.02)-- (1.48,2.62);
\draw [to-to,line width=0.5pt] (-0.18,6.66)-- (0.5,5.02);
\draw [line width=0.5pt] (-1.707106781186547,6.535533905932738)-- (-2.4142135623730945,8.242640687119286);
\draw [line width=0.5pt] (0,2.414213562373095)-- (-0.7071067811865472,4.121320343559643);
\draw (0.45,6.3) node[anchor=north west] {$l \geq \Phi l_0$};
\draw (1,4.5) node[anchor=north west] {$l \geq (\Phi^2-1) l_0$};
\end{tikzpicture}
\caption{In the case where the curve $\beta$ stays in the big cylinder, there could be one intersection more (here $5$) than non sandwiched segments in $\beta$ (here $4$). However, unfolding the trajectory of the curve $\beta$ allows to estimate precisely its length, given the lengths of the long diagonals which can be expressed using $\Phi$.}
\label{fig:non_escape_case}
\end{figure}
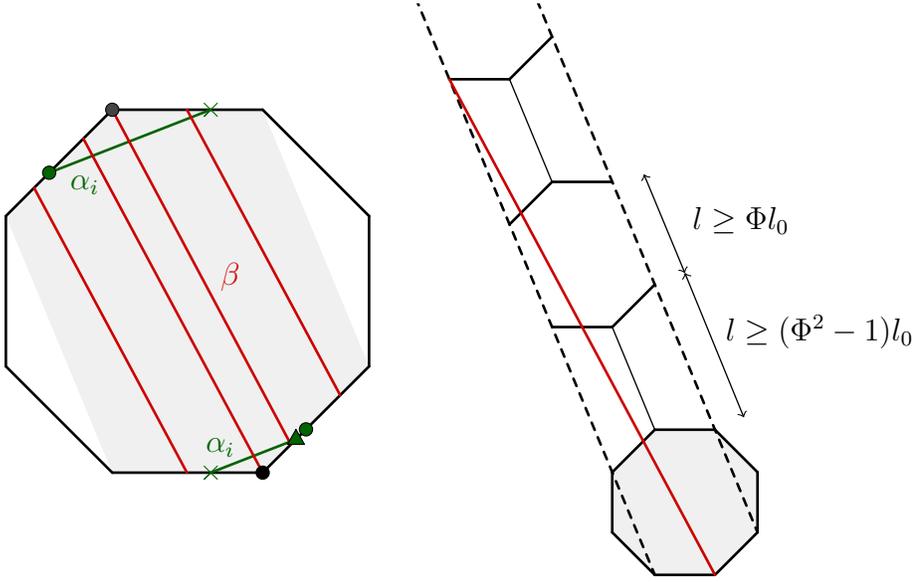

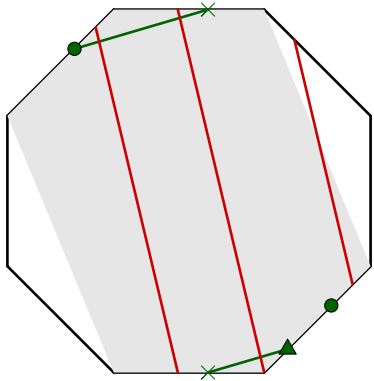
\begin{figure}[h]
\center
\definecolor{ccqqqq}{rgb}{0.8,0,0}
\definecolor{qqwuqq}{rgb}{0,0.39215686274509803,0}
\begin{tikzpicture}[line cap=round,line join=round,>=triangle 45,x=2cm,y=2cm]
\clip(-0.8,-0.2) rectangle (1.8,2.7);
\draw [line width=1pt] (0,0)-- (1,0);
\draw [line width=1pt] (1,0)-- (1.7071067811865475,0.7071067811865475);
\draw [line width=1pt] (1.7071067811865475,0.7071067811865475)-- (1.7071067811865475,1.7071067811865472);
\draw [line width=1pt] (1.7071067811865475,1.7071067811865472)-- (1,2.414213562373095);
\draw [line width=1pt] (1,2.414213562373095)-- (0,2.414213562373095);
\draw [line width=1pt] (0,2.414213562373095)-- (-0.7071067811865475,1.7071067811865477);
\draw [line width=1pt] (-0.7071067811865475,1.7071067811865477)-- (-0.7071067811865477,0.7071067811865478);
\draw [line width=1pt] (-0.7071067811865477,0.7071067811865478)-- (0,0);
\fill[gray!20] (0,0)--(1,0)--(1.7071067811865475,0.7071067811865475)--(1,2.414213562373095)-- (0,2.414213562373095)--(-0.7071067811865475,1.7071067811865477)--(0,0);
\draw [line width=1pt,color=qqwuqq] (0.6266666666666643,0)-- (1.155555555555555,0.155555555555555);
\draw [line width=1pt,color=qqwuqq] (-0.26111111111110763,2.1531024512619874)-- (0.6266666666666645,2.414213562373095);
\draw [line width=1pt,color=ccqqqq] (1,0)-- (0.42583832171184927,2.414213562373095);
\draw [line width=1pt,color=ccqqqq] (0.42583832171184904,0)-- (-0.11982573479760647,2.294387827575488);
\draw [line width=1pt,color=ccqqqq] (1.587281046388941,0.5872810463889404)-- (1.200465666653539,2.2137478957195564);
\begin{scriptsize}
\draw [color=qqwuqq] (0.6266666666666643,0)-- ++(-2.5pt,-2.5pt) -- ++(5pt,5pt) ++(-5pt,0) -- ++(5pt,-5pt);
\draw [fill=qqwuqq,shift={(1.155555555555555,0.155555555555555)}] (0,0) ++(0 pt,3.75pt) -- ++(3.2475952641916446pt,-5.625pt)--++(-6.495190528383289pt,0 pt) -- ++(3.2475952641916446pt,5.625pt);
\draw [color=qqwuqq] (0.6266666666666645,2.414213562373095)-- ++(-2.5pt,-2.5pt) -- ++(5pt,5pt) ++(-5pt,0) -- ++(5pt,-5pt);
\draw [fill=qqwuqq] (-0.26111111111110763,2.1531024512619874) circle (2.5pt);
\draw [fill=qqwuqq] (1.44599567007544,0.4459956700754397) circle (2.5pt);
\end{scriptsize}
\end{tikzpicture}
\caption{In the case where the curve $\beta$ leaves the big cylinder, there are at most as many non-sandwiched segments as intersections with the segment $\alpha_i$.}
\label{fig:case_escape_cylinder}
\end{figure}

\subsection{Conclusion}
As shown in the previous section, for any two distinct saddle connections $\alpha$ and $\beta$, we have
\begin{equation}\label{eq:sc_case}
\tag{$4$}
\frac{|\alpha \cap \beta| + 1}{l(\alpha)l(\beta)} \leq \frac{1}{l_0^2}
\end{equation}
and equality occurs if and only if $\alpha$ and $\beta$ are sides of the regular $n$-gon.

Now, any closed curve $\eta$ (resp. $\xi$) on the regular $n$-gon is homologous to a union of saddle connections $\eta = \eta_1 \cup \cdots \cup \eta_k$ (resp. $\xi = \xi_1 \cup \cdots \cup \xi_l)$. In this case, we have
\[
\Int(\eta,\xi) \leq (\sum_{i,j} |\eta_{i} \cap \xi_j|) + s
\]
where $s$ is the number singular intersection points. It should be noted that we set $|\eta_i \cap \xi_j| = 0$ if $\eta_i = \xi_j$. Further, we can assume without loss of generality that $\eta$ and $\xi$ are simple closed curves (see Lemma 3.1 of \cite{MM}) so that there are no multiple intersections at the singularities, and hence $s \leq \min(k,l)$. Further, $s \leq 1$ for $n \equiv 0 \mod 4$ and $s \leq 2$ for $n \equiv 2 \mod 4$. Using Equation \eqref{eq:sc_case}, we get:
\[ 
\Int(\eta,\xi) \leq (\sum_{i,j} l(\eta_{i}) l(\xi_j)) \times \frac{1}{l_0^2} + s - kl = \frac{l(\eta)l(\xi)}{l_0^2} + s-kl
\]    
with equality if and only if each $\eta_i$ (resp. $\xi_j$) is a side of the regular $n$-gon. In particular:
\begin{itemize}
\item For $n \equiv 0 \mod 4$, $s \leq 1$ and we get directly Theorem \ref{theo:KVol_4m}, by noticing that distinct sides of the $n$-gon are indeed intersecting once at the singularity.
\item For $n \equiv 2 \mod 4$, if $s \leq 1$ then at least one of the $\eta_i$ (or $\xi_j$) is not a side of the $n$-gon so that $l(\eta_i) > l_0$ (or $l(\xi_j) > l_0$) and
\[ 
\Int(\eta,\xi) < \frac{l(\eta)l(\xi)}{l_0^2}
\]   
Else, if $s=2$ then $k,l \geq 2$ so that $kl \geq 4$ and 
\[
\Int(\eta,\xi) \leq \frac{l(\eta)l(\xi)}{l_0^2} -2 < \frac{l(\eta)l(\xi)}{l_0^2}
\]
This gives Theorem \ref{theo:4m+2}.
\end{itemize}

\section{KVol as a supremum over pairs of directions}\label{sec:directions}
In this section, we extend the study of KVol as a function over the Teichm\"uller disk. We assume $n \equiv 0 \mod 4$ so that $\XX$ has a single singularity (as well as all the translation surfaces in its Teichm\"uller disk). The study follows the method of \cite{BLM22}, but the case of the regular $n$-gon requires more precise estimates.

We first give a consistent name for saddle connections as the surface varies in the Teichm\"uller disk, as well as the direction of such saddle connections. This is done in \S \ref{subsec:directions} by choosing a base surface in the Teichm\"uller disk, namely $\SS$, the staircase model. Next, for each pair of distinct periodic directions we define a quantity $K(d,d')$ which can be computed on the base surface $\SS$, and allows for a more convenient expression of KVol (see Proposition \ref{prop:sup_directions}). Finally, we provide in Proposition \ref{prop:etude_K} precise estimates on $K(d,d')$. These estimates are one of the main ingredients in the proof of Theorem \ref{theo:main}, and they differ from the case of the double $n$-gon.

\subsection{Directions in the Teichm\"uller disk}\label{subsec:directions}
Following \cite[\S 4]{BLM22}, we consider the plane template of $\SS$ as our base surface and define the direction of a saddle connection $\alpha$ in $X = M \cdot \SS$ as the direction (in $\RR P^1$) of the preimage saddle connection $M^{-1} \cdot \alpha$ in $\SS$ it corresponds to. More precisely:

\begin{Def}\label{def:directions}
For $d \in \RR P^1$, we say that a saddle connection in $\SS$  has direction $d$ if it has direction $d$ in the plane template of Figure \ref{staircase_model}. For $M \in \mathrm{GL}_2^+ (\RR)$ we say that a saddle connection $\alpha$ in $M\cdot \SS$ has direction $d$ if $M^{-1}\cdot \alpha$ has direction $d$ in $\SS$.
\end{Def}
This is a bit counter-intuitive because $\alpha$ may not have direction $d$ in a plane template for $M\cdot \SS$, but it allows for a consistent choice of the notion of direction along the Teichm\"uller space. Moreover, we have:\medskip

\begin{Prop}\cite[\S 4]{BLM22}\label{prop:directions}
Using the identifications 
\[ d = [x:y] \in \RR P^1 \mapsto -\frac{x}{y} \in \RR \cup \{ \infty \} \equiv \partial \HH \]
and for $M = \begin{pmatrix} a & b \\ c & d \end{pmatrix} \in SL_2(\RR)$
\[ M \cdot \SS \in \TT 
\mapsto \frac{di+b}{ci+a} \in \HH, \]
the locus of surfaces in $ \TT$ where the directions $d$ and $d'$ make an (unoriented) angle $\theta \in ]0, \frac{\pi}{2}]$ is the banana neighborood
\[
\gamma_{d,d',r}= \{ z \in \Hyp^2: \mathrm{dist}_{\HH}(z, \gamma_{d,d'} )=r \}
\]
where $\cosh r = \cfrac{1}{\sin \theta}$.\newline

In particular, the locus of surfaces in $\TT$ where the directions $d$ and $d'$ are orthogonal is the hyperbolic geodesic with endpoints $d$ and $d'$. \newline
\end{Prop}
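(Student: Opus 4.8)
The plan is to reduce the general statement to a single explicit computation in the case where $d$ and $d'$ are the horizontal and vertical directions, and then transport the conclusion by a hyperbolic isometry. First I would record the elementary fact that for the imaginary axis $I$ (the geodesic with endpoints $0$ and $\infty$) one has $\cosh(\mathrm{dist}_{\HH}(z,I)) = 1/\sin(\arg z)$. This follows by integrating the hyperbolic length element along the circular geodesic $|w| = |z|$, which meets $I$ orthogonally, from $z$ to $i|z|$: one gets $\mathrm{dist}_{\HH}(z,I) = \log\cot(\tfrac{\arg z}{2})$, and the half-angle identity turns this into the claimed formula.

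Next I would treat the base case $d = [1:0]$, $d' = [0:1]$, whose boundary points are $\infty$ and $0$, so that $\gamma_{d,d'} = I$. On $X = M\cdot\SS$ the two directions are carried to the columns $(a,c)^T$ and $(b,d)^T$ of $M$, so their unoriented angle $\theta$ satisfies $\sin\theta = |\det M|/(|(a,c)|\,|(b,d)|) = 1/\sqrt{(a^2+c^2)(b^2+d^2)}$, using $\det M = 1$. A direct computation of $z = \tfrac{di+b}{ci+a}$ gives $\mathrm{Im}\,z = 1/(a^2+c^2)$ and $|z|^2 = (b^2+d^2)/(a^2+c^2)$ — here one uses $(ab+cd)^2 + 1 = (a^2+c^2)(b^2+d^2)$, valid because $ad-bc=1$ — whence $\sin(\arg z) = \mathrm{Im}\,z/|z| = \sin\theta$. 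Combined with the first step this yields $\cosh(\mathrm{dist}_{\HH}(z,I)) = 1/\sin\theta$.

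For the general case I would choose $N \in SL_2(\RR)$ sending a direction vector of $d$ onto the horizontal axis and a direction vector of $d'$ onto the vertical axis (possible since $d \neq d'$, after normalising the determinant to $1$ by a diagonal matrix, which preserves both axes). Writing $M = M'N$ with $M' = MN^{-1} \in SL_2(\RR)$, the angle between the directions $d,d'$ on $M\cdot\SS$ equals the angle between the horizontal and vertical directions on $M'\cdot\SS$, to which the base case applies. It remains to compare the two hyperbolic points. Writing the identification as $z = \Psi(M) = WM^TW\cdot i$ with $W = \begin{pmatrix} 0 & 1 \\ 1 & 0 \end{pmatrix}$ (so that $WM^TW = \begin{pmatrix} d & b \\ c & a\end{pmatrix}$), a one-line manipulation gives $\Psi(M'N) = g_N\cdot\Psi(M')$, where $g_N := WN^TW \in SL_2(\RR)$ acts as a Möbius transformation. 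Tracking the boundary identification $[x:y]\mapsto -x/y$, the defining property of $N$ forces $g_N(\infty)$ and $g_N(0)$ to be precisely the two endpoints of $\gamma_{d,d'}$, so $g_N(I) = \gamma_{d,d'}$. Since $g_N$ is an isometry taking $\Psi(M') \mapsto \Psi(M) = z$ and $I \mapsto \gamma_{d,d'}$, it preserves distance, and the base case upgrades to $\cosh(\mathrm{dist}_{\HH}(z,\gamma_{d,d'})) = 1/\sin\theta$ for all $d,d'$. Finally, injectivity of $\cosh$ on $[0,\infty)$ shows that the locus where the angle is a fixed $\theta$ is exactly the equidistant curve $\{z : \mathrm{dist}_{\HH}(z,\gamma_{d,d'}) = r\}$ with $\cosh r = 1/\sin\theta$, and $\theta = \pi/2$ corresponds to $r = 0$, i.e.\ to $\gamma_{d,d'}$ itself.

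The main obstacle — really the only subtle point — is the equivariance used in the last step: one must verify that the deliberately nonstandard identification $M \mapsto \tfrac{di+b}{ci+a}$ (equivalently $WM^TW\cdot i$, designed to be invariant under \emph{left} multiplication by $SO(2)$, i.e.\ under rotating the surface) intertwines the right multiplication $M \mapsto M'N$ that realises the change of directions with a genuine orientation-preserving isometry $g_N$, and that this $g_N$ carries the model geodesic $I$ onto $\gamma_{d,d'}$ with the correct endpoints rather than swapped ones. Keeping the sign convention of $[x:y]\mapsto -x/y$ consistent throughout is exactly what makes the endpoints match.
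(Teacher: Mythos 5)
Your proof is correct, but there is nothing in this paper to compare it against: Proposition \ref{prop:directions} is imported verbatim from \cite[\S 4]{BLM22} (hence the citation in its statement), and the present paper contains no proof of it, so any line-by-line comparison would have to be made with \cite{BLM22} itself. Judged on its own, your argument is complete. The preliminary formula $\cosh(\mathrm{dist}_{\HH}(z,I))=1/\sin(\arg z)$ for the imaginary axis $I$ is right, since the circle $|w|=|z|$ is a geodesic meeting $I$ orthogonally; the only quibble is that for $\arg z>\pi/2$ the distance is $|\log\cot(\arg z/2)|$, which is immaterial because $\cosh$ is even. The base case is verified by exactly the identity you invoke: $\mathrm{Im}\,z=1/(a^2+c^2)$ and $|z|^2=(b^2+d^2)/(a^2+c^2)$ follow from $(ab+cd)^2+(ad-bc)^2=(a^2+c^2)(b^2+d^2)$, giving $\sin(\arg z)=1/\sqrt{(a^2+c^2)(b^2+d^2)}=\sin\theta$. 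The equivariance step, which you rightly single out as the crux, also checks out: with $\Psi(M)=WM^{T}W\cdot i$ and $W^2=\mathrm{Id}$ one gets $\Psi(M'N)=(WN^{T}W)\cdot\Psi(M')$, and writing $N=\left(\begin{smallmatrix} p & q \\ r & s \end{smallmatrix}\right)$ one has $g_N=WN^{T}W=\left(\begin{smallmatrix} s & q \\ r & p \end{smallmatrix}\right)$, while $N^{-1}(1,0)^{T}=(s,-r)^{T}$ and $N^{-1}(0,1)^{T}=(-q,p)^{T}$; under $[x:y]\mapsto -x/y$ this gives $d=[s:-r]\mapsto s/r=g_N(\infty)$ and $d'=[-q:p]\mapsto q/p=g_N(0)$, so the endpoints match on the nose (and since $\gamma_{d,d'}$ is an unoriented geodesic, even a swap would have been harmless). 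Two points you leave implicit are both easy and worth stating: the existence of $N$ when the determinant of the obvious choice is negative (fix it by replacing one representative vector by its negative, which preserves both axes projectively), and the surjectivity of $M\mapsto\Psi(M)$ onto $\HH$, which is what upgrades the pointwise identity $\cosh(\mathrm{dist}_{\HH}(\Psi(M),\gamma_{d,d'}))=1/\sin\theta$ to the stated equality of loci; combined with injectivity of $\cosh$ on $[0,\infty)$, this gives both inclusions, with $\theta=\pi/2$ corresponding to $r=0$, i.e.\ to the geodesic itself.
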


In the rest of the paper, we use the following 

\begin{Nota}\label{nota:sinus}
Given $X = M \cdot \SS \in \TT$, and $d,d'$ distinct periodic directions, we define
$\theta(X,d,d') \in ]0, \frac{\pi}{2}]$ as the (unoriented) angle between the directions $d$ and $d'$ in the surface $X$. With this notation, we have by Proposition \ref{prop:directions}:
\begin{equation}\label{eq:lien_sin_cosh}
\sin \theta(X,d,d') = \cfrac{1}{\cosh(\mathrm{dist}_{\HH}(X ,\gamma_{d,d'}))}.
\end{equation}
\end{Nota}

\subsection{KVol as a supremum over pairs of directions}
With the above choice of a consistent name for saddle connections and directions along the Teichm\"uller disk, we have the following proposition, which is already stated in \cite{BLM22} in the case of the double $n$-gon but can be extended with the same proof to the case of translation surfaces $S$ for which saddle connections in the same direction do not intersect. 

\begin{Prop}\cite[Proposition 5.1]{BLM22}\label{prop:sup_directions}
Let $S$ be a translation surface with a single singularity such that saddle connections in the same direction are non-intersecting. Let $\mathcal{P}$ be the set of directions of saddle connections of $S$. Define the notion of direction for $X$ in the Teichm\"uller disk of $S$ using Definition \ref{def:directions}. Then, for any surface $X$ in the Teichm\"uller disk of $S$, we have:
\begin{equation}\label{eq:reformulation}
\KVol(X) = \mathrm{Vol}(X) \cdot \sup_{
 \begin{scriptsize}
 \begin{array}{c}
d, d' \in \mathcal{P} \\
d \neq d'
\end{array}
\end{scriptsize}} K(d,d')\cdot \sin \theta(X,d,d'),
\end{equation}
where
$K(d,d') = \sup_{
 \begin{scriptsize}
 \begin{array}{c}
\alpha\subset \SS \ {\mathrm saddle\ connection\ in\ direction\ } d \\
\beta\subset \SS \ {\mathrm saddle\ connection\ in\ direction\ } d'
\end{array}
\end{scriptsize}
}
\frac{\mathrm{Int} (\alpha,\beta)}{\alpha\wedge \beta}
$ and $\theta(X,d,d')$ is the angle given by Notation \ref{nota:sinus}.
\end{Prop}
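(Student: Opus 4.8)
The plan is to combine two $SL_2(\RR)$-invariance properties with a reduction of the supremum defining $\KVol$ to a supremum over pairs of saddle connections organized by their directions. Write $X = M\cdot\SS$, and for a saddle connection $\alpha$ on $X$ let $v_\alpha\in\RR^2$ be its holonomy vector and $\alpha_0 = M^{-1}\cdot\alpha$ the corresponding saddle connection on $\SS$, so that $v_{\alpha_0} = M^{-1}v_\alpha$. I would first record the two invariances on which everything rests: the algebraic intersection $\mathrm{Int}(\alpha,\beta)$ is a topological quantity, hence unchanged by $M$, while the wedge product $\alpha\wedge\beta := v_\alpha\wedge v_\beta$ satisfies $v_\alpha\wedge v_\beta = \det(M)\,(v_{\alpha_0}\wedge v_{\beta_0}) = v_{\alpha_0}\wedge v_{\beta_0}$ because $M\in SL_2(\RR)$. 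Consequently the ratio $\frac{\mathrm{Int}(\alpha,\beta)}{\alpha\wedge\beta}$ takes the same value whether computed on $X$ or on $\SS$; since $\alpha\mapsto M^{-1}\alpha$ is a bijection between saddle connections of direction $d$ on $X$ and on $\SS$, this is exactly what lets $K(d,d')$ be evaluated once and for all on the base surface.

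Next I would factor the fundamental ratio for a single pair of saddle connections. Fixing orientations so that both sides are nonnegative, and using $|v_\alpha\wedge v_\beta| = l(\alpha)\,l(\beta)\sin\theta_X(\alpha,\beta)$ where $\theta_X(\alpha,\beta)$ is the unoriented angle between $\alpha$ and $\beta$ in $X$, one gets
\[
\frac{\mathrm{Int}(\alpha,\beta)}{l(\alpha)\,l(\beta)} = \frac{\mathrm{Int}(\alpha,\beta)}{\alpha\wedge\beta}\cdot\sin\theta_X(\alpha,\beta).
\]
The crucial point is that $\theta_X(\alpha,\beta)$ depends only on the directions $d,d'$ of $\alpha_0,\beta_0$ in $\SS$, not on the individual saddle connections: by Definition \ref{def:directions} the direction of $\alpha$ in $X$ is determined by $d$ alone, so $\theta_X(\alpha,\beta) = \theta(X,d,d')$ in the sense of Notation \ref{nota:sinus}. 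Fixing $d\neq d'$ and taking the supremum over all saddle connections $\alpha$ of direction $d$ and $\beta$ of direction $d'$, the factor $\sin\theta(X,d,d')$ pulls out of the supremum and, by the invariance of the previous paragraph, the remaining supremum is precisely $K(d,d')$.

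It then remains to reduce the supremum in the definition of $\KVol$ to one over such pairs, and this is where I expect the main work. By Lemma 3.1 of \cite{MM} one may restrict to simple closed curves; writing each as a geodesic concatenation of saddle connections $\alpha = \alpha_1\ast\cdots\ast\alpha_k$ and $\beta = \beta_1\ast\cdots\ast\beta_l$ gives exact additivity of lengths, $l(\alpha) = \sum_i l(\alpha_i)$, together with bilinearity $\mathrm{Int}(\alpha,\beta) = \sum_{i,j}\mathrm{Int}(\alpha_i,\beta_j)$. Setting $K^\ast = \max_{i,j}\frac{|\mathrm{Int}(\alpha_i,\beta_j)|}{l(\alpha_i)\,l(\beta_j)}$, a mediant-type estimate then yields
\[
\frac{|\mathrm{Int}(\alpha,\beta)|}{l(\alpha)\,l(\beta)} = \frac{\big|\sum_{i,j}\mathrm{Int}(\alpha_i,\beta_j)\big|}{\big(\sum_i l(\alpha_i)\big)\big(\sum_j l(\beta_j)\big)} \leq K^\ast,
\]
so nothing is lost by restricting to single saddle connections, the reverse inequality being immediate since on a one-singularity surface saddle connections are closed curves. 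The hypothesis that saddle connections of the same direction do not intersect is used here twice: it forces the diagonal terms with $d=d'$ to contribute $\mathrm{Int}=0$, so they may be discarded and the supremum runs genuinely over $d\neq d'$, and it prevents parallel pieces in the above decomposition from creating spurious intersections.

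Putting the factorization together with this reduction gives
\[
\KVol(X) = \mathrm{Vol}(X)\cdot\sup_{\substack{d,d'\in\mathcal{P}\\ d\neq d'}} K(d,d')\,\sin\theta(X,d,d'),
\]
as claimed. The genuinely delicate step is the reduction in the previous paragraph: one must check that passing to shortest (geodesic) representatives is simultaneously compatible with the exact length additivity $l(\alpha)=\sum_i l(\alpha_i)$ and with the additivity of algebraic intersection, and this is exactly the place where the single-singularity assumption and the non-intersecting-parallel-saddle-connections hypothesis are essential.
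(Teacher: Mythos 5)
Your proof is correct and takes essentially the same route as the paper, which does not reprove this statement but cites \cite{BLM22} (Proposition 5.1) and observes that the argument extends verbatim under the hypotheses you isolate: your three ingredients --- the $SL_2(\RR)$-invariance of $\mathrm{Int}$ and of the wedge product (so that $K(d,d')$ can be computed on the base surface $\SS$), the factorization $\frac{\mathrm{Int}(\alpha,\beta)}{l(\alpha)l(\beta)} = \frac{\mathrm{Int}(\alpha,\beta)}{\alpha\wedge\beta}\sin\theta(X,d,d')$, and the reduction to pairs of saddle connections via Lemma 3.1 of \cite{MM}, homological bilinearity of $\mathrm{Int}$, and the mediant inequality --- are exactly the ingredients of that proof, with the single-singularity and non-intersecting-parallel hypotheses used precisely where you use them. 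The only point you gloss over is that a simple closed geodesic avoiding the singularity (a cylinder core curve) is not itself a concatenation of saddle connections, but it is homotopic to a cylinder boundary component, which is a concatenation of parallel saddle connections of the same total length, so this is a routine adjustment that does not affect the argument.
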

The fact that saddle connections in the same direction on the regular $n$-gon are non-intersecting can be checked using the horizontal and vertical cylinder decomposition of the staircase model.

\begin{Rema}\label{rk:invariance_K}
Notice that the Veech group $\Gamma_n^{\pm}$ of the staircase model $\SS$ acts on $\SS$ while preserving the intersection form, and acts linearly on $\RR^2$, hence preserving the wedge product. In particular, $K(d,d') = K(g \cdot d, g \cdot d')$ for any element $g \in \Gamma_n^{\pm}$.
\end{Rema}

From this result and Theorem \ref{theo:KVol_4m}, we deduce

\begin{Cor}\label{cor:K_regular}
For $n \geq 8, n \equiv 0 \mod 4$, we have:
\[
\KVol(\XX) = \mathrm{Vol}(X) \cdot K(\infty, \frac{1}{\Phi}) \cdot \sin \theta(\XX,\infty, \frac{1}{\Phi})
\]
In particular, 
\[ \forall (d,d') \text{, } K(d, d') \cdot \sin \theta(\XX,d,d') \leq K(\infty, \frac{1}{\Phi}) \cdot \sin \theta(\XX,\infty, \frac{1}{\Phi}).
\]
\end{Cor}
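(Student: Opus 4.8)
The plan is to read Corollary~\ref{cor:K_regular} off Proposition~\ref{prop:sup_directions} and Theorem~\ref{theo:KVol_4m}, the only real work being to locate \emph{which} pair of directions attains the supremum. By definition $\KVol(\XX)=\Vol(\XX)\cdot\sup_{\alpha,\beta}\frac{\Int(\alpha,\beta)}{l(\alpha)l(\beta)}$, and by Theorem~\ref{theo:KVol_4m} (with the extension to arbitrary closed curves carried out in Section~\ref{sec:4m}) this supremum equals $\frac1{l_0^2}$, realised exactly by pairs of distinct sides of the $n$-gon. On the other hand Proposition~\ref{prop:sup_directions} rewrites the very same quantity as $\Vol(\XX)\cdot\sup_{d\neq d'}K(d,d')\sin\theta(\XX,d,d')$. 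Comparing the two expressions gives immediately
\[
\sup_{d\neq d'}K(d,d')\sin\theta(\XX,d,d')=\frac1{l_0^2}.
\]
Hence everything reduces to exhibiting one maximising pair of directions and checking it is $(\infty,\tfrac1\Phi)$: the displayed equality of the corollary is then this value of the supremum, and the subsequent inequality is exactly the statement that $(\infty,\tfrac1\Phi)$ is a maximiser.

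To produce the maximiser I would use that, by the proof of Proposition~\ref{prop:sup_directions}, for every $d\neq d'$ the quantity $K(d,d')\sin\theta(\XX,d,d')$ equals the supremum of $\frac{\Int(\alpha,\beta)}{l(\alpha)l(\beta)}$ taken over saddle connections $\alpha,\beta$ of $\XX$ of base-directions $d$ and $d'$ (this is where the $\SL$-invariance of the wedge product, the topological invariance of $\Int$, and the geometric meaning of $\theta$ from Notation~\ref{nota:sinus} enter). It therefore suffices to find two distinct sides of the $n$-gon whose base-directions, in the sense of Definition~\ref{def:directions}, are $\infty$ and $\tfrac1\Phi$. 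I would track the sides through the cut-and-paste map of Figure~\ref{cut_and_paste}, whose linear part is $A=\begin{pmatrix}\sin\frac\pi n & -\sin\frac{(n/2-1)\pi}{n}\\ 0 & 1\end{pmatrix}$. Placing the $n$-gon with a horizontal bottom side, two consecutive sides carry the direction vectors $(1,0)$ and $(\cos\frac{2\pi}{n},\sin\frac{2\pi}{n})$; applying $A$ and using $\sin\frac{(n/2-1)\pi}{n}=\cos\frac\pi n$ gives $A(1,0)=(\sin\frac\pi n,0)$ and, after the angle-subtraction $\sin\frac\pi n\cos\frac{2\pi}n-\cos\frac\pi n\sin\frac{2\pi}n=-\sin\frac\pi n$, also $A(\cos\frac{2\pi}n,\sin\frac{2\pi}n)=(-\sin\frac\pi n,\sin\frac{2\pi}n)$. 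Reading directions via $[x:y]\mapsto -x/y$ yields $\infty$ for the first side and $\frac{\sin\frac\pi n}{\sin\frac{2\pi}n}=\frac1{2\cos\frac\pi n}=\frac1\Phi$ for the second.

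With this identification the conclusion is immediate: the two adjacent sides are \emph{distinct} closed curves on the single-singularity surface $\XX$, so they meet exactly once, at the cone point, whence $\frac{\Int}{l(\cdot)l(\cdot)}=\frac1{l_0^2}$. By the reformulation of the previous paragraph this forces $K(\infty,\tfrac1\Phi)\sin\theta(\XX,\infty,\tfrac1\Phi)\geq\frac1{l_0^2}$, which together with the upper bound $\sup_{d\neq d'}K\sin\theta=\frac1{l_0^2}$ makes it an equality and proves the corollary.

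I expect the direction computation of the second paragraph to be the main obstacle, not because it is hard but because it is where the geometry must be pinned down precisely: one has to fix the exact placement and orientation of the $n$-gon relative to the normalisation of the cut-and-paste of Figure~\ref{cut_and_paste}, verify that the two chosen sides are genuinely distinct closed curves so that the equality case of Theorem~\ref{theo:KVol_4m} applies and the geometric intersection is $1$, and confirm that the trigonometric collapse to $\tfrac1\Phi$ holds for every $n\equiv0\bmod4$ and not merely for the octagon. Any residual ambiguity in the naming of directions coming from the action of $\Gamma_n^{\pm}$ only helps, since $K$ is $\Gamma_n^{\pm}$-invariant by Remark~\ref{rk:invariance_K}.
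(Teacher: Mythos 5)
Your proposal is correct and follows essentially the same route as the paper's own proof: the equality case of Theorem \ref{theo:KVol_4m} identifies pairs of distinct sides as the maximisers, and the wedge-product/angle identity underlying Proposition \ref{prop:sup_directions} converts their intersection-to-length ratio into $K(\infty,\tfrac{1}{\Phi})\sin\theta(\XX,\infty,\tfrac{1}{\Phi})$, after which the ``in particular'' inequality is immediate from the supremum formula. The only difference is that you verify explicitly, via the cut-and-paste matrix of Figure \ref{cut_and_paste}, that two adjacent sides of the $n$-gon have base-directions $\infty$ and $\tfrac{1}{\Phi}$ (a point the paper's proof simply asserts), and that computation is correct.
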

\begin{proof}
By Theorem \ref{theo:KVol_4m}, $\KVol$ is achieved on the regular $n$-gon by pairs of distinct sides. In particular, the sides of the $n$-gon $\alpha$ and $\beta$ corresponding to directions $\infty$ and $\frac{1}{\Phi}$ achieve the supremum:
\begin{align*}
\KVol(\XX) &= \frac{\Int(\alpha,\beta)}{l(\alpha)l(\beta)} \text{ by Theorem \ref{theo:KVol_4m}.} \\ 
& = \frac{\Int(\alpha,\beta)}{\alpha \wedge \beta} \sin angle(\alpha,\beta)\\
 &= K(\infty, \frac{1}{\Phi}) \sin \theta(\XX,\infty,\frac{1}{\Phi}) \text{ by definition of } K(d,d') \text{ and } \theta(X,d,d').
\end{align*}
\end{proof}

Next, we provide precise estimates on $K(d,d')$ using its invariance under the diagonal action of the Veech group. These estimates are one of the main ingredients in the proof of Theorem \ref{theo:main}.

\begin{Prop}\label{prop:etude_K}
For any pair of distinct periodic directions $(d,d')$, we have, with the notations of Figure \ref{staircase_model}:
\begin{itemize}
\item[\ding{171}] If there exist $k \in \NN^{*} \cup \{ \infty \}$ and $g \in \Gamma_n^{\pm}$ such that $(d,d') = (g \cdot \infty, \pm g \cdot \frac{1}{k\Phi})$, we have
\[ 
K(d,d') = \frac{1}{l(\alpha_1)l(\alpha_m)} = \frac{1}{\Phi l_m^2}
\]
with $l_m := l(\alpha_m)$.
\item[\ding{169}] If there exist $g \in \Gamma_n^{\pm}$ such that $(d,d') = (g \cdot \infty, \pm g \cdot \frac{\Phi^2 -1}{\Phi^3 - 2 \Phi})$, then
\[
K(d,d') = \frac{1}{(\Phi^3-2\Phi)l_m^2} = \frac{1}{\Phi^2-2}K(\infty, \frac{1}{\Phi}).
\]
This is the case of $(d,d') =(\frac{1}{\Phi}, \Phi - \frac{1}{\Phi})$, image of $(\infty, \frac{\Phi^2 -1}{\Phi^3 - 2 \Phi})$ by the element $g = T_V R \in \Gamma^{\pm}$.
\item In the other cases, 
\[ K(d,d') < \frac{1}{(\Phi^3-2\Phi)l_m^2}. \]
\end{itemize}
\end{Prop}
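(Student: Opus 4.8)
The plan is to reduce the computation of $K(d,d')$, for an arbitrary pair of distinct periodic directions, to the study of a single one-parameter function on the staircase model $\mathcal{S}$, using the $\Gamma_n^{\pm}$-invariance of $K$ from Remark~\ref{rk:invariance_K}. Since $K(d,d')=K(d',d)$ and $K$ is invariant under the diagonal action of $\Gamma_n^{\pm}$, I would first move one of the two directions to the horizontal cusp $\infty$ (the direction of the sides $\alpha_i$); the remaining configurations, in which both directions lie in the orbit of the vertical cusp, are treated by the same argument applied to the vertical decomposition and will contribute only to the ``other cases''. The stabiliser of $\infty$ in $\Gamma_n^{\pm}$ is generated by $T_H\colon d'\mapsto d'+\Phi$ and $R\colon d'\mapsto -d'$, so I may further assume $d'\in[0,\Phi/2]$. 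This turns the statement into a description of $d'\mapsto K(\infty,d')$ on $[0,\Phi/2]$, in which the club directions $\ding{171}$ are the points $\pm\frac{1}{k\Phi}$ lying in the orbit of $\infty$ under the vertical twist $T_V$ (with the convention $\frac{1}{\infty\Phi}=0$, the vertical cusp, for $k=\infty$), and the diamond direction $\ding{169}$ is the image of $\frac{\Phi^2-1}{\Phi^3-2\Phi}$.

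Next I would convert the ratio into a holonomy computation. Writing $\mathrm{hol}(\alpha)=(l(\alpha),0)$ for a horizontal saddle connection and $\mathrm{hol}(\beta)=(x,y)$ for $\beta$ in direction $d'$, one has $\alpha\wedge\beta=l(\alpha)\,y$, hence
\[
K(\infty,d')=\sup_{\alpha,\beta}\frac{\mathrm{Int}(\alpha,\beta)}{l(\alpha)\,y(\beta)},
\]
the supremum being over horizontal $\alpha$ and $\beta$ in direction $d'$. For $d'=\pm\frac{1}{k\Phi}$ the saddle connections in direction $d'$ are exactly the images under $T_V^{\pm k}$ of the horizontal ones, so everything reduces to counting $\mathrm{Int}(\alpha,T_V^{\pm k}\gamma)$ and controlling $l(\gamma)$; for a general $d'$ in the interior of the interval, the saddle connections in direction $d'$ cross the vertical cylinder decomposition a number of times governed by the position of $d'$ relative to the cusps $\pm\frac{1}{k\Phi}$, which I would encode by a continued-fraction expansion of $d'$ with respect to $T_V$, in the spirit of the cylinder analysis of \S\ref{sec:4m} and \cite{BLM22}.

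I would then treat the three cases separately. For the club family, the optimal pair consists of the longest and shortest horizontal sides $\alpha_1$ and $\alpha_m$, with $\beta$ winding $k$ times in the smallest vertical cylinder; the crossing count, together with the staircase relation $l(\alpha_1)=\Phi\,l_m$, yields $K=\frac{1}{l(\alpha_1)l(\alpha_m)}=\frac{1}{\Phi l_m^2}$, a value independent of $k$ and valid in the limit $k=\infty$ (the vertical cusp $d'=0$). For the diamond direction, a direct optimisation on $\mathcal{S}$ gives $\frac{1}{(\Phi^3-2\Phi)l_m^2}$, which I would cross-check against $\frac{1}{\Phi^2-2}K(\infty,\frac1\Phi)$ using $\Phi^2-2=2\cos\frac{2\pi}{n}>1$, so that the diamond value is genuinely smaller than the club value. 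For every remaining direction the goal is the strict inequality $K(d,d')<\frac{1}{(\Phi^3-2\Phi)l_m^2}$, and here I would feed the length bound of Lemma~\ref{Lemme_longueurs} and the intersection bounds of \S\ref{sec:4m} into the crossing count: away from the resonant points, the saddle connections in direction $d'$ are forced either to be strictly longer or to produce strictly fewer crossings per unit wedge than in the diamond configuration.

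The hard part will be this last, uniform bound over the complement of the two special families. Unlike the club and diamond cases, which form single $\Gamma_n^{\pm}$-orbits and admit an explicit computation, the complement is a continuum, and one must exclude, simultaneously and with a definite gap, every competing pair of saddle connections. I expect the decisive ingredient to be a monotonicity statement for $d'\mapsto K(\infty,d')$ on each interval between consecutive special directions — morally, that the best ratio decays as $d'$ drifts away from the resonant points $\pm\frac{1}{k\Phi}$ — combined with the sharp estimate $l(\beta)\ge(\Phi^2-1)l_0$ for segments that escape the big cylinder, which is exactly what creates the gap between the diamond and club values and forces the strict inequality elsewhere.
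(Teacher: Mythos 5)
Your overall skeleton --- normalize one direction to a cusp using the $\Gamma_n^{\pm}$-invariance of $K$, compute the two special families explicitly, and prove a strict uniform bound everywhere else --- is the same as the paper's. But there is a genuine gap exactly where the proposition has its content: the uniform strict inequality $K(d,d') < \frac{1}{(\Phi^3-2\Phi)l_m^2}$ in the ``other cases''. You defer this to a hoped-for monotonicity of $d' \mapsto K(\infty,d')$ on the intervals between the resonant points, but no such statement is proved or even plausible as a tool: $K(\infty,\cdot)$ is defined only on the dense countable set of periodic directions, is a supremum over infinitely many pairs of saddle connections, and behaves like a Markov-type spectrum rather than a monotone function. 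The other ingredient you invoke --- Lemma~\ref{Lemme_longueurs} and the intersection bounds of \S\ref{sec:4m} --- cannot substitute for it, because those statements live on the regular polygon and compare $\Int(\alpha,\beta)$ to the product of Euclidean lengths $l(\alpha)l(\beta)$, whereas $K(d,d')$ compares it to the wedge $\alpha\wedge\beta = l(\alpha)l(\beta)\sin\theta$. Via Proposition~\ref{prop:sup_directions} the polygon bounds only control the product $K(d,d')\sin\theta(\XX,d,d')$ (this is Corollary~\ref{cor:K_regular}); since $\sin\theta(\XX,d,d')$ is arbitrarily small for nearly parallel pairs of directions, this yields no uniform bound on $K(d,d')$ alone.

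What the paper does instead, and what is missing from your plan, is a finite combinatorial optimization on the staircase itself. After reducing to $\alpha = \alpha_i$ or $\alpha = \beta_j$ (two genuinely separate cases: the staircase is \emph{not} symmetric under exchanging horizontal and vertical, since $Z_1$ has exceptional modulus, so ``the same argument applied to the vertical decomposition'' is not automatic), the key steps are: (a) for $\alpha = \alpha_i$ with $i<m$, every nonsingular intersection costs $\beta$ a vertical length $h_i + h_{i+1}$, and $h_{m-1}, l_{m-1}$ are so much larger than $h_m, l_m$ that the strict bound follows (Lemma~\ref{cas_alpha_i}); (b) for $\alpha = \alpha_m$ and $\beta$ leaving $Z_m$, one localizes the co-slope of $\beta$ in an interval $]\frac{1}{(k+1)\Phi},\frac{1}{k\Phi}[$ or $]\frac{1}{\Phi},\Phi-\frac{1}{\Phi}[$, counts the numbers $p, p'$ of crossings of $C_m$ and $C_{m-1}$, bounds $\Int(\alpha_m,\beta)$ by $p(k+1)+1$ (refined when $p'=p$ or $p'=p-1$) together with a lower bound on the vertical length, and then maximizes the resulting rational expressions over the integer parameters $k,p$ using the elementary M\"obius monotonicity Lemma~\ref{lemme_fractions}; this is where the constants $\Phi^3-2\Phi$, $\Phi^3\pm\Phi$ and the inequalities $\Phi<2$, $\Phi>\sqrt{3}$ actually enter. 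Finally, your description of the club equality case is inconsistent: $\alpha_1$ and $\alpha_m$ are parallel (both horizontal), so they cannot realize $K(\infty,\frac{1}{k\Phi})$, and the relation $l(\alpha_1)=\Phi l_m$ you invoke is false (one has $l_{m-1}=(\Phi^2-1)l_m > \Phi l_m$, and $l(\alpha_1)$ is larger still). The realizing pair is $\alpha_m$ together with a saddle connection winding $k$ times around the small vertical cylinder $C_m$ (for $k=\infty$, the side $\beta_m$ itself), and the relevant staircase relation is $h_m = l(\beta_m) = \Phi l_m$, which gives $\frac{\Int(\alpha,\beta)}{\alpha\wedge\beta} = \frac{1}{\Phi l_m^2}$.
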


Recall that the $\alpha_i$ (resp. $\beta_j$) are the horizontal (resp. vertical) saddle connection sorted by decreasing length, and define $l_i := l(\alpha_i)$ and $h_j := l(\beta_j)$ for $1 \leq i,j \leq m$. Further, we denote $C_1, \dots, C_m$ (resp. $Z_1, \dots, Z_m$) the vertical (resp. horizontal) cylinders, as in Figure \ref{staircase_model}. Using that the moduli of both horizontal and vertical cylinders are all $1/\Phi$ (except the horizontal cylinder $Z_1$), one can compare the lengths $h_m, h_{m-1}$ and $l_{m-1}$ to $l_m$, as in Figure \ref{lengths_staircase}.

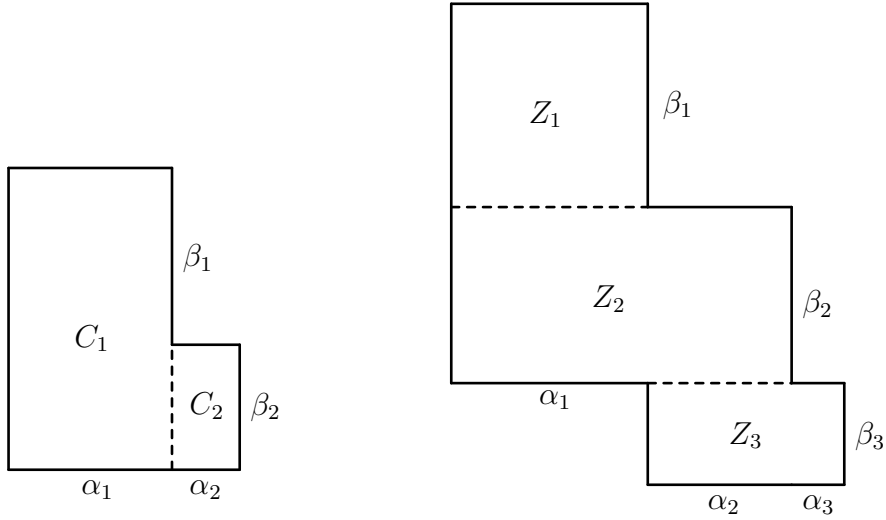
\begin{figure}[h]
\center
\begin{tikzpicture}[line cap=round,line join=round,>=triangle 45,x=1cm,y=1cm, scale=0.9]
\clip(-3.5,-1) rectangle (2,5);
\draw [line width=1pt] (0,0)-- (1,0);
\draw [line width=1pt] (1,0)-- (1,1.8477590650225735);
\draw [line width=1pt] (1,1.8477590650225735)-- (0,1.8477590650225735);
\draw [line width=1pt] (0,1.8477590650225735)-- (0,4.460884994775325);
\draw [line width=1pt] (0,4.460884994775325)-- (-2.414213562373095,4.460884994775325);
\draw [line width=1pt] (-2.414213562373095,4.460884994775325)-- (-2.414213562373095,0);
\draw [line width=1pt] (-2.414213562373095,0)-- (0,0);
\draw [line width=1pt,dash pattern=on 3pt off 3pt] (0,0)-- (0,1.8477590650225735);
\draw (0.1,0) node[anchor=north west] {$\alpha_2$};
\draw (-1.5,0) node[anchor=north west] {$\alpha_1$};
\draw (1,1.3) node[anchor=north west] {$\beta_2$};
\draw (0,3.5) node[anchor=north west] {$\beta_1$};
\draw (0.1,1.3) node[anchor=north west] {$C_2$};
\draw (-1.6,2.3) node[anchor=north west] {$C_1$};
\end{tikzpicture}
\begin{tikzpicture}[line cap=round,line join=round,>=triangle 45,x=1cm,y=1cm, scale = 0.7]
\clip(-9,-1) rectangle (2,10);
\draw [line width=1pt] (0,0)-- (1,0);
\draw [line width=1pt] (1,0)-- (1,1.9318516525781366);
\draw [line width=1pt] (1,1.9318516525781366)-- (0,1.9318516525781366);
\draw [line width=1pt] (0,1.9318516525781366)-- (0,5.277916867529369);
\draw [line width=1pt] (0,5.277916867529369)-- (-2.7320508075688776,5.277916867529369);
\draw [line width=1pt] (-2.7320508075688776,0)-- (0,0);
\draw (-1.8,0) node[anchor=north west] {$\alpha_2$};
\draw (-5,2) node[anchor=north west] {$\alpha_1$};
\draw (0,3.9) node[anchor=north west] {$\beta_2$};
\draw (-2.65,7.7) node[anchor=north west] {$\beta_1$};
\draw [line width=1pt] (-2.7320508075688776,1.9318516525781368)-- (-6.464101615137755,1.9318516525781366);
\draw [line width=1pt] (-6.464101615137755,1.9318516525781366)-- (-6.464101615137755,9.141620172685643);
\draw [line width=1pt] (-6.464101615137755,9.141620172685643)-- (-2.7320508075688767,9.141620172685643);
\draw [line width=1pt] (-2.7320508075688767,9.141620172685643)-- (-2.7320508075688776,5.277916867529369);
\draw [line width=1pt] (-2.7320508075688776,1.9318516525781368)-- (-2.7320508075688776,0);
\draw [line width=1pt,dash pattern=on 3pt off 3pt] (-6.4641,5.2779)-- (-2.7320508075688776,5.277916867529369);
\draw [line width=1pt,dash pattern=on 3pt off 3pt] (-2.7320508075688776,1.9318516525781368)-- (0,1.9318);
\draw (0,0) node[anchor=north west] {$\alpha_3$};
\draw (1,1.4) node[anchor=north west] {$\beta_3$};
\draw (-1.4,1.4) node[anchor=north west] {$Z_3$};
\draw (-4,4) node[anchor=north west] {$Z_2$};
\draw (-5.2,7.5) node[anchor=north west] {$Z_1$};
\end{tikzpicture}
\caption{The staircase models associated to the $n$-gon for $n=8$ on the left and $n=12$ on the right, and the cylinders.}
\label{staircase_model}
\end{figure}

\begin{figure}[h]
\center
\definecolor{ccqqqq}{rgb}{0.8,0,0}
\begin{tikzpicture}[line cap=round,line join=round,>=triangle 45,x=1cm,y=1cm]
\clip(-8,-1.3) rectangle (6,6);
\draw [line width=1pt] (0,0)-- (1,0);
\draw [line width=1pt] (1,0)-- (1,1.9318516525781366);
\draw [line width=1pt] (1,1.9318516525781366)-- (0,1.9318516525781366);
\draw [line width=1pt] (0,1.9318516525781366)-- (0,5.277916867529369);
\draw [line width=1pt] (0,5.277916867529369)-- (-2.7320508075688776,5.277916867529369);
\draw [line width=1pt] (-2.7320508075688776,0)-- (0,0);
\draw [line width=1pt,dash pattern=on 3pt off 3pt] (0,0)-- (0,1.9318516525781366);
\draw [line width=1pt,dash pattern=on 3pt off 3pt] (0,1.9318516525781366)-- (-2.7320508075688776,1.9318516525781368);
\draw [line width=1pt] (-2.7320508075688776,1.9318516525781368)-- (-6.5,1.9318516525781366);
\draw [line width=1pt] (-6.5,1.9318516525781366)--(-6.5,6);
\draw [line width=1pt] (-2.7320508075688767,6)-- (-2.7320508075688776,5.277916867529369);
\draw [line width=1pt,dash pattern=on 3pt off 3pt] (-2.7320508075688776,1.9318516525781368)-- (-2.7320508075688776,5.277916867529369);
\draw [line width=1pt] (-2.7320508075688776,1.9318516525781368)-- (-2.7320508075688776,0);
\draw [line width=1pt,dash pattern=on 3pt off 3pt] (-6.5,5.277916867529369)-- (-2.7320508075688776,5.277916867529369);
\draw[line width=1pt, color = ccqqqq] (0,1.9318516525781366)--(-2.7320508075688776,0);
\draw[line width=1pt, color = ccqqqq] (1,1.9318516525781366)--(0,0);
\draw [line width=1pt,color=ccqqqq] (-2.732,1.931)--(0,5.277);
\draw [line width=0.5pt] (-2.7,-0.3)-- (0,-0.3);
\draw [line width=0.5pt] (1,-0.3)-- (0,-0.3);
\draw [line width=0.5pt] (1.5,0)-- (1.5,1.9);
\draw [line width=0.5pt] (1.5,1.9)-- (1.5,5.3);
\draw [line width=0.5pt] (-2.7,-0.3)-- (-2.55,-0.15);
\draw [line width=0.5pt] (-2.7,-0.3)-- (-2.55,-0.45);
\draw [line width=0.5pt] (0,-0.3)-- (-0.15,-0.15);
\draw [line width=0.5pt] (0,-0.3)-- (-0.15,-0.45);
\draw [line width=0.5pt] (0,-0.3)-- (0.15,-0.45);
\draw [line width=0.5pt] (0,-0.3)-- (0.15,-0.15);
\draw [line width=0.5pt] (1,-0.3)-- (0.85,-0.15);
\draw [line width=0.5pt] (1,-0.3)-- (0.85,-0.45);
\draw [line width=0.5pt] (1.5,0)-- (1.65,0.15);
\draw [line width=0.5pt] (1.5,0)-- (1.35,0.15);
\draw [line width=0.5pt] (1.5,1.9)-- (1.65,1.75);
\draw [line width=0.5pt] (1.5,1.9)-- (1.35,1.75);
\draw [line width=0.5pt] (1.5,1.9)-- (1.35,2.05);
\draw [line width=0.5pt] (1.5,1.9)-- (1.65,2.05);
\draw [line width=0.5pt] (1.5,5.3)-- (1.35,5.15);
\draw [line width=0.5pt] (1.5,5.3)-- (1.65,5.15);
\draw (1.6,4) node[anchor=north west] {$h_{m-1} = (\Phi^3- 2 \Phi) l_m$};
\draw (1.6,1.4) node[anchor=north west] {$h_m = \Phi l_m$};
\draw (0.25,-0.3) node[anchor=north west] {$l_m$};
\draw (-3,-0.65) node[anchor=north west] {$l_{m-1} = (\Phi^2 - 1)l_m$};
\draw (-1.5,1.75) node[anchor=north west, color = ccqqqq] {$\gamma$};
\draw (0.05,1.55) node[anchor=north west, color = ccqqqq] {$\gamma'$};
\draw (-2.1,4) node[anchor=north west, color = ccqqqq] {$\gamma''$};
\end{tikzpicture}
\caption{The lengths $h_m, h_{m-1}$ and $l_{m-1}$ expressed using $l_m$. The diagonal curves $\gamma, \gamma'$ and $\gamma''$ have respective co-slope $\Phi - \frac{1}{\Phi}$, $\frac{1}{\Phi}$, and $\frac{\Phi^2-1}{\Phi^3-2\Phi}$.}
\label{lengths_staircase}
\end{figure}
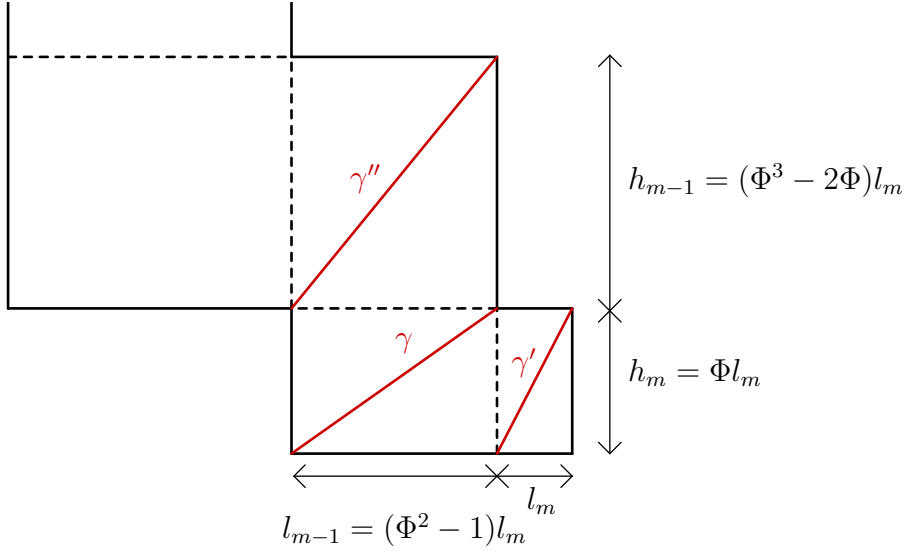

\begin{proof}
The proof goes as follow: given a pair of distinct periodic directions $(d,d')$ and a pair of saddle connections $(\alpha, \beta)$ in respective directions $d,d'$, we first notice that it is possible to assume that $\alpha$ is either horizontal ($d=\infty$, i.e $\alpha = \alpha_i$ for $i \in \llbracket 1 ,m\rrbracket$) or vertical ($d=0$, i.e $\alpha=\beta_j$ for $j \in \llbracket 1,m \rrbracket$). This is because $K(d,d')$ is by invariant under the action of the Veech group and any periodic direction is either the image of the horizontal or the vertical by an element of the Veech group. We study the cases $d=\infty$ and $d=0$ separately, identify the configurations \ding{171} and \ding{169} and show that $K(d,d')$ is smaller in the other cases. \newline

\paragraph{\underline{Case 1: $d$ represents the horizontal cusp.}} Up to the action by an element of the Veech group, we can assume $\alpha$ is one of the $\alpha_i$. Let us first study the intersection with $\alpha_i$ for $i<m$.
\paragraph{\underline{Case 1.1: $\alpha = \alpha_i$ for $i<m$.}}
\begin{Lem}\label{cas_alpha_i}
For any $i<m$, and any saddle connection $\beta$, we have 
$$
\frac{\mathrm{Int}(\alpha_i, \beta)}{\alpha_i \wedge \beta} < \frac{1}{\Phi^3-2\Phi}\cdot \frac{1}{l_m^2}.
$$
\end{Lem}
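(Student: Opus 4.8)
The plan is to bound the two quantities in the ratio separately, exploiting that $\alpha_i$ is horizontal. Since $\alpha_i$ has holonomy vector $(l_i,0)$, writing $v$ for the vertical component of the holonomy of $\beta$, one has $\alpha_i \wedge \beta = l_i\,|v|$, so the assertion is equivalent to $\Int(\alpha_i,\beta) < \frac{l_i}{(\Phi^3-2\Phi)\,l_m^2}\,|v|$. It therefore suffices to produce a good upper bound on the \emph{algebraic} intersection number $\Int(\alpha_i,\beta)$ in terms of $|v|$.

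The key geometric input I would use is that $\alpha_i$ is a cross-section (a transverse arc joining the two boundary components) of the vertical cylinder $C_i$, whose modulus is $\Phi$ and whose width equals $l_i$, so that the vertical length of its core is $\Phi l_i$. By the cross-section/winding principle, each algebraic crossing of $\beta$ with $\alpha_i$ forces $\beta$ to wind once around the core of $C_i$ and hence to accumulate vertical holonomy at least $\Phi l_i$ inside $C_i$; since $\beta$ is a straight segment, the total vertical displacement it spends inside $C_i$ is at most $|v|$. This would give
\[
\Int(\alpha_i,\beta) \le \frac{|v|}{\Phi l_i},
\qquad\text{hence}\qquad
\frac{\Int(\alpha_i,\beta)}{\alpha_i \wedge \beta} \le \frac{1}{\Phi\, l_i^{2}}.
\]

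To conclude I would invoke the length comparisons recorded in Figure~\ref{lengths_staircase}: for $i<m$ the lengths decrease, so $l_i \ge l_{m-1} = (\Phi^2-1)\,l_m$, whence $\frac{1}{\Phi l_i^2} \le \frac{1}{\Phi(\Phi^2-1)^2 l_m^2}$. Comparing with the target $\frac{1}{(\Phi^3-2\Phi)l_m^2} = \frac{1}{\Phi(\Phi^2-2)l_m^2}$, the desired strict inequality reduces to the elementary estimate $(\Phi^2-1)^2 > \Phi^2-2$, i.e. $\Phi^4 - 3\Phi^2 + 3 > 0$, which holds for every real $\Phi$ since the quadratic $t^2-3t+3$ in $t=\Phi^2$ has negative discriminant. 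This produces $\frac{\Int(\alpha_i,\beta)}{\alpha_i\wedge\beta} < \frac{1}{(\Phi^3-2\Phi)l_m^2}$, as required.

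The delicate step is the intersection bound $\Int(\alpha_i,\beta)\le |v|/(\Phi l_i)$. The naive ``vertical travel divided by circumference'' count is complicated by the fact that $\beta$ repeatedly enters and leaves $C_i$, so the local vertical coordinate of $C_i$ is not globally monotone along $\beta$, and short excursions of $\beta$ around the cone point can create geometric crossings that are very close together. The point I would emphasize is that it is the \emph{algebraic} intersection that is controlled: crossings produced by a single monotone passage through $C_i$ all count with the same sign and are paid for by genuine vertical winding, whereas pairs of crossings created by short excursions around the singularity cancel. I expect this to be the main obstacle, and I would resolve it by adapting the analogous cross-section estimate of \cite[\S 5]{BLM22} to the vertical cylinder $C_i$ of the staircase. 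Should a fully clean winding bound prove awkward, the weaker statement that consecutive algebraically counted crossings are separated by vertical displacement at least $\Phi l_m$ (the shortest vertical saddle connection) already suffices, since it yields $\frac{\Int(\alpha_i,\beta)}{\alpha_i\wedge\beta}\le \frac{1}{\Phi l_m l_i}\le \frac{1}{\Phi(\Phi^2-1)l_m^2}$, and $\Phi^2-1 > \Phi^2-2$.
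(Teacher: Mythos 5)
Your reduction to bounding $\Int(\alpha_i,\beta)$ in terms of the vertical holonomy $|v|$ is the right frame, and the closing arithmetic ($l_i \ge (\Phi^2-1)l_m$ and $(\Phi^2-1)^2 > \Phi^2-2$) is fine, but the key winding bound $\Int(\alpha_i,\beta)\le |v|/(\Phi l_i)$ is false, and with it your intermediate inequality $\frac{\Int(\alpha_i,\beta)}{\alpha_i\wedge\beta}\le \frac{1}{\Phi l_i^2}$. A crossing of the cross-section $\alpha_i$ does not force a full wind around $C_i$: the \emph{first} crossing (in particular a crossing at the singularity) costs strictly less than the circumference $\Phi l_i$, so the correct statement carries an additive $+1$, namely $\Int(\alpha_i,\beta)\le |v|/(\Phi l_i)+1$, and that $+1$ is exactly where the difficulty of the lemma lies. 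Concretely, take $n=8$ (so $m=2$), $i=1$ and $\beta=\beta_1$: these closed curves cross once at the singularity, so $\Int(\alpha_1,\beta_1)=\pm 1$, while $|v| = h_1 = \Phi(\Phi^2-2)l_m < \Phi l_1 = \Phi(\Phi^2-1)l_m$, so your bound would force $\Int(\alpha_1,\beta_1)=0$. For this same pair, $\frac{\Int(\alpha_1,\beta_1)}{\alpha_1\wedge\beta_1} = \frac{1}{\Phi(\Phi^2-1)(\Phi^2-2)l_m^2} > \frac{1}{\Phi(\Phi^2-1)^2 l_m^2}$, violating your intermediate bound (the lemma's own bound still holds, since $\Phi^2-1>1$). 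Note also that your worry about algebraic cancellation is a red herring: on a translation surface, two straight segments in distinct directions always cross with the same sign, so algebraic and geometric counts agree up to sign; the genuine obstacle is the cheap first (or singular) crossing.

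Your fallback does not repair this. First, it drops the same $+1$: separation of consecutive crossings by $\Phi l_m$ only gives $\Int(\alpha_i,\beta) \le |v|/(\Phi l_m)+1$, not $\Int(\alpha_i,\beta)\le |v|/(\Phi l_m)$. Second, even after restoring the $+1$ and inserting a lower bound on $|v|$, the constant $\Phi l_m$ is too weak: with two crossings one only gets $\frac{\Int(\alpha_i,\beta)}{\alpha_i\wedge\beta}\le \frac{2}{\Phi(\Phi^2-1)l_m^2}$, which exceeds the target $\frac{1}{\Phi(\Phi^2-2)l_m^2}$ because $2(\Phi^2-2)>\Phi^2-1$, i.e.\ $\Phi^2>3$, for every $n\ge 8$. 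This is why the paper's proof uses two sharper inputs: (a) any $\beta$ with $\Int(\alpha_i,\beta)\neq 0$ must cross a core curve of one of the horizontal cylinders $Z_1,\dots,Z_m$ (these cores being homologous to $\alpha_1$, $\alpha_1+\alpha_2$, etc.), hence has vertical length at least $h_i \ge (\Phi^3-2\Phi)l_m$ --- this is what controls the single-crossing case created by the $+1$; and (b) each \emph{further} non-singular crossing costs $h_i+h_{i+1}\ge(\Phi^3-\Phi)l_m$, which is the full circumference of $C_i$ and is where your winding intuition is correct; the lemma then follows from a two-case analysis according to $\Int(\alpha_i,\beta)\le 1$ or $\ge 2$. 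Without an input like (a), no winding-type estimate alone can close the gap.
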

\begin{proof}
First notice that if a saddle connection $\beta$ does not intersect the core curves of the cylinders $Z_1, \dots, Z_m$, we have $\mathrm{Int}(\alpha_i, \beta) = 0$ since the core curves are respectively homologous to $\alpha_1, \alpha_1 + \alpha_2, \cdots, \alpha_{i-1}+\alpha_i$. In particular, a saddle connection $\beta$ having non-zero intersection with $\alpha$ has a vertical length at least $h_i$. Further, any non singular intersection of $\beta$ with $\alpha_i$ for $i<m$ requires a vertical length at least $h_i+h_{i+1}$ (see Figure \ref{fig:lemme_4_7}). This gives:
$$
\alpha_i \wedge \beta \geq l_i \mathrm{max}((h_i+h_{i+1})(\mathrm{Int}(\alpha_i, \beta)-1), h_i)
$$

\begin{figure}
\center
\definecolor{qqwuqq}{rgb}{0,0.39215686274509803,0}
\definecolor{ccqqqq}{rgb}{0.8,0,0}
\begin{tikzpicture}[line cap=round,line join=round,>=triangle 45,x=1cm,y=1cm, scale = 0.8]
\clip(-4,-0.5) rectangle (1,6);
\draw [line width=1pt,color=ccqqqq] (-3,0)-- (0,0);
\draw [line width=1pt,dash pattern=on 3pt off 3pt] (1,2)-- (0,2);
\draw [line width=1pt] (0,2)-- (0,5.58);
\draw [line width=1pt,color=ccqqqq] (0,5.58)-- (-2.973333333333335,5.59);
\draw [line width=1pt,dash pattern=on 3pt off 3pt] (-2.973333333333335,5.59)-- (-4,5.59);
\draw [line width=1pt,dash pattern=on 3pt off 3pt] (-4,2)-- (-3,2);
\draw [line width=1pt] (-3,2)-- (-3,0);
\draw [line width=1pt,color=qqwuqq] (-1.8000135734152098,-0.004035828788961915)-- (-1.2147591025791868,1.3466186656662082);
\draw [line width=1pt,dash pattern=on 3pt off 3pt,color=qqwuqq] (-1.2147591025791868,1.3466186656662082)-- (-1.0021966769673205,1.8371717666016807);
\draw [line width=1pt,color=qqwuqq] (-1.7733469067485448,5.585964171211038)-- (-2.3438978186802184,4.269242656183222);
\draw [line width=1pt,dash pattern=on 3pt off 3pt,color=qqwuqq] (-2.3438978186802184,4.269242656183222)-- (-2.6266666666666683,3.61666666666666);
\draw [color=ccqqqq](-1,0) node[anchor=north west] {$\alpha_i$};
\draw [color=qqwuqq](-2.053333333333335,4.976666666666658) node[anchor=north west] {$\beta$};
\draw (0.15,4.136666666666659) node[anchor=north west] {$h_i$};
\draw (-0.1,1.4) node[anchor=north west] {$h_{i+1}$};
\draw [-to,line width=0.5pt] (0.5173091738313147,1.2565125882099142) -- (0.5110264706788689,1.90991371606422);
\draw [-to,line width=0.5pt] (0.49525623247357337,3.5500184894148097) -- (0.508720224338232,2.1497633354904315);
\draw [-to,line width=0.5pt] (0.49051431388862954,4.043178022248922) -- (0.4766682074512304,5.483173091738302);
\draw [-to,line width=0.5pt] (0.5220498597886045,0.7634812486518121) -- (0.5285815537271586,0.08418507904224826);
\begin{scriptsize}
\draw [fill=qqwuqq] (-1.7733469067485448,5.585964171211038) circle (2.5pt);
\draw [fill=qqwuqq] (-1.8000135734152098,-0.004035828788961915) circle (2.5pt);
\end{scriptsize}
\end{tikzpicture}
\caption{A non singular intersection with $\alpha_i$, $i<m$, requires a vertical length at least $h_i + h_{i+1}$.}
\label{fig:lemme_4_7}
\end{figure}
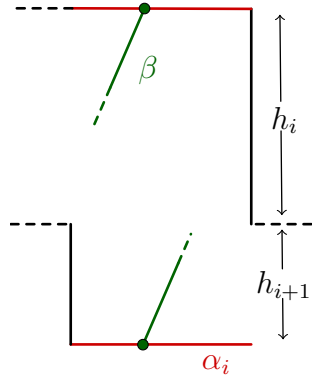

In particular,
\begin{enumerate}[label =(\roman*)]
\item If $\Int(\alpha_i,\beta) \leq 1$, we use $\alpha_i \wedge \beta \geq l_i h_i$ to get
\[  \frac{\mathrm{Int}(\alpha_i,\beta)}{\alpha_i \wedge \beta} \leq \frac{1}{l_i h_i},\]
and, since $i<m$, we get
\[ \frac{1}{l_i h_i} \leq \frac{1}{l_{m-1} h_{m-1}} \leq \frac{1}{(\Phi^2-1)(\Phi^3-2\Phi) l_m^2} < \frac{1}{(\Phi^3-2\Phi) l_m^2}. \]
\item Otherwise, we use $\alpha_i \wedge \beta \geq l_i (h_i+h_{i+1})(\Int(\alpha_i,\beta)-1)$ and obtain
\[ \frac{\mathrm{Int}(\alpha_i,\beta)}{\alpha_i \wedge \beta} \leq \frac{2}{l_i(h_i+h_{i+1})}. \]
But, since $i<m$, we have $l_i \geq l_{m-1} = (\Phi^2-1)l_m$ and $h_i+h_{i+1} \geq h_{m-1} + h_m = (\Phi^3 - \Phi)l_m$ and in particular 
$$
\frac{2}{l_i (h_i+h_{i+1})} \leq \frac{2}{(\Phi^3-\Phi)(\Phi^2-1)l_m^2} < \frac{1}{(\Phi^3-2\Phi)l_m^2},
$$
where the last inequality comes from $\Phi^3- \Phi > \Phi^3 - 2\Phi$ and $\Phi^2-1 > 2$.
\end{enumerate}

\end{proof}
 
\paragraph{\underline{Case 1.2: $\alpha = \alpha_m$.}}
In this case, notice that either $\beta$ is contained in the horizontal cylinder $Z_m$ (see Figure \ref{staircase_model}) and, up to a horizontal twist, its co-slope is $\pm \frac{1}{k\Phi}$ for $k \in \NN^* \cup \{ \infty \}$ so that it corresponds to a geodesic of case \ding{171}, or $\beta$ is not contained in $Z_m$.\newline

\underline{\textbf{Case 1.2.i.}} If the direction of $\beta$ is, up to an horizontal twist, $d' = \pm \frac{1}{k\Phi}$ as in \ding{171}, and $\beta$ is contained in $Z_m$, we see that $\alpha$ and $\beta$ intersect $k+1$ times and $\alpha \wedge \beta = (k+1) l(\beta_m)l(\alpha_m) = (k+1) \Phi l_m^2$ (see Figure \ref{fig:plane_template_direction_3Phi} for the case $k=3$). This gives directly that 
\[ \frac{\Int(\alpha_m, \beta)}{l(\alpha) l(\beta)} = \frac{k+1}{(k+1) \Phi l_m^2} = \frac{1}{\Phi l_m^2}.
\]
In particular, $K(\infty, \frac{1}{k\Phi}) \geq \frac{1}{\Phi l_m^2}$, and the reversed inequality holds since the other saddle connections of direction $d' = \frac{1}{k\Phi}$ do not intersect $\alpha_m$, and as seen in Case 1.1 the intersection of any $\beta$ with $\alpha_i$, $i<m$, gives a lower ratio. Consequently,
$$\text{(\ding{171}) } \forall k \in \NN^* \cup \{ \infty \}, \text{ } K(\infty, \frac{1}{k\Phi}) = \frac{1}{\Phi l_m^2}.$$
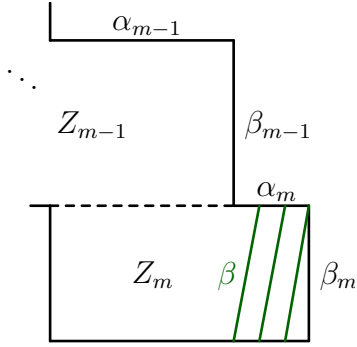
\begin{figure}
\center
\definecolor{qqwuqq}{rgb}{0,0.39215686274509803,0}
\begin{tikzpicture}[line cap=round,line join=round,>=triangle 45,x=1cm,y=1cm]
\clip(-3,-0.1) rectangle (1.6,4.5);
\draw [line width=1pt] (0,0)-- (1,0);
\draw [line width=1pt] (1,0)-- (1,1.8);
\draw [line width=1pt] (1,1.8)-- (0,1.8);
\draw [line width=1pt] (0,1.8)-- (0,4);
\draw [line width=1pt] (0,4)-- (-2.44,4);
\draw [line width=1pt] (-2.44,4)--(-2.44,4.5);
\draw [line width=1pt] (0,0)-- (-2.44,0);
\draw [line width=1pt,color=qqwuqq] (0,0)-- (0.34214876033057856,1.8);
\draw [line width=1pt,color=qqwuqq] (0.34214876033057856,0)-- (0.6842975206611571,1.8);
\draw [line width=1pt,color=qqwuqq] (0.6842975206611571,0)-- (1,1.8);
\draw [line width=1pt] (-2.44,1.8)-- (-2.44,0);
\draw [line width=1pt, dash pattern=on 3pt off 3pt] (-2.44,1.8)--(0,1.8);
\draw [line width=1pt] (-2.7,1.8)--(-2.44,1.8);
\draw (-1.15,3.9) node[above] {$\alpha_{m-1}$};
\draw (-1.5,1.2) node[anchor=north west] {$Z_m$};
\draw (-2.5,3.2) node[anchor=north west] {$Z_{m-1}$};
\draw (-3.2,4) node[anchor=north west] {$\ddots$};
\draw (0.16,2.25) node[anchor=north west] {$\alpha_m$};
\draw (1,1.2) node[anchor=north west] {$\beta_m$};
\draw (0,3.2) node[anchor=north west] {$\beta_{m-1}$};
\draw [color=qqwuqq](-0.36,1.2) node[anchor=north west] {$\beta$};
\end{tikzpicture}
\caption{Example of $\beta$ with co-slope $\frac{1}{3\Phi}$. Here, $\Int(\alpha_m,\beta) = 3$ with an intersection at the singularity.}
\label{fig:plane_template_direction_3Phi}
\end{figure}

\underline{\textbf{Case 1.2.ii.}} In the second case, the geodesic $\beta$ is not contained in the horizontal cylinder $Z_m$. As such, $\beta$ has a vertical length at least the length of $\beta_{m-1}$, that is $h_{m-1} = (\Phi^3 - 2\Phi) l_m$.
\begin{itemize}
\item[(1.2.ii.a)] \underline{If $\Int(\alpha_m, \beta) = 0$ or $1$}, we directly have 
$$
\frac{\Int(\alpha_m,\beta)}{\alpha_m \wedge \beta} \leq \frac{1}{(\Phi^3-2\Phi)l_m^2},
$$
with equality if and only if $\Int(\alpha_m,\beta) = 1$ and the vertical length of $\beta$ is exactly $h_{m-1}$. In particular, $\beta$ is contained in the horizontal cylinder $Z_{m-1}$ and up to an horizontal twist, its direction is $d' = 0$ or $d'=\pm \frac{\Phi^2-1}{\Phi^3-2\Phi}$ (see Figure \ref{lengths_staircase}). The case $d'=0$ is an instance of case \ding{171} and has already been dealt with. The case $d'=\pm \frac{\Phi^2-1}{\Phi^3-2\Phi}$ is exactly case \ding{169}. Further, taking the diagonal saddle connections $\gamma$ and $\gamma'$ of Figure \ref{lengths_staircase} of respective holonomy vectors $\begin{pmatrix} \Phi^2-1 \\ \Phi \end{pmatrix}l_m$ and $\begin{pmatrix} 1 \\ \Phi \end{pmatrix}l_m$ which intersect once at the singularity, we get:
\[
\frac{\Int(\gamma,\gamma')}{\gamma \wedge \gamma'} = \frac{1}{(\Phi^3-2\Phi)l_m^2} = K(\infty, \frac{\Phi^2-1}{\Phi^3-2\Phi}),
\]
This is due to the fact that the pair of directions $(\frac{1}{\Phi}, \Phi - \frac{1}{\Phi})$ is the image of the pair $(\infty, \frac{\Phi^2-1}{\Phi^3-2\Phi})$ by the diagonal action of the matrix $T_V R$ which belongs to the (unoriented) Veech group. In particular:
\[ K(\frac{1}{\Phi}, \Phi - \frac{1}{\Phi}) = K(\infty, \frac{\Phi^2-1}{\Phi^3-2\Phi}). \]
\item[(1.2.ii.b)] Otherwise, $\beta$ intersects $\alpha_m$ outside the singularity and is not contained in the horizontal cylinder $Z_m$. In this case, we show:
\end{itemize}

\begin{Lem}\label{lem:cas12iib}
Assume $\beta$ intersects $\alpha_m$ outside the singularity and is not contained in the horizontal cylinder $Z_m$. Then 
$$
\frac{\Int(\alpha_m,\beta)}{\alpha_m \wedge \beta} < \frac{1}{(\Phi^3-2\Phi)l_m^2}.
$$
\end{Lem}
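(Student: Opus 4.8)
The plan is to convert the claimed inequality into a lower bound on the vertical displacement of $\beta$, and then to control the number of intersections with $\alpha_m$ through the way $\beta$ travels across the horizontal cylinders. Since $\alpha_m$ is horizontal of length $l_m$, its holonomy is $(l_m,0)$, so if $\beta$ has holonomy $(x,y)$ then $\alpha_m\wedge\beta=l_m\,|y|$. Writing $v:=|y|$ for the vertical length of $\beta$ and $N:=\Int(\alpha_m,\beta)$, the statement to prove is exactly
\[
v>N\,(\Phi^3-2\Phi)\,l_m=N\,h_{m-1}.
\]
I would first dispose of the case $N\le 1$: as in case $(1.2.ii.a)$, the hypothesis $\beta\not\subset Z_m$ forces $v\ge h_{m-1}$, with equality only when $\beta\subset\overline{Z_{m-1}}$, in which case $\beta$ can meet $\alpha_m$ only at the singularity. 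Since here $\beta$ crosses $\alpha_m$ transversally, we get $v>h_{m-1}$, which settles $N=1$.

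For $N\ge 2$ the strategy is to decompose the developed trajectory of $\beta$ into maximal sub-arcs lying in $\overline{Z_m}$ (the \emph{$Z_m$-sojourns}) and the complementary \emph{excursions}. A transversal intersection with $\alpha_m$ can occur only during a $Z_m$-sojourn, and because $\alpha_m$ is glued to itself across $Z_m$ by the vertical translation of length $h_m$, each such crossing consumes vertical length $h_m$ inside $Z_m$. On the other hand, the only cylinder adjacent to $Z_m$ across the part of its boundary not formed by $\alpha_m$ is $Z_{m-1}$, so any excursion out of $\overline{Z_m}$ traverses $Z_{m-1}$ and contributes vertical length at least $h_{m-1}$ while producing no intersection with $\alpha_m$. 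Since $\beta\not\subset Z_m$ there is at least one excursion, giving $v\ge N h_m+h_{m-1}$; this already yields the desired inequality for small $N$ but is too weak for large $N$ (because $h_m<h_{m-1}$), so the real work is to show that producing many crossings forces many excursions. Here I would use the horizontal Dehn twist $T_H$, which fixes the horizontal direction, hence $\alpha_m$, and preserves both $\Int$ and $\wedge$ (Remark~\ref{rk:invariance_K}), to normalise the direction of $\beta$ into a single fundamental interval of co-slopes; this leaves finitely many combinatorial types of how $\beta$ threads $Z_m$ and $Z_{m-1}$, and in each type one estimates $v$ and $N$ precisely by unfolding the trajectory as in Figure~\ref{fig:non_escape_case}, using the explicit lengths $h_m=\Phi l_m$ and $h_{m-1}=(\Phi^3-2\Phi)l_m$ of Figure~\ref{lengths_staircase}.

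The main obstacle is precisely the near-vertical regime, i.e.\ directions close to the directions $\tfrac{1}{k\Phi}$ of configuration \ding{171}: there $\beta$ can wind many times inside $Z_m$, crossing $\alpha_m$ repeatedly at the cheap cost $h_m$ each and visiting $Z_{m-1}$ only once, which would make the ratio approach $\tfrac{1}{\Phi l_m^2}>\tfrac{1}{(\Phi^3-2\Phi)l_m^2}$. The point to exploit is that $\beta$ is a genuine saddle connection: a short, nearly vertical curve crossing $\alpha_m$ many times while staying in $Z_m$ is itself \emph{contained} in $Z_m$ and is therefore one of the excluded \ding{171} curves. Consequently, as the direction of $\beta$ approaches such a cusp direction, a saddle connection with $\beta\not\subset Z_m$ is forced to be long: the foot of each $Z_m$-sojourn drifts by $|c|\,h_m$ per wrap and must eventually leave the window of length $l_m$, driving $\beta$ through a full $Z_{m-1}$-traversal each time, so that the number of forced excursions grows with $N$. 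Making this drift/closing-up bookkeeping quantitative and summing over sojourns and excursions is the technical heart of the argument, and it is exactly the step where the regular $n$-gon requires sharper estimates than the double $n$-gon; carried through, it yields the strict inequality $v>N h_{m-1}$ and hence the lemma.
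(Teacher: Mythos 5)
Your opening moves are sound and essentially parallel the paper's: the reduction of the claim to $v > N\,h_{m-1}$ (where $v$ is the vertical length of $\beta$ and $N=\Int(\alpha_m,\beta)$), the treatment of $N\le 1$, and the normalisation of the co-slope by $T_H$ all match what the paper does (the paper runs the bookkeeping with the \emph{vertical} cylinders $C_m, C_{m-1}$, you with the horizontal ones, which is only a change of viewpoint). The gap is in the step you defer to ``drift/closing-up bookkeeping.'' The mechanism you propose --- at most $k+1$ crossings per $Z_m$-sojourn when $\tfrac{1}{(k+1)\Phi}<c<\tfrac{1}{k\Phi}$, and \emph{one} excursion of vertical cost $h_{m-1}$ each time a sojourn ends --- cannot yield the lemma, no matter how carefully it is carried out. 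Writing $N'$ for the number of transversal crossings, it gives at best $v\ \ge\ N'h_m+\bigl(\tfrac{N'}{k+1}-1\bigr)h_{m-1}$, and as $N'\to\infty$ this exceeds $N h_{m-1}$ (with $N\le N'+1$) only when $h_m+\tfrac{h_{m-1}}{k+1}>h_{m-1}$, i.e.\ only when $k(\Phi^2-3)<1$. Since $\Phi^2-3=2\cos(2\pi/n)-1$, this fails for every $k\ge 2$ as soon as $n\ge 12$, and for every $k\ge 3$ when $n=8$: precisely the near-vertical regime you yourself identified as the crux. It even fails for $c>\tfrac{1}{\Phi}$ with one transversal crossing plus a singular intersection, where your accounting gives only $v\ge h_m+h_{m-1}$, which is smaller than the required $2h_{m-1}$ because $\Phi^2>3$.

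What is missing is a lower bound on the cost of \emph{returning} to the $\alpha_m$-window, not merely of leaving it. Once the horizontal position of $\beta$ drifts past the window of width $l_m$, the curve cannot meet $\alpha_m$ again before its position has drifted across the complementary window, of width $l_{m-1}=(\Phi^2-1)l_m$; during that stretch it alternates between traversals of $Z_{m-1}$ and traversals of $Z_m$ that produce \emph{no} intersection (these idle $Z_m$-traversals are absent from your count). Consequently each of the at least $\tfrac{l_{m-1}}{c\,(h_m+h_{m-1})}>\tfrac{1}{c\Phi}\ge k$ crossings of $\alpha_{m-1}$ occurring there costs vertical length $h_m+h_{m-1}$, not $h_{m-1}$. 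This factor-$k$ compensation is exactly the paper's key step (each crossing of the vertical cylinder $C_{m-1}$ forces at least $k$ intersections with $\alpha_{m-1}$, and items (i)--(iii) of the length estimate price each of them at $h_m+h_{m-1}$): with it, one batch of at most $k+1$ cheap crossings comes with vertical length at least $k(h_m+h_{m-1})+(k+1)h_m$, giving per-cycle ratios of the form $\tfrac{k+1}{k\Phi^3+\Phi}\cdot\tfrac{1}{l_m^2}$, which are strictly below $\tfrac{1}{(\Phi^3-2\Phi)l_m^2}$ for all $k\ge 1$ because $\Phi^2-3<2k$; the remaining cases are then closed by the optimisation of Lemma \ref{lemme_fractions}. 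Without this ingredient the inequality $v>N h_{m-1}$ is simply out of reach, so the proposal has a genuine gap at its technical heart.
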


\begin{proof}
Let $c$ denote the co-slope of $\beta$. The assumption ensures that $c \neq \frac{1}{k\Phi}$. Up to an horizontal twist, we can also assume that $c \in ]-\frac{1}{\Phi},\Phi - \frac{1}{\Phi}[  $. Finally, up to a symmetry with respect to the vertical axis, we can further assume $c >0$, giving:
\[ c \in ]0,\Phi - \frac{1}{\Phi}[ \backslash \{\frac{1}{k \Phi}, k \in \NN^* \cup \{\infty \} \} \]
\begin{enumerate}
\item Let us first study the case $\frac{1}{\Phi} < c < \Phi - \frac{1}{\Phi}$. Under this assumption, $\beta$ has to vertically go through the small horizontal cylinder $Z_m$ before any non singular intersection with $\alpha_m$ and vertically go through $Z_{m-1}$ after each non-singular intersection with $\alpha_m$ (see Figure \ref{fig:case_k=0}), so that if we denote by $p$ the number of non singular intersections between $\alpha_m$ and $\beta$ we get
\[ l(\beta) \sin \theta \geq p(h_m +(h_m + h_{m-1})) \]
where $\theta$ is the angle between the horizontal and the direction of $\beta$, so that $l(\beta) \sin \theta$ is the vertical length of $\beta$. \newline
Now, adding the possible singular intersection, we deduce that
\begin{align*}
\frac{\Int(\alpha_m, \beta)}{\alpha_m \wedge \beta} &\leq \frac{p+1}{p(h_m +(h_m + h_{m-1}))l_m} \\
& = \frac{p+1}{p\Phi^3} \cdot \frac{1}{l_m^2}.
\end{align*}
(Recall that $h_m = \Phi l_m$ and $h_{m-1} = (\Phi^3 - 2\Phi)l_m$.)\newline
The last quantity is maximal for $p=1$, so that
\[ \frac{\Int(\alpha_m, \beta)}{\alpha_m \wedge \beta} \leq \frac{2}{\Phi^3} \cdot \frac{1}{l_m^2} < \frac{1}{\Phi^3 - 2\Phi} \cdot \frac{1}{l_m^2}\]
where the last inequality comes from $\Phi < 2$. 

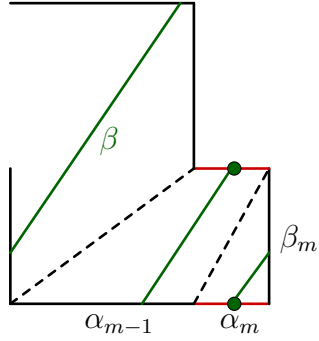
\begin{figure}
\center
\definecolor{qqwuqq}{rgb}{0,0.39215686274509803,0}
\definecolor{ccqqqq}{rgb}{0.8,0,0}
\begin{tikzpicture}[line cap=round,line join=round,>=triangle 45,x=1cm,y=1cm]
\clip(-3,-0.5) rectangle (1.7,4.1);
\draw [line width=1pt,color=ccqqqq] (0,0)-- (1,0);
\draw [line width=1pt] (1,0)-- (1,1.8);
\draw [line width=1pt,color=ccqqqq] (1,1.8)-- (0,1.8);
\draw [line width=1pt] (0,1.8)-- (0,4);
\draw [line width=1pt] (0,4)-- (-2.44,4);
\draw [line width=1pt] (0,0)-- (-2.44,0);
\draw [line width=1pt] (-2.44,1.8)-- (-2.44,0);
\draw (-1.6,0) node[anchor=north west] {$\alpha_{m-1}$};
\draw (0.2,0) node[anchor=north west] {$\alpha_m$};
\draw (1,1.2) node[anchor=north west] {$\beta_m$};
\draw (-1.4,2.5) node[anchor=north west,color=qqwuqq] {$\beta$};
\draw [line width=1pt,dash pattern=on 3pt off 3pt] (-2.44,0)-- (0,1.8);
\draw [line width=1pt,dash pattern=on 3pt off 3pt] (0,0)-- (1,1.8);
\draw [line width=1pt,color=qqwuqq] (-0.69,0)-- (0.5,1.8);
\draw [line width=1pt,color=qqwuqq] (-2.44,0.68)-- (-0.18,4);
\draw [line width=1pt,color=qqwuqq] (0.5,0)-- (1,0.68);
\begin{scriptsize}
\draw [fill=qqwuqq] (0.5368166581254012,1.8) circle (2.5pt);
\draw [fill=qqwuqq] (0.5368166581254012,0) circle (2.5pt);
\end{scriptsize}
\end{tikzpicture}
\caption{If the co-slope of $\beta$ lies between $\frac{1}{\Phi}$ and $\Phi- \frac{1}{\Phi}$, then a non-singular intersection with $\alpha_m$ requires a vertical length at least $h_m+(h_m+h_{m-1})$.}
\label{fig:case_k=0}
\end{figure}

\item Now, assume $0 < c < \frac{1}{\Phi}$ and let $k \geq 0$ be such that $\frac{1}{(k+1) \Phi} < c < \frac{1}{k \Phi}$. Let $p \geq 1 $ denote the number of times the curve $\beta$ crosses the small vertical cylinder $C_m$, that is the number of connected components of $\beta \cap C_m$. Then $\beta$ crosses $C_{m-1}$ at most once fewer, say $p' \geq 1$ times with $p' \geq p-1$. Moreover, the assumption on the co-slope gives that for each crossing of $C_m$ the curve $\beta$ intersects at most $k+1$ times $\alpha_m$ and for each crossing of $C_{m-1}$ it intersects at least $k$ times $\alpha_{m-1}$ (see Figure \ref{fig:case_k=2}). This gives that :
$$\Int(\alpha_m, \beta) \leq s_{max}^\circ := p(k+1) +1$$
(where the added intersection stands for the possible singular intersection). In fact, this estimate can be improved in the cases $p'=p$ and $p'=p-1$ as follows:
\begin{itemize}
\item[$\bigstar$] \underline{If $p' = p$}, the curve $\beta$ has to either start or end at a vertex of $C_m$, so that either the first or the last crossing of $C_m$ has only $k$ intersections with $\alpha_m$ instead of $k+1$. Hence, 
$$ \Int(\alpha_m, \beta) \leq s_{max}^\star := k(p+1) = (k+1)p-1 +1.$$
\item[\ding{115}] \underline{If $p' = p-1$} (so that $p \geq 2$), the curve $\beta$ has to both start and end at a vertex of $C_m$. The same argument shows:
$$ \Int(\alpha_m, \beta) \leq s_{max}^\Delta := (k+1)p-1 = (k+1)p-2+1.$$
\end{itemize}

\begin{figure}
\center
\definecolor{wwwwww}{rgb}{0.4,0.4,0.4}
\definecolor{qqwuqq}{rgb}{0,0.39215686274509803,0}
\begin{tikzpicture}[line cap=round,line join=round,>=triangle 45,x=1cm,y=1cm]
\clip(-2.5,-0.5) rectangle (1.7,4.2);
\fill[line width=1pt,color=wwwwww,fill=wwwwww,fill opacity=0.1] (-2.088447652072008,4) -- (0,4) -- (0.002478854036820562,0) -- (-2.1018276355316923,0) -- cycle;
\fill[line width=1pt,fill=black,fill opacity=0.15] (0,1.8) -- (1,1.8) -- (1,0) -- (0.002478854036820562,0) -- cycle;
\draw [line width=1pt] (0,0)-- (1,0);
\draw [line width=1pt] (1,0)-- (1,1.8);
\draw [line width=1pt] (1,1.8)-- (0,1.8);
\draw [line width=1pt] (0,1.8)-- (0,4);
\draw [line width=1pt] (0,4)-- (-2.1,4);
\draw [line width=1pt] (0,0)-- (-2.1018276355316923,0);
\draw [line width=1pt] (-2.1,1.8)-- (-2.1018276355316923,0);
\draw (-1.5848737291348032,0.0400925798775776) node[anchor=north west] {$\alpha_{m-1}$};
\draw (0.24331855631113586,0.0400925798775776) node[anchor=north west] {$\alpha_m$};
\draw (1.0972047734655266,1.2990273872205922) node[anchor=north west] {$\beta_m$};
\draw [line width=1pt,color=qqwuqq] (0.16547137981842694,0)-- (0.5644381593435552,1.8);
\draw [line width=1pt,color=qqwuqq] (0.5644381593435552,0)-- (0.9634049388686834,1.8);
\draw [line width=1pt,color=qqwuqq] (0.9634049388686834,0)-- (1,0.16510424781424984);
\draw [line width=1pt,color=qqwuqq] (-2.1018276355316923,0.16510424781424984)-- (-1.2518298532738348,4);
\draw [line width=1pt,color=qqwuqq] (-1.253657488805527,0)-- (-0.36706464541635303,4);
\draw [line width=1pt,color=qqwuqq] (-0.36889228094804527,0)-- (-0.10377430444030677,1.1961205348523836);
\draw [line width=1pt,color=qqwuqq] (-0.03563780616452043,0.892664970250974)-- (0.16547137981842694,1.8);
\draw [line width=1pt,dash pattern=on 3pt off 3pt,color=qqwuqq] (-0.2334953997067013,0)-- (-0.03563780616452043,0.892664970250974);
\draw [line width=1pt,dash pattern=on 3pt off 3pt,color=qqwuqq] (-0.10377430444030677,1.1961205348523836)-- (0.03007449857708295,1.8);
\draw (-1.7,2.3) node[anchor=north west] {$C_{m-1}$};
\draw (0.25,1.3) node[anchor=north west] {$C_m$};
\draw [color=qqwuqq](-0.9389854366718665,1.4741835343291856) node[anchor=north west] {$\beta$};
\end{tikzpicture}
\caption{A portion of the curve $\beta$ with co-slope $\frac{1}{3\Phi} < c < \frac{1}{2\Phi}$. Each time $\beta$ crosses the cylinder $C_m$, it gives at most $k+1 = 3$ intersections with $\alpha_m$ and it is followed by a crossing of $C_{m-1}$, giving at least $k=2$ intersections with $\alpha_{m-1}$.}
\label{fig:case_k=2}
\end{figure}
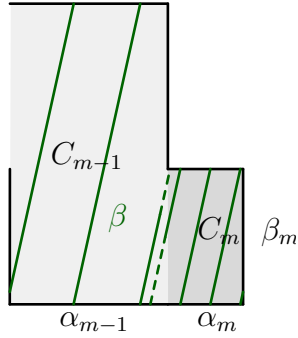

Concerning the lengths, cutting $\beta$ at each intersection with $\alpha_m$ and $\alpha_{m-1}$ helps us notice that:
\begin{enumerate}
\item[$(i)$] Before any non-singular intersection with $\alpha_m$, there is a vertical length at least $h_m$.
\item[$(ii)$] Before any non-singular intersection with $\alpha_{m-1}$, there is a vertical length at least $h_m+h_{m-1}$, except maybe the first intersection with $\alpha_{m-1}$ for which there is a vertical length a least $h_{m-1}$ (but note that this case arises only if $\alpha$ starts at a bottom vertex of $Z_{m-1}$, in particular this cannot happen for $p' = p-1$).
\item[$(iii)$] We further have to count a vertical length $h_m$ for $\beta$ to reach the singularity after the last intersection with either $\alpha_m$ or $\alpha_{m-1}$.
\end{enumerate}

In particular, letting $s := Int(\alpha_m, \beta) \geq 2$ (so that there are at least $s-1$ non-singular intersections), we have :
\begin{align*}
l(\beta) \sin \theta & \geq (s-1)h_m + (kp'-1) (h_m+h_{m-1}) + h_{m-1} + h_m \\
& = kp'(h_m+h_{m-1}) + (s-1) h_m,
\end{align*}
and for $p' = p-1$, we can replace $kp'-1$ by $kp'$ as remarked in $(ii)$.
\begin{align*}
l(\beta) \sin \theta & \geq (s-1)h_m + kp' (h_m+h_{m-1}) + h_m \\
&= k(p-1)(h_m+h_{m-1}) + s h_m.
\end{align*}

In particular, the following inequalities holds:
\begin{align*}
& \text{ If } p' \geq p \text{, then }
 \frac{\Int(\alpha_m, \beta)}{\alpha_m \wedge \beta} \leq \frac{s}{p'k(h_m+h_{m-1}) + (s-1) h_m}\cdot \frac{1}{l_m} & & \\
& (\text{\ding{115}}) \text{ If } p' =p-1 \text{, then }
\frac{\Int(\alpha_m, \beta)}{\alpha_m \wedge \beta} \leq \frac{s}{(p-1)k(h_m+h_{m-1}) + s h_m}\cdot \frac{1}{l_m}. & &
\end{align*}

Now, since $k(h_m+h_{m-1}) > h_m$, for fixed $k$ and $p$, the right part is maximal when $s$ is maximal. Distinguishing cases \ding{108} ($p' \geq p+1$), $\bigstar$ ($p'=p$) and \ding{115} ($p'=p-1$), we have:
\begin{itemize}
\item[\ding{108}] \underline{If $p' \geq p+1$}, then $s_{max}=s_{max}^\circ = p(k+1)+1$ and 
\begin{align*}
\frac{\Int(\alpha_m, \beta)}{\alpha_m \wedge \beta} & \leq \frac{p(k+1)+1}{p'k(\Phi^3-\Phi) + p(k+1) \Phi} \cdot \frac{1}{l_m^2}\\
& \leq \frac{p(k+1)+1}{(p+1)k(\Phi^3-\Phi) + p(k+1) \Phi} \cdot \frac{1}{l_m^2}\\
& = \frac{pk+p+1}{k((p+1)\Phi^3-\Phi) + p\Phi} \cdot \frac{1}{l_m^2}
\end{align*}
(given that $h_m = \Phi l_m$ and $h_{m-1} = (\Phi^3 - 2\Phi)l_m$.) \newline

To give an upper bound for this last quantity, it is convenient to use the following lemma which will be used several times along the proof:
\begin{Lem}\label{lemme_fractions}
Let $a,b,c,d \in \RR$. Assume the m\"obius transformation $f:x \mapsto \frac{ax+b}{cx+d}$ is defined for any $x \geq 1$. Then,
\begin{itemize}
\item if $ad-bc \geq 0$, then $\forall x \geq 1$, $f(x) \leq \lim_{x \to \infty}f(x) = \frac{a}{c}$.
\item Otherwise, $\forall x \geq 1$, $f(x) \leq f(1) = \frac{a+b}{c+d}$.
\end{itemize}
\end{Lem}
\begin{proof}
$f$ is differentiable of derivative map $f'(x) = \frac{ad-bc}{(cx+d)^2}$.
\end{proof}
Since for any fixed $p \geq 1$ we have $p^2\Phi - (p+1)((p+1) \Phi^3 -\Phi) <(p+1)^2(\Phi - \Phi^3) <0$, we can use Lemma \ref{lemme_fractions} to conclude that the last quantity is maximal for $k=1$, and it gives
\begin{align*}
\frac{\Int(\alpha_m, \beta)}{\alpha_m \wedge \beta}& \leq  \frac{2p+1}{(p+1)(\Phi^3-\Phi)+2p\Phi}\cdot \frac{1}{l_m^2} \\
& = \frac{2p+1}{p(\Phi^3+\Phi)+(\Phi^3-\Phi)}\cdot \frac{1}{l_m^2}
\end{align*}
Similarly, given the inequality $2(\Phi^3-\Phi)-(\Phi^3+\Phi) < 0$, coming from $\Phi > 2\cos(\frac{\pi}{6}) = \sqrt{3}$, Lemma \ref{lemme_fractions} shows that the last quantity is maximal for $p \to \infty$, and we have
\[
\frac{\Int(\alpha_m, \beta)}{\alpha_m \wedge \beta} = \frac{2}{\Phi^3+\Phi} \cdot \frac{1}{l_m^2} < \frac{1}{\Phi^3- 2\Phi} \cdot \frac{1}{l_m^2},
\]
where the last inequality comes from $\Phi < 2$.

\item[$\bigstar$] \underline{If $p' = p$}, then $s_{max} = s_{max}^\star=p(k+1)$ and:
\begin{align*}
\frac{\Int(\alpha_m, \beta)}{\alpha_m \wedge \beta} & \leq \frac{p(k+1)}{pk(\Phi^3-\Phi) + (p(k+1)-1) \Phi} \cdot \frac{1}{l_m^2}\\
& = \frac{p(k+1)}{p(k\Phi^3+\Phi) -\Phi} \cdot \frac{1}{l_m^2}
\end{align*}
Similarly, for fixed $k$ the last quantity is maximal for $p=1$, giving
\begin{align*}
\frac{\Int(\alpha_m, \beta)}{\alpha_m \wedge \beta}& \leq  \frac{k+1}{k(\Phi^3-\Phi)+k\Phi}\cdot \frac{1}{l_m^2} \\
& = \frac{k+1}{k \Phi^3} \cdot \frac{1}{l_m^2} < \frac{2}{\Phi^3} \cdot \frac{1}{l_m^2} < \frac{1}{\Phi^3- 2\Phi} \cdot \frac{1}{l_m^2}.
\end{align*}

\item[\ding{115}] Finally, \underline{if $p' = p-1$}, we have $s_{max} = s_{max}^\Delta=p(k+1)-1$ and:
\begin{align*}
\frac{\Int(\alpha_m, \beta)}{\alpha_m \wedge \beta} &\leq \frac{p(k+1)-1}{(p-1)k(\Phi^3-\Phi) + (p(k+1)-1) \Phi} \cdot \frac{1}{l_m^2}\\
& = \frac{p(k+1)-1}{p(k\Phi^3+\Phi) - k\Phi^3 + (k-1)\Phi} \cdot \frac{1}{l_m^2}
\end{align*}
Using the inequality 
$
(k+1)((k-1)\Phi -k\Phi^3) + k\Phi^3 + \Phi = k^2(\Phi-\Phi^3) < 0
$
and Lemma \ref{lemme_fractions}, we see that for fixed $k \geq 1$, the last quantity is maximal for $p$ minimal, that is $p=2$ since $p' =p-1$ should be at least one. In particular
\begin{align*}
\frac{\Int(\alpha_m, \beta)}{\alpha_m \wedge \beta} & \leq \frac{2k+1}{k(\Phi^3-\Phi) + (2k+1) \Phi}\cdot \frac{1}{l_m^2}\\
& = \frac{2k+1}{k(\Phi^3+\Phi) + \Phi}\cdot \frac{1}{l_m^2}
\end{align*}
which, using again Lemma \ref{lemme_fractions}, is shown to be minimal for $k=1$, so that
\[
\frac{\Int(\alpha_m, \beta)}{\alpha_m \wedge \beta} < \frac{3}{\Phi^3+2\Phi} \cdot \frac{1}{l_m^2} < \frac{1}{\Phi^3- 2\Phi} \cdot \frac{1}{l_m^2},
\]
where the last inequality comes from $\Phi < 2$.
\end{itemize}
\end{enumerate}
This concludes the proof of Lemma \ref{lem:cas12iib}.
\end{proof}

\paragraph{\underline{Case 2: $d$ represents the vertical cusp.}} Up to the action by an element of the Veech group, we can assume that $\alpha$ is one of the $\beta_j$, $j \in \llbracket 1, m \rrbracket$. We first study the case $\alpha = \beta_j$ for $j<m$.
\paragraph{\underline{Case 2.1: $\alpha = \beta_j$ for $j<m$.}}
\begin{Lem}
Assume $d' \notin \{\frac{1}{k\Phi}, k \in \NN \cup \{\infty \} \}$. For any $j < m$, 
$$
\frac{\mathrm{Int}(\beta_j,\beta)}{\beta_j \wedge \beta} < \frac{1}{(\Phi^3-2\Phi)}\cdot \frac{1}{l_m^2}
$$
\end{Lem}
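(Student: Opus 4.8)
The plan is to mirror the proof of Lemma \ref{cas_alpha_i} (Case 1.1), exchanging the roles of the horizontal and vertical directions. Writing $\beta_j$ for the vertical saddle connection under consideration, its holonomy vector is $(0,h_j)$, so that for any $\beta$ with horizontal holonomy $w$ one has $\beta_j \wedge \beta = h_j\, w$; the whole argument therefore reduces to bounding the horizontal length $w$ of $\beta$ from below in terms of $\Int(\beta_j,\beta)$.

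First I would record two geometric length estimates, exactly symmetric to those used in Lemma \ref{cas_alpha_i}. Estimate (a): since the core curves of the vertical cylinders $C_1,\dots,C_m$ are homologous to the partial sums $\beta_1,\ \beta_1+\beta_2,\ \dots$, a saddle connection $\beta$ with $\Int(\beta_j,\beta)\neq 0$ must cross the relevant cores and hence has horizontal length $w\geq l_j$. Estimate (b): for $j<m$, each non-singular intersection with $\beta_j$ forces $\beta$ to traverse the two vertical cylinders abutting $\beta_j$, of widths $l_j$ and $l_{j+1}$, so that $w\geq (l_j+l_{j+1})\bigl(\Int(\beta_j,\beta)-1\bigr)$; this is the vertical analogue of the situation drawn in Figure \ref{fig:lemme_4_7}. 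Combining the two gives $\beta_j\wedge\beta \geq h_j\,\max\bigl(l_j,\,(l_j+l_{j+1})(\Int(\beta_j,\beta)-1)\bigr)$.

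Then I would split on the value of $\Int(\beta_j,\beta)$, using the monotonicity $h_j\geq h_{m-1}$ and $l_j\geq l_{m-1}$ valid for $j<m$, together with $h_{m-1}=(\Phi^3-2\Phi)l_m$, $l_{m-1}=(\Phi^2-1)l_m$ and $l_{m-1}+l_m=\Phi^2 l_m$. If $\Int(\beta_j,\beta)\leq 1$ then
\[
\frac{\Int(\beta_j,\beta)}{\beta_j\wedge\beta}\leq \frac{1}{h_j l_j}\leq \frac{1}{h_{m-1}l_{m-1}}=\frac{1}{(\Phi^3-2\Phi)(\Phi^2-1)l_m^2}<\frac{1}{(\Phi^3-2\Phi)l_m^2},
\]
the last inequality holding because $\Phi^2-1>1$. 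If $\Int(\beta_j,\beta)\geq 2$, then using $\Int(\beta_j,\beta)\leq 2\,(\Int(\beta_j,\beta)-1)$ one gets
\[
\frac{\Int(\beta_j,\beta)}{\beta_j\wedge\beta}\leq \frac{2}{h_j(l_j+l_{j+1})}\leq \frac{2}{(\Phi^3-2\Phi)\Phi^2 l_m^2}<\frac{1}{(\Phi^3-2\Phi)l_m^2},
\]
where the final step uses $\Phi^2>2$, which holds since $\Phi=2\cos(\pi/n)\geq 2\cos(\pi/8)>\sqrt2$ for $n\geq 8$.

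The main obstacle is step (b), and it is here that the hypothesis $d'\notin\{\frac{1}{k\Phi}\}$ is essential: for these excluded co-slopes the curve $\beta$ can wind inside a horizontal cylinder and pick up many intersections with $\beta_j$ while travelling very little horizontal distance, so the clean per-crossing width estimate $l_j+l_{j+1}$ breaks down precisely for them (these are exactly the directions of configuration \ding{171}, whose larger contribution is accounted for separately in Case 1.2.i). I would therefore need to verify carefully that, once these directions are removed, every non-singular crossing of $\beta_j$ genuinely costs horizontal length $l_j+l_{j+1}$, and to confirm the identification of the two abutting cylinder widths so that their sum is indeed at least $l_{m-1}+l_m=\Phi^2 l_m$ for every $j<m$.
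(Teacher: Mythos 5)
Your overall architecture (reduce to the horizontal holonomy $w$ of $\beta$, split on $\Int(\beta_j,\beta)\leq 1$ versus $\geq 2$, then the same arithmetic) does mirror the paper's proof, but your estimate (a) — the step carrying your first case — is false, and the homology identity behind it is wrong. In the staircase the core curve $c_i$ of the vertical cylinder $C_i$ is homologous to the \emph{consecutive} sum $\beta_i+\beta_{i+1}$ for $i<m$, and $c_m$ to $\beta_m$ (this is the vertical analogue of the paper's statement in Case 1.1 that the core of $Z_i$ is homologous to $\alpha_{i-1}+\alpha_i$); it is \emph{not} homologous to the partial sum $\beta_1+\cdots+\beta_i$. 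Consequently $\beta_j\sim c_j-c_{j+1}+\cdots\pm c_m$, so $\Int(\beta_j,\beta)\neq 0$ only forces $\beta$ to cross \emph{some} $C_i$ with $i\geq j$, which gives $w\geq l_m$, not $w\geq l_j$. This cannot be repaired by rewording: for $n\geq 12$ take $\beta=\gamma$, the diagonal of $C_{m-1}$ of co-slope $\Phi-\frac{1}{\Phi}$ drawn in Figure \ref{lengths_staircase}. It stays in $C_{m-1}$, so $w=l_{m-1}$; its co-slope is not of the form $\frac{1}{k\Phi}$, so it is admissible; and $\Int(\beta_1,\gamma)=\pm\Int(c_{m-1},\gamma)=\pm 1\neq 0$, although $l_{m-1}<l_1$. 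Hence the first inequality of your first display, $\Int(\beta_j,\beta)/(h_jw)\leq 1/(h_jl_j)$, is unjustified and false in general.

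This gap is fatal rather than cosmetic because your case $\Int(\beta_j,\beta)\leq 1$ then never uses the hypothesis $d'\notin\{\frac{1}{k\Phi}\}$, and that case is exactly where the hypothesis is indispensable. If $d'=\frac{1}{k\Phi}$, the saddle connections in direction $d'$ staying in the small cylinder $C_m$ (e.g.\ $\gamma'$ of Figure \ref{lengths_staircase}) have $w=l_m$ and $\Int(\beta_{m-1},\cdot)=\pm 1$, so the ratio equals $\frac{1}{h_{m-1}l_m}=\frac{1}{(\Phi^3-2\Phi)l_m^2}$ \emph{exactly}; a direction-blind treatment of the few-intersection case would thus prove a strict inequality that is actually attained. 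The paper invokes the hypothesis precisely there: horizontal length $l_m$ forces co-slope $\frac{1}{k\Phi}$, so the hypothesis yields $w\geq l_{m-1}$, which is the correct substitute for your $w\geq l_j$ and closes case one. Your diagnosis that the hypothesis is instead needed for the per-crossing estimate (b) misidentifies the failure mode: a curve in direction $\frac{1}{k\Phi}$ winds in the \emph{vertical} cylinder $C_m$, not a horizontal one, and has no non-singular intersection with $\beta_j$, $j<m$, at all — its many intersections are with $\alpha_m$ and $\beta_m$, which belong to Cases 1.2.i and 2.2 of the proof of Proposition \ref{prop:etude_K}, not to this lemma. Finally, on (b) itself: the cylinders abutting $\beta_j$ are $C_j$ and $C_{j-1}$, of widths $l_j$ and $l_{j-1}$ (your $l_{j+1}$ copies a typo of Figure \ref{fig:lem410}; for $j\geq 2$ this is harmless since it only weakens the constant), but for $j=1$ both sides of $\beta_1$ are glued to the same cylinder $C_1$, so consecutive crossings can cost as little as $l_1$ and the bound $w\geq (l_1+l_2)(\Int(\beta_1,\beta)-1)$ fails — a defect your write-up shares with the paper's own proof; it can be closed by using instead $l_1\geq (\Phi^2-1)l_m>2l_m$.
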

\begin{proof}
First notice that if $\beta$ has horizontal length $l_m$, then its direction is $\frac{1}{k\Phi}$ for a given $k$. In particular the assumption on $d'$ ensures that $\beta$ has horizontal length at least than $l_{m-1}$.
Further, any non-singular intersection with $\beta_j$ for $j<m$ requires an horizontal length $l_j+l_{j-1}$ (see Figure \ref{fig:lem410}). This gives, for $j <m$,
$$
\beta_j \wedge \beta \geq h_j \mathrm{max}((l_j+l_{j-1})(\mathrm{Int}(\beta_j, \beta)-1), l_{m-1}).
$$

In particular,
\begin{enumerate}[label =(\roman*)]
\item If $\Int(\beta_j,\beta) \leq 1$, we use $\beta_j \wedge \beta \geq h_j l_{m-1}$ to get
\[  \frac{\mathrm{Int}(\beta_j,\beta)}{\beta_j \wedge \beta} \leq \frac{1}{l_{m-1} h_j} \leq \frac{1}{l_{m-1}h_{m-1}}.\]
Then, given that $l_{m-1} = (\Phi^2-1) l_m$ and $h_{m-1} = (\Phi^3-2\Phi) l_m$, we draw:
$$
\frac{\mathrm{Int}(\beta_j,\beta)}{\beta_j \wedge \beta} \leq \frac{1}{(\Phi^3-2\Phi)(\Phi^2-1)l_m^2} < \frac{1}{(\Phi^3-2\Phi)}\cdot \frac{1}{l_m^2}
$$

\item Otherwise, we use $\beta_j \wedge \beta \geq  h_j (l_j+l_{j-1})(\mathrm{Int}(\beta_j, \beta)-1)$ to obtain:
\[ \frac{\mathrm{Int}(\beta_j,\beta)}{\beta_j \wedge \beta} \leq \frac{2}{h_j(l_j+l_{j-1})}. \]
Since $2 l_{m-1} \leq 2l_j < (\Phi^2-1)l_j = l_{j-1}$, we deduce that
\[ \frac{\mathrm{Int}(\beta_j,\beta)}{\beta_j \wedge \beta} \leq \frac{1}{h_j l_{m-1}} \leq \frac{1}{l_{m-1}h_{m-1}} < \frac{1}{(\Phi^3-2\Phi)}\cdot \frac{1}{l_m^2}. \]
\end{enumerate}

\begin{figure}
\center
\definecolor{qqwuqq}{rgb}{0,0.39215686274509803,0}
\definecolor{ccqqqq}{rgb}{0.8,0,0}
\begin{tikzpicture}[line cap=round,line join=round,>=triangle 45,x=1cm,y=1cm, scale = 1.2]
\clip(-3.5,-0.6) rectangle (2.3,2.6);
\draw [line width=1pt] (-3,0)-- (0,0);
\draw [line width=1pt,dash pattern=on 3pt off 3pt] (0,0)-- (0,-1);
\draw [line width=1pt] (2,2)-- (0,2);
\draw [line width=1pt,color=ccqqqq] (-3,2)-- (-3,0);
\draw [line width=1pt,color=ccqqqq] (2,2)-- (2,0);
\draw [line width=1pt,dash pattern=on 3pt off 3pt] (2,0)-- (2,-1);
\draw [line width=1pt,dash pattern=on 3pt off 3pt] (0,2)-- (0,3);
\draw [line width=1pt,color=qqwuqq] (-3,1)-- (-1.4199582045705343,1.3251239848287564);
\draw [line width=1pt,dash pattern=on 3pt off 3pt,color=qqwuqq] (-1.4199582045705343,1.3251239848287564)-- (-0.3192737955430047,1.5516109689940372);
\draw [line width=1pt,color=qqwuqq] (2,1)-- (0.9604206507739941,0.7860865569861867);
\draw [line width=1pt,dash pattern=on 3pt off 3pt,color=qqwuqq] (0.9604206507739941,0.7860865569861867)-- (0.2666666666666651,0.6433333333333321);
\draw [color=ccqqqq](-3.5,1) node[anchor=north west] {$\beta_j$};
\draw [color=qqwuqq](1.1,0.9) node[anchor=north west] {$\beta$};
\draw [-to,line width=0.5pt] (-1.4,-0.3) -- (-0.10667,-0.3);
\draw [-to,line width=0.5pt] (-1.78667,-0.3) -- (-2.93333,-0.3);
\draw [-to,line width=0.5pt] (0.61333,-0.3) -- (0.10667,-0.3);
\draw [-to,line width=0.5pt] (1.28,-0.3) -- (1.89333,-0.3);
\draw (-1.8,-0.05) node[anchor=north west] {$l_j$};
\draw (0.6,-0.05) node[anchor=north west] {$l_{j+1}$};
\begin{scriptsize}
\draw [fill=qqwuqq] (2,1) circle (2.5pt);
\draw [fill=qqwuqq] (-3,1) circle (2.5pt);
\end{scriptsize}
\end{tikzpicture}
\caption{Any non-singular intersection with $\beta_j$ requires a horizontal length at least $l_j + l_{j+1}$.}
\label{fig:lem410}
\end{figure}
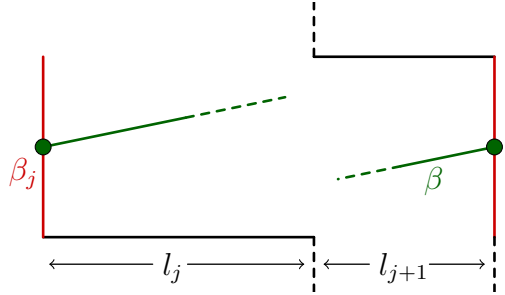
\end{proof}

\paragraph{\underline{Case 2.2: $\alpha = \beta_m$.}}
In this case $\alpha$ is homologous to a non singular geodesic so there is no non singular intersection. The first intersection with $\beta_m$ requires a horizontal length at least $l_m$ while all the following intersections require a length $l_{m-1}+l_m$. In particular, either $\beta$ stays in the vertical cylinder $C_m$ and the co-slope is $\frac{1}{k\Phi}$ for $k \in \NN \cup \{ \infty \}$ (case \ding{171}), or there are at least two intersections and
$$
\frac{\mathrm{Int}(\beta_m,\beta)}{\beta_m \wedge \beta} \leq \frac{2}{h_m (2 l_m+l_{m-1})} = \frac{2}{\Phi(\Phi^2+1)}\cdot \frac{1}{l_m^2} < \frac{1}{\Phi^3-2\Phi}\cdot \frac{1}{l_m^2}.
$$
where the last inequality comes from $\Phi < 2$. This concludes the proof of Proposition \ref{prop:etude_K}. \newline
\end{proof}

\section{Proof of Theorem \ref{theo:main}}\label{sec:extension_teichmuller}
In this section, we finally prove our main result. More precisely, we show

\begin{Theo}\label{theo:main_reformule}
For any $X \in \TT$, we have
\begin{equation}\label{Formule_2}
\KVol(X) = \mathrm{Vol}(X) K(\infty,\frac{1}{\Phi}) \cdot \frac{1}{\cosh(\mathrm{dist}_{\HH}(X, \Gamma_n \cdot \mathcal{G}_{max}))}
\end{equation}
\end{Theo}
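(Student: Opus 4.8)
The plan is to translate the problem entirely into hyperbolic geometry and then reduce it to a single distance estimate. By Proposition \ref{prop:sup_directions} combined with Notation \ref{nota:sinus} (Equation \eqref{eq:lien_sin_cosh}), for every $X \in \TT$ one has
\[
\KVol(X) = \mathrm{Vol}(X)\cdot\sup_{d\neq d'}\frac{K(d,d')}{\cosh(\mathrm{dist}_{\HH}(X,\gamma_{d,d'}))}.
\]
Both sides are $\Gamma_n^{\pm}$-invariant: the left-hand side because $\KVol$ is intrinsic, the right-hand side because $K(d,d')=K(g\cdot d,g\cdot d')$ by Remark \ref{rk:invariance_K} while $\Gamma_n^{\pm}$ acts on $\HH$ by isometries permuting the geodesics $\gamma_{d,d'}$. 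Hence it suffices to establish the formula for $X$ in the fundamental domain $\TT$.

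The lower bound is immediate. The pairs of type \ding{171} are exactly those with $\gamma_{d,d'}=g\cdot\gamma_{\infty,\pm\frac{1}{k\Phi}}$, $g\in\Gamma_n^{\pm}$, so the union of geodesics they define is $\Gamma_n^{\pm}\cdot\mathcal{G}_{max}=\Gamma_n\cdot\mathcal{G}_{max}$ (the last equality because $\mathcal{G}_{max}$ is invariant under the reflection $R$, which normalises $\Gamma_n$). Since $K(d,d')=K(\infty,\frac{1}{\Phi})$ on all these pairs by Proposition \ref{prop:etude_K}, and since $\cosh$ is increasing and continuous,
\[
\sup_{(d,d')\text{ of type }\text{\ding{171}}}\frac{K(d,d')}{\cosh(\mathrm{dist}_{\HH}(X,\gamma_{d,d'}))}=\frac{K(\infty,\frac{1}{\Phi})}{\cosh(\mathrm{dist}_{\HH}(X,\Gamma_n\cdot\mathcal{G}_{max}))},
\]
which gives $\KVol(X)\geq \mathrm{Vol}(X)K(\infty,\frac{1}{\Phi})/\cosh(\mathrm{dist}_{\HH}(X,\Gamma_n\cdot\mathcal{G}_{max}))$.

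For the upper bound I would split the supremum along Proposition \ref{prop:etude_K}. The type \ding{171} pairs are handled by the display above. For any pair that is \emph{not} of type \ding{171}, Proposition \ref{prop:etude_K} yields $K(d,d')\leq\frac{1}{\Phi^2-2}K(\infty,\frac{1}{\Phi})$ (with equality only in case \ding{169}, using $\frac{1}{(\Phi^3-2\Phi)l_m^2}=\frac{1}{\Phi^2-2}K(\infty,\frac{1}{\Phi})$); since $\cosh(\mathrm{dist}_{\HH}(X,\gamma_{d,d'}))\geq 1$, such a pair contributes at most $\frac{1}{\Phi^2-2}K(\infty,\frac{1}{\Phi})$. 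Therefore the non-\ding{171} pairs are dominated by the type \ding{171} supremum as soon as
\begin{equation*}
\cosh(\mathrm{dist}_{\HH}(X,\Gamma_n\cdot\mathcal{G}_{max}))\leq\Phi^2-2\qquad\text{for all }X\in\TT. \tag{$\ast$}
\end{equation*}
It is crucial here that $\Phi^2-2=2\cos\!\left(\frac{2\pi}{n}\right)>1$ for $n\geq 8$, so that $(\ast)$ has content.

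It remains to prove the geometric estimate $(\ast)$, which is the heart of the matter and the step I expect to be the main obstacle. I would prove it by locating the point of $\TT$ farthest from $\Gamma_n\cdot\mathcal{G}_{max}$. Using the explicit description of $\mathcal{G}_{max}$ as the vertical geodesics $x=\pm\frac{1}{k\Phi}$ together with the imaginary axis $x=0$ (the case $k=\infty$), the distance formula $\sinh(\mathrm{dist}_{\HH}(x_0+iy_0,\{x=a\}))=\lvert x_0-a\rvert/y_0$, and its analogue for the semicircular $\Gamma_n$-images, one analyses the arrangement $\Gamma_n\cdot\mathcal{G}_{max}\cap\TT$ of Figure \ref{geodesics_kPhi} and checks that its unique farthest point is the corner $\XX$, with $\cosh(\mathrm{dist}_{\HH}(\XX,\gamma_{\infty,\frac{1}{\Phi}}))=\Phi^2-2$; the accumulation of the $\gamma_{\infty,\frac{1}{k\Phi}}$ at the cusp $x=0$ keeps the distance bounded up the cusp. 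This simultaneously yields the two extremal assertions of Theorem \ref{theo:main}: $\KVol$ attains its maximum exactly on $\Gamma_n\cdot\mathcal{G}_{max}$ (where $\mathrm{dist}_{\HH}=0$) and its minimum uniquely at $\XX$ (where the distance is maximal). The genuine difficulty is this equidistant-point analysis: one must bring in enough of the $\Gamma_n$-images rising from the bottom of $\TT$, not merely the vertical geodesics, to rule out a deeper hole elsewhere in the fundamental domain.
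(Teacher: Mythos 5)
Your reduction of the theorem to the single estimate $(\ast)$ contains a genuine gap: $(\ast)$ is false for every $n\geq 12$, so your argument proves the theorem only for the octagon. The precise point of failure is your claimed evaluation $\cosh(\mathrm{dist}_{\HH}(\XX,\gamma_{\infty,\frac{1}{\Phi}}))=\Phi^2-2$. The surface $\XX$ is represented by the corner $\cos(\tfrac{\pi}{n})+i\sin(\tfrac{\pi}{n})$ of $\TT$ (this is what the unfolding matrix of Figure \ref{cut_and_paste} gives, and where all the figures place $\XX$), and the hyperbolic distance from this point to the vertical geodesic $\{x=\tfrac{1}{\Phi}\}$ satisfies $\sinh(\mathrm{dist})=\bigl(\cos\tfrac{\pi}{n}-\tfrac{1}{2\cos(\pi/n)}\bigr)/\sin\tfrac{\pi}{n}=\cot\tfrac{2\pi}{n}$, hence $\cosh(\mathrm{dist}_{\HH}(\XX,\gamma_{\infty,\frac{1}{\Phi}}))=\tfrac{1}{\sin(2\pi/n)}$, which equals $\Phi^2-2=2\cos\tfrac{2\pi}{n}$ if and only if $\sin\tfrac{4\pi}{n}=1$, i.e.\ $n=8$. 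Moreover no other geodesic of the orbit $\Gamma_n\cdot\mathcal{G}_{max}$ can rescue $(\ast)$: by \eqref{eq:lien_sin_cosh} we get $\sin\theta(\XX,\infty,\tfrac{1}{\Phi})=\sin\tfrac{2\pi}{n}$, so Corollary \ref{cor:K_regular} (available at this stage, since it follows from Theorem \ref{theo:KVol_4m} proved in Section \ref{sec:4m}) gives $\KVol(\XX)=\mathrm{Vol}(\XX)\,K(\infty,\tfrac{1}{\Phi})\sin\tfrac{2\pi}{n}$; if $(\ast)$ held at $X=\XX$, your (correct) lower bound would force $\KVol(\XX)\geq\mathrm{Vol}(\XX)\,K(\infty,\tfrac{1}{\Phi})/(\Phi^2-2)$, i.e.\ $\sin\tfrac{2\pi}{n}\geq\tfrac{1}{2\cos(2\pi/n)}$, i.e.\ $\sin\tfrac{4\pi}{n}\geq 1$, which fails for all $n\geq 12$ (for $n=12$ one would need $\tfrac{\sqrt{3}}{2}\geq 1$). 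In fact $\cosh(\mathrm{dist}_{\HH}(\XX,\Gamma_n\cdot\mathcal{G}_{max}))=\tfrac{1}{\sin(2\pi/n)}$ grows like $\tfrac{n}{2\pi}$, while $\Phi^2-2<2$, so the discrepancy worsens as $n$ increases.

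The failure is structural rather than a repairable slip: once $X$ is far from $\Gamma_n\cdot\mathcal{G}_{max}$ (near $\XX$), you cannot discard the factor $\sin\theta(X,d,d')\leq 1$ for the competing non-\ding{171} pairs, because at $\XX$ the type-\ding{171} contribution $K(\infty,\tfrac{1}{\Phi})\sin\tfrac{2\pi}{n}$ is \emph{strictly smaller} (for $n\geq 12$) than the crude bound $\tfrac{1}{\Phi^2-2}K(\infty,\tfrac{1}{\Phi})$ you allow the type-\ding{169} pairs; the domination there can only come from the fact that the \ding{169} geodesics such as $\gamma_{\frac{1}{\Phi},\Phi-\frac{1}{\Phi}}$ are themselves far from $\XX$ --- exactly the information your argument throws away. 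This is why the paper splits the proof: your domination criterion is precisely Lemma \ref{lem:R_1_case} and is applied only on the region $\mathcal{R}_+$ where it is valid (the remark following it notes that $\mathcal{R}_+=\TT^+$ exactly when $m=2$, i.e.\ for the octagon), whereas on $\TT\setminus\mathcal{R}_+$ the proof anchors at the single point $\XX$, where Corollary \ref{cor:K_regular} --- a flat-geometry input coming from Theorem \ref{theo:KVol_4m}, not a hyperbolic distance estimate --- dominates \emph{all} pairs at once, and then transports this inequality over $\TT\setminus\mathcal{R}_+$ via the boundary-reduction lemma for the domains $V(d,d')$ and the sine-ratio monotonicity of Propositions \ref{prop:extension_case_1} and \ref{prop:extension_Prop_BLM}. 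That anchoring-and-propagation mechanism is the missing idea in your proposal.
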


Recall that $\mathcal{G}_{max} = \bigcup_{k \in \NN^* \cup \{ \infty \}} \gamma_{\infty, \pm \frac{1}{k\Phi}}$, and notice that the constant $K_0$ given in the statement of Theorem \ref{theo:main} is now explicitely determined by $K_0 = \Vol(X) K(\infty, \frac{1}{\Phi})$.

We begin this section by giving an overview of the geometric ideas behind the proof.

\subsection{Geometric interpretation of Proposition \ref{prop:sup_directions}}
Following \S 7.2 of \cite{BLM22}, we first give a geometric interpretation of Equation~\eqref{eq:reformulation} (see Proposition \ref{prop:sup_directions}), which we restate here for the reader's convenience:

\[
\KVol(X) = \mathrm{Vol}(X) \cdot \sup_{
 \begin{scriptsize}
 \begin{array}{c}
d, d' \in \mathcal{P} \\
d \neq d'
\end{array}
\end{scriptsize}} K(d,d')\cdot \sin \theta(X,d,d').
\]
Recall that $\mathcal{P}$ is the set of periodic directions on the staircase model $\SS$.\newline

Given a pair of distinct periodic directions $(d,d')$, its associated geodesic $\gamma_{d,d'}$ on $\HH$ gives a projected geodesic on the hyperbolic surface $\HH / \Gamma_n$. In the fundamental domain $\TT$, it gives a set of geodesics formed by all the images of $\gamma_{d,d'}$ by an element of $\Gamma_n$. By Remark \ref{rk:invariance_K}, all pairs of directions in $\Gamma_n^{\pm} \cdot (d,d')$ give the same value for $K(\cdot,\cdot)$. This amounts to saying that the corresponding geodesic trajectory on the surface $\HH / \Gamma_n$ has a well defined associated constant, $K(d,d')$. Now, given a point $X \in \HH / \Gamma_n$, we can look at the minimal distance $r$ from $X$ to the geodesic trajectory associated with $(d,d')$. Proposition \ref{prop:directions} gives that for a pair of saddle connections $\alpha$ and $\beta$ on $X$ of respective directions $d$ and $d'$ (in the sense of Definition~\ref{def:directions}), we have the sharp inequality:
\[
\frac{\mathrm{Int}(\alpha, \beta)}{l(\alpha) l(\beta)} \leq K(d,d') \times \frac{1}{\cosh r}.
\]
Examples of geodesic trajectories for $d=\infty$ and $d'= \frac{1}{\Phi}, \frac{1}{2\Phi}$ and $\frac{1}{3\Phi}$ are depicted in Figure \ref{geodesics_kPhi} of the introduction, as well as in Figure \ref{geodesics_autre} for $d'= \frac{\Phi}{2\Phi^2-1}$. \newline 

Using this interpretation, we can prove Theorem \ref{theo:main_reformule} by showing that for every pair of distinct periodic directions $(d,d')$ and any $X \in \TT$ (or, by symmetry, for any $X \in \TT_{+} = \{ X = x+iy \in \TT, x \geq 0 \}$) we have: 

\begin{equation}\label{eq:trefle}
\tag{\ding{168}}
 K(d,d') \sin \theta(X, d,d') \leq \max_{k \in \NN^{*}} \{ K(\infty, \frac{1}{k \Phi}) \sin \theta (X,\infty, \frac{1}{k \Phi})\}
\end{equation}

The constant $K(\infty, \frac{1}{k\Phi})$ being maximal among all possible constants $K(d,d')$, Equation \eqref{eq:trefle} holds for surfaces $X$ close to $\Gamma_n \cdot \mathcal{G}_{max}$. More precisely, in \S 5.2, we use Proposition \ref{prop:etude_K} to show that \eqref{eq:trefle} holds in the domain $\mathcal{R}_+ = \{X = x+iy \in \TT_+,y \geq x-\frac{1}{\Phi} \}$ (Lemma \ref{lem:R_1_case} and Figure \ref{Bonne_geodesique}).

Then, it remains to deal with surfaces outside $\mathcal{R}_+$ (which are, in some sense, surfaces close to the regular $n$-gon $\XX$). This latter case is explained in \S5.3 and requires use of the machinery of \cite[\S 7]{BLM22}, along with Corollary \ref{cor:K_regular} and Proposition \ref{prop:etude_K}.

\begin{figure}
\center
\definecolor{ccqqqq}{rgb}{0.8,0,0}
\begin{tikzpicture}[line cap=round,line join=round,>=triangle 45,x=2cm,y=2cm]
\clip(-1.2,-0.5) rectangle (1.2,2.5);
\draw [shift={(0.541196100146197,0)},line width=1pt]  plot[domain=0:3.141592653589793,variable=\t]({1*0.541196100146197*cos(\t r)+0*0.541196100146197*sin(\t r)},{0*0.541196100146197*cos(\t r)+1*0.541196100146197*sin(\t r)});
\draw [shift={(0,0)},line width=1pt,dash pattern=on 3pt off 3pt]  plot[domain=0:3.141592653589793,variable=\t]({1*1*cos(\t r)+0*1*sin(\t r)},{0*1*cos(\t r)+1*1*sin(\t r)});
\draw [line width=1pt] (0.9238795325112867,0) -- (0.9238795325112867,3.0472427983539028);
\draw [line width=1pt] (-0.9238795325112867,0) -- (-0.9238795325112867,3.0472427983539028);
\draw [shift={(-0.541196100146197,0)},line width=1pt]  plot[domain=0:3.141592653589793,variable=\t]({1*0.541196100146197*cos(\t r)+0*0.541196100146197*sin(\t r)},{0*0.541196100146197*cos(\t r)+1*0.541196100146197*sin(\t r)});
\draw [shift={(-1.0238408779149524,0)},line width=1pt,color=ccqqqq]  plot[domain=0.6763286696673477:1.4493964179776777,variable=\t]({1*0.825431473300645*cos(\t r)+0*0.825431473300645*sin(\t r)},{0*0.825431473300645*cos(\t r)+1*0.825431473300645*sin(\t r)});
\draw [shift={(0.04897119341563786,0)},line width=1pt,color=ccqqqq]  plot[domain=1.0008387825741267:2.2098093501643175,variable=\t]({1*0.6136726703570244*cos(\t r)+0*0.6136726703570244*sin(\t r)},{0*0.6136726703570244*cos(\t r)+1*0.6136726703570244*sin(\t r)});
\draw [shift={(0.8239181871076211,0)},line width=1pt,color=ccqqqq]  plot[domain=1.4493964179776777:3.141592653589793,variable=\t]({1*0.8254314733006451*cos(\t r)+0*0.8254314733006451*sin(\t r)},{0*0.8254314733006451*cos(\t r)+1*0.8254314733006451*sin(\t r)});
\draw [line width=1pt,color=ccqqqq] (0.3170253355622144,0.49258571550470803) -- (0.3170253355622144,3.0472427983539028);
\draw (-0.2,0) node[anchor=north west] {$d' = \frac{\Phi}{2\Phi^2-1}$};
\draw [line width=0.4pt,domain=-2.2700411522633757:2.93292181069959] plot(\x,{(-0-0*\x)/1});
\draw [line width=0.4pt] (0,0) -- (0,3.0472427983539028);
\begin{scriptsize}
\draw [fill=black] (0.31,0) circle (2pt);
\end{scriptsize}
\end{tikzpicture}
\includegraphics[width=5cm]{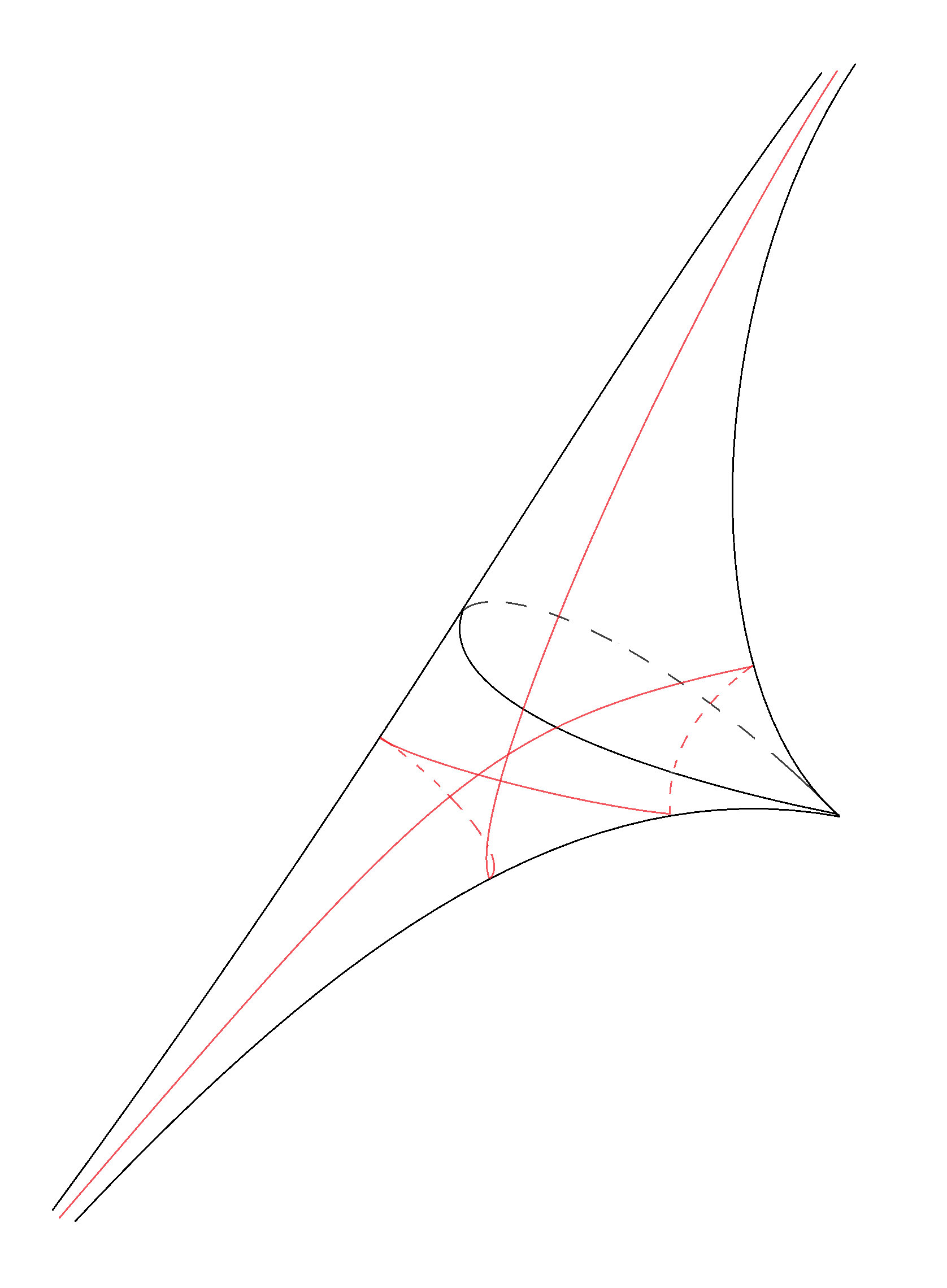}
\caption{The geodesic $\gamma_{\infty, \frac{\Phi}{2\Phi^2-1}}$ its images by the Veech group intersecting the fundamental domain $\TT$. On the right, the same geodesics on the surface $\HH / \Gamma_n$.}
\label{geodesics_autre}
\end{figure}

\subsection{The case of surfaces close to $\Gamma_n \cdot \mathcal{G}_{max}$}

With the above geometric interpretation, we deduce that for surfaces close to the geodesic trajectories associated to the directions $(\infty, \frac{1}{k\Phi})$, $k \in \NN^* \cup \{\infty\}$ (having maximal $K(\cdot,\cdot)$), KVol is achieved by pairs of saddle connections in directions $d = \infty$ and $d' = \frac{1}{k \Phi}$, and Equation \eqref{eq:trefle} holds. More precisely, we have:

\begin{Lem}\label{lem:R_1_case}
For any $X \in \mathcal{R}_+ = \{X = x+iy \in \TT_+$ with $y \geq x-\frac{1}{\Phi} \}$, Equation \eqref{eq:trefle} holds.
\end{Lem}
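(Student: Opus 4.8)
The plan is to combine the trichotomy of Proposition~\ref{prop:etude_K} with the angle–distance dictionary of Proposition~\ref{prop:directions}. Write $K_{\max}=K(\infty,\frac{1}{\Phi})=\frac{1}{\Phi l_m^2}$ for the largest possible value of $K$. I will use the elementary formula for the distance from $X=x_0+iy_0$ to a vertical geodesic $\{x=a\}$, namely $\cosh \mathrm{dist}_{\HH}(X,\{x=a\})=\frac{\sqrt{(x_0-a)^2+y_0^2}}{y_0}$, obtained by minimising $\cosh\mathrm{dist}_{\HH}(z,w)=1+\frac{|z-w|^2}{2\,\mathrm{Im}(z)\,\mathrm{Im}(w)}$ over the axis; observe that each $\gamma_{\infty,\frac{1}{k\Phi}}$ \emph{is} the vertical line $\{x=\frac{1}{k\Phi}\}$ (with $\{x=0\}$ for $k=\infty$). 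By Proposition~\ref{prop:etude_K} every pair $(d,d')$ lies in one of two regimes: either $K(d,d')=K_{\max}$, which occurs exactly when $(d,d')\in\Gamma_n^{\pm}\cdot(\infty,\frac{1}{k\Phi})$, so that $\gamma_{d,d'}\in\Gamma_n\cdot\mathcal{G}_{max}$ (configuration \ding{171}); or $K(d,d')\le \frac{1}{\Phi^2-2}K_{\max}$ (configuration \ding{169} and the generic case, since $\frac{1}{(\Phi^3-2\Phi)l_m^2}=\frac{1}{\Phi^2-2}K_{\max}$). I treat the two regimes separately.

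In the non-maximal regime, since $\sin\theta\le 1$ it suffices to exhibit a base geodesic close to $X$: I claim $\min_{k\in\NN^*\cup\{\infty\}}\mathrm{dist}_{\HH}(X,\gamma_{\infty,\frac{1}{k\Phi}})\le \mathrm{arccosh}(\Phi^2-2)$ for every $X\in\mathcal{R}_+$. Indeed, by the distance formula the tube of radius $\mathrm{arccosh}(\Phi^2-2)$ around $\{x=a\}$ is the Euclidean wedge $\{|x-a|\le \sqrt{(\Phi^2-2)^2-1}\,y\}$, whose half-slope satisfies $\sqrt{(\Phi^2-2)^2-1}\ge 1$ because $\Phi=2\cos(\frac{\pi}{n})\ge 2\cos(\frac{\pi}{8})=\sqrt{2+\sqrt2}$ for $n\ge 8$, i.e. $\Phi^2-2\ge\sqrt2$. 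As the rightmost foot in $[0,\frac{\Phi}{2}]$ is $\frac1\Phi$ (note $\frac1\Phi<\frac\Phi2$ since $\Phi^2>2$), any $X=x_0+iy_0\in\mathcal{R}_+$ with $x_0\ge\frac1\Phi$ satisfies $x_0-\frac1\Phi\le y_0$ by definition of $\mathcal{R}_+$, hence lies in the wedge around $\{x=\frac1\Phi\}$; the extremal configuration is the corner point $\frac{\Phi}{2}+i(\frac{\Phi}{2}-\frac1\Phi)$ representing $\XX$, which lies on $\partial\mathcal{R}_+$ and gives equality precisely at $n=8$. For $x_0\le\frac1\Phi$ the feet $\frac1{k\Phi}$ accumulate at $0$, and using the lower bound on $y_0$ provided by the bounding circles of $\mathcal{T}_+$ the same wedges cover this part of $\mathcal{R}_+$. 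This yields $K(d,d')\sin\theta(X,d,d')\le \frac{1}{\Phi^2-2}K_{\max}\le K_{\max}\max_{k}\sin\theta(X,\infty,\frac{1}{k\Phi})$, which is \eqref{eq:trefle}.

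In the maximal regime $\gamma_{d,d'}$ is itself a geodesic of $\Gamma_n\cdot\mathcal{G}_{max}$ and $K=K_{\max}$ on both sides of \eqref{eq:trefle}, so the inequality reduces to showing that the geodesic of $\Gamma_n\cdot\mathcal{G}_{max}$ closest to $X$ is a base vertical line. To see this I would examine the $\Gamma_n$-orbit of $\mathcal{G}_{max}$ inside the strip $-\frac\Phi2\le x\le\frac\Phi2$: the generator $T_V$ fixes $0$, sends $\infty\mapsto\frac1\Phi$, and, crucially, maps the foot $\frac1{k\Phi}\mapsto\frac1{(k+1)\Phi}$, so it carries the base line $\gamma_{\infty,\frac1{k\Phi}}$ to the semicircle joining the consecutive feet $\frac1{(k+1)\Phi}$ and $\frac1\Phi$. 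Iterating $T_V$ and $T_H$ therefore produces only nested semicircles with both endpoints in $[0,\frac\Phi2]$ and of uniformly bounded height, all confined to the lower part of $\mathcal{T}_+$ cut off by the slope-one boundary of $\mathcal{R}_+$. Hence no non-base image of $\mathcal{G}_{max}$ enters $\mathcal{R}_+$ closer than the nearest base line, giving $\sin\theta(X,d,d')\le \max_k\sin\theta(X,\infty,\frac1{k\Phi})$ and again \eqref{eq:trefle}.

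The main obstacle is this last step: converting the qualitative picture ``the non-base images of $\mathcal{G}_{max}$ live in the lower part of $\mathcal{T}$'' into a quantitative bound valid for all $n\ge 8$, that is, controlling the heights and horizontal positions of all $\Gamma_n$-images of the connecting semicircles and verifying that none of them comes closer to a point of $\mathcal{R}_+$ than the base lines do. By contrast the non-maximal regime is comparatively routine once the wedge description and the inequality $\Phi^2\ge 2+\sqrt2$ are in hand, with the equality case explaining precisely why $\partial\mathcal{R}_+$ is drawn through the point representing $\XX$.
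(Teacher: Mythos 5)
Your first regime is handled correctly, and it is in fact the entirety of the paper's own proof: by Proposition~\ref{prop:etude_K}, every pair $(d,d')$ outside the orbit $\Gamma_n^{\pm}\cdot(\infty,\pm\frac{1}{k\Phi})$ satisfies $K(d,d')\le\frac{1}{\Phi^2-2}K(\infty,\frac{1}{\Phi})$, and your wedge computation (half-slope $\sqrt{(\Phi^2-2)^2-1}\ge 1$ since $\Phi^2-2\ge\sqrt{2}$, wedges around the feet $\frac{1}{k\Phi}$ covering $\mathcal{R}_+$) is exactly the paper's reduction to the angle condition $\theta\ge\frac{\pi}{4}$ together with its ``one verifies that this set is exactly $\mathcal{R}_+$''.

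The genuine gap is your second regime, and it is fatal to that strategy rather than merely technical: the confinement statement you are trying to prove is \emph{false}. Acting on directions, the vertical twist $T_V$ (or its inverse, depending on sign conventions) is the parabolic $d\mapsto\frac{d}{1-\Phi d}$ fixing $0$; it sends the pair $(\infty,\frac{1}{2\Phi})$ to $(-\frac{1}{\Phi},\frac{1}{\Phi})$. Hence the semicircle $|z|=\frac{1}{\Phi}$ is an image of the base line $\gamma_{\infty,\frac{1}{2\Phi}}$, and its arc $\frac{\pi}{3}\le\arg z\le\frac{2\pi}{3}$ lies inside $\TT$ --- it is precisely the green arc of the paper's Figure~\ref{geodesics_kPhi} --- with its positive-real-part portion inside $\mathcal{R}_+$ (there the condition $y\ge x-\frac{1}{\Phi}$ is vacuous). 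Your enumeration applies only one parabolic generator in one direction and so produces semicircles with both endpoints in $[0,\frac{\Phi}{2}]$; the inverse produces this arc, with one negative endpoint and apex $\frac{i}{\Phi}$, well inside $\mathcal{R}_+$. At any point $X$ of this arc whose real part is not one of the feet $0,\frac{1}{k\Phi}$, one has $K(-\frac{1}{\Phi},\frac{1}{\Phi})=K(\infty,\frac{1}{\Phi})$ (case \ding{171} of Proposition~\ref{prop:etude_K}) and $\sin\theta(X,-\frac{1}{\Phi},\frac{1}{\Phi})=1$, while $\max_k \sin\theta(X,\infty,\frac{1}{k\Phi})<1$; so the inequality you are chasing cannot hold, and no quantitative control of the orbit will rescue it.

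What saves the lemma is that the maximal regime requires no geometry at all. For a pair in the orbit of the base pairs, the left-hand side equals $K(\infty,\frac{1}{\Phi})/\cosh\mathrm{dist}_{\HH}(X,\gamma_{d,d'})$ with $\gamma_{d,d'}\in\Gamma_n\cdot\mathcal{G}_{max}$, which is automatically at most $K(\infty,\frac{1}{\Phi})/\cosh\mathrm{dist}_{\HH}(X,\Gamma_n\cdot\mathcal{G}_{max})$ --- and this distance to the \emph{whole orbit} is the quantity that actually appears in Theorem~\ref{theo:main_reformule}. In other words, \eqref{eq:trefle} must be read, as the paper uses it, with the right-hand side ranging over all $\Gamma_n^{\pm}$-images of the base pairs (equivalently, as a comparison with $\cosh\mathrm{dist}_{\HH}(X,\Gamma_n\cdot\mathcal{G}_{max})$); the literal formulation with only the vertical lines $\gamma_{\infty,\frac{1}{k\Phi}}$ on the right is violated exactly on arcs such as the one above. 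With that reading, orbit pairs contribute nothing to prove, and your first paragraph already constitutes the complete proof --- which is what the paper's two-line argument amounts to.
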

Recall that because of the symmetry of KVol with respect to the reflection on the vertical axis, we can assume $X \in \TT_{+} := \{ X = x+iy \in \TT, x \geq 0\}$.

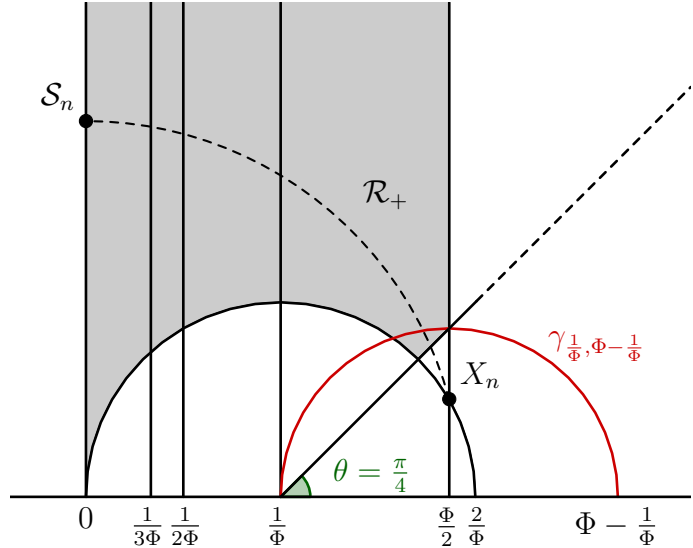
\begin{figure}[h]
\center
\definecolor{qqwuqq}{rgb}{0,0.39215686274509803,0}
\definecolor{ccqqqq}{rgb}{0.8,0,0}
\begin{tikzpicture}[line cap=round,line join=round,>=triangle 45,x=1cm,y=1cm,scale=5]
\clip(-0.2,-0.25) rectangle (1.6082304526748996,1.3155555555555518);
\def\h{1.5}
\coordinate (b) at (0,0);
\coordinate (bdr) at (0.96592582628,0.451);
\coordinate (mil) at (0.88,0.365);
\filldraw [draw=black, fill=gray!40](0,\h) to (b) to (mil) to (bdr) -- (0.965925826289,\h);
\filldraw[fill=white] (1.03527618041,0) -- (1.03527618041,0) arc (0:180:0.5176380902050415) (0,0) -- cycle;
\draw [shift={(0.5176380902050415,0)},line width=1pt,color=qqwuqq,fill=qqwuqq,fill opacity=0.3] (0,0) -- (0:0.07901234567901236) arc (0:45:0.07901234567901236) -- cycle;
\draw [line width=1pt] (0.9659258262890683,0) -- (0.9659258262890683,1.3155555555555518);
\draw [line width=1pt] (0.5176380902050415,0) -- (0.5176380902050415,1.3155555555555518);
\draw [line width=1pt] (0.2588190451025207,0) -- (0.2588190451025207,1.3155555555555518);
\draw [line width=1pt] (0.1725460300683471,0) -- (0.1725460300683471,1.32);
\draw [shift={(0.5176380902050415,0)},line width=1pt]  plot[domain=0:3.141592653589793,variable=\t]({1*0.5176380902050415*cos(\t r)+0*0.5176380902050415*sin(\t r)},{0*0.5176380902050415*cos(\t r)+1*0.5176380902050415*sin(\t r)});
\draw [shift={(0.9659258262890683,0)},line width=1pt,color=ccqqqq]  plot[domain=0:3.141592653589793,variable=\t]({1*0.44828773608402683*cos(\t r)+0*0.44828773608402683*sin(\t r)},{0*0.44828773608402683*cos(\t r)+1*0.44828773608402683*sin(\t r)});
\draw [shift={(0,0)},line width=0.8pt,dash pattern=on 3pt off 3pt]  plot[domain=0.2617993877991492:1.5707963267948966,variable=\t]({1*1*cos(\t r)+0*1*sin(\t r)},{0*1*cos(\t r)+1*1*sin(\t r)});
\draw (0.45,0) node[anchor=north west] {$\frac{1}{\Phi}$};
\draw (0.19,0) node[anchor=north west] {$\frac{1}{2\Phi}$};
\draw (0.09,0) node[anchor=north west] {$\frac{1}{3\Phi}$};
\draw (0.9,0) node[anchor=north west] {$\frac{\Phi}{2}$};
\draw (0.98,0) node[anchor=north west] {$\frac{2}{\Phi}$};
\draw (1.27,0.0) node[anchor=north west] {$\Phi - \frac{1}{\Phi}$};
\draw [color = ccqqqq] (1.2, 0.4) node[right] {$ \gamma_{\frac{1}{\Phi}, \Phi - \frac{1}{\Phi}}$};
\draw [line width=1pt] (0,0) -- (0,1.3155555555555518);
\draw [line width=1pt,domain=-0.7041975308641953:1.6082304526748996] plot(\x,{(-0-0*\x)/1});
\draw (0.7074897119341587,0.8704526748971166) node[anchor=north west] {$\mathcal{R}_+$};
\draw (-0.05,0) node[anchor=north west] {$0$};
\draw [color=qqwuqq](0.6284773662551463,0.13) node[anchor=north west] {$\theta = \frac{\pi}{4}$};
\draw [line width=1pt] (0.5176380902050415,0)-- (1.038572131522274,0.5209340413172325);
\draw [line width=1pt,dash pattern=on 3pt off 3pt,domain=1.038572131522274:1.6082304526748996] plot(\x,{(-0.19563737532687653--0.37794238683127546*\x)/0.37794238683127546});
\draw [fill=black] (0,1) circle (0.5pt);
\draw [fill=black] (0.9659,0.26) circle (0.5pt);
\draw (0,1) node[anchor=south east] {$\SS$};
\draw (0.9659,0.26) node[anchor=south west] {$\XX$};
\end{tikzpicture}
\caption{The region $\mathcal{R}_+$ in the right part of the fundamental domain $\TT^+$ and the geodesic $(\frac{1}{\Phi}, \Phi - \frac{1}{\Phi})$ for the case $n=12$.}
\label{Bonne_geodesique}
\end{figure}

\begin{proof}
 By Proposition \ref{prop:etude_K}, Equation \eqref{eq:trefle} holds for any $X \in \TT^+$ such that there is a $k \in \NN^* \cup \{ \infty \}$ with 
\begin{equation}\label{eq:star}
\sin \theta(X,\infty, \frac{1}{k\Phi}) \geq \frac{1}{\Phi^2-2}.
\end{equation}

Since $\cfrac{1}{\Phi^2-2} \leq \cfrac{1}{\Phi_8^2-2} = \cfrac{\sqrt{2}}{2} = \sin \cfrac{\pi}{4}$, we directly have that the above condition is verified for $X \in \{Y \in \TT^+, \exists k \in \NN^* \cup \{\infty \}, \theta(Y,\infty, \frac{1}{k\Phi}) \geq \frac{\pi}{4} \}$. One verifies that this set is exactly $\mathcal{R}_+$.
\end{proof}
\begin{Rema}
In fact, $\mathcal{R}_+ = \TT^+$ for $m=2$, which finishes the proof in the case of the octagon. However, this is no longer the case for $m \geq 3$.
\end{Rema}

\subsection{The case of surfaces close to $\XX$}
It remains to deal with surfaces away from geodesics associated with directions $\infty$ and $\frac{1}{k \Phi}$. In a way, these surfaces are close to the regular $n$-gon $\XX$. To this end we make the following definition (which can also be found in \cite[\S 7.2]{BLM22}):
\begin{Def}
Given a pair of distinct periodic directions $(d,d')$ and its associated geodesic $\gamma_{d,d'}$ on $\HH$, we denote by $V(d,d')$ the connected component of $\HH \backslash (\Gamma_n^{\pm} \cdot \gamma_{d,d'})$ containing $\XX$.
\end{Def}

As an example, the domain $V(\infty,d')$ with $d' = \frac{\Phi}{2\Phi^2-1}$ is drawn in Figure \ref{fig:example_voronoi_domain}.

\begin{Rema}
If a geodesic of $\Gamma_n^{\pm} \cdot \gamma_{d,d'}$ passes through $\XX$, then $V(d,d')$ is not well defined. For convenience, we set $V(d,d') = \{ \XX \}$ as in this case we automatically have:
\[ K(d,d') \leq K(\infty, \frac{1}{\Phi}) \sin \theta( \XX, \infty, \frac{1}{\Phi}) \]
and since $\XX$ is the furthest point away from the set of geodesics $\mathcal{G}_{max}$, we have for any surface $X \in \TT$:
\[ K(d,d') \sin \theta(X,d,d') \leq K(d,d') \leq K(\infty, \frac{1}{\Phi}) \sin \theta( \XX, \infty, \frac{1}{\Phi}). \]
\end{Rema}

\begin{figure}
\center
\definecolor{ccqqqq}{rgb}{0.8,0,0}
\begin{tikzpicture}[line cap=round,line join=round,>=triangle 45,x=3cm,y=3cm]
\clip(-2.2,-0.5) rectangle (2.2,1.5);
\filldraw[fill=gray!20] (1.848,0) -- (1.848,0) arc (0:180:0.825431473300645) (0.198,0) -- cycle;
\filldraw[fill=white] (1.082,0) -- (1.082,0) arc (0:180:0.541196100146197) (0,0) -- cycle;
\filldraw[fill=white, color=white] (0.9238795325112867,0) rectangle (2.5,4); 
\filldraw[fill=white] (0,0) -- (0,0) arc (0:180:0.541196100146197) (-1.082,0) -- cycle;
\filldraw[fill=white] (-0.9238795325112867,0) rectangle (-2.5,4); 
\draw [shift={(0.541196100146197,0)},line width=1pt]  plot[domain=0:3.141592653589793,variable=\t]({1*0.541196100146197*cos(\t r)+0*0.541196100146197*sin(\t r)},{0*0.541196100146197*cos(\t r)+1*0.541196100146197*sin(\t r)});
\draw [line width=1pt] (0.9238795325112867,0) -- (0.9238795325112867,3.0472427983539028);
\draw [line width=1pt] (-0.9238795325112867,0) -- (-0.9238795325112867,3.0472427983539028);
\draw [shift={(-0.541196100146197,0)},line width=1pt]  plot[domain=0:3.141592653589793,variable=\t]({1*0.541196100146197*cos(\t r)+0*0.541196100146197*sin(\t r)},{0*0.541196100146197*cos(\t r)+1*0.541196100146197*sin(\t r)});
\draw [shift={(-1.0238408779149524,0)},line width=1pt,color=ccqqqq]  plot[domain=0.6763286696673477:1.4493964179776777,variable=\t]({1*0.825431473300645*cos(\t r)+0*0.825431473300645*sin(\t r)},{0*0.825431473300645*cos(\t r)+1*0.825431473300645*sin(\t r)});
\draw [shift={(0.04897119341563786,0)},line width=1pt,color=ccqqqq]  plot[domain=1.0008387825741267:2.2098093501643175,variable=\t]({1*0.6136726703570244*cos(\t r)+0*0.6136726703570244*sin(\t r)},{0*0.6136726703570244*cos(\t r)+1*0.6136726703570244*sin(\t r)});
\draw [shift={(0.8239181871076211,0)},line width=1pt,color=ccqqqq]  plot[domain=1.4493964179776777:3.141592653589793,variable=\t]({1*0.8254314733006451*cos(\t r)+0*0.8254314733006451*sin(\t r)},{0*0.8254314733006451*cos(\t r)+1*0.8254314733006451*sin(\t r)});
\draw [line width=1pt,color=ccqqqq] (0.3170253355622144,0.49258571550470803) -- (0.3170253355622144,3.0472427983539028);

\draw [shift={(1.0238408779149524,0)},line width=1pt,color=ccqqqq]  plot[domain=1.69229:2.465262,variable=\t]({1*0.825431473300645*cos(\t r)+0*0.825431473300645*sin(\t r)},{0*0.825431473300645*cos(\t r)+1*0.825431473300645*sin(\t r)});
\draw [shift={(-0.8239181871076211,0)},line width=1pt, color=ccqqqq]  plot[domain=0:1.69229,variable=\t]({1*0.8254314733006451*cos(\t r)+0*0.8254314733006451*sin(\t r)},{0*0.8254314733006451*cos(\t r)+1*0.8254314733006451*sin(\t r)});
\draw [shift={(-0.04897119341563786,0)},line width=1pt,color=ccqqqq]  plot[domain=0.938:2.1415,variable=\t]({1*0.6136726703570244*cos(\t r)+0*0.6136726703570244*sin(\t r)},{0*0.6136726703570244*cos(\t r)+1*0.6136726703570244*sin(\t r)});
\draw [line width=1pt,color=ccqqqq] (-0.3170253355622144,0.49258571550470803) -- (-0.3170253355622144,3.0472427983539028);

\draw (0,-0.1) node[anchor=north west] {$d' = \frac{\Phi}{2\Phi^2-1}$};
\draw [line width=0.4pt,domain=-2.2700411522633757:2.93292181069959] plot(\x,{(-0-0*\x)/1});
\begin{scriptsize}
\draw [fill=black] (0.31,0) circle (2pt);
\draw [fill=black] (0,1) circle (2pt);
\draw [fill=black] (0.9238,0.38) circle (2pt);
\end{scriptsize}
\draw (0,1) node[anchor=south east] {$\SS$};
\draw (0.9238,0.38) node[anchor=south west] {$\XX$};
\draw (0,0) node[below] {$0$};
\draw (0.9238,0) node[below] {$\frac{\Phi}{2}$};
\draw (-0.9238,0) node[below] {$-\frac{\Phi}{2}$};
\draw [line width=0.5pt,to-] (0.8,0.6)--(1.2,1);
\draw (1.05,1.1) node[right] {$V(\infty,\frac{\Phi}{2\Phi^2-1}) \cap \TT$};
\end{tikzpicture}
\caption{The geodesics of $\Gamma_n^{\pm} \cdot \gamma_{\infty,\frac{\Phi}{2\Phi^2-1}}$ intersecting the fundamental domain $\TT$, and the domain $V(\infty, \frac{\Phi}{2\Phi^2-1}) \cap \TT$.}
\label{fig:example_voronoi_domain}
\end{figure}
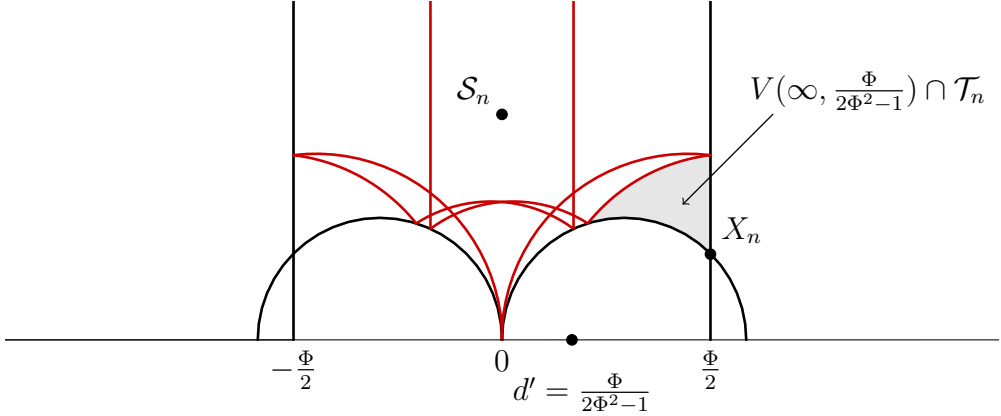

With this definition, the proof of Lemmas 7.6 and 7.7 of \cite[\S 7.2]{BLM22} generalizes to our setting and gives:
\begin{Lem}
Let $(d_0,d_0')$ be a pair of distinct periodic directions. Assume that Equation \eqref{eq:trefle} holds for any pair of directions $(d,d')$ whose associated geodesic lie in the boundary of the domain $V(d_0,d_0')$. Then the same inequality is true for $(d_0,d_0')$.\newline
Furthermore, a pair of directions $(d,d')$, $d<d'$ whose associated geodesic lies in the boundary of the domain $V(d_0,d_0') \cap \TT_{+}$ has to satisfy $d'+d \geq \Phi$.
\end{Lem}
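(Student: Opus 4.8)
The plan is to prove the two assertions separately, following the scheme of Lemmas 7.6 and 7.7 of \cite{BLM22}. Throughout I write $G_{d,d'}(X) := K(d,d')\sin\theta(X,d,d') = K(d,d')/\cosh(\mathrm{dist}_{\HH}(X,\gamma_{d,d'}))$ for the left-hand side of \eqref{eq:trefle}, and I use freely that $K$ is $\Gamma_n^{\pm}$-invariant (Remark \ref{rk:invariance_K}) and that $\theta(g\cdot X, g\cdot d, g\cdot d') = \theta(X,d,d')$ for $g \in \Gamma_n^{\pm}$ (Proposition \ref{prop:directions}). The maximal values of $K$ are attained exactly along $\mathcal{G}_{max}$ (Proposition \ref{prop:etude_K}), which is why these geodesics govern the right-hand side.

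\emph{Propagation of \eqref{eq:trefle}.} For $X \in \TT_+ \setminus V(d_0,d_0')$ one checks that $X$ lies in $\mathcal{R}_+$ for the pairs $(d_0,d_0')$ to which the lemma is applied, so that \eqref{eq:trefle} already holds by Lemma \ref{lem:R_1_case}; it therefore suffices to treat $X \in V(d_0,d_0')$. Fix such an $X$ and let $\gamma_{e,e'} \subset \Gamma_n^{\pm}\cdot\gamma_{d_0,d_0'}$ be the geodesic of the orbit closest to $X$. Since $X$ is interior to the component $V(d_0,d_0')$, this nearest geodesic lies on $\partial V(d_0,d_0')$, i.e. $(e,e')$ is one of the boundary pairs. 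Because $\gamma_{d_0,d_0'}$ is itself a member of the orbit, $\mathrm{dist}_{\HH}(X,\gamma_{d_0,d_0'}) \ge \mathrm{dist}_{\HH}(X,\gamma_{e,e'})$, while $K(d_0,d_0') = K(e,e')$ by invariance; monotonicity of $t \mapsto 1/\cosh t$ then gives $G_{d_0,d_0'}(X) \le G_{e,e'}(X)$. By hypothesis \eqref{eq:trefle} holds for the boundary pair $(e,e')$ at $X$, so $G_{e,e'}(X) \le \max_k G_{\infty,1/k\Phi}(X)$, and \eqref{eq:trefle} follows for $(d_0,d_0')$ at $X$.

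\emph{The geometric constraint $d+d' \ge \Phi$.} The key input is the symmetry of the picture about the vertical line $L = \{\mathrm{Re} = \tfrac{\Phi}{2}\}$ on which $\XX$ sits; indeed $\XX$ lies at the top of the extremal geodesic $\gamma_{1/\Phi,\Phi-1/\Phi}$, whose endpoints sum to exactly $\Phi$. The orientation-reversing element $T_H R \in \Gamma_n^{\pm}$ induces the reflection $\rho : z \mapsto \Phi - \bar z$ across $L$, which fixes $\XX$; consequently $\rho$ preserves the orbit $\Gamma_n^{\pm}\cdot\gamma_{d_0,d_0'}$ and, fixing $\XX$, preserves the component $V(d_0,d_0')$. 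On the boundary circle it acts by $x \mapsto \Phi - x$, so a boundary arc $\gamma_{d,d'}$, of center $\tfrac{d+d'}{2}$, is sent to $\gamma_{\Phi-d',\Phi-d}$, of center $\Phi - \tfrac{d+d'}{2}$, the two centers being symmetric about $\tfrac{\Phi}{2}$. I would then argue by contradiction: if a boundary arc of $V(d_0,d_0') \cap \TT_+$ had center $\tfrac{d+d'}{2} < \tfrac{\Phi}{2}$, then together with its $\rho$-image it would meet on $L$ and cut $\XX$ off on the wrong side, contradicting that $V(d_0,d_0')$ is the connected component containing $\XX$ and the adjacent part of $\TT_+$. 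Hence every such arc has center $\ge \tfrac{\Phi}{2}$, i.e. $d+d' \ge \Phi$.

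\emph{Main obstacle.} The propagation step is essentially formal once the nearest-geodesic reduction is set up. The real difficulty is the geometric claim $d+d' \ge \Phi$: turning the reflection symmetry into a rigorous exclusion requires a careful analysis of how a boundary arc of $V(d_0,d_0') \cap \TT_+$ meets $L$ and on which side of it $\XX$ lies, using the explicit extremal geodesic $\gamma_{1/\Phi,\Phi-1/\Phi}$ together with the classification of $K$-values in Proposition \ref{prop:etude_K} to control which geodesics of the orbit can actually appear on $\partial V(d_0,d_0')$ near $\XX$. This is precisely the point where the argument departs from the double $n$-gon and must be redone with the staircase-model geometry of Figure \ref{staircase_model}.
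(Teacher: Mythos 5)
Your propagation step contains the correct core mechanism for $X \in V(d_0,d_0')$: the orbit geodesic nearest to such an $X$ bounds $V(d_0,d_0')$, and $\Gamma_n^{\pm}$-invariance of $K$ plus monotonicity of $t \mapsto 1/\cosh t$ transfers \eqref{eq:trefle} from that boundary pair to $(d_0,d_0')$. This is indeed the heart of the argument the paper invokes (it gives no proof itself, but cites Lemmas 7.6 and 7.7 of \cite{BLM22} as generalizing). The gap is your reduction to that case. The claim that $\TT_+ \setminus V(d_0,d_0') \subset \mathcal{R}_+$ (equivalently $\TT_+ \setminus \mathcal{R}_+ \subset V(d_0,d_0')$) is both circular --- ``the pairs to which the lemma is applied'' is not available information, since the lemma is precisely what reduces \emph{arbitrary} pairs to pairs with $d+d' \geq \Phi$, so it must be proved for an arbitrary $(d_0,d_0')$ --- and false. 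Concretely, for $n \geq 12$ the region $\TT_+ \setminus \mathcal{R}_+$ is nonempty, and one can choose a parabolic point $d_0$ of $\Gamma_n$ with $\bigl(1+\tfrac{1}{\sqrt{2}}\bigr)\tfrac{1}{\Phi} < d_0 < \tfrac{\Phi}{2}$; the vertical geodesic $\gamma_{d_0,\infty}$ then crosses $\TT_+ \setminus \mathcal{R}_+$ and separates the part of it lying in $\{x < d_0\}$ from $\XX$. Those points belong to $\TT_+ \setminus V(d_0,\infty)$ but not to $\mathcal{R}_+$, so Lemma \ref{lem:R_1_case} does not cover them and your proof says nothing about them. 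Handling $X \notin V(d_0,d_0')$ is exactly the part that needs work.

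For the second assertion, identifying $T_H R \in \Gamma_n^{\pm}$ with the reflection $z \mapsto \Phi - \bar{z}$, which fixes $\XX$ and hence preserves $V(d_0,d_0')$, is correct and genuinely useful, but the contradiction you draw from it does not go through. If $d < d' \leq \tfrac{\Phi}{2}$, the arc $\gamma_{d,d'}$ and its reflection $\gamma_{\Phi-d',\Phi-d}$ are disjoint in $\HH$ (at best asymptotic at the ideal point $\tfrac{\Phi}{2}$), so ``they meet on $L$'' is unavailable and nothing is excluded. When $d' > \tfrac{\Phi}{2}$ the two arcs do cross on $L$, but the configuration is not automatically contradictory: if $\XX$ lies below the crossing point, a height comparison on $\{x \leq \tfrac{\Phi}{2}\}$ shows that the reflected arc separates $\gamma_{d,d'} \cap \{x < \tfrac{\Phi}{2}\}$ from $\XX$ --- which is the desired conclusion (the arc bounding $V \cap \TT_+$ is the reflected one), not a contradiction --- while if $\XX$ lies above the crossing point, both arcs pass below $\XX$ and the reflection argument excludes nothing; such arcs must instead be screened off by \emph{other} orbit geodesics (images under suitable words in $T_H, T_V, R$). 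That finer separation analysis is precisely what a proof of $d+d' \geq \Phi$ requires, and it is the step you explicitly defer in your closing paragraph. So, as written, your proposal establishes neither the case $X \notin V(d_0,d_0')$ of the first claim nor the inequality $d + d' \geq \Phi$; both are genuine gaps relative to what the cited Lemmas 7.6 and 7.7 of \cite{BLM22} are meant to supply.
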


In particular, it suffices to prove Equation \eqref{eq:trefle} for pairs of directions $(d,d')$ with $d+d' \geq \Phi$. This relies on the sinus comparison techniques of \cite[\S 7.3]{BLM22}.

\begin{Prop}\label{prop:extension_case_1}
Let $(d,d')$  with $d+d' \geq \Phi$ and $X=x+iy \in \TT$ inside the half disk defined by the geodesic $\gamma_{\frac{1}{\Phi}, \Phi - \frac{1}{\Phi}}$. We have
\[
 K(d,d') \sin \theta(X,d,d') \leq K(\infty,\frac{1}{\Phi}) \sin \theta(X,\infty,\frac{1}{\Phi}).
\]
This condition is verified in particular for $X \in \TT \backslash \mathcal{R}_+$.
\end{Prop}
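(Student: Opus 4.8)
The plan is to convert the statement into a comparison of hyperbolic distances and then split according to the value of $K(d,d')$ furnished by Proposition \ref{prop:etude_K}. By Proposition \ref{prop:directions} (in the form of Notation \ref{nota:sinus}) one has $\sin\theta(X,d,d') = 1/\cosh(\mathrm{dist}_{\HH}(X,\gamma_{d,d'}))$, so the desired inequality is equivalent to
\[
\frac{K(d,d')}{K(\infty,\frac{1}{\Phi})}\,\cosh\big(\mathrm{dist}_{\HH}(X,\gamma_{\infty,\frac{1}{\Phi}})\big) \;\leq\; \cosh\big(\mathrm{dist}_{\HH}(X,\gamma_{d,d'})\big).
\]
By Proposition \ref{prop:etude_K} the ratio $K(d,d')/K(\infty,\frac{1}{\Phi})$ equals $1$ exactly when $(d,d')$ is of type \ding{171}, equals $\frac{1}{\Phi^2-2}$ when $(d,d')$ is of type \ding{169}, and is strictly smaller than $\frac{1}{\Phi^2-2}$ in every remaining case, so I would treat these three situations separately. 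The geometric fact I would exploit throughout is that $\gamma_{\infty,\frac{1}{\Phi}}$ is the vertical geodesic $\{\mathrm{Re}=\frac{1}{\Phi}\}$, which shares the ideal endpoint $\frac{1}{\Phi}$ with the semicircle $\gamma_{\frac{1}{\Phi},\Phi-\frac{1}{\Phi}}$ bounding the half-disk $D$; this shared endpoint is what makes the comparison tractable.

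For the type \ding{171} pairs the two constants coincide, so it suffices to prove $\mathrm{dist}_{\HH}(X,\gamma_{d,d'})\geq\mathrm{dist}_{\HH}(X,\gamma_{\infty,\frac{1}{\Phi}})$ for every $X\in\TT\cap D$. These pairs are exactly the $\Gamma_n^{\pm}$-images of $(\infty,\frac{1}{k\Phi})$, so their geodesics range over the images of $\mathcal{G}_{max}$, and the content is that among all of them $\gamma_{\infty,\frac{1}{\Phi}}$ is the one nearest to $X$. Using Remark \ref{rk:invariance_K} to normalise by a Veech element, together with the hypothesis $d+d'\geq\Phi$ (satisfied, by the preceding Lemma, for the geodesics bounding the domains $V(d,d')$), this reduces to a finite check: the other images of $\mathcal{G}_{max}$ that meet a neighbourhood of $D$ have their ideal endpoints on the far side of $\frac{1}{\Phi}$ relative to $X$, hence stay farther from any $X\in D$ than the vertical line $\{\mathrm{Re}=\frac{1}{\Phi}\}$.

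For the remaining pairs, where $K(d,d')\leq\frac{1}{\Phi^2-2}K(\infty,\frac{1}{\Phi})$, the target inequality becomes $\sin\theta(X,d,d')\leq(\Phi^2-2)\sin\theta(X,\infty,\frac{1}{\Phi})$. The extremal instance is the type \ding{169} geodesic $\gamma_{\frac{1}{\Phi},\Phi-\frac{1}{\Phi}}=\partial D$, where $\sin\theta=1$ on the boundary and decreases inside $D$; here the key estimate to establish is $\sin\theta(X,\infty,\frac{1}{\Phi})\geq\frac{1}{\Phi^2-2}$ throughout $\TT\cap D$. A direct hyperbolic-trigonometry computation anchors this at the top of $\partial D$, the point $\frac{\Phi}{2}+i\frac{\Phi^2-2}{2\Phi}$, whose distance to $\{\mathrm{Re}=\frac{1}{\Phi}\}$ satisfies $\cosh=\sqrt{2}\leq\Phi^2-2$ (equality at $n=8$, since $\Phi_8^2-2=\sqrt2$). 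The general type \ding{169} or generic geodesic is then reduced to this extremal one by the boundary-of-$V$ argument of the preceding Lemma and the $\Gamma_n^{\pm}$-invariance of $K$. Finally I would dispatch the ``in particular'' clause by verifying directly that $\TT\setminus\mathcal{R}_+$ is contained in the half-disk $D$.

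The main obstacle I anticipate is carrying out this sinus comparison uniformly in $n$: the vertical geodesic $\gamma_{\infty,\frac{1}{\Phi}}$ and the boundary semicircle agree at the shared ideal point $\frac{1}{\Phi}$ but degenerate differently near the other endpoint $\Phi-\frac{1}{\Phi}$, where $\sin\theta(X,\infty,\frac{1}{\Phi})\to 0$ and the naive bound $\sin\theta(X,d,d')\leq 1$ is too weak. Controlling this forces one to show that the offending region near $\Phi-\frac{1}{\Phi}$ lies outside the fundamental domain $\TT$, and to check that the elementary inequalities $\Phi_8^2-2=\sqrt2$ and $\Phi<2$ invoked along the way remain valid as $\Phi=2\cos(\pi/n)$ grows with $n$.
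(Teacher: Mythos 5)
There is a genuine gap, and it sits exactly where this proposition carries its content. Your key estimate --- $\sin \theta(X,\infty,\frac{1}{\Phi}) \geq \frac{1}{\Phi^2-2}$ throughout $\TT \cap D$ --- is \emph{false} for every $n \geq 12$. You anchored it at the top of the semicircle $\partial D$, the point $\frac{\Phi}{2}+i\frac{\Phi^2-2}{2\Phi}$, where indeed $\cosh(\mathrm{dist}_{\HH}) = \sqrt{2} \leq \Phi^2-2$; but that is not the worst point of $\TT \cap D$. Write $u = \Phi^2-2$. The corner $\XX = \frac{\Phi}{2}+i\frac{\sqrt{4-u^2}}{2\Phi}$ of the fundamental domain lies strictly inside $D$ as soon as $u > \sqrt{2}$, and there one computes $\sinh \bigl(\mathrm{dist}_{\HH}(\XX,\gamma_{\infty,\frac{1}{\Phi}})\bigr) = \frac{u}{\sqrt{4-u^2}}$, hence $\cosh = \frac{2}{\sqrt{4-u^2}}$; the inequality $\frac{2}{\sqrt{4-u^2}} \leq u$ is equivalent to $(u^2-2)^2 \leq 0$, which holds only for $u=\sqrt{2}$, i.e.\ $n=8$. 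For $n=12$ one gets $\sin\theta(\XX,\infty,\frac{1}{\Phi}) = \frac{1}{2} < \frac{1}{\sqrt{3}} = \frac{1}{\Phi^2-2}$. Note that for $n=8$ the paper's Remark after Lemma \ref{lem:R_1_case} gives $\mathcal{R}_+ = \TT_+$, so Proposition \ref{prop:extension_case_1} is only ever needed for $n \geq 12$ --- precisely when your estimate breaks, and it breaks at $\XX \in \TT \setminus \mathcal{R}_+$, the very point the proposition exists to cover. (Your last paragraph worries about the region near the ideal endpoint $\Phi - \frac{1}{\Phi}$; the actual obstruction is at $\XX$, well inside $\TT$.)

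This is not a repairable technicality within your scheme, because for the type \ding{169} pair $(d,d')=(\frac{1}{\Phi},\Phi-\frac{1}{\Phi})$ the inequality to be proven is an \emph{equality} at $\XX$: one has $\cosh\bigl(\mathrm{dist}_{\HH}(\XX,\gamma_{\frac{1}{\Phi},\Phi-\frac{1}{\Phi}})\bigr) = \frac{2}{u\sqrt{4-u^2}}$, so $K(\tfrac{1}{\Phi},\Phi-\tfrac{1}{\Phi})\sin\theta(\XX,\tfrac{1}{\Phi},\Phi-\tfrac{1}{\Phi}) = \frac{1}{u}\cdot\frac{u\sqrt{4-u^2}}{2}\,K(\infty,\tfrac{1}{\Phi}) = \frac{\sqrt{4-u^2}}{2}\,K(\infty,\tfrac{1}{\Phi}) = K(\infty,\tfrac{1}{\Phi})\sin\theta(\XX,\infty,\tfrac{1}{\Phi})$. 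Any argument which, like yours, discards $\sin\theta(X,d,d')$ by bounding it by $1$ therefore loses exactly the factor it needs near $\XX$. The paper proceeds differently: it anchors the inequality at $\XX$ itself via Corollary \ref{cor:K_regular} (i.e.\ the already-proven Theorem \ref{theo:KVol_4m}, which yields $K(d,d')\sin\theta(\XX,d,d') \leq K(\infty,\frac{1}{\Phi})\sin\theta(\XX,\infty,\frac{1}{\Phi})$ for \emph{all} pairs, with the sharp value of both sides), and then propagates this from $\XX$ to the whole region using the monotonicity statement Proposition \ref{prop:extension_Prop_BLM} (from \cite{BLM22}): for $\frac{1}{\Phi}\leq d \leq \frac{\Phi}{2}\leq d'$, the ratio $\sin\theta(X,\infty,\frac{1}{\Phi})/\sin\theta(X,d,d')$ is minimal at the corner $\XX$ of the domain $\{x \geq \frac{1}{\Phi}\}\cap\TT$. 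The remaining pairs, with $d<\frac{1}{\Phi}$ and hence $d'>\Phi-\frac{1}{\Phi}$, are reduced to that case by the sine comparison $\sin\theta(X,d,d')\leq \sin\theta(X,\frac{1}{\Phi},\Phi-\frac{1}{\Phi})$ combined with $K(d,d')\leq K(\frac{1}{\Phi},\Phi-\frac{1}{\Phi})$ from Proposition \ref{prop:etude_K}. Your treatment of the type \ding{171} pairs (a distance comparison between vertical geodesics) is essentially sound, but it is not where the difficulty of the proposition lies.
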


\begin{proof}[Proof of Proposition \ref{prop:extension_case_1}]
We distinguish two cases:
\begin{enumerate}
\item In the case $d \geq \frac{1}{\Phi}$, we use Proposition 7.8 of \cite{BLM22} which can be stated more generally as:

\begin{Prop}\label{prop:extension_Prop_BLM}
Let $a \in \RR$, $b > 0$, and $c \geq b$. Let $\mathcal{D} \subset \HH$ be the domain enclosed by the geodesics $\gamma_{a, \infty}$, $\gamma_{a + b, \infty}$ and $\gamma_{a-c, a+c}$, and $X_0$ be the right corner of the domain $\mathcal{D}$, as in Figure \ref{fig:prop_BLM}. Then for any $(d,d')$ with $a \leq d \leq a + b \leq d'$ such that $\gamma_{d,d'}$ intersect the domain $\mathcal{D}$, the function
\[
F_{(d,d')} : X \in \mathcal{D} \mapsto \frac{\sin \theta(X,\infty,a)}{\sin \theta(X,d,d')}
\]
is minimal at $X_0$ on the domain $\mathcal{D}$.
\end{Prop}

\begin{figure}
\center
\definecolor{qqwuqq}{rgb}{0,0.39215686274509803,0}
\definecolor{zzttqq}{rgb}{0.6,0.2,0}
\definecolor{uuuuuu}{rgb}{0.26666666666666666,0.26666666666666666,0.26666666666666666}
\begin{tikzpicture}[line cap=round,line join=round,>=triangle 45,x=1cm,y=1cm, scale=0.8]
\clip(-4.2,-1) rectangle (9,7);
\fill[line width=1pt,color=gray!20,fill=gray!20] (0,0) -- (3,0) -- (3,8.64) -- (0,8.64) -- cycle;
\draw [shift={(0,0)},line width=1pt,color=white,fill=white]  (0,0) --  plot[domain=0:1.5707963267948966,variable=\t]({1*3.56*cos(\t r)+0*3.56*sin(\t r)},{0*3.56*cos(\t r)+1*3.56*sin(\t r)}) -- cycle ;
\draw [line width=1pt] (0,0) -- (0,8.5);
\draw [shift={(0,0)},line width=1pt]  plot[domain=0:3.141592653589793,variable=\t]({1*3.56*cos(\t r)+0*3.56*sin(\t r)},{0*3.56*cos(\t r)+1*3.56*sin(\t r)});
\draw [line width=1pt] (3,0) -- (3,8.5);
\draw [line width=1pt,domain=-4.893333333333335:12.426666666666671] plot(\x,{(-0-0*\x)/3.56});
\draw (0,-0.1) node[below] {$a$};
\draw (2.826666666666668,0.05) node[below] {$a+b$};
\draw (3.5,-0.4) node[below] {$a+c$};
\draw (-3.5,-0.1) node[below] {$a-c$};
\draw (0.7666666666666672,0) node[below] {$d$};
\draw (7.906666666666671,0) node[below] {$d'$};
\draw [shift={(4.5,0)},line width=1pt]  plot[domain=0:3.141592653589793,variable=\t]({1*3.6*cos(\t r)+0*3.6*sin(\t r)},{0*3.6*cos(\t r)+1*3.6*sin(\t r)});
\draw (3.4,2.4) node[anchor=north west] {$X_0$};
\draw (6.8,3.8) node[left] {$\gamma_{d,d'}$};
\draw (1.2066666666666672,6.02) node[anchor=north west] {$\mathcal{D}$};
\begin{scriptsize}
\draw [fill=uuuuuu] (3,1.9166637681137504) circle (2pt);
\end{scriptsize}
\end{tikzpicture}
\caption{The domain $\mathcal{D}$ of Proposition \ref{prop:extension_Prop_BLM}.}
\label{fig:prop_BLM}
\end{figure}
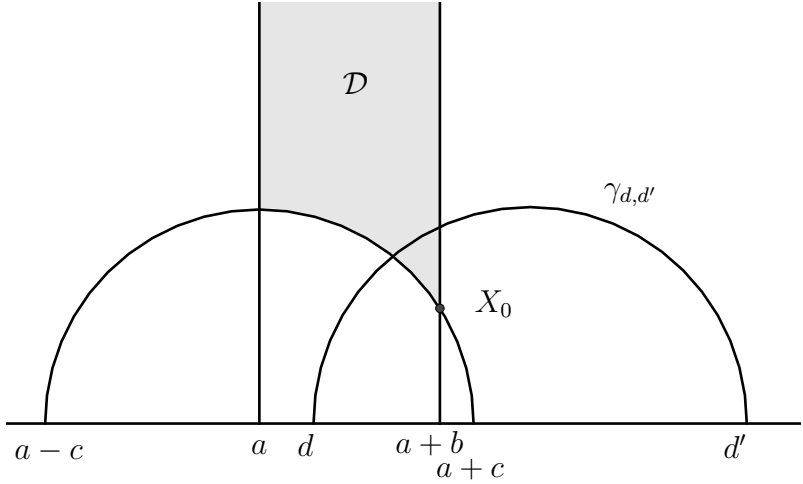

In our setting, we set $a = \frac{1}{\Phi}$, $b = \frac{\Phi}{2} - \frac{1}{\Phi}$ and $c=\frac{1}{\Phi}$ and it gives that for any $(d,d')$ with $\frac{1}{\Phi} \leq d \leq \frac{\Phi}{2} \leq d'$, the function

\[
F_{(d,d')} : X \mapsto \frac{\sin \theta(X,\infty,\frac{1}{\Phi})}{\sin \theta(X,d,d')}
\]
is minimal at $\XX$ on the domain $\mathcal{D} = \{X=x+iy \in \TT$ with $x \geq \frac{1}{\Phi} \}$.\newline
In particular, since by Corollary \ref{cor:K_regular}
\[
\forall (d,d'), \text{ } K(d,d') \sin \theta(\XX,d,d') \leq K(\infty, \frac{1}{\Phi})\sin \theta(\XX, \infty, \frac{1}{\Phi})
\]

we deduce that for all $X \in \{X=x+iy \in \TT$ with $x \geq \frac{1}{\Phi} \}$ and $(d,d')$ with $\frac{1}{\Phi} \leq d \leq \frac{\Phi}{2} \leq d'$, we have
\[
 K(d,d') \sin \theta(X,d,d') \leq K(\infty,\frac{1}{\Phi}) \sin \theta(X,d,d').
\]

\item Else, $d < \frac{1}{\Phi}$ so that $d' > \Phi - \frac{1}{\Phi}$. In this case, for any pair of directions $(d,d')$ not in $\Gamma \cdot \mathcal{G}_{max}$ and such that $d \leq \frac{1}{\Phi}$ and $d' \geq \Phi - \frac{1}{\Phi}$, and for all $X \in \mathcal{R}_+$, we have
$\sin \theta (X,d,d') \leq \sin \theta (X,\frac{1}{\Phi}, \Phi - \frac{1}{\Phi})$
so that
\begin{align*}
K(d,d') \sin \theta(X,d,d') & \leq K(d,d') \sin \theta(X,\frac{1}{\Phi},\Phi - \frac{1}{\Phi}) \ & \\
& \leq K(\frac{1}{\Phi},\Phi - \frac{1}{\Phi}) \sin \theta(X,\frac{1}{\Phi},\Phi - \frac{1}{\Phi}) & \text{ (by Proposition \ref{prop:etude_K})}\\
& \leq K(\infty,\frac{1}{\Phi}) \sin \theta(X,\infty,\frac{1}{\Phi}) &\text{ (by case 1.)}
\end{align*}
This concludes the proof.
\end{enumerate}
\end{proof}

\section{Proof of Theorem \ref{theo:main2}}\label{sec:4m+2}
In the case where $n \equiv 2 \mod 4$, saddle connections are not necessarily closed anymore, and it is no longer possible to use techniques from Sections 4 and 5. However, we can still show that $\KVol$ is bounded on the Teichm\"uller disk of $\XX$. In \S6.1, we extend a boundedness criterion of \cite[\S 3]{BLM22} to the case of multiple singularities. In \S6.2, we use this criterion to show that KVol is bounded on the Teichm\"uller disk of the regular $n$-gon.

\subsection{Boundedness criterion for multiple singularities}
In this paragraph, we show Theorem \ref{theo:boundedness_criterion_0} which extends the boundedness criterion of \cite{BLM22} to the case of translation surfaces with multiple singularities.
\paragraph{Theorem 1.5.}
$\KVol$ is bounded on the Teichm\"uller disk of a Veech surface $X$ if and only if there are no intersecting closed curves $\alpha$ and $\beta$ on $X$ such that $\alpha = \alpha_1 \cup \cdots \cup \alpha_k$ and $\beta = \beta_1 \cup \cdots \cup \beta_l$ are unions of parallel saddle connections (that is all saddle connections $\alpha_1, \dots , \alpha_k,\beta_1, \dots, \beta_l$ have the same direction).

\begin{proof}
First, notice that if there exist a pair $(\alpha,\beta)$ of parralel closed curves, then applying the Teichm\"uller geodesic flow in their common direction make the length of both $\alpha$ and $\beta$ go to zero, while the interserction remain unchanged. In particular

\begin{equation*} 
\KVol(g_t \cdot X) \to + \infty \text{ as } t \to +\infty. 
\end{equation*}

Conversely, let $X$ be a Veech surface, which we assume to be of unit area. Recall that for such a surface, the "no small triangle condition" of \cite{Vorobets} (see also \cite{SW}) gives a constant $A > 0$ such that for any two saddle connections $\alpha$ and $\beta$, the inequality $|\alpha \wedge \beta| \geq A$ holds. Further, the constant $A$ does not depend on the choice of the surface in the Teichm\"uller disk of $X$.

Now, let $\alpha$ be a saddle connection in a periodic direction. The Veech surface $X$ decomposes into cylinders in the direction of $\alpha$. Let $h$ be the smallest height of the cylinders. For any saddle connection $\beta$ which is not parallel to $\alpha$, $\beta$ has at least a vertical length $h$ between each non-singular intersection with $\alpha$, plus a vertical length at least $h$ before the first non-singular intersection, and after the last non-singular intersection with $\alpha$. In particular, for any saddle connection $\beta$ having at least one non-singular intersection with $\alpha$, we have:
\[ h(\beta) \geq h(|\alpha \cap \beta|+1) \]
where $h(\beta) = l(\beta) \sin angle (\alpha,\beta)$. In fact, if $\beta$ does not intersect $\alpha$ non-singularly then the above inequality still holds, as it becomes $h(\beta) \geq h$ which is true as long as $\beta$ is not parallel to $\alpha$. Finally, since $h(\beta) \leq l(\beta)$, we conclude that :

\begin{equation*}
\frac{ |\alpha \cap \beta| + 1}{l(\alpha)l(\beta)} \leq \frac{1}{l(\alpha) h} \leq \frac{1}{A}. \end{equation*}
The last inequalty comes from the fact that $l(\alpha)h = \alpha \wedge \beta_0 \geq A$ for a saddle connection $\beta_0$ which stays inside the cylinder of height $h$.\newline

Next, take $\alpha$ and $\beta$ two simple closed curves decomposed as an union of saddle connections $\alpha = \alpha_1 \cup \cdots \cup \alpha_k$ and $\beta = \beta_1 \cup \cdots \cup \beta_l$, and assume that $\alpha_1, \dots, \alpha_k,\beta_1 , \dots ,\beta_l$ are not all parallel. Then, 
\[ \Int(\alpha,\beta) \leq (\sum_{
\begin{scriptsize}
 \begin{array}{c}
1 \leq i \leq k \\
1 \leq j \leq l
\end{array}
\end{scriptsize}} |\alpha_i \cap \beta_j|) + s \]
where $s \leq \min(k,l)$ denotes the number of common singularities between $\alpha$ and $\beta$. It should be noted that if $\alpha_i = \beta_j$, we set $|\alpha_i \cap \beta_j| = 0$.\newline
Now, since saddle connections are not all parallel, there is at least $\min(k,l)$ pairs $(i,j)$ such that $\alpha_i$ and $\beta_j$ are not parallel, and we get
\begin{align*}
\Int(\alpha,\beta) & \leq (\sum_{
\begin{scriptsize}
 \begin{array}{c}
1 \leq i \leq k \\
1 \leq j \leq l
\end{array}
\end{scriptsize}} |\alpha_i \cap \beta_j|) + s \\ 
& \leq \sum_{
\begin{scriptsize}
 \begin{array}{c}
i,j \\
\alpha_i  \text{ and } \beta_j \text{ non-parallel}
\end{array}
\end{scriptsize}} (|\alpha_i \cap \beta_j| + 1)\\
& \leq  \sum_{
\begin{scriptsize}
 \begin{array}{c}
i,j \\
\alpha_i  \text{ and } \beta_j \text{ non-parallel}
\end{array}
\end{scriptsize}} \frac{1}{A}\times l(\alpha_i)l(\beta_j) \\
& \leq \frac{1}{A} l(\alpha) l(\beta).
\end{align*}
This gives the required boundedness result
\[ \KVol(X) \leq \frac{1}{A}. \]
\end{proof}

\subsection{Intersection of parallel curves on $\XX$, $n \equiv 2 \mod 4$}
In this paragraph, we go back to the regular $n$-gon for $n \equiv 2 \mod 4$ and we study the intersection $\Int(\alpha,\beta)$ in the case where $\alpha$ and $\beta$ are union of parallel saddle connections. Up to the action of the Veech group, we can assume that $\alpha$ and $\beta$ are either both horizontal or both vertical. We will work in the staircase model $\mathcal{S}$, as in Figure \ref{staircase_model_v2}. Notice that in the horizontal direction, saddle connections go from one singularity to the other while saddle connections are closed curves in the vertical direction. In this latter case, it is easy to show:

\begin{Lem}[Vertical curves]
For every $i,j$, $\Int(\beta_i,\beta_j) = 0$
\end{Lem}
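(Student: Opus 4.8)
The plan is to argue purely geometrically, exploiting that all the $\beta_i$ point in the same (vertical) direction, so that they can never cross transversally. First I would recall the elementary fact that on a translation surface two geodesic segments sharing a common direction are, in their interiors, either disjoint or they overlap along a common subsegment: if they met at an interior point while making a nonzero angle, at least one of them would fail to be vertical there. Since the $\beta_i$ are distinct saddle connections running between singularities, an interior overlap would force them to coincide entirely (two vertical geodesics through a common interior point agree locally, and developing along the overlap they stay equal until they both reach a singularity). Hence for $i \neq j$ the interiors of $\beta_i$ and $\beta_j$ are disjoint, and the only points the two closed curves can share are cone points.

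Next I would convert this into the vanishing of the algebraic intersection. Recall that $\Int(\beta_i,\beta_j)$ is computed from transverse representatives by summing the signs of their crossings, so in particular its absolute value is at most the number of transverse intersection points of $\beta_i$ and $\beta_j$. By the previous paragraph there are no such crossings away from the singularities, and a meeting at a cone point contributes no signed transverse crossing: near the cone point both curves consist only of vertical rays, which can be separated while staying vertical, removing the contact without creating any new intersection. Therefore $\Int(\beta_i,\beta_j)=0$.

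Concretely, on the staircase model $\mathcal{S}$ (Figure \ref{staircase_model_v2}) this is immediate: each $\beta_j$ is a vertical side of the staircase, and the vertical cylinder decomposition exhibits the $\beta_j$ as lying in the boundaries and cores of the pairwise disjoint vertical cylinders $C_1,\dots,C_m$, so they are pairwise disjoint except at shared vertices. I would simply invoke this decomposition, which is exactly the one already used to verify that saddle connections in a common direction on $\XX$ are non-intersecting (the hypothesis of Proposition \ref{prop:sup_directions}). The only point requiring any care — hardly an obstacle — is the bookkeeping at the cone points, where one must check that curves sharing a singularity genuinely do not cross; since both arrive and leave vertically, this separation is always possible, which is what rules out any hidden $\pm 1$ contribution to the intersection form.
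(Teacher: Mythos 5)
Your reduction to the singular points is fine: parallel geodesic segments cannot cross in their interiors, distinct saddle connections cannot partially overlap, so the $\beta_i$ are indeed pairwise disjoint away from the cone points. The gap is the last step, where you assert that a meeting at a cone point contributes nothing because "both arrive and leave vertically, so this separation is always possible." That is false in general. At a cone point of angle $2\pi k$ there are $k$ upward and $k$ downward vertical separatrices, alternating in cyclic order around the point; a closed vertical curve through the singularity uses one downward ray (incoming) and one upward ray (outgoing), and two such curves contribute $\pm 1$ to $\Int$ exactly when their two pairs of rays \emph{link} in this cyclic order: any perturbation pushing one curve off the singularity is a detour around the cone point, it cannot stay vertical, and the number of rays of the other curve it must cross is dictated by the cyclic order, not by directions. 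For $k \geq 3$ linked configurations exist (with rays in cyclic order $u_1,d_1,u_2,d_2,u_3,d_3$, the pairs $\{d_1,u_3\}$ and $\{d_2,u_1\}$ link, giving intersection $\pm 1$). The surfaces at issue ($n \equiv 2 \bmod 4$) have two cone points of angle $\pi(n-2)/2$, which is $\geq 6\pi$ as soon as $n \geq 14$, so this is precisely the regime where verticality alone rules out nothing; one would have to check the actual cyclic order of the vertical separatrices of the staircase model at each singularity, which you never do (for $n=10$ the cone angle is $4\pi$ and linking happens to be combinatorially impossible, but seeing that already requires the cyclic-order argument you omit). Note also that the paper's own discussion around Theorem \ref{theo:boundedness_criterion_0} refutes your general principle: the square-tiled surfaces with several singularities and unbounded KVol mentioned there possess intersecting closed curves that are unions of \emph{parallel} saddle connections, and by your own first paragraph such intersections can only occur at singularities.

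This is why the paper argues homologically instead of geometrically: $\Int$ depends only on homology classes; $\beta_m$ is homologous to a non-singular core curve of the small vertical cylinder, which is disjoint from every $\beta_j$, so $\Int(\beta_m,\beta_j)=0$; and each $\beta_j+\beta_{j+1}$ is homologous to a non-singular core curve of a vertical cylinder, disjoint from every $\beta_i$, whence $\Int(\beta_j,\beta_i)=-\Int(\beta_{j+1},\beta_i)$ and induction gives $\Int(\beta_j,\beta_i)=\pm\Int(\beta_m,\beta_i)=0$. This sidesteps all bookkeeping at the cone points. If you want to keep a purely geometric proof, you must replace the sentence "this separation is always possible" by an explicit verification that the pairs of vertical rays used by $\beta_i$ and $\beta_j$ at their common singularity never link; that combinatorial check is the real content of the lemma, not an afterthought.
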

\begin{proof}
First, since $\beta_m$ is homologous to a non-singular curve which do not intersect any of the $\beta_j$ for $j<m$, we have $\Int(\beta_m, \beta_j) = 0$.\newline
Next, for any $j<m$ the curve $\beta_j + \beta_{j+1}$ is homologous to a non-singular curve and for all $i$, $\Int(\beta_j + \beta_{j+1}, \beta_i)=0$ and hence $\Int(\beta_j,\beta_i) = - \Int(\beta_{j+1},\beta_i)$ and by induction $\Int(\beta_j,\beta_i) = \pm \Int(\beta_m,\beta_i) = 0$.
\end{proof}

In the case where $\alpha$ (resp. $\beta$) is an horizontal curve made of two horizontal saddle connections $\alpha_{i_1}$ and $\alpha_{i_2}$ (resp. $\alpha_{j_1}$ and $\alpha_{j_2}$) going from one singularity to the other and oriented such that the resulting curve $\alpha = \alpha_{i_1} \pm \alpha_{i_2}$ has a well defined orientation, we have:
\begin{Lem}[Horizontal curves]
In this setting, $\Int(\alpha,\beta) = 0$.
\end{Lem}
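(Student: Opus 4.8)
The plan is to argue homologically, in close parallel with the previous lemma. Since the algebraic intersection number $\mathrm{Int}(\alpha,\beta)$ depends only on the homology classes $[\alpha],[\beta]\in H_1(\XX;\ZZ)$, and since the core curves of the horizontal cylinders $Z_1,\dots,Z_m$ are pairwise disjoint, these core classes span an \emph{isotropic} subspace $L\subset H_1(\XX;\RR)$ for the intersection form. It therefore suffices to show that the class of any closed curve made of two horizontal saddle connections lies in $L$; then $[\alpha],[\beta]\in L$ and $\mathrm{Int}(\alpha,\beta)=0$ follows immediately from isotropy. Note that, unlike the vertical case where each $\beta_j$ is already a closed saddle connection, here the horizontal saddle connections run between the two \emph{distinct} singularities, so one must work with pairs $\alpha_{i_1}\cup\overline{\alpha_{i_2}}$ --- this asymmetry is precisely why a separate argument is needed.

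For the key step I would first observe that, in the staircase model, the boundary of each horizontal cylinder $Z_k$ is a closed curve made of horizontal saddle connections, and is homologous (within the closed cylinder) to the nonsingular core curve of $Z_k$; in particular the closed curve formed by two consecutive horizontal saddle connections is homologous to a core curve, and hence lies in $L$. I would then express an arbitrary two--saddle-connection curve $\alpha_{i_1}\cup\overline{\alpha_{i_2}}$ as a telescoping sum of such elementary boundary curves in the graph $G=\alpha_1\cup\cdots\cup\alpha_m$ formed by the horizontal saddle connections: since any two oriented edges of $G$ joining the two singularities differ by a $\ZZ$-combination of the elementary cycles $\alpha_i\cup\overline{\alpha_{i+1}}$, the class $[\alpha_{i_1}\cup\overline{\alpha_{i_2}}]$ is a $\ZZ$-combination of core classes and so lies in $L$. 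The same reasoning applies to $\beta$.

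Finally, with $[\alpha],[\beta]\in L$ and $L$ isotropic, we conclude $\mathrm{Int}(\alpha,\beta)=0$. The main obstacle is the key step: verifying that the cylinder-boundary/core relations really do account for \emph{every} two--saddle-connection class, equivalently that the subspace of homology classes carried by the horizontal foliation is isotropic. Concretely this amounts to checking, from the explicit cylinder combinatorics of the staircase model, that the orientations on the $\alpha_i$ can be chosen so that consecutive pairs close up into cylinder boundaries and telescope correctly, and that there are enough horizontal cylinders for their cores to span all horizontal classes --- this is the analog, for horizontal pairs, of the relations ``$\beta_j+\beta_{j+1}$ is homologous to a nonsingular curve'' used in the vertical lemma. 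Once this combinatorial bookkeeping is in place, the conclusion is immediate.
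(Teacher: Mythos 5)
Your proposal is correct and follows essentially the same route as the paper: the paper likewise telescopes $\alpha = \pm[\alpha_{i_1} + (-1)^{i_2-i_1-1}\alpha_{i_2}] = \pm\sum_{k=0}^{i_2-i_1-1}(-1)^k(\alpha_{i_1+k}+\alpha_{i_1+k+1})$ into consecutive pairs, notes that each pair $\alpha_i+\alpha_{i+1}$ is homologous to the core curve of a horizontal cylinder, and concludes from the pairwise non-intersection of these core curves --- which is exactly your isotropy of the subspace $L$. The only difference is presentational (your graph-homology and isotropic-subspace language versus the paper's direct chain-level computation), and the ``combinatorial bookkeeping'' you flag at the end is precisely the alternating-sign telescoping the paper writes out, so no genuine gap remains.
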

\begin{proof}
First notice that, given the orientation of the saddle connections $\alpha_i$ is from left to right as in Figure \ref{staircase_model_v2}, the curve $\alpha$ has a well defined orientation if and only if $\alpha =\pm [\alpha_{i_1} + (-1)^{i_2-i_1-1}\alpha_{i_2}]$. In this case, we can write (assuming $i_1 < i_2$ for convenience)
\begin{align*}
\alpha &=\pm [ (\alpha_{i_1} + \alpha_{i_1+1}) -  (\alpha_{i_1+1} + \alpha_{i_1+2}) +\cdots + (-1)^{i_2-i_1-1}(\alpha_{i_2-1} + \alpha_{i_2})]\\
& = \pm \sum_{k=0}^{i_2-i_1-1} (-1)^{k}(\alpha_{i_1+k} + \alpha_{i_1+k+1}).
\end{align*}
Now, since the curves $\alpha_i + \alpha_{i+1}$ are closed curves homologous to core curve of horizontal cylinder, they are pairwise non-intersecting. Making the same decomposition for $\beta$ gives directly $\Int(\alpha,\beta)=0$.
\end{proof}

\begin{figure}
\center
\definecolor{ccqqqq}{rgb}{0.8,0,0}
\begin{tikzpicture}[line cap=round,line join=round,>=triangle 45,x=1cm,y=1cm, scale = 0.6]
\clip(-10.1,-1) rectangle (2,10);
\draw [-to, line width=1pt] (0,0)-- (0.6,0);
\draw [line width=1pt] (0.6,0) -- (1,0);
\draw [line width=1pt] (1,0)-- (1,1.9318516525781366);
\draw [line width=1pt] (1,1.9318516525781366)-- (0,1.9318516525781366);
\draw [line width=1pt] (0,1.9318516525781366)-- (0,5.277916867529369);
\draw [line width=1pt] (0,5.277916867529369)-- (-2.7320508075688776,5.277916867529369);
\draw [-to,line width=1pt] (-2.7320508075688776,0)-- (-1.26,0);
\draw [line width=1pt] (-1.26,0) -- (0,0);
\draw (-1.8,0) node[anchor=north west] {$\alpha_3$};
\draw (-5,1.9) node[anchor=north west] {$\alpha_2$};
\draw (0,3.9) node[anchor=north west] {$\beta_2$};
\draw (-2.65,7.7) node[anchor=north west] {$\beta_1$};
\draw [line width=1pt] (-2.7320508075688776,1.9318516525781368)--(-4.5,1.93185);
\draw [to-,line width=1pt](-4.5,1.93185)-- (-6.464101615137755,1.9318516525781366);
\draw [line width=1pt] (-6.464101615137755,1.9318516525781366)-- (-6.464101615137755,5.277916867529369);
\draw [-to,line width=1pt] (-9.9,5.277916867529369)--(-8.1,5.2779);
\draw [line width=1pt] (-8.1,5.2779) -- (-6.464101615137755,5.277916867529369);
\draw [line width=1pt] (-9.9,5.277916867529369)-- (-9.9,9.141620172685643);
\draw [line width=1pt] (-9.9,9.141620172685643)-- (-2.7320508075688767,9.141620172685643);
\draw [line width=1pt] (-2.7320508075688767,9.141620172685643)-- (-2.7320508075688776,5.277916867529369);
\draw [line width=1pt] (-2.7320508075688776,1.9318516525781368)-- (-2.7320508075688776,0);
\draw (0,0) node[anchor=north west] {$\alpha_4$};
\draw (1,1.4) node[anchor=north west] {$\beta_3$};
\draw (-9,5) node[anchor=north west] {$\alpha_1$};
\draw [color = ccqqqq] (1,0) node {\large x};
\draw [color = ccqqqq] (1,1.9318516525781366) node {\large x};
\draw [color = ccqqqq] (-2.7320508075688776,5.277916867529369) node {\large x};
\draw [color = ccqqqq] (-2.7320508075688776,0) node {\large x};
\draw [color = ccqqqq] (-2.7320508075688776,1.9318516525781368) node {\large x};
\draw [color = ccqqqq] (-2.7320508075688767,9.141620172685643) node {\large x};
\draw [color = ccqqqq] (-9.9,5.277916867529369) node {\large x};
\draw [color = ccqqqq] (-9.9,9.141620172685643) node {\large x};
\draw (0,0) circle (5pt);
\draw (0,1.9318516525781366)  circle (5pt);
\draw (0,5.277916867529369) circle (5pt);
\draw (-6.464101615137755,9.141620172685643) circle (5pt);
\draw (-6.464101615137755,1.9318516525781366)  circle (5pt);
\draw (-6.464101615137755,5.277916867529369) circle (5pt);
\end{tikzpicture}
\caption{The staircase model associated with the $14$-gon. The horizontal and the vertical direction represent the two cuspidal directions. Notice that vertical saddle connections are closed while horizontal saddle connections are not.}
\label{staircase_model_v2}
\end{figure}

As a corollary, we can apply the criterion of Theorem \ref{theo:boundedness_criterion_0} to deduce that KVol is bounded on the Teichm\"uller disk of the regular $n$-gon. In fact, a closer look at the proof of Theorem \ref{theo:boundedness_criterion_0} gives the following explicit bound:
\begin{Cor}\label{cor:boundedness}
Let $n \geq 10$, $n \equiv 2 \mod 4$. For any surface $X$ in the Teichm\"uller disk of the regular $n$-gon, and any closed curves $\alpha$ and $\beta$ on $X$, we have:
\[
\frac{\Int(\alpha, \beta)}{l(\alpha) l(\beta)} \leq  \frac{1}{\Phi l_m^2}.
\]
\end{Cor}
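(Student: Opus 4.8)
The plan is to specialize the proof of Theorem~\ref{theo:boundedness_criterion_0} to the staircase model $\SS$ of the regular $n$-gon and to make the constant appearing there explicit. Recall that the ``no small triangle'' constant is, by definition, a lower bound $A$ for $|\sigma \wedge \tau|$ over all pairs of non-parallel saddle connections $\sigma,\tau$, that it is invariant along the Teichm\"uller disk, and that the core of the proof of Theorem~\ref{theo:boundedness_criterion_0} establishes, for any two non-parallel saddle connections,
\[
\frac{|\sigma \cap \tau| + 1}{l(\sigma)\, l(\tau)} \leq \frac{1}{l(\sigma)\, h} \leq \frac{1}{A},
\]
where $h$ is the smallest height of a cylinder in the direction of $\sigma$. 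Thus the entire corollary reduces to the single computation $A = \Phi l_m^2$ on $\SS$, followed by the same summation over a saddle connection decomposition as in Theorem~\ref{theo:boundedness_criterion_0}.

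First I would compute $A$. Since every periodic direction on $\SS$ is the image of the horizontal or the vertical cusp under $\Gamma_n^{\pm}$, and since $\Gamma_n^{\pm}$ preserves the wedge product (Remark~\ref{rk:invariance_K}), it suffices to examine crossing pairs one of whose members is horizontal or vertical. The shortest such saddle connections are $\alpha_m$, of horizontal holonomy $(l_m,0)$, and $\beta_m$, of vertical holonomy $(0,h_m)$ with $h_m = \Phi l_m$ by Figure~\ref{lengths_staircase}; they give $|\alpha_m \wedge \beta_m| = l_m h_m = \Phi l_m^2$. To see that this is the minimum, note that any saddle connection crossing $\alpha_m$ has vertical component at least $h_m$ (the height of the smallest horizontal cylinder $Z_m$ it must traverse), so its wedge with $\alpha_m$ is at least $l_m h_m = \Phi l_m^2$; the symmetric statement holds for $\beta_m$, and every remaining pair involves a strictly longer saddle connection in at least one factor. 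Hence $A = \Phi l_m^2$, and for every non-parallel pair of saddle connections
\[
\frac{|\sigma \cap \tau| + 1}{l(\sigma)\, l(\tau)} \leq \frac{1}{\Phi l_m^2}.
\]

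Finally I would assemble the bound for arbitrary closed curves exactly as in the proof of Theorem~\ref{theo:boundedness_criterion_0}. Decomposing $\alpha = \alpha_1 \cup \cdots \cup \alpha_k$ and $\beta = \beta_1 \cup \cdots \cup \beta_l$ into saddle connections, the Vertical and Horizontal curves lemmas guarantee that parallel components never contribute to the intersection, so $\alpha$ and $\beta$ cannot both be unions of parallel saddle connections unless they are disjoint. Assuming (via Lemma~3.1 of \cite{MM}) that $\alpha,\beta$ are simple, the number $s$ of singular intersections satisfies $s \leq \min(k,l)$, which is at most the number of non-parallel pairs $(\alpha_i,\beta_j)$; summing the per-pair estimate then yields
\[
\Int(\alpha,\beta) \leq \sum_{\text{$\alpha_i,\beta_j$ non-parallel}} \bigl(|\alpha_i \cap \beta_j| + 1\bigr) \leq \frac{1}{\Phi l_m^2}\sum_{i,j} l(\alpha_i)\, l(\beta_j) \leq \frac{l(\alpha)\, l(\beta)}{\Phi l_m^2},
\]
which is the desired inequality.

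The main obstacle is the minimality claim $A = \Phi l_m^2$: one must verify that no crossing pair of saddle connections---in particular no pair of diagonals, nor a diagonal paired with a short edge of another periodic direction---spans a parallelogram of area smaller than the smallest cylinder $Z_m$ (equivalently $C_m$). This is precisely where the cylinder data of $\SS$ for $n \equiv 2 \bmod 4$ (all moduli equal to $\Phi$ except the enlarged $C_1$, together with $h_m = \Phi l_m$, $l_{m-1} = (\Phi^2-1)l_m$ and $h_{m-1} = (\Phi^3-2\Phi)l_m$) enters, and the reduction to the two cusps via $\Gamma_n^{\pm}$-invariance is what keeps the verification finite.
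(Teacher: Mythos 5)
Your proposal is correct and follows essentially the same route as the paper: the paper's own proof of this corollary consists precisely in revisiting the proof of Theorem \ref{theo:boundedness_criterion_0} and making its constant explicit for the staircase model, which is exactly what you do. Your evaluation of the optimal ``no small triangle'' constant as $\Phi l_m^2$ --- reducing to the horizontal and vertical cusps via $\Gamma_n^{\pm}$-invariance of the wedge product, and bounding each wedge below by (shortest saddle connection in the cusp direction) $\times$ (smallest transverse cylinder height) --- is the intended ``closer look'', with $l_m$ understood, as in the paper's statement, as the length of the shortest horizontal saddle connection of $\SS$ (so that $h_m=\Phi l_m$ is the shortest vertical one).
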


\bibliographystyle{alpha}
\bibliography{KVol_bibli}

\begin{thebibliography}{{Mon}05}

\bibitem[BLM22]{BLM22}
Julien Boulanger, Erwan Lanneau, and Daniel Massart.
\newblock Algebraic intersection in a family of veech surfaces.
\newblock arXiv:2110.14235, 2022.

\bibitem[CKM21a]{CKM}
Smail Cheboui, Arezki Kessi, and Daniel Massart.
\newblock Algebraic intersection for translation surfaces in a family of
  {T}eichm\"{u}ller disks.
\newblock {\em Bull. Soc. Math. France}, 149(4):613--640, 2021.

\bibitem[CKM21b]{CKMcras}
Smail Cheboui, Arezki Kessi, and Daniel Massart.
\newblock Algebraic intersection for translation surfaces in the stratum
  {$H(2)$}.
\newblock {\em C. R. Math. Acad. Sci. Paris}, 359:65--70, 2021.

\bibitem[Mas22]{survey_massart}
Daniel Massart.
\newblock A short introduction to translation surfaces, {V}eech surfaces, and
  {T}eichm\"{u}ller dynamics.
\newblock In {\em Surveys in geometry {I}}, pages 343--388. Springer, Cham,
  [2022] \copyright 2022.

\bibitem[MM14]{MM}
Daniel {Massart} and Bjoern {Muetzel}.
\newblock {On the intersection form of surfaces}.
\newblock {\em {Manuscr. Math.}}, 143(1-2):19--49, 2014.

\bibitem[{Mon}05]{Monteil}
Thierry {Monteil}.
\newblock {On the finite blocking property}.
\newblock {\em {Ann. Inst. Fourier}}, 55(4):1195--1217, 2005.

\bibitem[SU]{SU}
John {Smillie} and Corinna {Ulcigrai}.
\newblock {Geodesic flow on the Teichm\"uller disk of the regular octagon
  cutting sequences and octagon continued fractions maps}.
\newblock In {\em {Dynamical numbers. Interplay between dynamical systems and
  number theory.}}

\bibitem[SW10]{SW}
John Smillie and Barak Weiss.
\newblock Characterizations of lattice surfaces.
\newblock {\em Invent. Math.}, 180(3):535--557, 2010.

\bibitem[{Vee}89]{Veech}
W.~A. {Veech}.
\newblock {Teichm\"uller curves in moduli space, Eisenstein series and an
  application to triangular billiards}.
\newblock {\em {Invent. Math.}}, 97(3):553--583, 1989.

\bibitem[Vor96]{Vorobets}
Ya.~B. Vorobets.
\newblock Planar structures and billiards in rational polygons.
\newblock {\em Russ. Math. Surv.}, 51(1):177--178, 1996.

\bibitem[Wri16]{survey_wright}
Alex Wright.
\newblock From rational billiards to dynamics on moduli spaces.
\newblock {\em Bull. Amer. Math. Soc. (N.S.)}, 53(1):41--56, 2016.

\bibitem[ZK76]{KZ}
A.~N. {Zemlyakov} and A.~B. {Katok}.
\newblock {Topological transitivity of billiards in polygons}.
\newblock {\em {Math. Notes}}, 18:760--764, 1976.

\bibitem[Zor06]{survey_Zorich}
Anton Zorich.
\newblock Flat surfaces.
\newblock In {\em Frontiers in number theory, physics, and geometry. {I}},
  pages 437--583. Springer, Berlin, 2006.

\end{thebibliography}
\end{document}